\title{Data-Driven Sample Average Approximation \\ with Covariate Information\thanks{An earlier version of this article was available on Optimization Online on July 24, 2020. This version also incorporates analysis from our unpublished technical report arXiv preprint arXiv:2101.03139.}}
\date{Version 2 (this document): July 25, 2022 \\[0.1in] \hspace*{-0.85in} Version 1: July 24, 2020}
\author[1]{Rohit Kannan\thanks{I dedicate this work to the memory of my grandparents (Thathi and Thatha).}}
\author[2]{G{\"u}zin Bayraksan}
\author[3]{James R. Luedtke}
\affil[1]{Center for Nonlinear Studies (T-CNLS) and Applied Mathematics \& Plasma Physics (T-5), \protect\\ Los Alamos National Laboratory, Los Alamos, NM, USA. E-mail: rohitk@alum.mit.edu}
\affil[2]{Department of Integrated Systems Engineering, The Ohio State University, Columbus, OH, USA. \protect\\ E-mail: bayraksan.1@osu.edu}
\affil[3]{Department of Industrial \& Systems Engineering and Wisconsin Institute for Discovery, \protect\\ University of Wisconsin-Madison, Madison, WI, USA. E-mail: jim.luedtke@wisc.edu}
\newcounter{mycounter}
\newcommand{\Set}[2]{\left\lbrace #1 : #2 \right\rbrace}
\DeclareMathOperator*{\argmin}{arg\,min}
\DeclarePairedDelimiter\ceil{\lceil}{\rceil}
\DeclarePairedDelimiter\floor{\lfloor}{\rfloor}
\newcommand{\tr}[1]{\ensuremath{{#1}^\text{T}}}
\newcommand{\uset}[2]{\ensuremath{\underset{#1}{#2}}}
\DeclarePairedDelimiter\abs{\lvert}{\rvert}%
\DeclarePairedDelimiter\norm{\lVert}{\rVert}%
\newcommand{\prob}[1]{\mathbb{P}\left\lbrace{#1}\right\rbrace}
\newcommand{\pr}{\mathbb{P}}
\newcommand{\expect}[1]{\mathbb{E}\left[{#1}\right]}
\newcommand{\expv}{\mathbb{E}}
\newcommand{\expectation}[2]{\mathbb{E}_{#1}\hspace*{-0.02in}\left[{#2}\right]}
\newcommand{\dev}[2]{\mathbb{D}\left({#1},{#2}\right)}
\newcommand{\convinprob}{\xrightarrow{p}}
\newcommand{\D}{\mathcal{D}}
\newcommand{\F}{\mathcal{F}}
\newcommand{\I}{\mathcal{I}}
\newcommand{\J}{\mathcal{J}}
\newcommand{\Q}{\mathcal{Q}}
\newcommand{\R}{\mathbb{R}}
\newcommand{\hS}{\hat{S}}
\newcommand{\X}{\mathcal{X}}
\newcommand{\Y}{\mathcal{Y}}
\newcommand{\Z}{\mathcal{Z}}
\newcommand{\hf}{\hat{f}}
\newcommand{\hg}{\hat{g}}
\newcommand{\hQ}{\hat{Q}}
\newcommand{\heps}{\hat{\varepsilon}}
\newcommand{\teps}{\tilde{\varepsilon}}
\newcommand{\hth}{\hat{\theta}}
\newcommand{\hpi}{\hat{\pi}}
\newcommand{\sth}{\theta^*}
\newcommand{\spi}{\pi^*}
\newcommand{\hv}{\hat{v}}
\newcommand{\bx}{\bar{x}}
\newcommand{\by}{\bar{y}}
\newcommand{\bz}{\bar{z}}
\newcommand{\hz}{\hat{z}}
\newcommand{\tz}{\tilde{z}}
\newcommand{\proj}[2]{\operatorname{proj}_{#1}(#2)}
\providecommand{\keywords}[1]
{
  \small	
  \textbf{Key words:} #1
}
\newtheorem{theorem}{Theorem}[]
\newtheorem{lemma}[theorem]{Lemma}
\newtheorem{assumption}{Assumption}[]
\newtheorem{remark}{Remark}
\newtheorem{proposition}[theorem]{Proposition}
\newtheorem{example}{Example}
\let\oldtheorem\theorem
\renewcommand{\theorem}{\oldtheorem\normalfont}
\let\oldlemma\lemma
\renewcommand{\lemma}{\oldlemma\normalfont}
\let\oldassumption\assumption
\renewcommand{\assumption}{\oldassumption\normalfont}
\let\oldremark\remark
\renewcommand{\remark}{\oldremark\normalfont}
\let\olddefinition\definition
\renewcommand{\definition}{\olddefinition\normalfont}
\let\oldcorollary\corollary
\renewcommand{\corollary}{\oldcorollary\normalfont}
\let\oldproposition\proposition
\renewcommand{\proposition}{\oldproposition\normalfont}
\let\oldexample\example
\renewcommand{\example}{\oldexample\normalfont}
\let\oldconjecture\conjecture
\renewcommand{\conjecture}{\oldconjecture\normalfont}
\newtheorem{assp}{Assumption}
\newtheorem{assj}{Assumption}
\newenvironment{assjack}[1]
  {%
   \addtocounter{assj}{-1}%
   \begin{assj}}
  {\end{assj}}
\newtheorem{assjp}{Assumption}
\newenvironment{assjackplus}[1]
  {%
   \addtocounter{assjp}{-1}%
   \begin{assjp}}
  {\end{assjp}}
\newtheorem{assjcv}{Assumption}
\newtheorem{assjpcv}{Assumption}
\begin{document}

\maketitle

\begin{abstract}
We study optimization for data-driven decision-making when we have observations of the uncertain parameters within the optimization model together with concurrent observations of covariates. 
Given a new covariate observation, the goal is to choose a decision that minimizes the expected cost conditioned on this observation.
We investigate three data-driven frameworks that integrate a machine learning prediction model within a stochastic programming sample average approximation (SAA) for approximating the solution to this problem. 
Two of the SAA frameworks are new and use out-of-sample residuals of leave-one-out prediction models for scenario generation. 
The frameworks we investigate are flexible and accommodate parametric, nonparametric, and semiparametric regression techniques.
We derive conditions on the data generation process, the prediction model, and the stochastic program under which solutions of these data-driven SAAs are consistent and asymptotically optimal, and also derive convergence rates and finite sample guarantees.
Computational experiments validate our theoretical results, demonstrate the potential advantages of our data-driven formulations over existing approaches (even when the prediction model is misspecified), and illustrate the benefits of our new data-driven formulations in the \mbox{limited data regime}. \\[0.1in]
\keywords{Data-driven stochastic programming, covariates, regression, sample average approximation, {jackknife}, large deviations}
\end{abstract}

\section{Introduction}
\label{sec:intro}

We study data-driven decision-making under uncertainty, where the decision-maker (DM) has access to a finite number of observations of uncertain parameters of an optimization model together with concurrent observations of auxiliary features/covariates.
Stochastic programming~\citep{shapiro2009lectures,birge2011introduction} is a popular modeling framework for decision-making under uncertainty in such applications.
A standard formulation of a stochastic program is
\begin{alignat*}{2}
&\uset{z \in \Z}{\min} \: && \expect{c(z,Y)},
\end{alignat*}
where $z$ denotes the decision variables, $Y$ denotes the uncertain model parameters, $\Z$ denotes the feasible region, $c$ is a cost function, and the expectation is computed with respect to the distribution of~$Y$.
Data-driven solution methods such as sample average approximation (SAA) traditionally assume access to only samples of the random vector~$Y$~\citep{shapiro2009lectures,homem2014monte}.
However, in many real-world applications, values of $Y$ (e.g., demand for water and energy) are predicted using available covariate information (e.g., weather).

Motivated by the developments in~\citet{ban2018big,bertsimas2014predictive}, and~\citet{sen2018learning}, we study the case in which covariate information is available and can be used to inform the distribution of $Y$.
Specifically, given a new random observation $X = x$ of covariates,
the goal of the DM is to solve the conditional stochastic program
\begin{alignat}{2}
\label{eqn:sp}
\uset{z \in \Z}{\min} \: && \expect{c(z,Y) \mid X = x}. \tag{SP}
\end{alignat}
The aim of this paper is to analyze the SAA framework when a prediction model---obtained by statistical or machine learning---is explicitly integrated into the SAA for~\eqref{eqn:sp} to leverage the covariate observation $X = x$.
Here, residuals of the prediction model are {scaled and} added on to a point prediction of~$Y$ at~$X = x$ to construct scenarios of $Y$ for use within the SAA.
We formally define our data-driven approximations to~\eqref{eqn:sp} in Section~\ref{sec:form}.

Applications of this framework include (i) the data-driven newsvendor problem~\citep{ban2018big}, where the product's demand can be predicted using seasonality and location data before making order decisions, (ii) dynamic procurement of a new product~\citep{ban2018dynamic} whose demand can be predicted using historical data for similar past products, (iii) shipment planning under uncertainty~\citep{bertsimas2014predictive}, where historical demands, weather forecasts, and web search results can be used to predict products' demands before making production and inventory decisions, and (iv) grid scheduling under uncertainty~\citep{donti2017task}, where seasonality, weather, and historical demand data can be used to predict the load before creating generator schedules.

Formulation~\eqref{eqn:sp} requires knowledge of the conditional distribution of the random variables given a new realization of the covariates.
Since this distribution is typically unknown, we are interested in using an estimate of it to approximately solve~\eqref{eqn:sp} given access to a finite set of joint observations of $(X,Y)$. 
In this setting, we would like to construct approximations to~\eqref{eqn:sp} that not only have good statistical properties, but are also practically effective in the limited data regime. 
At a minimum, we would like a data-driven approach that is asymptotically optimal in the sense that the objective value of its solutions approaches the optimal value of~\eqref{eqn:sp} as the number of samples increases.
We would also like to determine the rate at which this convergence occurs.

Our first contribution is to generalize and analyze the approach proposed in~\citet{ban2018dynamic} and~\citet{sen2018learning}, in which data-driven approximations to~\eqref{eqn:sp} are constructed using explicit models that predict the random vector~$Y$ using the covariates~$X$.
In this approach, a prediction model is first used to generate a point prediction of~$Y$ at the new observation~$X = x$.
The residuals obtained during the training of the prediction model are then {scaled and} added on to this point prediction to construct scenarios for use within an SAA framework to approximate the solution to~\eqref{eqn:sp}.
We refer to this approximation as the \textit{empirical residuals-based SAA} (ER-SAA). We demonstrate asymptotic optimality, rates of convergence, and finite sample guarantees of solutions obtained from the ER-SAA under mild assumptions.
Inspired by {jackknife}-based methods for constructing prediction intervals~\citep{barber2019predictive}, we also propose two new data-driven SAA frameworks that use \textit{leave-one-out residuals} instead of empirical residuals, and demonstrate how our analysis can be extended to these frameworks.
The motivation for these new data-driven SAA formulations is that using leave-one-out residuals might result in a better approximation of the true conditional distribution of $Y$ given $X = x$, particularly when the sample size is small.

The prediction frameworks we analyze are flexible and accommodate parametric, nonparametric, and semiparametric regression techniques~\citep{van2000asymptotic,gyorfi2006distribution,wainwright2019high}.
While our results imply that using nonparametric regression techniques within our SAA frameworks results in convergent approximations to~\eqref{eqn:sp} under mild assumptions~\cite[cf.][]{bertsimas2014predictive}, the rate at which such approximations converge typically exhibits poor dependence on the dimension of the covariate vector~$X$.
Parametric (and semiparametric) regression approaches, on the other hand, presume some knowledge of the functional dependence of $Y$ on~$X$.
If the assumed functional dependence is a good approximation of the true dependence, they may yield significantly better solutions when the number of samples is limited.
The tradeoff between employing parametric and nonparametric regression techniques within our framework is evident upon looking at the assumptions under which these approaches are guaranteed to yield convergent approximations to~\eqref{eqn:sp}, the rates at which their optimal solutions converge, and numerical experience in Section~\ref{sec:computexp}.
The generality of our framework enables DMs to choose the modeling approach that works best for their application.

\subsection{Relation to existing literature}
\label{subsec:relatedwork}

The papers of \citet{ban2018dynamic} and~\citet{sen2018learning} are most closely related to this work.
Motivated by the application of dynamic procurement of a short-life-cycle product in the presence of demand uncertainty,~\citet{ban2018dynamic} propose a residual tree method for the data-driven solution of multistage stochastic programs (also see references to the operations management literature therein for other data-driven approaches). 
They propose to use ordinary least squares (OLS) or Lasso regression to generate demand forecasts for a new product using historical demand and covariate data for similar products, and establish asymptotic optimality of their data-driven procurement decisions for their particular application.
\citet{sen2018learning} also use predictive models to generate scenarios of random variables in stochastic programs with exogenous and endogenous uncertainty when covariate information is available.
They propose an empirical additive error method that is similar to the residual tree method of~\citet{ban2018dynamic}. 
They also consider estimating distributions of the coefficients and residuals of a linear regression model and propose to subsequently sample from these distributions to generate scenarios of the random variables.
They present model validation and model selection strategies for when the DM has access to several candidate prediction models. 
\citet{kim2015two} use empirical residuals to construct scenarios in a computational study, but conduct no analysis of the approach.

Our work differs from the above in the following respects: we introduce a general framework that applies to a wide range of prediction and optimization models {and allows for the covariance matrix of the errors to depend on the covariates}; we establish asymptotic optimality of the solutions from the ER-SAA under general conditions; we derive results establishing rates of convergence and finite sample guarantees of the solutions from the ER-SAA; we propose two new frameworks that use leave-one-out residuals and extend the asymptotic optimality and rate of convergence analysis to these frameworks; and we present an empirical study demonstrating the potential advantage of using these frameworks.

\citet{bertsimas2014predictive} consider approximating the solution to~\eqref{eqn:sp} by solving a reweighted SAA problem, where the weights are chosen using nonparametric regression methods based on k-nearest neighbors (kNN), kernels, classification and regression trees (CART), or random forests (RF).
They pay particular attention to the setting where the joint observations of $(X,Y)$ may not be i.i.d., but arise from a mixing process.
They also consider the setting where decisions affect the realization of the uncertainty, and establish asymptotic optimality and consistency of their data-driven solutions.
They also consider policy-based empirical risk minimization (ERM) approaches for~\eqref{eqn:sp}, and develop out-of-sample guarantees for costs of decisions constructed using such policies.
\citet{diao2020distribution} develop stochastic quasigradient methods for efficiently solving the kNN and kernel-based reweighted SAA formulations of~\citet{bertsimas2014predictive} without sacrificing theoretical guarantees.
\citet{ban2018big} also propose a policy-based ERM approach and a kernel regression-based nonparametric approach for solving~\eqref{eqn:sp} in the context of the data-driven newsvendor problem.
They derive finite sample guarantees on the out-of-sample costs of order decisions, and quantify the gains from using feature information under different demand models.
{\citet{bazier2020generalization} derive out-of-sample performance guarantees for regularized portfolio selection with side information.}
\citet{bertsimas2019predictions} extend the analysis of~\citet{bertsimas2014predictive} to the multistage setting when the covariates evolve according to a Markov process.
They establish asymptotic optimality and consistency of their data-driven decisions along with finite sample guarantees for the solutions to the kNN-based approach.
{\citet{kallus2022stochastic} propose RF-based decision policies for problem~\eqref{eqn:sp} and demonstrate asymptotic optimality of these policies.}
{Finally, \citet{hu2022fast} show that ``estimate and then optimize'' methods can have faster regret convergence rates than ERM-based approaches for contextual linear optimization when the estimated parameters appear in the objective coefficients.}

Our work differs from the above in the following respects: we propose data-driven approaches to approximate the solution to~\eqref{eqn:sp} that rely on the construction of explicit models to predict the random variables from covariates, allow for both parametric and nonparametric regression models, and derive convergence rates and finite sample guarantees for solutions to our approximations that complement the above analyses.

Another stream of research has been investigating methods that change the training of the prediction model in order to obtain better solutions to~\eqref{eqn:sp}~{\citep[e.g., see][]{donti2017task,elmachtoub2017smart,davarnia2018bayesian,el2019generalization}}.
The philosophy behind these approaches is that, instead of constructing the prediction model purely for high predictive accuracy, the DM should construct a model to predict~$Y$ using~$X$ such that the resulting optimization decisions 
provide the lowest cost solution to the true conditional stochastic program~\eqref{eqn:sp}.
These methods result in harder {joint estimation and optimization} problems that can only be solved to optimality in special settings.
In contrast, we focus on the setting where the prediction framework is independent of the stochastic programming model. This is common in many real-world applications and facilitates easily changing or improving the prediction model.

{Several recent works~\citep[e.g., see][]{bertsimas2019dynamic,dou2019distributionally,esteban2021distributionally,nguyen2020distributionally} use distributionally robust optimization (DRO) in a bid to construct better approximations to~\eqref{eqn:sp} than SAA in the limited data regime.
In follow-up work~\citep{kannan2020residuals}, we study residuals-based DRO formulations that are built around our data-driven SAA formulations, analyze their theoretical guarantees, and illustrate their advantages in the limited data regime through a case study.
Our data-driven SAA formulations in this work are flexible and remain tractable under milder assumptions on~\eqref{eqn:sp} compared to such DRO approaches.}

A `traditional data-driven SAA approach' for the conditional stochastic program~\eqref{eqn:sp} would involve constructing a model to predict the random variables~$Y$ given~$X$, fitting a distribution to the residuals of the prediction model, and using samples from this distribution along with the prediction model to construct scenarios for~$Y$
given $X = x$.
While it is difficult to pin down a reference that is \textit{the first} to adopt this approach, we point to the works of~\citet{schutz2009supply},~\citet{royset2014data}, and the references therein for applications-motivated versions.
{Recent work by~\citet{grigas-ice-21} contributes to this approach by attempting to directly estimate the conditional distribution of $Y$ given $X$ while considering the structure of the optimization
problem (assuming $Y$ has finite support).}
Instead of fitting a distribution to the residuals of the prediction model, we propose and analyze methods that directly use empirical residuals within the SAA framework. These methods avoid the need to fit a  distribution of the residuals, and hence we expect them to be advantageous when the available data is insufficient to provide a good estimate of the residuals distribution.

\subsection{Summary of main contributions}
\label{subsec:summaryofcontrib}

The key contributions of this paper are as follows:
\begin{enumerate}
\itemsep0em
\item We demonstrate asymptotic optimality, rates of convergence, and finite sample guarantees of solutions to the ER-SAA formulation under mild assumptions on the data, the prediction framework, and the stochastic programming formulation.

\item We introduce and analyze two new variants of ER-SAA that use leave-one-out residuals instead of empirical residuals, which may lead to better solutions when data is limited.

\item We verify that the assumptions on the underlying stochastic programming formulation hold for a broad class of two-stage stochastic programs, including two-stage stochastic mixed-integer programming (MIP) with continuous recourse. Additionally, we verify that the assumptions on the prediction step hold for a broad class of M-estimation procedures and nonparametric regression methods, including OLS, Lasso, kNN, and RF regression.

\item Finally, we empirically validate our theoretical results, demonstrate the advantages of our data-driven SAA formulations over existing approaches in the limited data regime, and demonstrate the potential benefit of using a structured prediction model even if it is misspecified.
\end{enumerate}

\section{Data-driven SAA frameworks}
\label{sec:form}

Recall that our goal is to approximate the solution to the conditional stochastic program~\eqref{eqn:sp}:
\begin{alignat*}{2}
\uset{z \in \Z}{\min} \: && \expect{c(z,Y) \mid X = x},
\end{alignat*}
where $X = x$ is a new random observation of the covariates and the expectation is taken with respect to the conditional distribution of $Y$ given $X = x$.
Let $P_X$ and $P_Y$ denote the marginal distributions of the covariates~$X$ and the random vector $Y$, respectively, and $\X \subseteq \R^{d_x}$ and $\Y \subseteq \R^{d_y}$ denote their supports.
{We assume that the support $\Y$ is nonempty and convex and $c : \Z \times \Y \to \R$}.

We assume that the `true relationship' between the random vector~$Y$ and the random covariates~$X$ can be described as $$Y = f^*(X) + {Q^*(X)}\varepsilon,$$ where $f^*(x) := \expect{Y \mid X = x}$ is the regression function, {$Q^*(X)$ is the square root of the conditional covariance matrix of the error term}, and the {zero-mean random errors~$\varepsilon$ are independent of the covariates~$X$. 
Because the the error term $Q^*(X)\varepsilon$ is influenced by the covariate $X$, our model is {\it heteroscedastic}.}
{When $Q^* \equiv I$, as assumed in \citet{ban2018dynamic} and \citet{sen2018learning}, the error distribution is {\it homoscedastic}.}
{Heteroscedasticity arises, for instance, when variability of the random vector $Y$ such as the variability of product
demands or wind power availability depends on the covariates $X$ like location and seasonality.
It can also arise when the DM cannot fully identify all the covariates and the remaining covariates appear in the error term.}

We suppose that $f^*$ {and $Q^*$} belong to known {classes} of functions $\F$ {and $\Q$, respectively}.
The model classes~$\F$ {and~$\Q$ may comprise parametric or nonparametric models}.
Let $\Xi \subseteq \R^{d_y}$ denote the support of~$\varepsilon$ and $P_{\varepsilon}$ denote its distribution.
{We assume $Q^*(\bar{x}) \succ 0$, $\forall \bar{x} \in \X$, where $A \succ 0$ denotes the matrix $A$ is positive definite. We may also assume without loss of generality that the covariance matrix $\expect{\varepsilon \tr{\varepsilon}} = I$ since a general covariance matrix for~$\varepsilon$ can be handled by suitably redefining~$Q^*$.}

Under these structural assumptions, the conditional stochastic program~\eqref{eqn:sp} is equivalent to
\begin{align}
\label{eqn:speq}
v^*(x) &:= \uset{z \in \Z}{\min} \left\lbrace  g(z;x) := \expect{c(z,f^*(x)+{Q^*(x)}\varepsilon)} \right\rbrace,
\end{align}
where the expectation is computed with respect to the distribution $P_{\varepsilon}$ of~$\varepsilon$.
We refer to problem~\eqref{eqn:speq} as the \textit{true problem}, and denote its optimal solution set by $S^*(x)$.
Throughout, we assume that the feasible set $\Z \subset \R^{d_z}$ is nonempty and compact, $\expect{\abs{c(z,f^*(x)+{Q^*(x)}\varepsilon)}} < +\infty$ for each $z \in \Z$ and almost every (a.e.) $x \in \X$, and the function~$g(\cdot;x)$ is lower semicontinuous (lsc) on $\Z$ for a.e.\ $x \in \X$ (see Theorem~7.42 of~\citet{shapiro2009lectures} for conditions that guarantee $g(\cdot;x)$ is lsc).
These assumptions ensure problem~\eqref{eqn:speq} is well defined and the solution set $S^*(x) \neq \emptyset$ for a.e.~$x \in \X$.

Let $\D_n := \{(y^i,x^i)\}_{i=1}^{n}$ denote joint observations of $(Y,X)$.
If the functions~$f^*$ {and $Q^*$ are} known, then the \textit{full-information SAA} {(FI-SAA)} counterpart to the true problem~\eqref{eqn:speq} using data~$\D_n$ is 
\begin{align}
\label{eqn:fullinfsaa}
&\uset{z \in \Z}{\min} \biggl\{ g^*_n(z;x) := \dfrac{1}{n} \displaystyle\sum_{i=1}^{n} c(z,f^*(x)+{Q^*(x)}\varepsilon^i) \biggr\},
\end{align}
where  $\varepsilon^i := {[Q^*(x^i)]^{-1}(y^i - f^*(x^i))}$, $\forall i \in \{1,\dots,n\}$ denote the realizations of the errors~$\varepsilon$ at the given observations.
We cannot solve problems~\eqref{eqn:speq} or~\eqref{eqn:fullinfsaa} directly because the functions~$f^*$ {and $Q^*$ are} unknown.
A practical alternative is to {first} estimate $f^*$ from the data~$\D_n$, for instance by using an M-estimator~\citep{van2000asymptotic,van2000empirical} of the form
\begin{alignat}{2}
\label{eqn:regr}
\hf_n(\cdot) &\in \uset{f(\cdot) \in \F}{\argmin} && \: \dfrac{1}{n}\sum_{i=1}^{n} \ell \left( y^i,f(x^i) \right)
\end{alignat}
with some loss function $\ell: \R^{d_y} \times \R^{d_y} \to \R_+$. 
We sometimes assume that the regression model class~$\F$ is parameterized by~$\theta$ (e.g., the parameters of a linear regression model) and let~$\sth$ denote the true value of~$\theta$ corresponding to the regression function~$f^*$. In this setting, the aim of the regression step~\eqref{eqn:regr} is to estimate~$\sth$, and we denote the estimate corresponding to~$\hf_n$ by~$\hth_n$.

{Given an estimate $\hat{f}_n$ of $f^*$, we use the fact that $\mathbb{E}\bigl[(Y - f^*(X))\tr{(Y - f^*(X))} \mid X = x\bigr] = Q^*(x) \tr{Q^*(x)}$ and plug in $\hf_n$ instead of $f^*$ to determine the best regression estimate $\hat{Q}_n$ of $Q^*$ in the model class~$\Q$~\citep{bauwens2006multivariate}.
We may then update our estimate $\hf_n$ using an estimate $\hat{Q}_n$ of $Q^*$, e.g., using weighted least squares regression~\citep{romano2017resurrecting}, which could yield an improved estimate of $f^*$ with lower variance.
Alternatively, we could estimate $f^*$ and $Q^*$ jointly using M-estimation~\citep{davidian1987variance}.
In the homoscedastic setting, we simply set $\hQ_n := Q^* \equiv I$.}
Throughout, we reference equation~\eqref{eqn:regr} for {both} regression steps ({i.e., for estimating $f^*$ \textit{and} $Q^*$}) with the understanding that our regression setup is not restricted to M-estimation.

Given an estimate {$\hat{f}_n$ of $f^*$ and a nonsingular estimate $\hat{Q}_n$ of $Q^*$}, residuals $\heps^i_{n} := {[\hat{Q}_n(x^i)]^{-1}(y^i - \hat{f}_n(x^i))}$, $i \in \{1,\dots,n\}$, of this estimate can be used as proxy for samples of $\varepsilon$ from~$P_{\varepsilon}$.
{Let $\proj{\Y}{v}$ denote the orthogonal projection of $v \in \R^{d_y}$ onto $\Y$. Then,}
the \textit{empirical residuals-based SAA} (ER-SAA) {corresponding} to problem~\eqref{eqn:speq} is defined as
\begin{align}
\label{eqn:app}
\hv^{ER}_n(x) &:= \uset{z \in \Z}{\min} \biggl\{ \hg^{ER}_n(z;x) := \dfrac{1}{n}\displaystyle\sum_{i=1}^{n} c \bigl( z,{\proj{\Y}{\hf_n(x) + \hat{Q}_n(x)\heps^i_{n}}} \bigr) \biggr\}.
\end{align}
{While the ER-SAA problem~\eqref{eqn:app} is equally tractable and does not lose any theoretical guarantees if the scenarios $\{\hf_n(x) + \hat{Q}_n(x)\heps^i_{n}\}$ are \textit{not} projected onto the support~$\Y$, this projection step may be helpful in situations where (some of) the ER-SAA scenarios $\{\hf_n(x) + \hat{Q}_n(x)\heps^i_{n}\}$ lie outside the support~$\Y$ even though the ``true'' FI-SAA scenarios $\{f^*(x) + Q^*(x)\varepsilon^i\}$ are contained in~$\Y$.}
We let $\hz^{ER}_n(x)$ denote an optimal solution to problem~\eqref{eqn:app}
and~$\hS^{ER}_n(x)$ denote its optimal solution set. 
We assume throughout that the set~$\hS^{ER}_n(x)$ is nonempty for a.e.~$x \in \X$, which holds, for example, if the function $c(\cdot,y)$ is lower semicontinuous on~$\Z$ for each $y \in {\Y}$. 
We stress that problem \eqref{eqn:app} is different from the following \textit{naive SAA} (N-SAA) problem that directly uses the observations~$\{y^i\}_{i=1}^{n}$ of the random vector~$Y$ without using the new {covariate} observation $X=x$:
\begin{alignat}{2}
\label{eqn:nsaa}
{\hv^{\text{NSAA}}_n} &:= \uset{z \in \Z}{\min} \: && \dfrac{1}{n}\displaystyle\sum_{i=1}^{n} c \left( z,y^i \right).
\end{alignat}
The computational complexity of the ER-SAA problem~\eqref{eqn:app} is similar to that of the N-SAA problem~\eqref{eqn:nsaa} with the only additional computation cost being the cost of estimating~$f^*$ {and $Q^*$}.
{Problem~\eqref{eqn:app} also differs from the following point prediction-based deterministic approximation to~\eqref{eqn:speq}}:
\begin{align}
\label{eqn:pointpred}
{\hv^{\text{PP}}_n(x)} &{:= \uset{z \in \Z}{\min} \:  c \bigl( z, \proj{\Y}{\hf_n(x)} \bigr).}
\end{align}
{Problem~\eqref{eqn:app} is a modification of problem~\eqref{eqn:pointpred} that accounts for the uncertainty in the point estimate.}

We also propose two alternatives to the ER-SAA problem~\eqref{eqn:app} that construct scenarios differently. 
{Note that the $n$ observations $\D_n = \{(y^i,x^i)\}_{i=1}^{n}$ that are used to estimate the regression functions $\hf_n$ and  $\hat{Q}_n$ are also used to estimate the errors $\heps^i_{n},  i \in  \{1,\ldots,n\}$. This can cause a bias in the  estimation of the residuals, especially when the sample size $n$ is small, yielding suboptimal solutions for some problems. To alleviate this issue, we propose to use jackknife-based variants of the ER-SAA problem.}
For each $i \in \{1,\ldots,n\}$, let $\hf_{-i}$ {and $\hat{Q}_{-i}$} denote the estimates of $f^*$ {and $Q^*$} obtained by omitting the data point $(y^i,x^i)$ from the training set~$\D_n$ while carrying out the regression step~\eqref{eqn:regr}, and define the residual term $\heps^{i}_{n,J} := {[\hat{Q}_{-i}(x^i)]^{-1} (y^i - \hf_{-i}(x^i))}${, calculated at the omitted point $(y^i,x^i)$}. The alternatives we propose are
\begin{align}
\hv^{J}_n(x) &:= \uset{z \in \Z}{\min} \Biggl\{ \hat{g}^{J}_n(z;x) := \dfrac{1}{n}\displaystyle\sum_{i=1}^{n} c\bigl( z,{\proj{\Y}{\hf_n(x) + \hat{Q}_n(x)\heps^{i}_{n,J}}} \bigr) \Biggr\}, \label{eqn:jackknife} \\
\hv^{J+}_n(x) &:= \uset{z \in \Z}{\min} \Biggl\{ \hat{g}^{J+}_n(z;x) := \dfrac{1}{n}\displaystyle\sum_{i=1}^{n} c \bigl( z,{\proj{\Y}{\hf_{-i}(x) + \hat{Q}_{-i}(x)\heps^{i}_{n,J}}} \bigr) \Biggr\}. \label{eqn:jackknife+}
\end{align}
We call problems~\eqref{eqn:jackknife} and~\eqref{eqn:jackknife+} \textit{{jackknife}-based SAA} (J-SAA) and \textit{{jackknife+}-based SAA} (J+-SAA), respectively~\citep[cf.][]{barber2019predictive}.
These data-driven SAAs are well-motivated when the data $\D_n$ is independent, in which case the leave-one-out residual $\heps^i_{n,J}$ may be a significantly more accurate estimate of the scaled prediction error at the covariate observation $x^i$ than the empirical residual $\heps^i_n$, particularly when $n$ is small {relative to the complexity of the regression step~\eqref{eqn:regr} due to overfitting~\citep{barber2019predictive}}.
When $\D_n$ is not independently generated, omitting blocks of data (instead of individual observations as in the {jackknife}-based methods) during the regression steps~\eqref{eqn:regr} can yield better-motivated variants of the J-SAA and J+-SAA formulations~\citep{lahiri2013resampling}.

Problems~\eqref{eqn:jackknife} and~\eqref{eqn:jackknife+} roughly require the construction of~$n$ regression models, which may be computationally unattractive in some settings. This extra computational burden can be alleviated in some special settings such as OLS regression 
by re-using information from one regression model to the next (see page~13 of~\citet{barber2019predictive} for other regression setups that can re-use information).
We make use of this computational speed-up in our experiments in Section~\ref{sec:computexp}.

We use the following two-stage stochastic linear program (LP) as our running example for problem~\eqref{eqn:speq}. Section~\ref{sec:tssp} in the Appendix includes a discussion of more general forms of problem~\eqref{eqn:speq} that satisfy the assumptions of our framework. {The more general classes of problems discussed in the Appendix subsume the problem class presented in Example 1.}

\begin{example}[Two-stage stochastic LP]
\label{exm:runningexample}
The set~$\Z$ is a nonempty convex polytope and the function~$c(z,Y) := \tr{c}_z z + {V}(z,Y)$, with 
${V}(z,Y) := \min_{v \in \R^{d_v}_+} \Set{ {\tr{c}_v} v}{ Wv = Y - Tz}$.
The quantities~$c_z$, {$c_v$}, $W$, 
and $T$ have commensurate dimensions.
We assume that ${V}(z,y) < +\infty$ 
for each $z \in \Z$ and $y \in \R^{d_y}$,
the matrix~$W$ has full row rank,
and the dual feasible set $\Set{\lambda}{\tr{\lambda} W \leq {\tr{c}_v}}$ is nonempty.
\end{example}

We also use OLS regression {with a structured parametric model for heteroscedasticity} as our running example for the regression step~\eqref{eqn:regr}. 
Section~\ref{sec:regression} in the Appendix includes a detailed discussion of how other prediction models fit within our framework.

\begin{example}[OLS regression]
\label{exm:regrexample}
The model class~$\F := \Set{f(\cdot)}{f(X) = \theta X \text{ for some } \theta \in \R^{d_y \times d_x}}$, with $X_1 \equiv 1$ and loss function is $\ell(y,\hat{y}) := \norm{y - \hat{y}}^2$.
We assume the regression function is $f^*(X) = \sth X$ for some $\sth \in \R^{d_y \times d_x}$, and estimate $\sth$ by $\hth_n \in \argmin_{\theta \in \R^{d_y \times d_x}} \frac{1}{n}\sum_{i=1}^{n} \norm{y^i - \theta x^i}^2$. 
In the homoscedastic setting, the model class is~$\Q := \{Q : \R^{d_x} \to \R^{d_y \times d_y} : Q \equiv I\}$ and we simply set $\hQ_n := Q^* \equiv I$.
In the heteroscedastic setting, following \citet{romano2017resurrecting}, the model class is a set of diagonal matrices \mbox{$\Q := \{Q : \R^{d_x} \to \R^{d_y \times d_y} : Q(X) = \text{diag}(q_1(X), q_2(X), \dots, q_{d_y}(X))\}$}, where $q^2_j(X) := \exp(\sum_{i=1}^{d_x}\pi^j_i \log\abs{X_i})$ for some $\pi^j \in \R^{d_x}$.
We assume that $Q^* \in \Q$ with parameters $\{\pi^{j*}\}_{j=1}^{d_y}$ and $\mathbb{P}_X\{X = 0\} = 0$.
For $j \in \{1,\dots,d_y\}$, we use $\mathbb{E}\bigl[(Y_j - f^*_j(X))^2 \mid X = x\bigr] = q^2_j(x)$ to estimate $\pi^{j*}$ by 
\[
\hat{\pi}^{j}_n \in \uset{\pi^j \in \R^{d_x}}{\argmin} \: \dfrac{1}{n}\displaystyle\sum_{i=1}^{n} \Bigl(\log(\max\{\delta_n,y^i_j - (\hat{\theta}_n x^i)_j\}^2) - \sum_{k=1}^{d_x}\pi^j_k \log\abs{X_k} \Bigr)^2. 
\]
The tolerance sequence $\{\delta_n\} \downarrow 0$ is chosen to avoid ill-conditioning~\citep{romano2017resurrecting}.
\end{example}

There is an inherent tradeoff between using parametric and nonparametric regression techniques for estimating the {functions~$f^*$ and $Q^*$}.
If the function classes~$\F$ {and $\Q$ are} correctly specified, then parametric regression approaches may yield much faster rates of convergence of the data-driven SAA estimators relative to nonparametric approaches (see Section~\ref{subsec:finitesample}).
On the other hand, misspecification of the prediction model can result in our data-driven solutions being asymptotically inconsistent and suboptimal.
Empirical evidence in Section~\ref{sec:computexp} indicates that it \textit{may} still be beneficial to use a misspecified prediction model when we do not have access to an abundance of data.

\begin{remark}
\label{rem:modelclass}
Although we assume that the functions~$f^* \in \F$ and~$Q^* \in \Q$ to establish our theoretical guarantees, our ER-SAA formulation~\eqref{eqn:app} is well defined even when $f^* \not\in \F$ and $Q^* \not\in \Q$, i.e., when the regression models are misspecified.
In this misspecified setting,  the regression estimates satisfy $\hf_n \convinprob \bar{f} \in \F$ and $\hat{Q}_n \convinprob \bar{Q} \in \Q$, where $\convinprob$ denotes convergence in probability, and $\bar{f}$ and $\bar{Q}$ are the best (in terms of prediction error) approximations to $f^*$ and $Q^*$ in $\F$ and $\Q$, respectively, under mild assumptions.
The optimal value and solutions of the ER-SAA formulation then converge in probability to the optimal value and solutions of
\[
\uset{z \in \Z}{\min} \: \dfrac{1}{n} \displaystyle\sum_{i=1}^{n} c\bigl(z,\proj{\Y}{\bar{f}(x)+\bar{Q}(x)\bar{\varepsilon}^i}\bigr),
\]
under mild assumptions, where $\bar{\varepsilon}^i := [\bar{Q}(x^i)]^{-1}(f^*(x^i) - \bar{f}(x^i) + Q^*(x^i)\varepsilon^i)$.
Although the ER-SAA estimators may no longer be consistent, their asymptotic and finite sample properties in this setting may be characterized by replacing $f^*$, $Q^*$, $\{\varepsilon^i\}$ by $\bar{f}$, $\bar{Q}$, and $\{\bar{\varepsilon}^i\}$ in our assumptions and results.
\end{remark}

\paragraph{Notation.} Let~$[n] := \{1,\dots,n\}$, $\abs{S}$ denote the cardinality of a finite set $S$, $\norm{\cdot}$ denote the Euclidean norm {or its induced matrix norm}, $\norm{\cdot}_0$ denote the $\ell_0$ ``norm'', $\mathcal{B}_{\delta}(v)$ denote a Euclidean ball of radius~$\delta > 0$ around a point~$v$, $M_{[j]}$ denote the $j^{\text{th}}$ row of a matrix $M$ and
{$M \succ 0$ denote that it is positive definite.}
For sets $A, B \subseteq \R^{d_z}$, let $\dev{A}{B} := \sup_{v \in A} \text{dist}(v,B)$ denote the deviation of~$A$ from~$B$, where $\text{dist}(v,B) := \inf_{w \in B} \norm{v - w}$.
A random vector $V$ is said to be sub-Gaussian with variance proxy $\sigma^2$ if $\expect{V} = 0$ and $\expect{\exp(s\tr{u} V)} \leq \exp(0.5\sigma^2 s^2)$, $\forall s \in \R$ and $\norm{u} = 1$.
The abbreviations `a.e.', `LLN', and `r.h.s.' are shorthand for `almost everywhere', `law of large numbers', and `right-hand side'. 
By `a.e.~$X$' and `a.e.~$Y$', we mean {$P_X$-a.e.~$x \in \X$} and $P_{Y}$-a.e.~$Y$.
Throughout, `a.s.' is written to mean almost surely with respect to the probability measure by which the data~$\{(x^i,\varepsilon^i)\}$ is generated.
The symbols~$\xrightarrow{p}$,~$\xrightarrow{a.s.}$, and~$\xrightarrow{d}$ are used to denote convergence in probability, almost surely, and in distribution with respect to this probability measure.
For sequences of random variables~$\{V_n\}$ and~$\{W_n\}$, $V_n = o_p(W_n)$ and $V_n = O_p(W_n)$  convey that~$V_n = R_n W_n$ with $\{R_n\}$ converging in probability to zero ($R_n \convinprob 0$), or being bounded in probability, respectively (see Chapter~2 of~\citet{van2000asymptotic} for basic theory). 
We write 
$O(1)$ to denote generic constants.
We assume throughout this work that all functions, sets and selections are measurable (see~\citet{vaart1996weak} and~\citet{shapiro2009lectures} for detailed consideration of these issues).

\section{Analysis of the empirical residuals-based SAA}
\label{sec:ersaa}

We first analyze the theoretical properties of solutions to the ER-SAA problem~\eqref{eqn:app}.
{In particular, after establishing some preliminary results that are useful for the remaining analysis (Section \ref{subsec:prelim}) we investigate conditions under which solutions to problem~\eqref{eqn:app} are asymptotically optimal and consistent (Section \ref{subsec:consistency})
and develop finite sample guarantees for solutions to problem~\eqref{eqn:app} using large deviations theory (Section \ref{subsec:finitesample}).
We outline the modifications required to analyze the J-SAA and J+-SAA methods in Section~\ref{subsec:jackoutline}. 
Omitted proofs are provided in Appendix~\ref{sec:proofs}.
We also present a number of complementary results in the Appendix. 
In Section~\ref{sec:alt_assumptions} of the Appendix, we briefly discuss alternative assumptions under which our theoretical guarantees hold.}
{We analyze the rate of convergence of the optimal value of problem~\eqref{eqn:app} to that of problem~\eqref{eqn:speq} in Section~\ref{sec:ersaa_rate}. In
 Section~\ref{sec:jackknife}, we provide more details about the adaptation of our analysis to the J-SAA and J+-SAA methods.
Finally, in  Sections~\ref{sec:tssp} and~\ref{sec:regression}, we verify that a variety of stochastic optimization and {regression} setups satisfy the assumptions made in our analysis. }

\subsection{Preliminary results}
\label{subsec:prelim}

{The difference between the objective functions of the ER-SAA problem~\eqref{eqn:app} and the true problem~\eqref{eqn:speq} can be bounded uniformly over the decision variables~$z \in \Z$ as follows:
\begin{align}
\label{eqn:ersaa_error_decomp}
\uset{z \in \Z}{\sup}\: \abs*{\hg^{ER}_n(z;x) - g(z;x)} &\leq \uset{z \in \Z}{\sup} \: \abs*{\hg^{ER}_n(z;x) - g^*_n(z;x)} + \uset{z \in \Z}{\sup} \: \abs*{g^*_n(z;x) - g(z;x)}.
\end{align}
The second term on the r.h.s.\ of inequality~\eqref{eqn:ersaa_error_decomp} corresponds to the maximum deviation between the FI-SAA objective function~\eqref{eqn:fullinfsaa} and the objective function of the true problem on~$\Z$.
This term converges to zero in probability whenever a uniform weak LLN result holds~\citep{shapiro2009lectures}.
The first term on the r.h.s.\ of inequality~\eqref{eqn:ersaa_error_decomp} corresponds to the maximum deviation between the ER-SAA and FI-SAA objective functions on~$\Z$.
Let $\teps^i_{n}(x)$ denote the difference between the $i$th ER-SAA scenario $(\hf_n(x) + {\hat{Q}_n(x)}\heps^i_{n})$
and the {$i$th FI-SAA scenario} $(f^*(x) + {Q^*(x)}\varepsilon^i)$, i.e.,
\begin{align*}
\teps^i_{n}(x) :=& \bigl(\hf_n(x) + {\hat{Q}_n(x)}\heps^i_{n}\bigr) - \left( f^*(x) + {Q^*(x)}\varepsilon^i \right), \quad \forall i \in [n].
\end{align*}
Then, the first term on the r.h.s.\ of inequality~\eqref{eqn:ersaa_error_decomp} can be bounded using the \textit{deviation sequence} $\{\teps^i_{n}(x)\}_{i=1}^{n}$ whenever the following Lipschitz assumption holds.}

\begin{assumption}
\label{ass:equilipschitz}
For each $z \in \Z$, the function~$c$ in problem~\eqref{eqn:speq} satisfies the Lipschitz condition
\[
\abs*{c(z,\bar{y}) - c(z,y)} \leq L(z) \norm{\bar{y} - y}, \quad \forall y,\bar{y} \in \Y,
\]
with Lipschitz constant $L$ satisfying $\uset{z \in \Z}{\sup}\: L(z) < +\infty$.
\end{assumption}

Assumption~\ref{ass:equilipschitz} requires the function~$c(z,\cdot)$ to be Lipschitz continuous {on $\Y$} for each $z \in \Z$. 
We show in Section~\ref{sec:tssp} in the Appendix that it is satisfied by Example~\ref{exm:runningexample}. 
{When Assumption~\ref{ass:equilipschitz} holds, the first term on the r.h.s.\ of inequality~\eqref{eqn:ersaa_error_decomp} can be bounded as follows.}

\begin{lemma}
\label{lem:ersaa_error_decomp_term1}
{Suppose Assumption~\ref{ass:equilipschitz} holds. Then}
\begin{align*}
{\uset{z \in \Z}{\sup} \: \abs*{\hg^{ER}_n(z;x) - g^*_n(z;x)}} &{\leq \Bigl(\uset{z \in \Z}{\sup} \: L(z)\Bigr) \biggl(\dfrac{1}{n}\displaystyle\sum_{i=1}^{n} \norm{\teps^i_{n}(x)}\biggr).}
\end{align*}
\end{lemma}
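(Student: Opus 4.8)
The plan is to bound the supremum directly using the projection step and the Lipschitz property of $c$. First I would observe that the two SAA objectives share the same averaging structure, so the difference of averages is at most the average of differences:
\[
\abs*{\hg^{ER}_n(z;x) - g^*_n(z;x)} \leq \dfrac{1}{n}\displaystyle\sum_{i=1}^{n} \abs*{c\bigl(z,\proj{\Y}{\hf_n(x) + \hat{Q}_n(x)\heps^i_{n}}\bigr) - c\bigl(z,f^*(x)+Q^*(x)\varepsilon^i\bigr)}.
\]
Here the ER-SAA scenarios are the projected points $\proj{\Y}{\hf_n(x) + \hat{Q}_n(x)\heps^i_{n}}$ and the FI-SAA scenarios are $f^*(x)+Q^*(x)\varepsilon^i$, both of which lie in $\Y$ (the latter by the standing assumption that the FI-SAA scenarios are contained in the support). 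Since Assumption~\ref{ass:equilipschitz} gives Lipschitz continuity of $c(z,\cdot)$ on $\Y$, each summand is bounded by $L(z)$ times the distance between the projected ER-SAA scenario and the FI-SAA scenario.

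Next I would use the nonexpansiveness of the Euclidean projection onto the convex set $\Y$ to discard the projection. Because $\Y$ is convex, $\proj{\Y}{\cdot}$ is $1$-Lipschitz, and the FI-SAA scenario $f^*(x)+Q^*(x)\varepsilon^i$ equals its own projection (it lies in $\Y$). Hence
\[
\norm*{\proj{\Y}{\hf_n(x) + \hat{Q}_n(x)\heps^i_{n}} - \bigl(f^*(x)+Q^*(x)\varepsilon^i\bigr)} \leq \norm*{\bigl(\hf_n(x) + \hat{Q}_n(x)\heps^i_{n}\bigr) - \bigl(f^*(x)+Q^*(x)\varepsilon^i\bigr)} = \norm{\teps^i_{n}(x)},
\]
by the very definition of the deviation sequence $\teps^i_n(x)$. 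Substituting this bound into the per-term estimate yields $\abs*{\hg^{ER}_n(z;x) - g^*_n(z;x)} \leq L(z)\,\tfrac{1}{n}\sum_{i=1}^n \norm{\teps^i_n(x)}$.

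Finally I would bound $L(z)$ by its supremum $\sup_{z\in\Z} L(z)$, which is finite by Assumption~\ref{ass:equilipschitz}, and take the supremum over $z \in \Z$ of the left-hand side. Since the right-hand side no longer depends on $z$, the supremum passes through cleanly and gives exactly the claimed inequality. I do not anticipate a genuine obstacle here; the only point requiring slight care is justifying that the projection can be dropped without loss, which rests on the convexity of $\Y$ (stated in the setup) making the projection nonexpansive, together with the fact that the FI-SAA scenarios already lie in $\Y$ so that the projection only moves the ER-SAA point \emph{closer} to them. If one wished to avoid even invoking that the FI-SAA scenarios lie in $\Y$, the same conclusion follows from the general fact that projecting one point onto a convex set never increases its distance to any point already in that set.
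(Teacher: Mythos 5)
Your proposal is correct and follows essentially the same route as the paper's proof: bound the difference of averages by the average of differences, apply the Lipschitz condition from Assumption~\ref{ass:equilipschitz} to each summand, and then use the nonexpansiveness of the orthogonal projection onto the convex set~$\Y$ (together with the fact that the FI-SAA scenarios lie in~$\Y$) to replace the projected distance by $\norm{\teps^i_n(x)}$ before taking the supremum over~$z$. Your extra remark justifying why the projection can be dropped is a slightly more explicit version of the paper's one-line appeal to the Lipschitz continuity of orthogonal projections, but the argument is the same.
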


{Therefore, when Assumption~\ref{ass:equilipschitz} holds, convergence of the mean deviation term $\frac{1}{n}\sum_{i=1}^{n} \norm{\teps^i_{n}(x)}$ to zero in probability readily translates to convergence to zero in probability of the first term on the r.h.s.\ of inequality~\eqref{eqn:ersaa_error_decomp}.
Next, we focus on bounding the mean deviation term $\frac{1}{n}\sum_{i=1}^{n} \norm{\teps^i_{n}(x)}$ using the arguments in Section~3.1 of~\citet{kannan2021heteroscedasticity}.}

\begin{lemma}
\label{lem:meandeviation}
{Given regression estimates $\hf_n$ of $f^*$ and $\hQ_n$ of $Q^*$ with $[\hQ_n(\bar{x})]^{-1} \succ 0$ for each $\bar{x} \in \X$:}
\begin{enumerate}
\item {In the homoscedastic setting (i.e., $\hQ_n := Q^* \equiv I$), we have}
\[
{\frac{1}{n} \sum_{i=1}^{n} \norm{\teps^i_n(x)} \leq \norm{\hf_n(x) - f^*(x)} + \frac{1}{n} \sum_{i=1}^{n} \norm{\hf_n(x^i) - f^*(x^i)}}.
\]

\item {In the heteroscedastic setting, we have}
\begin{align}
\label{eqn:meandeviation}
\hspace*{-0.2in}{\frac{1}{n} \sum_{i=1}^{n} \norm{\teps^i_n(x)}} {\leq} &{\norm{\hf_n(x) - f^*(x)} + \norm{\hQ_n(x) - Q^*(x)}\biggl(\frac{1}{n} \sum_{i=1}^{n} \norm{\varepsilon^i}\biggr) +} \nonumber \\
& { \norm{\hQ_n(x)} \biggl(\frac{1}{n} \sum_{i=1}^{n} \bigl\lVert \bigl[\hQ_n(x^i)\bigr]^{-1} - \bigl[Q^*(x^i)\bigr]^{-1}\bigr\rVert^2\biggr)^{1/2} \biggl(\frac{1}{n} \sum_{i=1}^{n} \norm{Q^*(x^i)}^4\biggr)^{1/4} \biggl(\frac{1}{n} \sum_{i=1}^{n} \norm{\varepsilon^i}^4\biggr)^{1/4} +} \nonumber\\ 
& {\norm{\hQ_n(x)} \biggl(\frac{1}{n} \sum_{i=1}^{n} \bigl\lVert \bigl[\hQ_n(x^i)\bigr]^{-1}\bigr\rVert^2\biggr)^{1/2} \biggl(\frac{1}{n} \sum_{i=1}^{n} \norm{f^*(x^i) - \hf_n(x^i)}^2\biggr)^{1/2}.}
\end{align}
\end{enumerate}
\end{lemma}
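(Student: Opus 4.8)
The plan is to bound $\frac{1}{n}\sum_{i=1}^n \norm{\teps^i_n(x)}$ directly, through an exact algebraic expansion of each deviation $\teps^i_n(x)$ followed by the triangle inequality and, in the heteroscedastic case, an application of H\"older's inequality to the resulting averages. The common starting point for both parts is to substitute the data-generating identity $y^i - f^*(x^i) = Q^*(x^i)\varepsilon^i$ into the definitions of $\heps^i_n$ and $\varepsilon^i$, and to split $\teps^i_n(x) = \bigl(\hf_n(x) - f^*(x)\bigr) + \bigl(\hQ_n(x)\heps^i_n - Q^*(x)\varepsilon^i\bigr)$, isolating the prediction error of $\hf_n$ at the query point $x$ (which yields the first term of each bound) from the scaled-residual discrepancy.

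For part~1, I set $\hQ_n \equiv Q^* \equiv I$, so that $\heps^i_n = y^i - \hf_n(x^i)$ and $\varepsilon^i = y^i - f^*(x^i)$. The scaled-residual discrepancy then telescopes to $\heps^i_n - \varepsilon^i = f^*(x^i) - \hf_n(x^i)$, giving the identity $\teps^i_n(x) = \bigl(\hf_n(x) - f^*(x)\bigr) - \bigl(\hf_n(x^i) - f^*(x^i)\bigr)$. The triangle inequality and averaging over $i \in [n]$ deliver the claimed bound immediately.

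For part~2, the key is a further decomposition of the scaled-residual discrepancy. Writing $\hQ_n(x)\heps^i_n - Q^*(x)\varepsilon^i = \bigl(\hQ_n(x) - Q^*(x)\bigr)\varepsilon^i + \hQ_n(x)\bigl(\heps^i_n - \varepsilon^i\bigr)$ produces the second term of the bound after invoking submultiplicativity of the induced matrix norm. For the remaining piece I would expand $\heps^i_n - \varepsilon^i = \bigl([\hQ_n(x^i)]^{-1} - [Q^*(x^i)]^{-1}\bigr)Q^*(x^i)\varepsilon^i + [\hQ_n(x^i)]^{-1}\bigl(f^*(x^i) - \hf_n(x^i)\bigr)$, which follows by substituting $y^i - \hf_n(x^i) = Q^*(x^i)\varepsilon^i + \bigl(f^*(x^i) - \hf_n(x^i)\bigr)$ into $\heps^i_n = [\hQ_n(x^i)]^{-1}(y^i - \hf_n(x^i))$ and subtracting $\varepsilon^i = [Q^*(x^i)]^{-1}(y^i - f^*(x^i))$. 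Applying submultiplicativity, pulling the factor $\norm{\hQ_n(x)}$ (constant in $i$) out of the average, and bounding the two resulting averages by H\"older's inequality---with exponents $(2,4,4)$ for the term containing $[\hQ_n(x^i)]^{-1} - [Q^*(x^i)]^{-1}$, $Q^*(x^i)$, and $\varepsilon^i$, and with Cauchy--Schwarz (exponents $(2,2)$) for the term containing $[\hQ_n(x^i)]^{-1}$ and $f^*(x^i) - \hf_n(x^i)$---yields exactly the third and fourth terms of inequality~\eqref{eqn:meandeviation}.

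Once the decomposition is fixed the computations are elementary, so I do not anticipate a genuine obstacle; the only delicate points are the bookkeeping with the matrix inverses (where positive definiteness of $[\hQ_n(\bar{x})]^{-1}$, assumed in the hypothesis, guarantees these quantities are well defined) and the choice of H\"older exponents, which must be made so that the moment factors $\bigl(\tfrac{1}{n}\sum_{i=1}^n\norm{Q^*(x^i)}^4\bigr)^{1/4}$ and $\bigl(\tfrac{1}{n}\sum_{i=1}^n\norm{\varepsilon^i}^4\bigr)^{1/4}$ emerge in precisely the form needed for the subsequent rate analysis. These are exactly the arguments of Section~3.1 of~\citet{kannan2021heteroscedasticity}.
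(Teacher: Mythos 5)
Your proposal is correct and follows essentially the same route as the paper: the same initial triangle-inequality split isolating $\norm{\hf_n(x)-f^*(x)}$, the same telescoping identity in the homoscedastic case, and in the heteroscedastic case an algebraically identical three-term decomposition of $\hQ_n(x)\heps^i_n - Q^*(x)\varepsilon^i$ (you merely peel off $(\hQ_n(x)-Q^*(x))\varepsilon^i$ first and then expand $\heps^i_n-\varepsilon^i$, whereas the paper expands the matrix product first), followed by the same generalized H\"older $(2,4,4)$ and Cauchy--Schwarz bounds on the averages.
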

\begin{proof}
{We begin by noting that}
\begin{align}
\label{eqn:meandeviation_int1}
{\frac{1}{n} \sum_{i=1}^{n} \norm{\teps^i_n(x)}} &{:= \frac{1}{n} \sum_{i=1}^{n} \norm{(\hf_n(x) + \hQ_n(x)\heps^i_{n}) - (f^*(x) + Q^*(x)\varepsilon^i)}} \nonumber\\
&{\leq \norm{\hf_n(x) - f^*(x)} + \frac{1}{n} \sum_{i=1}^{n} \norm{\hQ_n(x)\heps^i_{n} - Q^*(x)\varepsilon^i}.}
\end{align}
{In the homoscedastic case, we can bound the r.h.s.\ of inequality~\eqref{eqn:meandeviation_int1} further as}
\[
{\frac{1}{n} \sum_{i=1}^{n} \norm{\teps^i_n(x)} \leq \norm{\hf_n(x) - f^*(x)} + \frac{1}{n} \sum_{i=1}^{n} \norm{\heps^i_{n} - \varepsilon^i} = \norm{\hf_n(x) - f^*(x)} + \frac{1}{n} \sum_{i=1}^{n} \norm{\hf_n(x^i) - f^*(x^i)}.}
\]
{We now focus on the heteroscedastic setting by bounding the second term on the r.h.s.\ of~\eqref{eqn:meandeviation_int1}.}
\begin{align}
\label{eqn:meandeviation_int2}
&{\frac{1}{n} \sum_{i=1}^{n} \norm{\hQ_n(x)\heps^i_{n} - Q^*(x)\varepsilon^i}} \nonumber\\
{=}& {\frac{1}{n} \sum_{i=1}^{n} \bigl\lVert\hQ_n(x)\bigl[\hQ_n(x^i)\bigr]^{-1} (y^i - \hf_n(x^i)) - Q^*(x)\varepsilon^i\bigr\rVert} \nonumber\\
{=}& {\frac{1}{n} \sum_{i=1}^{n} \bigl\lVert\hQ_n(x)\bigl[\hQ_n(x^i)\bigr]^{-1} \bigl(f^*(x^i) - \hf_n(x^i) + Q^*(x^i)\varepsilon^i\bigr) - Q^*(x)\varepsilon^i\rVert} \nonumber\\
{\leq}& {\frac{1}{n} \sum_{i=1}^{n} \bigl\lVert \hQ_n(x)\bigl[\hQ_n(x^i)\bigr]^{-1} (f^*(x^i) - \hf_n(x^i))\bigr\rVert + \frac{1}{n} \sum_{i=1}^{n} \bigl\lVert \bigl(\hQ_n(x)\bigl[\hQ_n(x^i)\bigr]^{-1} Q^*(x^i) - Q^*(x) \bigr) \varepsilon^i\bigr\rVert}.
\end{align}
{We have for each $i \in [n]$}
\begin{align*}
{\hQ_n(x)\bigl[\hQ_n(x^i)\bigr]^{-1} Q^*(x^i) - Q^*(x)} &{= \bigl(\hQ_n(x)\bigl[\hQ_n(x^i)\bigr]^{-1} - Q^*(x)\bigl[Q^*(x^i)\bigr]^{-1}\bigr) Q^*(x^i)} \\
&{= \hQ_n(x) \bigl(\bigl[\hQ_n(x^i)\bigr]^{-1} - \bigl[Q^*(x^i)\bigr]^{-1}\bigr) Q^*(x^i) + [\hQ_n(x) - Q^*(x)].}
\end{align*}
{Plugging the above equality into inequality~\eqref{eqn:meandeviation_int2}, we get}
\begin{align}
\label{eqn:meandeviation_int3}
&{\frac{1}{n} \sum_{i=1}^{n} \norm{\hQ_n(x)\heps^i_{n} - Q^*(x)\varepsilon^i}} \nonumber\\
{\leq}& {\frac{1}{n} \sum_{i=1}^{n} \Bigl\lVert \Bigl(\hQ_n(x) \bigl(\bigl[\hQ_n(x^i)\bigr]^{-1} - \bigl[Q^*(x^i)\bigr]^{-1}\bigr) Q^*(x^i) + \bigl(\hQ_n(x) - Q^*(x)\bigr) \Bigr) \varepsilon^i\Bigr\rVert +}  \nonumber\\
& \qquad {\frac{1}{n} \sum_{i=1}^{n} \bigl\lVert \hQ_n(x)\bigl[\hQ_n(x^i)\bigr]^{-1} (f^*(x^i) - \hf_n(x^i))\bigr\rVert} \nonumber \\
{\leq}& {\frac{1}{n} \sum_{i=1}^{n} \bigl\lVert \hQ_n(x) \bigl(\bigl[\hQ_n(x^i)\bigr]^{-1} - \bigl[Q^*(x^i)\bigr]^{-1}\bigr) Q^*(x^i) \varepsilon^i \bigr\rVert + \frac{1}{n} \sum_{i=1}^{n} \bigl\lVert \bigl(\hQ_n(x) - Q^*(x)\bigr) \varepsilon^i\bigr\rVert +}  \nonumber\\
& \qquad {\frac{1}{n} \sum_{i=1}^{n} \bigl\lVert \hQ_n(x)\bigl[\hQ_n(x^i)\bigr]^{-1} (f^*(x^i) - \hf_n(x^i))\bigr\rVert} \nonumber \\
{\leq}& { \norm{\hQ_n(x)} \biggl(\frac{1}{n} \sum_{i=1}^{n} \bigl\lVert \bigl[\hQ_n(x^i)\bigr]^{-1} - \bigl[Q^*(x^i)\bigr]^{-1}\bigr\rVert^2\biggr)^{1/2} \biggl(\frac{1}{n} \sum_{i=1}^{n} \norm{Q^*(x^i)}^4\biggr)^{1/4} \biggl(\frac{1}{n} \sum_{i=1}^{n} \norm{\varepsilon^i}^4\biggr)^{1/4} + } \\ 
& {  \norm{\hQ_n(x) - Q^*(x)} \biggl(\frac{1}{n}\sum_{i=1}^{n} \norm{\varepsilon^i}\biggr) + \norm{\hQ_n(x)} \biggl(\frac{1}{n} \sum_{i=1}^{n} \bigl\lVert \bigl[\hQ_n(x^i)\bigr]^{-1}\bigr\rVert^2\biggr)^{1/2} \biggl(\frac{1}{n} \sum_{i=1}^{n} \norm{f^*(x^i) - \hf_n(x^i)}^2\biggr)^{1/2},} \nonumber
\end{align}
{where the last step above follows by the Cauchy-Schwarz inequality.
Finally, using inequality~\eqref{eqn:meandeviation_int3} in inequality~\eqref{eqn:meandeviation_int1}, we get the stated result.}
\end{proof}

{In the homoscedastic setting, the bound on the mean deviation term $\frac{1}{n}\sum_{i=1}^{n} \norm{\teps^i_{n}(x)}$ can be interpreted as the sum of the prediction error $\norm{\hf_n(x) - f^*(x)}$ at the new covariate realization $x \in \X$ and the average estimation error $\frac{1}{n} \sum_{i=1}^{n} \norm{f^*(x^i) - \hf_n(x^i)}$ at the training data points $\{x^i\}$.}

{We remark that Section~\ref{sec:alt_assumptions} of the Appendix presents alternative assumptions
under which the theoretical guarantees studied in the paper continue to hold. These alternative assumptions relax the uniform Lipschitz continuity of Assumption~\ref{ass:equilipschitz} to a weaker {\it local} Lipschitz continuity condition but require more stringent conditions on the regression step. Conditions and examples under which these alternative assumptions hold are discussed in the Appendix.}

\subsection{Consistency and asymptotic optimality}
\label{subsec:consistency}

In this section, we investigate conditions under which the optimal value and optimal solutions to the ER-SAA problem~\eqref{eqn:app} asymptotically converge to those of the true problem~\eqref{eqn:speq}.
We begin by making the following assumption on the uniform convergence of the sequence of objective functions of the FI-SAA problem~\eqref{eqn:fullinfsaa} to the objective function of the true problem~\eqref{eqn:speq} on~$\Z$.

\begin{assumption}
\label{ass:uniflln}
For a.e.~$x \in \X$, the sequence of sample average functions $\left\lbrace g^*_n(\cdot;x) \right\rbrace$ defined in~\eqref{eqn:fullinfsaa} converges in probability to the true function $g(\cdot;x)$ defined in~\eqref{eqn:speq} uniformly on the set~$\Z$.
\end{assumption}

Assumption~\ref{ass:uniflln} is a uniform weak LLN result that is guaranteed to hold if $c(\cdot,y)$ is continuous for a.e.~$y \in \Y$, $c(\cdot,y)$ is dominated by an integrable function for a.e.~$y \in \Y$, and the errors $\{\varepsilon^i\}$ are i.i.d.~\citep[see Theorem~7.48 of][]{shapiro2009lectures}.
Using pointwise LLN results in~\citet{walk2010strong} and~\citet{white2014asymptotic}, we can show that Assumption \ref{ass:uniflln} also holds for some mixing/stationary processes by noting the proof of Theorem~7.48 of~\citet{shapiro2009lectures} also extends to these settings.
{Proposition~\ref{prop:tssp-checkass} in the Appendix shows that Assumption~\ref{ass:uniflln} holds for our running example (Example~\ref{exm:runningexample}) of two-stage stochastic LP whenever $\expect{\norm{Y}} < +\infty$ and the errors $\{\varepsilon^i\}$ are i.i.d.}

{Next, we need the following weak LLN assumptions on the function $Q^*$ and the errors~$\varepsilon$.
These assumptions hold, for instance, when the samples $\{(x^i,\varepsilon^i)\}$ are i.i.d.\ and the quantities $\mathbb{E}[\norm{Q^*(X)}^4]$, $\mathbb{E}[\lVert [Q^*(X)]^{-1} \rVert^2]$, and $\mathbb{E}[\norm{\varepsilon}^4]$ are finite.
In particular, Assumption~\ref{ass:varweaklln} holds for Example~\ref{exm:regrexample} if $\{x^i\}$ is i.i.d.\ and $\mathbb{E}\bigl[\bigl(\exp(\sum_{k=1}^{d_x} \pi^{j*}_k \log(\abs{X_k}))\bigr)^2\bigr] < +\infty$ and $\mathbb{E}\bigl[\exp(-\sum_{k=1}^{d_x} \pi^{j*}_k \log(\abs{X_k}))\bigr] < +\infty$ for each $j \in \{1,\dots,d_y\}$.
Assumptions~\ref{ass:varweaklln} and~\ref{ass:errorsweaklln} also hold for non-i.i.d.\ data arising from mixing/stationary processes that satisfy suitable assumptions (see the discussion above).}

\begin{assumption}
\label{ass:varweaklln}
{The function~$Q^*$ and the covariate samples~$\{x^i\}_{i=1}^{n}$ satisfy the weak LLNs}
\[
{\frac{1}{n} \sum_{i=1}^{n} \norm{Q^*(x^i)}^4 \convinprob \mathbb{E}[\norm{Q^*(X)}^4] \quad \text{and} \quad \frac{1}{n} \sum_{i=1}^{n} \bigl\lVert \bigl[Q^*(x^i)\bigr]^{-1}\bigr\rVert^2 \convinprob \mathbb{E}\bigl[ \bigl\lVert \bigl[Q^*(X)\bigr]^{-1} \bigr\rVert^2 \bigr].}
\]
\end{assumption}

\begin{assumption}
\label{ass:errorsweaklln}
{The error samples~$\{\varepsilon^i\}_{i=1}^{n}$ satisfy the weak LLN $\dfrac{1}{n} \displaystyle\sum_{i=1}^{n} \norm{\varepsilon^i}^4 \convinprob \mathbb{E}[\norm{\varepsilon}^4]$.}
\end{assumption}

Finally, we need the assumption below on the consistency of the regression {estimates $\hf_n$ and~$\hQ_n$}.

\begin{assumption}
\label{ass:regconsist}
\setcounter{mycounter}{\value{assumption}}
The {regression estimates $\hf_n$ and $\hQ_n$ possess} the following consistency properties:
\vspace*{-0.1in}
\begin{multicols}{2}
\begin{enumerate}[label=(\themycounter\alph*),itemsep=0em]
\item \label{ass:regconsist_point} $\hf_n(x) \xrightarrow{p} f^*(x)$ for a.e.~$x \in \X$,

\item \label{ass:regconsist_mse} $\dfrac{1}{n} \displaystyle\sum_{i=1}^{n} \norm{f^*(x^i) - \hf_n(x^i)}^2 \xrightarrow{p} 0$,

\columnbreak

\item \label{ass:regconsist_point2} {$\hQ_n(x) \xrightarrow{p} Q^*(x)$} for a.e.~$x \in \X$,

\item \label{ass:regconsist_mse2} {$\dfrac{1}{n} \displaystyle\sum_{i=1}^{n} \bigl\lVert \bigl[\hQ_n(x^i)\bigr]^{-1} - \bigl[Q^*(x^i)\bigr]^{-1}\bigr\rVert^2 \xrightarrow{p} 0$.}
\end{enumerate}%
\end{multicols}%
\end{assumption}

Assumption~\ref{ass:regconsist_point} holds for our running example of OLS regression (Example \ref{exm:regrexample}) if the parameter estimate $\hth_n$ is weakly consistent (i.e., $\hth_n \convinprob \sth$), and Assumption~\ref{ass:regconsist_mse} holds if, in addition, the weak LLN $\frac{1}{n}\sum_{i=1}^{n} \norm{x^i}^2 \convinprob \expect{\norm{X}^2}$ is satisfied (see Chapter~3 of~\citet{white2014asymptotic} for various assumptions on the data~$\D_n$ and the distributions~$P_X$ and~$P_{\varepsilon}$ under which these conditions hold).
The quantity \mbox{$\frac{1}{n} \sum_{i=1}^{n} \norm{f^*(x^i) - \hf_n(x^i)}^2$} is called the empirical $L^2$ semi-norm in the empirical process theory literature~\citep{van2000empirical}.
{Assumption~\ref{ass:regconsist_point2} holds for our running example of OLS regression with structured heteroscedasticity if the parameter estimate $\hat{\pi}_n$ is weakly consistent (i.e., $\hat{\pi}_n \convinprob \pi^*$, see Appendix~B.2 of~\citet{romano2017resurrecting} for assumptions under which this holds).
Assumption~\ref{ass:regconsist_mse2} holds for our running example if, e.g., we additionally have the support~$\X$ to be compact and bounded away from the origin (i.e., for each $x \in \X$, $b \leq \norm{x} \leq B$ for constants $b, B > 0$) and assume that the estimates~$\{\hat{\pi}_n\}$ lie in a compact set a.s.\ for $n$ large enough.}
Assumption~\ref{ass:regconsist} is implied by the stronger assumption of uniform convergence of the estimates $\hf_n$ and $\hQ_n$ to the functions $f^*$ and $Q^*$, respectively, on the support~$\X$ of the covariates, i.e., when
$\sup_{x \in \X} \norm{f^*(x) - \hf_n(x)} \convinprob 0$, {$\sup_{x \in \X} \norm{Q^*(x) - \hQ_n(x)} \convinprob 0$, and $\sup_{x \in \X} \norm{[Q^*(x)]^{-1} - [\hQ_n(x)]^{-1}} \convinprob 0$}.
Section~\ref{sec:regression} in the Appendix expands on the above arguments and shows that Assumption~\ref{ass:regconsist} also holds when $f^*$ is estimated using Lasso, kNN, and RF regression under certain conditions.

{The following result will prove useful in our analysis of ER-SAA in the heteroscedastic setting.}

\begin{lemma}
\label{lem:varbound}
{We have}
\[
{\biggl(\frac{1}{n} \sum_{i=1}^{n} \bigl\lVert \bigl[\hQ_n(x^i)\bigr]^{-1}\bigr\rVert^2\biggr)^{1/2} \leq \biggl(\frac{1}{n} \sum_{i=1}^{n} \bigl\lVert \bigl[\hQ_n(x^i)\bigr]^{-1} - \bigl[Q^*(x^i)\bigr]^{-1}\bigr\rVert^2\biggr)^{1/2} + \biggl(\frac{1}{n} \sum_{i=1}^{n} \bigl\lVert \bigl[Q^*(x^i)\bigr]^{-1}\bigr\rVert^2\biggr)^{1/2}.}
\]
\end{lemma}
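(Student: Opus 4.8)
The plan is to recognize that this is nothing more than the triangle inequality applied twice: once for the induced matrix norm at each data point, and once for the $\ell^2$ norm of the resulting sequence of nonnegative scalars. For each $i \in [n]$, abbreviate $a_i := \norm{[\hQ_n(x^i)]^{-1}}$, $b_i := \norm{[\hQ_n(x^i)]^{-1} - [Q^*(x^i)]^{-1}}$, and $c_i := \norm{[Q^*(x^i)]^{-1}}$. The statement to prove is then exactly
\[
\Bigl(\tfrac{1}{n}\textstyle\sum_{i=1}^{n} a_i^2\Bigr)^{1/2} \leq \Bigl(\tfrac{1}{n}\textstyle\sum_{i=1}^{n} b_i^2\Bigr)^{1/2} + \Bigl(\tfrac{1}{n}\textstyle\sum_{i=1}^{n} c_i^2\Bigr)^{1/2}.
\]

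First I would establish the pointwise bound $a_i \leq b_i + c_i$ for every $i$. This is immediate from the decomposition $[\hQ_n(x^i)]^{-1} = \bigl([\hQ_n(x^i)]^{-1} - [Q^*(x^i)]^{-1}\bigr) + [Q^*(x^i)]^{-1}$ together with the triangle inequality (subadditivity) of the induced matrix norm $\norm{\cdot}$; this is where the well-definedness guaranteed by $[\hQ_n(\bar{x})]^{-1} \succ 0$ (and hence existence of the inverses) is used.

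Next I would lift this pointwise inequality to the normalized $\ell^2$ norm over the index $i$. Viewing $(a_i)$, $(b_i)$, $(c_i)$ as vectors in $\R^n$, the inequality $0 \leq a_i \leq b_i + c_i$ and monotonicity of the $\ell^2$ norm under coordinatewise domination of nonnegative vectors give $\bigl(\tfrac{1}{n}\sum_i a_i^2\bigr)^{1/2} \leq \bigl(\tfrac{1}{n}\sum_i (b_i+c_i)^2\bigr)^{1/2}$. Applying Minkowski's inequality (the triangle inequality for the $\ell^2$ norm) to the sequences $(b_i)$ and $(c_i)$ then yields $\bigl(\tfrac{1}{n}\sum_i (b_i+c_i)^2\bigr)^{1/2} \leq \bigl(\tfrac{1}{n}\sum_i b_i^2\bigr)^{1/2} + \bigl(\tfrac{1}{n}\sum_i c_i^2\bigr)^{1/2}$, since the common factor $1/\sqrt{n}$ simply scales both sides. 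Chaining these two estimates gives the claim.

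There is no real obstacle here: the result is a routine consequence of subadditivity of norms, and the only point requiring a moment's care is the monotonicity step, which is valid precisely because the scalars $a_i, b_i, c_i$ are nonnegative, so that the coordinatewise bound transfers to the $\ell^2$ norm before Minkowski's inequality is invoked.
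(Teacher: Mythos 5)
Your proof is correct and follows exactly the same route as the paper's: a pointwise application of the triangle inequality for the operator norm, followed by the triangle inequality for the $\ell_2$-norm of the resulting nonnegative vectors (your explicit split into monotonicity plus Minkowski is just a more detailed account of the paper's final step). No issues.
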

\begin{proof}
{The triangle inequality for the operator norm implies 
\[
\bigl\lVert \bigl[\hQ_n(x^i)\bigr]^{-1}\bigr\rVert \leq \bigl\lVert \bigl[\hQ_n(x^i)\bigr]^{-1} - \bigl[Q^*(x^i)\bigr]^{-1}\bigr\rVert + \bigl\lVert \bigl[Q^*(x^i)\bigr]^{-1}\bigr\rVert, \quad \forall i \in [n].
\]
Therefore, the following component-wise inequality holds:
\[
0 \leq \begin{pmatrix} \bigl\lVert \bigl[\hQ_n(x^1)\bigr]^{-1}\bigr\rVert \\ \vdots \\ \bigl\lVert \bigl[\hQ_n(x^n)\bigr]^{-1}\bigr\rVert \end{pmatrix} \leq \begin{pmatrix} \bigl\lVert \bigl[\hQ_n(x^1)\bigr]^{-1} - \bigl[Q^*(x^1)\bigr]^{-1}\bigr\rVert \\ \vdots \\ \bigl\lVert \bigl[\hQ_n(x^n)\bigr]^{-1} - \bigl[Q^*(x^n)\bigr]^{-1}\bigr\rVert \end{pmatrix} + \begin{pmatrix} \bigl\lVert \bigl[Q^*(x^1)\bigr]^{-1}\bigr\rVert \\ \vdots \\ \bigl\lVert \bigl[Q^*(x^n)\bigr]^{-1}\bigr\rVert \end{pmatrix}.
\]
The stated result then follows as a consequence of the triangle inequality for the $\ell_2$-norm.
}
\end{proof}

Our next result {uses Lemmas~\ref{lem:ersaa_error_decomp_term1} and~\ref{lem:meandeviation}} to establish conditions under which the sequence of objective functions of the ER-SAA problem~\eqref{eqn:app} converges uniformly to the objective function of the true problem~\eqref{eqn:speq} on the feasible region~$\Z$.

\begin{proposition}
\label{prop:uniformconvofobj}
Suppose {Assumptions~\ref{ass:equilipschitz} through~\ref{ass:regconsist}} hold.
Then, for a.e.~$x \in \X$, the sequence of objective functions of the ER-SAA problem~\eqref{eqn:app} converges in probability to the objective function of the true problem~\eqref{eqn:speq} uniformly on the feasible region~$\Z$.
\end{proposition}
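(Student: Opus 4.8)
The plan is to work from the error decomposition in inequality~\eqref{eqn:ersaa_error_decomp}, which bounds $\sup_{z \in \Z} \abs{\hg^{ER}_n(z;x) - g(z;x)}$ by the sum of the ER-to-FI deviation $\sup_{z \in \Z} \abs{\hg^{ER}_n(z;x) - g^*_n(z;x)}$ and the FI-to-true deviation $\sup_{z \in \Z} \abs{g^*_n(z;x) - g(z;x)}$. The second term converges to zero in probability for a.e.~$x$ directly by Assumption~\ref{ass:uniflln}, so the entire burden falls on the first term. To control it I would invoke Lemma~\ref{lem:ersaa_error_decomp_term1} to bound it by $\bigl(\sup_{z \in \Z} L(z)\bigr) \cdot \frac{1}{n}\sum_{i=1}^n \norm{\teps^i_n(x)}$, where $\sup_{z \in \Z} L(z) < +\infty$ by Assumption~\ref{ass:equilipschitz}. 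It then suffices to show the mean deviation term $\frac{1}{n}\sum_{i=1}^n \norm{\teps^i_n(x)} \convinprob 0$ for a.e.~$x$, after which I would apply the bounds of Lemma~\ref{lem:meandeviation} and argue term by term.

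In the homoscedastic case the bound has two pieces: $\norm{\hf_n(x) - f^*(x)} \convinprob 0$ by Assumption~\ref{ass:regconsist_point}, while $\frac{1}{n}\sum_{i=1}^n \norm{\hf_n(x^i) - f^*(x^i)} \leq \bigl(\frac{1}{n}\sum_{i=1}^n \norm{\hf_n(x^i) - f^*(x^i)}^2\bigr)^{1/2} \convinprob 0$ by the Cauchy--Schwarz (power-mean) inequality together with Assumption~\ref{ass:regconsist_mse}. The heteroscedastic case is the main work: the bound~\eqref{eqn:meandeviation} is a sum of four products, and the strategy is to classify each factor as either $o_p(1)$ (vanishing) or $O_p(1)$ (bounded in probability) and then use that an $O_p(1)$ sequence times an $o_p(1)$ sequence is $o_p(1)$. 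Concretely, $\norm{\hf_n(x) - f^*(x)} = o_p(1)$ and $\norm{\hQ_n(x) - Q^*(x)} = o_p(1)$ by Assumptions~\ref{ass:regconsist_point} and~\ref{ass:regconsist_point2}; $\norm{\hQ_n(x)} = O_p(1)$ by the continuous mapping theorem applied to Assumption~\ref{ass:regconsist_point2}; the moment factors $\frac{1}{n}\sum_{i=1}^n \norm{\varepsilon^i}$ and $\bigl(\frac{1}{n}\sum_{i=1}^n \norm{\varepsilon^i}^4\bigr)^{1/4}$ are $O_p(1)$ by Assumption~\ref{ass:errorsweaklln} and the power-mean inequality $\frac{1}{n}\sum_{i=1}^n \norm{\varepsilon^i} \leq \bigl(\frac{1}{n}\sum_{i=1}^n \norm{\varepsilon^i}^4\bigr)^{1/4}$; the factors $\bigl(\frac{1}{n}\sum_{i=1}^n \norm{Q^*(x^i)}^4\bigr)^{1/4}$ and $\bigl(\frac{1}{n}\sum_{i=1}^n \norm{[Q^*(x^i)]^{-1}}^2\bigr)^{1/2}$ are $O_p(1)$ by Assumption~\ref{ass:varweaklln}; and $\bigl(\frac{1}{n}\sum_{i=1}^n \norm{[\hQ_n(x^i)]^{-1} - [Q^*(x^i)]^{-1}}^2\bigr)^{1/2} = o_p(1)$ and $\bigl(\frac{1}{n}\sum_{i=1}^n \norm{f^*(x^i) - \hf_n(x^i)}^2\bigr)^{1/2} = o_p(1)$ by Assumptions~\ref{ass:regconsist_mse2} and~\ref{ass:regconsist_mse}.

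The only factor not covered by a single assumption is $\bigl(\frac{1}{n}\sum_{i=1}^n \norm{[\hQ_n(x^i)]^{-1}}^2\bigr)^{1/2}$ appearing in the fourth product; here I would invoke Lemma~\ref{lem:varbound} to split it into an $o_p(1)$ piece plus an $O_p(1)$ piece, so that it is itself $O_p(1)$. Multiplying out, each of the four products in~\eqref{eqn:meandeviation} is $o_p(1)$, so the mean deviation term vanishes in probability; combining the two vanishing terms through inequality~\eqref{eqn:ersaa_error_decomp} yields $\sup_{z \in \Z}\abs{\hg^{ER}_n(z;x) - g(z;x)} \convinprob 0$ for a.e.~$x$, which is the claimed uniform convergence.

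I expect the main obstacle to be organizational rather than conceptual: the heavy lifting has already been done by Lemmas~\ref{lem:ersaa_error_decomp_term1}--\ref{lem:varbound}, so the real care is in tracking which of the many factors in~\eqref{eqn:meandeviation} are bounded versus vanishing and assembling them correctly. The two places that require a half-step beyond citing a single assumption are justifying that $\norm{\hQ_n(x)}$ is $O_p(1)$ via continuous mapping on the pointwise limit, and controlling the $L^1$-type error averages by their $L^2$/$L^4$ counterparts through power-mean inequalities; everything else is a direct appeal to Assumptions~\ref{ass:equilipschitz} through~\ref{ass:regconsist}.
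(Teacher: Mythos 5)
Your proposal is correct and follows essentially the same route as the paper's proof: decompose via inequality~\eqref{eqn:ersaa_error_decomp}, dispatch the FI-to-true term with Assumption~\ref{ass:uniflln}, reduce the ER-to-FI term to the mean deviation via Lemma~\ref{lem:ersaa_error_decomp_term1}, and then kill each product in the bound of Lemma~\ref{lem:meandeviation} by pairing $o_p(1)$ factors from Assumption~\ref{ass:regconsist} with $O_p(1)$ factors from Assumptions~\ref{ass:varweaklln}--\ref{ass:errorsweaklln} and Lemma~\ref{lem:varbound}. Your explicit bookkeeping of which factors are vanishing versus merely bounded (including the continuous-mapping step for $\norm{\hQ_n(x)}$ and the power-mean inequalities) is exactly the content the paper leaves implicit.
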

\begin{proof}
We wish to show that
$\uset{z \in \Z}{\sup} \: \abs*{\hg^{ER}_n(z;x) - g(z;x)} \xrightarrow{p} 0$ for a.e.~$x \in \X$. Equation~\eqref{eqn:ersaa_error_decomp} yields
{
\begin{align*}
\abs*{\hg^{ER}_n(z;x) - g(z;x)} &\leq {\uset{z \in \Z}{\sup} \: \abs*{\hg^{ER}_n(z;x) - g^*_n(z;x)}} + \uset{z \in \Z}{\sup} \: \abs*{g^*_n(z;x) - g(z;x)}.
\end{align*}
}%
The second term on the r.h.s.\ of the above inequality vanishes in the limit in probability under Assumption~\ref{ass:uniflln}.
If the first term also vanishes in the limit in probability, by $o_p(1) + o_p(1) = o_p(1)$, we obtain the desired result. 
We now show that the first term vanishes in the limit in probability.

{Since Assumption~\ref{ass:equilipschitz} holds, Lemma~\ref{lem:ersaa_error_decomp_term1} implies the first term converges to zero in probability whenever the mean deviation term $\frac{1}{n}\sum_{i=1}^{n} \norm{\teps^i_{n}(x)} \xrightarrow{p} 0$.
Therefore, the desired result holds if each term on the r.h.s.\ of inequality~\eqref{eqn:meandeviation} converges to zero in probability.
The first term on the r.h.s.\ of~\eqref{eqn:meandeviation} converges to zero in probability by Assumption~\ref{ass:regconsist_point}, and the second term converges to zero in probability by Assumptions~\ref{ass:errorsweaklln} and~\ref{ass:regconsist_point2}.
The third term on the r.h.s.\ of~\eqref{eqn:meandeviation} converges to zero in probability by Assumptions~\ref{ass:varweaklln},~\ref{ass:errorsweaklln},~\ref{ass:regconsist_point2}, and~\ref{ass:regconsist_mse2}.
Finally, the last term on the r.h.s.\ of inequality~\eqref{eqn:meandeviation} converges to zero in probability by Assumptions~\ref{ass:varweaklln},~\ref{ass:regconsist_mse},~\ref{ass:regconsist_point2}, and~\ref{ass:regconsist_mse2} and Lemma~\ref{lem:varbound}.
}
\end{proof}

{Fewer assumptions are needed to establish Proposition~\ref{prop:uniformconvofobj} in the homoscedastic case. In that setting, Assumptions~\ref{ass:varweaklln},~\ref{ass:errorsweaklln},~\ref{ass:regconsist_point2}, and~\ref{ass:regconsist_mse2} are not required and Assumption~\ref{ass:regconsist_mse} may be weakened to the assumption $\frac{1}{n} \sum_{i=1}^{n} \norm{f^*(x^i) - \hf_n(x^i)} \xrightarrow{p} 0$ on account of Lemma~\ref{lem:meandeviation}.}
Proposition~\ref{prop:uniformconvofobj} provides the foundation for the following result, which demonstrates that the optimal value and solutions of the ER-SAA problem~\eqref{eqn:app} converge to those of the true problem~\eqref{eqn:speq}.

\begin{restatable}{theorem}{thmapproxconv}
\label{thm:approxconv}
Suppose {Assumptions~\ref{ass:equilipschitz} to~\ref{ass:regconsist}} hold.
Then, we have $\hv^{ER}_n(x) \xrightarrow{p} v^*(x)$, $\mathbb{D}\bigl(\hS^{ER}_n(x),S^*(x)\bigr) \xrightarrow{p} 0$, and $\sup_{z \in \hS^{ER}_n(x)} g(z;x) \xrightarrow{p} v^*(x)$ for a.e.~$x \in \X$.
\end{restatable}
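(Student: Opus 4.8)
The plan is to deduce all three conclusions from the uniform convergence in probability established in Proposition~\ref{prop:uniformconvofobj}, namely $\uset{z \in \Z}{\sup} \abs*{\hg^{ER}_n(z;x) - g(z;x)} \convinprob 0$ for a.e.~$x \in \X$, together with the standing assumptions that $\Z$ is compact and $g(\cdot;x)$ is lsc. The optimal-value claim is immediate: since taking infima over a common feasible set is nonexpansive in the sup-norm, $\abs*{\hv^{ER}_n(x) - v^*(x)} = \abs*{\uset{z\in\Z}{\inf} \hg^{ER}_n(z;x) - \uset{z\in\Z}{\inf} g(z;x)} \leq \uset{z\in\Z}{\sup}\abs*{\hg^{ER}_n(z;x) - g(z;x)}$, so $\hv^{ER}_n(x) \convinprob v^*(x)$.

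Next, the asymptotic optimality statement $\uset{z \in \hS^{ER}_n(x)}{\sup} g(z;x) \convinprob v^*(x)$ follows essentially for free and does not require solution-set convergence. For any $z \in \hS^{ER}_n(x)$ we have $\hg^{ER}_n(z;x) = \hv^{ER}_n(x)$, hence $\abs*{g(z;x) - v^*(x)} \leq \uset{z'\in\Z}{\sup}\abs*{\hg^{ER}_n(z';x) - g(z';x)} + \abs*{\hv^{ER}_n(x) - v^*(x)}$, which is a bound uniform over $z \in \hS^{ER}_n(x)$; both terms on the right vanish in probability by the above. The matching lower bound is the trivial $g(z;x) \geq v^*(x)$.

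The crux is the solution-set deviation $\dev{\hS^{ER}_n(x)}{S^*(x)} \convinprob 0$. Here I would invoke the classical SAA consistency argument (e.g.\ Theorem~5.3 of~\citet{shapiro2009lectures}): under uniform convergence of the objectives on the compact set $\Z$ and lower semicontinuity of the limit $g(\cdot;x)$, the optimal solution sets converge in deviation. The key deterministic step is a compactness/lsc argument: any sequence $z_n \in \hS^{ER}_n(x)$ attaining (up to $o(1)$) the deviation has, by compactness of $\Z$, a convergent subsequence $z_{n_j} \to \bar z$; uniform convergence then gives $g(z_{n_j};x) = \hg^{ER}_{n_j}(z_{n_j};x) + o(1) = \hv^{ER}_{n_j}(x) + o(1) \to v^*(x)$, and lsc yields $g(\bar z;x) \leq \liminf_j g(z_{n_j};x) = v^*(x)$, so $\bar z \in S^*(x)$ and $\text{dist}(z_{n_j},S^*(x)) \to 0$, contradicting a strictly positive deviation.

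The main obstacle, and the only genuinely nontrivial point, is that Proposition~\ref{prop:uniformconvofobj} delivers convergence only in probability, whereas the clean deterministic consistency argument above is phrased along a fixed sample path. I would bridge this with the standard subsequence principle: convergence in probability of $\uset{z\in\Z}{\sup}\abs*{\hg^{ER}_n(z;x) - g(z;x)}$ implies that every subsequence contains a further subsequence along which this sup converges to zero almost surely; applying the deterministic argument along such a sub-subsequence shows $\dev{\hS^{ER}_n(x)}{S^*(x)} \to 0$ almost surely along it, and since every subsequence admits a sub-subsequence converging to the common limit $0$, the full sequence converges to $0$ in probability. The assumed nonemptiness of $\hS^{ER}_n(x)$ and $S^*(x)$ for a.e.~$x \in \X$ is what makes the deviation well defined throughout.
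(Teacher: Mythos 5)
Your proposal is correct, and the first two conclusions (convergence of $\hv^{ER}_n(x)$ via the nonexpansiveness of the infimum in the sup-norm, and asymptotic optimality via the uniform bound over $\hS^{ER}_n(x)$) match the paper's argument in substance, with your nonexpansiveness observation being a slightly cleaner packaging. For the key step $\dev{\hS^{ER}_n(x)}{S^*(x)} \convinprob 0$, however, you take a genuinely different route. The paper first proves a deterministic quantitative lemma (Lemma~\ref{lem:lscsuboptimal}): for an lsc function on a compact set, any point at distance at least $\delta$ from the optimal set is $\kappa$-suboptimal for some $\kappa = \kappa(\delta) > 0$. It then argues entirely at the level of probabilities of events --- if the deviation were at least $\delta$ with probability bounded away from zero along a subsequence, the lemma would force $\sup_{z \in \hS^{ER}_{n_q}(x)} g(z;x) > v^*(x) + \kappa$ with that same probability, contradicting the consequence of Proposition~\ref{prop:uniformconvofobj} that $\pr\bigl\{\sup_{z \in \hS^{ER}_n(x)} g(z;x) \leq v^*(x) + \kappa\bigr\} \to 1$. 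You instead run the classical sample-path argument (extract a convergent subsequence from $\Z$, pass to the limit using uniform convergence, invoke lsc to place the limit point in $S^*(x)$) and bridge from convergence in probability to almost-sure convergence via the sub-subsequence principle. Both arguments rest on exactly the same three ingredients --- Proposition~\ref{prop:uniformconvofobj}, compactness of $\Z$, and lower semicontinuity of $g(\cdot;x)$ --- and both are valid. The paper's version avoids the measure-theoretic bridge and makes the quantitative suboptimality gap explicit (which is then reused in the finite-sample analysis of Theorem~\ref{thm:exponentialconv}); yours reuses the textbook deterministic consistency argument essentially verbatim at the cost of the extra subsequence-extraction layer. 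One small point to keep in mind if you write this out in full: the deviation $\dev{\hS^{ER}_n(x)}{S^*(x)}$ is a supremum that need not be attained, so the sequence $z_n$ should be chosen as a near-maximizer, exactly as you indicate with your ``up to $o(1)$'' qualifier.
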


The proof of Theorem~\ref{thm:approxconv} follows a similar outline as the proof of Theorem~5.3 of~\citet{shapiro2009lectures}, {except that we} consider convergence in probability rather than almost sure convergence. 
Under an \textit{inf-compactness} condition on the ER-SAA problem~\eqref{eqn:app}, the conclusions of Theorem~\ref{thm:approxconv} hold even if the set~$\Z$ is unbounded~\citep[see the discussion following Theorem~5.3 of][]{shapiro2009lectures}.
While we consider convergence in probability instead of almost sure convergence (because the statistics literature is typically concerned with conditions under which Assumption~\ref{ass:regconsist} holds rather than its almost sure counterpart), note that our results until this point can be naturally extended to the latter setting by suitably strengthening Assumptions~\ref{ass:uniflln} to~\ref{ass:regconsist}.

{Next, we identify conditions under which the optimal value of the ER-SAA problem~\eqref{eqn:app} converges to the optimal value of the true problem~\eqref{eqn:speq} on average over the space of the covariates.
This can be important in situations where the new covariate observations are random and the DM needs to make decisions facing different observations of the covariates.
Given $q \in [1,+\infty]$, we write $\norm{F}_{L^q}$ to denote the $L^q$-norm of a measurable function $F : \X \to \R^{d_F}$, i.e.,
$\norm{F}_{L^q} := \bigl(\int_S \norm{F}^q dP_X \bigr)^{1/q}$.
We require the following adaptation of Assumptions~\ref{ass:uniflln} and~\ref{ass:regconsist}.}

\begin{assumption}
\label{ass:uniflln_lq}
{The sequence of sample average functions $\left\lbrace g^*_n(\cdot;x) \right\rbrace$ defined in~\eqref{eqn:fullinfsaa} satisfies}
\[
{\Big\lVert\sup_{z \in \Z} \abs*{g^*_{n}(z;X) - g(z;X)} \Big\rVert_{L^q} \convinprob 0.}
\]
\end{assumption}

\begin{assumption}
\label{ass:regconsist_lq}
\setcounter{mycounter}{\value{assumption}}
{The regression estimates $\hf_n$ and $\hQ_n$ possess the following consistency properties:}
\vspace*{-0.1in}
\begin{multicols}{2}
\begin{enumerate}[label=(\themycounter\alph*),itemsep=0em]
\item \label{ass:regconsist_point_lq} {$\norm{\hf_n(X)-f^*(X)}_{L^q} \convinprob 0$,}

\item \label{ass:regconsist_mse_lq} {$\dfrac{1}{n} \displaystyle\sum_{i=1}^{n} \norm{f^*(x^i) - \hf_n(x^i)}^2 \xrightarrow{p} 0$,}

\columnbreak

\item \label{ass:regconsist_point2_lq} {$\norm{\hQ_n(X) - Q^*(X)}_{L^q} \convinprob 0$},

\item \label{ass:regconsist_mse2_lq} {$\dfrac{1}{n} \displaystyle\sum_{i=1}^{n} \bigl\lVert \bigl[\hQ_n(x^i)\bigr]^{-1} - \bigl[Q^*(x^i)\bigr]^{-1}\bigr\rVert^2 \xrightarrow{p} 0$.}
\end{enumerate}
\end{multicols}
\end{assumption}

{Assumption~\ref{ass:uniflln_lq} is implied by the uniform convergence in probability of the FI-SAA objective function $g^*_n$ to $g$ on $\Z \times \X$, i.e., $\sup_{(z,x) \in \Z \times \X} \abs*{g^*_{n}(z;x) - g(z;x)} \convinprob 0$.
We show in Section~\ref{sec:tssp} that this assumption holds for our running Examples~\ref{exm:runningexample} and~\ref{exm:regrexample} whenever $\expect{\norm{\varepsilon}} < \infty$ and the support~$\X$ is compact and bounded away from the origin.
Assumptions~\ref{ass:regconsist_point_lq} and~\ref{ass:regconsist_point2_lq} hold for our running Example~\ref{exm:regrexample} if the estimates $\hth_n$ and $\hat{\pi}_n$ are weakly consistent, $\norm{X}_{L^q} < +\infty$, and if, e.g., we additionally have the support~$\X$ to be compact and bounded away from the origin and assume that the estimates~$\{\hat{\pi}_n\}$ lie in a compact set a.s.\ for $n$ large enough (see Section~\ref{sec:regression} in the Appendix for details).
Unlike Assumptions~\ref{ass:regconsist_point} and~\ref{ass:regconsist_point2} that require $\hf_n$ and $\hQ_n$ to be pointwise consistent, Assumptions~\ref{ass:regconsist_point_lq} and~\ref{ass:regconsist_point2_lq} only require $\hf_n$ and $\hQ_n$ to be consistent on average over the covariates~$X$.
We have the following result.
}

\begin{theorem}
\label{thm:averageconsist}
{Suppose Assumptions~\ref{ass:equilipschitz},~\ref{ass:varweaklln},~\ref{ass:errorsweaklln},~\ref{ass:uniflln_lq}, and~\ref{ass:regconsist_lq} hold and $\big\lVert\norm{Q^*(X)}\big\rVert_{L^q}< +\infty$ for some constant $q \in [1,+\infty]$.
Then, $\norm*{\hv^{ER}_n(X) - v^*(X)}_{L^q} \convinprob 0$ and $\norm*{g(\hz^{ER}_n(X);X) - v^*(X)}_{L^q} \convinprob 0$.}
\end{theorem}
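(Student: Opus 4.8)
The plan is to establish the two $L^q$ convergence statements by mirroring the pointwise argument behind Proposition~\ref{prop:uniformconvofobj} and Theorem~\ref{thm:approxconv}, but carrying out all estimates in the $L^q(P_X)$ norm rather than pointwise in $x$. The starting point is the familiar sandwich bound $\abs{\hv^{ER}_n(x) - v^*(x)} \leq \sup_{z \in \Z} \abs{\hg^{ER}_n(z;x) - g(z;x)}$, which holds for a.e.~$x$ because both quantities are optima over the same compact set $\Z$. Taking the $L^q$ norm in $x$ and applying the triangle inequality with the decomposition~\eqref{eqn:ersaa_error_decomp} gives
\[
\norm*{\hv^{ER}_n(X) - v^*(X)}_{L^q} \leq \Bigl\lVert \sup_{z \in \Z} \abs{\hg^{ER}_n(z;X) - g^*_n(z;X)} \Bigr\rVert_{L^q} + \Bigl\lVert \sup_{z \in \Z} \abs{g^*_n(z;X) - g(z;X)} \Bigr\rVert_{L^q}.
\]
The second summand vanishes in probability directly by Assumption~\ref{ass:uniflln_lq}, so the entire task reduces to controlling the first summand in $L^q$.

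First I would bound the first summand using Lemma~\ref{lem:ersaa_error_decomp_term1}, which yields the pointwise estimate $\sup_{z} \abs{\hg^{ER}_n(z;X) - g^*_n(z;X)} \leq (\sup_z L(z))\,\frac{1}{n}\sum_i \norm{\teps^i_n(X)}$, and then substitute the explicit bound~\eqref{eqn:meandeviation} from Lemma~\ref{lem:meandeviation}. Since $\sup_z L(z)<+\infty$ by Assumption~\ref{ass:equilipschitz}, it suffices to show that the $L^q$ norm of each of the four terms on the right of~\eqref{eqn:meandeviation} converges to zero in probability. The crucial observation is that in each term the dependence on the new covariate $X$ is confined to the leading factors $\norm{\hf_n(X)-f^*(X)}$, $\norm{\hQ_n(X)-Q^*(X)}$, or $\norm{\hQ_n(X)}$, whereas the remaining factors are empirical averages over the training data $\{x^i\}$ that do \emph{not} depend on $X$. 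I would therefore pull the $L^q$ norm onto the $X$-dependent leading factor alone, treating the data-dependent averages as (random) scalars: for the first term, $\bigl\lVert \norm{\hf_n(X)-f^*(X)} \bigr\rVert_{L^q} = \norm{\hf_n(X)-f^*(X)}_{L^q} \convinprob 0$ by Assumption~\ref{ass:regconsist_point_lq}; for the second term, $\norm{\hQ_n(X)-Q^*(X)}_{L^q} \convinprob 0$ by Assumption~\ref{ass:regconsist_point2_lq} while the factor $\frac{1}{n}\sum_i\norm{\varepsilon^i}$ is $O_p(1)$ using Assumption~\ref{ass:errorsweaklln} (moments of $\norm{\varepsilon}$ of order one are dominated by the fourth-order weak LLN). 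For the third and fourth terms, the leading factor is $\norm{\hQ_n(X)}$, and to bound its $L^q$ norm I would write $\norm{\hQ_n(X)} \leq \norm{\hQ_n(X)-Q^*(X)} + \norm{Q^*(X)}$ and invoke both Assumption~\ref{ass:regconsist_point2_lq} and the hypothesis $\bigl\lVert\norm{Q^*(X)}\bigr\rVert_{L^q}<+\infty$; the accompanying data averages converge to zero or are $O_p(1)$ by Assumptions~\ref{ass:varweaklln},~\ref{ass:errorsweaklln},~\ref{ass:regconsist_mse_lq}, and~\ref{ass:regconsist_mse2_lq}, exactly as in the proof of Proposition~\ref{prop:uniformconvofobj}, with Lemma~\ref{lem:varbound} handling the factor $\bigl(\frac{1}{n}\sum_i\lVert[\hQ_n(x^i)]^{-1}\rVert^2\bigr)^{1/2}$. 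Combining these via $o_p(1)\cdot O_p(1)=o_p(1)$ and summing the four contributions establishes $\norm*{\hv^{ER}_n(X)-v^*(X)}_{L^q}\convinprob 0$.

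For the second conclusion, $\norm*{g(\hz^{ER}_n(X);X)-v^*(X)}_{L^q}\convinprob 0$, I would use the standard two-sided bound that for any near-optimal ER-SAA solution $\hz^{ER}_n(x)$ one has $0 \leq g(\hz^{ER}_n(x);x) - v^*(x) \leq 2\sup_{z\in\Z}\abs{\hg^{ER}_n(z;x)-g(z;x)}$ for a.e.~$x$ (the left inequality because $v^*$ is the true optimum, the right because $\hz^{ER}_n$ is ER-SAA-optimal and $v^*$ is attained at some feasible point). Taking $L^q$ norms and reusing the uniform bound already shown to vanish in $L^q$ finishes the argument. The main obstacle is the bookkeeping required to separate, inside each $L^q$ norm, the single $X$-dependent factor from the data-averaged factors: one must justify that a product of the form (function of $X$)$\times$(random scalar depending only on the training sample) has $L^q$ norm equal to the $L^q$ norm of the $X$-factor times that scalar, which is legitimate because the new observation $X$ is independent of the training data $\D_n$ and the scalar is measurable with respect to $\D_n$. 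Once this factorization is made precise, each piece reduces to a hypothesis already in force, so the remainder is a routine assembly of $o_p(1)$ and $O_p(1)$ terms.
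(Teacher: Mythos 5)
Your proposal is correct and follows essentially the same route as the paper: the same reduction of $\abs{\hv^{ER}_n(X)-v^*(X)}$ to the uniform $L^q$ deviation, the same decomposition via~\eqref{eqn:ersaa_error_decomp} with Assumption~\ref{ass:uniflln_lq} handling the FI-SAA term, the same use of Lemmas~\ref{lem:ersaa_error_decomp_term1},~\ref{lem:meandeviation}, and~\ref{lem:varbound} with the $L^q$ norm pulled onto the $X$-dependent factors, and the same bound $\big\lVert\norm{\hQ_n(X)}\big\rVert_{L^q} \leq \big\lVert\norm{\hQ_n(X)-Q^*(X)}\big\rVert_{L^q} + \big\lVert\norm{Q^*(X)}\big\rVert_{L^q}$. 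The only cosmetic difference is in the second conclusion, where the paper routes through $\norm{g(\hz^{ER}_n(X);X)-\hv^{ER}_n(X)}_{L^q}$ via the triangle inequality rather than your explicit two-sided bound, but these are equivalent.
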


\subsection{Finite sample guarantees}
\label{subsec:finitesample}

We {now} establish a lower bound on the probability that solutions to the ER-SAA problem~\eqref{eqn:app} are nearly optimal to the true problem~\eqref{eqn:speq}.
{Section~\ref{sec:ersaa_rate} in the Appendix investigates the rate of convergence of ER-SAA estimators under weaker assumptions.}
Our next assumption is motivated by the analysis in Section~2 of~\citet{homem2008rates} and Section~7.2.9 of~\citet{shapiro2009lectures}.

\begin{assumption}
\label{ass:tradsaalargedev}
The full-information SAA problem~\eqref{eqn:fullinfsaa} possesses the following uniform exponential bound property: for any constant $\kappa > 0$ and a.e.~$x \in \X$, there exist positive constants $K(\kappa,x)$ and $\beta(\kappa,x)$ such that
$\pr \Bigl\{\uset{z \in \Z}{\sup} \: \abs*{ g^*_n(z;x) - g(z;x)} > \kappa \Bigr\} \leq K(\kappa,x) \exp\left(-n\beta(\kappa,x)\right)$, $\forall n \in \mathbb{N}$.
\end{assumption}

Lemma~2.4 of~\citet{homem2008rates} provides conditions under which Assumption~\ref{ass:tradsaalargedev} holds~\citep[also see Section~7.2.9 of][]{shapiro2009lectures}.
In particular,~\citet{homem2008rates} shows that Assumption~\ref{ass:tradsaalargedev} holds whenever 
the function~$c(\cdot,y)$ is Lipschitz continuous on~$\Z$ for a.e.\ $y \in \Y$ with an integrable Lipschitz constant
and some pointwise exponential bounds hold.
When the errors $\{\varepsilon^i\}$ are i.i.d., Section~7.2.9 of~\citet{shapiro2009lectures} presents conditions under which these pointwise exponential bound conditions are satisfied via Cram\'{e}r's large deviation theorem.
\citet{bryc1996large} present mixing conditions on the errors~$\{\varepsilon^i\}$ under which these assumptions are also satisfied (also see the references therein).
The G{\"a}rtner-Ellis Theorem~\citep[see Section~2.3 of][]{dembo2011large} provides an alternative avenue for verifying Assumption~\ref{ass:tradsaalargedev} for non-i.i.d.\ errors~$\{\varepsilon^i\}$~\citep{dai2000convergence}.
If we also assume that the random variable $c(z,f^*(x)+Q^*(x)\varepsilon) - \expect{c(z,f^*(x)+Q^*(x)\varepsilon)}$
is sub-Gaussian for each~$z \in \Z$ and a.e.\ $x \in \X$, then we can characterize the dependence of~$\beta(\kappa,x)$ on~$\kappa$, see Assumption~(C4) on page~396 and Theorem~7.67 of~\citet{shapiro2009lectures}.

{Proposition~\ref{prop:tssp-checkass} in the Appendix shows that Assumption~\ref{ass:tradsaalargedev} holds for Example~\ref{exm:runningexample} whenever the errors~$\{\varepsilon^i\}$ are i.i.d.\ and sub-Gaussian (which includes zero-mean Gaussian).
Unlike Assumption~\ref{ass:uniflln}, Assumption~\ref{ass:tradsaalargedev} may not hold when the distribution of the errors~$\varepsilon$ is heavy-tailed (heavy-tailed error distributions such as the Pareto and Weibull distributions occur in finance, weather forecasting, and reliability engineering applications).}

{Next, we require the following strengthening of Assumptions~\ref{ass:varweaklln} and~\ref{ass:errorsweaklln}.}

\begin{assumption}
\label{ass:varlargedev}
{For any constant $\kappa > 0$ and $n \in \mathbb{N}$, there exist positive constants $J_Q(\kappa)$, $\gamma_{Q}(n,\kappa)$, $\bar{J}_Q(\kappa)$, and $\bar{\gamma}_{Q}(n,\kappa)$, with $\lim_{n \to \infty} \gamma_{Q}(n,\kappa) = \infty$ and $\lim_{n \to \infty} \bar{\gamma}_{Q}(n,\kappa) = \infty$ for each $\kappa > 0$, such that}
\begin{align*}
{\mathbb{P}\biggl\{ \biggl(\frac{1}{n}\sum_{i=1}^{n} \bigl\lVert \bigl[Q^*(x^i)\bigr]^{-1}\bigr\rVert^2\biggr)^{1/2} > \Bigl(\expect{\bigl\lVert \bigl[Q^*(X)\bigr]^{-1} \bigr\rVert^2}\Bigr)^{1/2} + \kappa \biggr\}} &{\leq J_Q(\kappa)\exp(- \gamma_{Q}(n,\kappa)),} \\
{\mathbb{P}\biggl\{ \biggl(\frac{1}{n} \sum_{i=1}^{n} \norm{Q^*(x^i)}^4\biggr)^{1/4} > \bigl(\expect{\norm{Q^*(X)}^4}\bigr)^{1/4} + \kappa \biggr\}} &{\leq \bar{J}_Q(\kappa)\exp(-\bar{\gamma}_{Q}(n,\kappa)).}
\end{align*}
\end{assumption}

\begin{assumption}
\label{ass:errorlargedev}
{For any constant $\kappa > 0$ and $n \in \mathbb{N}$, there exist positive constants $J_{\varepsilon}(\kappa)$, $\gamma_{\varepsilon}(n,\kappa)$, $\bar{J}_{\varepsilon}(\kappa)$, and $\bar{\gamma}_{\varepsilon}(n,\kappa)$, with $\lim_{n \to \infty} \gamma_{\varepsilon}(n,\kappa) = \infty$ and $\lim_{n \to \infty} \bar{\gamma}_{\varepsilon}(n,\kappa) = \infty$ for each $\kappa > 0$, such that}
\begin{align*}
{\mathbb{P}\biggl\{ \frac{1}{n}\sum_{i=1}^{n} \norm{\varepsilon^i} > \expect{\norm{\varepsilon}} + \kappa \biggr\}} &{\leq J_{\varepsilon}(\kappa)\exp(-\gamma_{\varepsilon}(n,\kappa)),} \\
{\mathbb{P}\biggl\{ \biggl(\frac{1}{n}\sum_{i=1}^{n} \norm{\varepsilon^i}^4\biggr)^{1/4} > (\expect{\norm{\varepsilon}^4})^{1/4} + \kappa \biggr\}} &{\leq \bar{J}_{\varepsilon}(\kappa)\exp(-\bar{\gamma}_{\varepsilon}(n,\kappa)).}
\end{align*}
\end{assumption}

{The first part of Assumption~\ref{ass:varlargedev} holds, e.g., if for each $\kappa > 0$ and $n \in \mathbb{N}$, constants $J_Q(\kappa) > 0$ and $\gamma_Q(n,\kappa) > 0$ exist such that
\[
\mathbb{P}\biggl\{ \frac{1}{n}\sum_{i=1}^{n} \bigl\lVert \bigl[Q^*(x^i)\bigr]^{-1}\bigr\rVert^2 > \expect{\bigl\lVert \bigl[Q^*(X)\bigr]^{-1} \bigr\rVert^2} + \kappa^2 \biggr\} \leq J_Q(\kappa)\exp(- \gamma_{Q}(n,\kappa)).
\]
The function $\gamma_Q$ in the inequality above is related to the so-called rate function in large deviations theory (see~\citet{dembo2011large} and Section~7.2.8 of~\cite{shapiro2009lectures}).
Similar conclusions hold for the probability inequalities involving the terms $\frac{1}{n} \sum_{i=1}^{n} \norm{Q^*(x^i)}^4$ and $\frac{1}{n}\sum_{i=1}^{n} \norm{\varepsilon^i}^4$ in Assumptions~\ref{ass:varlargedev} and~\ref{ass:errorlargedev}.
Using large deviations theory, we can show that the constants $\gamma_Q(n,\kappa)$ and $\bar{\gamma}_Q(n,\kappa)$ in Assumption~\ref{ass:varlargedev} increase linearly with the sample size~$n$ (i.e., $\gamma_Q(n,\kappa) = n \gamma_{Q,1}(\kappa)$ and $\bar{\gamma}_Q(n,\kappa) = n \bar{\gamma}_{Q,1}(\kappa)$) for our running Example~\ref{exm:regrexample} with i.i.d.\ data~$\{(x^i,\varepsilon^i)\}$ whenever the support~$\X$ of the covariates is compact and bounded away from the origin.
The constant $\gamma_{\varepsilon}(n,\kappa)$ in Assumption~\ref{ass:errorlargedev} also increases linearly with~$n$ whenever the errors~$\varepsilon$ are sub-Gaussian (see Chapter~3 of~\citet{vershynin2018high}).
The discussion following Assumption~\ref{ass:tradsaalargedev} provides avenues for verifying Assumptions~\ref{ass:varlargedev} and~\ref{ass:errorlargedev} for non-i.i.d.\ data~$\{(x^i,\varepsilon^i)\}$.}

Next, we make the following large deviation assumption on the regression procedure~\eqref{eqn:regr} that is similar in spirit to Assumption~\ref{ass:tradsaalargedev}.

\begin{assumption}
\label{ass:reglargedev}
\setcounter{mycounter}{\value{assumption}}
{The regression estimates $\hf_n$ and $\hQ_n$ possess the following finite sample properties: for any constant $\kappa > 0$ and $n \in \mathbb{N}$, there exist positive constants $K_f(\kappa,x)$, $\bar{K}_f(\kappa)$, $\beta_f(n,\kappa,x)$, $\bar{\beta}_f(n,\kappa)$, $K_Q(\kappa,x)$, $\bar{K}_Q(\kappa)$, $\beta_Q(n,\kappa,x)$, and $\bar{\beta}_Q(n,\kappa)$, 
with $\lim_{n \to \infty} \beta_f(n,\kappa,x) = \infty$, 
$\lim_{n \to \infty} \bar{\beta}_f(n,\kappa) = \infty$, 
$\lim_{n \to \infty} \beta_Q(n,\kappa,x) = \infty$, and 
$\lim_{n \to \infty} \bar{\beta}_Q(n,\kappa) = \infty$ for each $\kappa > 0$ and a.e.\ $x \in \X$, such that}
\begin{enumerate}[label=(\themycounter\alph*),itemsep=0.2em]
\item \label{ass:reglargedev_point} {$\mathbb{P}\bigl\{\norm{f^*(x) - \hf_n(x)} > \kappa \bigr\} \leq K_f(\kappa,x) \exp\bigl(-\beta_f(n,\kappa,x)\bigr)$ for a.e.\ $x \in \X$,}

\item \label{ass:reglargedev_mse} {$\mathbb{P}\biggl\{\dfrac{1}{n} \displaystyle\sum_{i=1}^{n} \norm{f^*(x^i) - \hf_n(x^i)}^2 > \kappa^2 \biggr\} \leq \bar{K}_f(\kappa) \exp\bigl(-\bar{\beta}_f(n,\kappa)\bigr)$,}

\item \label{ass:reglargedev_point2} {$\mathbb{P}\bigl\{\norm{Q^*(x) - \hQ_n(x)} > \kappa \bigr\} \leq K_Q(\kappa,x) \exp\bigl(-\beta_Q(n,\kappa,x)\bigr)$ for a.e.\ $x \in \X$,}

\item \label{ass:reglargedev_mse2} {$\mathbb{P}\biggl\{\dfrac{1}{n} \displaystyle\sum_{i=1}^{n} \bigl\lVert \bigl[\hQ_n(x^i)\bigr]^{-1} - \bigl[Q^*(x^i)\bigr]^{-1}\bigr\rVert^2 > \kappa^2 \biggr\} \leq \bar{K}_Q(\kappa) \exp\bigl(-\bar{\beta}_Q(n,\kappa)\bigr)$.}
\end{enumerate}
\end{assumption}

We verify in Section~\ref{sec:regression} of the Appendix that Assumptions~\ref{ass:reglargedev_point} and~\ref{ass:reglargedev_mse} hold for OLS regression and the Lasso with constants~$\beta_f(n,\kappa,x)$ and $\bar{\beta}_f(n,\kappa)$ scaling as~$O(n\kappa^2)$ under sub-Gaussian assumptions on the errors~$\varepsilon$.
{The finite sample guarantees on the estimate~$\hQ_n$ in Assumptions~\ref{ass:reglargedev_point2} and~\ref{ass:reglargedev_mse2} are typically harder to verify.}

The next result provides conditions under which the maximum deviation of the ER-SAA objective from the full-information SAA objective on the feasible set~$\Z$ satisfies a qualitatively similar large deviations bound as that in Assumption~\ref{ass:tradsaalargedev}.

\begin{restatable}{lemma}{lemlargedevofdev}
\label{lem:largedevofdev}
Suppose {Assumptions~\ref{ass:equilipschitz},~\ref{ass:tradsaalargedev} to~\ref{ass:reglargedev} hold.}
Then for any constant $\kappa > 0$, $n \in \mathbb{N}$, and a.e.\ $x \in \X$, there exist positive constants $\bar{K}(\kappa,x)$ and $\bar{\beta}(n,\kappa,x)$, 
with $\lim_{n \to \infty} \bar{\beta}(n,\kappa,x) = \infty$ for each $\kappa > 0$ and a.e.\ $x \in \X$, such that
$\pr \Bigl\{\uset{z \in \Z}{\sup} \: \abs*{ \hg^{ER}_n(z;x) - g^*_n(z;x)} > \kappa \Bigr\} \leq \bar{K}(\kappa,x) \exp\bigl(-\bar{\beta}(n,\kappa,x)\bigr)$.
\end{restatable}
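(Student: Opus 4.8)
The plan is to reduce the uniform objective deviation to the scalar mean-deviation term and then control that term by a union bound over exponential tails. First I would apply Lemma~\ref{lem:ersaa_error_decomp_term1}: writing $\bar{L} := \sup_{z \in \Z} L(z) < +\infty$ (finite by Assumption~\ref{ass:equilipschitz}), it gives the sample-path bound $\sup_{z \in \Z} \abs*{\hg^{ER}_n(z;x) - g^*_n(z;x)} \le \bar{L} \cdot \tfrac{1}{n}\sum_{i=1}^{n} \norm{\teps^i_n(x)}$, so it suffices to show that $\pr\bigl\{\tfrac{1}{n}\sum_{i=1}^{n}\norm{\teps^i_n(x)} > \kappa/\bar{L}\bigr\}$ decays exponentially in $n$. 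Invoking the heteroscedastic bound~\eqref{eqn:meandeviation} of Lemma~\ref{lem:meandeviation}, the mean-deviation term is dominated by a sum of four nonnegative terms, so I would split the budget $\kappa/\bar{L}$ as $\sum_{j=1}^{4}\kappa_j$ and bound $\pr\{\text{(term $j$)} > \kappa_j\}$ separately; a union bound over $j$ then yields the claim once each piece is controlled.

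The structure I would exploit is that every one of the four terms is a product in which some factors concentrate to zero while the rest concentrate around finite (for fixed $x$) nominal constants. The vanishing factors are exactly the regression-error quantities: $\norm{\hf_n(x) - f^*(x)}$ in term~1 (Assumption~\ref{ass:reglargedev_point}), $\norm{\hQ_n(x) - Q^*(x)}$ as a factor of term~2 (Assumption~\ref{ass:reglargedev_point2}), the inverse-covariance discrepancy $(\tfrac1n\sum_i\norm{[\hQ_n(x^i)]^{-1} - [Q^*(x^i)]^{-1}}^2)^{1/2}$ in term~3 (Assumption~\ref{ass:reglargedev_mse2}), and the empirical $L^2$ semi-norm $(\tfrac1n\sum_i\norm{f^*(x^i) - \hf_n(x^i)}^2)^{1/2}$ in term~4 (Assumption~\ref{ass:reglargedev_mse}). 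The nominal factors are the moment averages $(\tfrac1n\sum_i\norm{Q^*(x^i)}^4)^{1/4}$ and $(\tfrac1n\sum_i\norm{[Q^*(x^i)]^{-1}}^2)^{1/2}$ (Assumption~\ref{ass:varlargedev}), together with $\tfrac1n\sum_i\norm{\varepsilon^i}$ and $(\tfrac1n\sum_i\norm{\varepsilon^i}^4)^{1/4}$ (Assumption~\ref{ass:errorlargedev}); the factor $\norm{\hQ_n(x)}$ I would bound via the triangle inequality by $\norm{Q^*(x)} + \norm{\hQ_n(x) - Q^*(x)}$ (the second piece again controlled by Assumption~\ref{ass:reglargedev_point2}), and the factor $(\tfrac1n\sum_i\norm{[\hQ_n(x^i)]^{-1}}^2)^{1/2}$ in term~4 I would split into a vanishing and a nominal piece using Lemma~\ref{lem:varbound}.

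For each product term I would then decompose the event further: the product exceeds $\kappa_j$ only if some bounded factor exceeds its nominal value plus one, or the single vanishing factor exceeds $\kappa_j$ divided by the product of those nominal-plus-one constants, which is a positive threshold depending only on $\kappa$ and $x$. Each such sub-event has, by the corresponding assumption, probability at most $K\exp(-\beta(n,\kappa,x))$ with $\beta(n,\kappa,x) \to \infty$. A finite union bound across the four terms and their constituent factors gives $\pr\bigl\{\sup_{z}\abs*{\hg^{ER}_n(z;x) - g^*_n(z;x)} > \kappa\bigr\} \le \bar{K}(\kappa,x)\exp(-\bar{\beta}(n,\kappa,x))$, where $\bar{K}$ is the finite sum of the constants produced and $\bar{\beta}$ is the minimum of the exponents, which diverges because each does.

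I expect the main obstacle to be bookkeeping rather than any deep estimate: choosing the budget split $\{\kappa_j\}$ and the per-factor thresholds so that each vanishing-factor threshold stays bounded away from zero uniformly in $n$ after absorbing the nominal constants. Because those constants are finite only for a fixed $x$ and depend on $\kappa$, the resulting $\bar{K}$ and $\bar{\beta}$ necessarily depend on both $\kappa$ and $x$, exactly as stated. Assumption~\ref{ass:tradsaalargedev} does not enter this particular argument—only Assumptions~\ref{ass:equilipschitz},~\ref{ass:varlargedev},~\ref{ass:errorlargedev}, and~\ref{ass:reglargedev} are used—and the homoscedastic case is strictly simpler, since then $\hQ_n \equiv Q^* \equiv I$ and only the two terms from part~1 of Lemma~\ref{lem:meandeviation} survive, controlled directly by Assumptions~\ref{ass:reglargedev_point} and~\ref{ass:reglargedev_mse}.
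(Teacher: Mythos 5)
Your proposal matches the paper's proof essentially step for step: reduce via Lemma~\ref{lem:ersaa_error_decomp_term1} to the mean-deviation term, split the four terms of Lemma~\ref{lem:meandeviation} with a union bound, peel off the nominal factors at ``nominal value plus a constant'' using Assumptions~\ref{ass:varlargedev} and~\ref{ass:errorlargedev}, handle $\norm{\hQ_n(x)}$ by the triangle inequality and the term-four factor by Lemma~\ref{lem:varbound}, and take $\bar{K}$ as the sum of constants and $\bar{\beta}$ as the minimum exponent. Your side observations --- that Assumption~\ref{ass:tradsaalargedev} is not actually used in this lemma and that the homoscedastic case collapses to two terms --- are both consistent with the paper.
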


We are now ready to present the main result of this section. 
It extends finite sample results that are known for traditional SAA estimators, see, e.g., Theorem~2.3 of~\citet{homem2008rates} and Section~5.3 of~\citet{shapiro2009lectures}, to the ER-SAA setting.

\begin{restatable}{theorem}{thmexponentialconv}
\label{thm:exponentialconv}
Suppose {Assumptions~\ref{ass:equilipschitz},~\ref{ass:tradsaalargedev} to~\ref{ass:reglargedev} hold.}
Then, for each $n \in \mathbb{N}$ and a.e.\ $x \in \X$, given $\eta > 0$, there exist constants $Q(\eta,x) > 0$ and $\gamma(n,\eta,x) > 0$, 
with $\lim_{n \to \infty} \gamma(n,\eta,x) = \infty$ for each $\eta > 0$ and a.e.\ $x \in \X$, such that
$\prob{\textup{dist}(\hz^{ER}_n(x),S^*(x)) \geq \eta} \leq Q(\eta,x) \exp(-\gamma(n,\eta,x))$.
\end{restatable}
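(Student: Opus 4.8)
The plan is to reduce this solution-level statement to the uniform objective-deviation estimates already quantified in Lemma~\ref{lem:largedevofdev} and Assumption~\ref{ass:tradsaalargedev}, following the template of Theorem~2.3 of~\citet{homem2008rates}. First I would establish a deterministic \emph{growth} property of the true problem~\eqref{eqn:speq}. Fix $\eta > 0$ and a.e.~$x \in \X$ for which $S^*(x) \neq \emptyset$, and consider $\Z_\eta(x) := \{z \in \Z : \textup{dist}(z,S^*(x)) \geq \eta\}$. If $\Z_\eta(x) = \emptyset$ the probability in question is zero and there is nothing to prove, so assume otherwise. Since $\Z$ is compact and $g(\cdot;x)$ is lsc, $\Z_\eta(x)$ is compact and $g(\cdot;x)$ attains its minimum over it; as no point of $\Z_\eta(x)$ is optimal, this minimum strictly exceeds $v^*(x)$, so $\delta(\eta,x) := \min_{z \in \Z_\eta(x)} g(z;x) - v^*(x) > 0$. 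Consequently, $\textup{dist}(\hz^{ER}_n(x),S^*(x)) \geq \eta$ forces $g(\hz^{ER}_n(x);x) \geq v^*(x) + \delta(\eta,x)$.

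Next I would bound the true-objective optimality gap at the ER-SAA solution by twice the uniform objective deviation. Writing $\hz := \hz^{ER}_n(x)$ and picking any $z^* \in S^*(x)$, optimality of $\hz$ for~\eqref{eqn:app} gives $\hg^{ER}_n(\hz;x) \leq \hg^{ER}_n(z^*;x)$, whence
\begin{align*}
g(\hz;x) - v^*(x) &= \bigl(g(\hz;x) - \hg^{ER}_n(\hz;x)\bigr) + \bigl(\hg^{ER}_n(\hz;x) - g(z^*;x)\bigr) \\
&\leq \bigl(g(\hz;x) - \hg^{ER}_n(\hz;x)\bigr) + \bigl(\hg^{ER}_n(z^*;x) - g(z^*;x)\bigr) \leq 2\uset{z \in \Z}{\sup}\:\abs*{\hg^{ER}_n(z;x) - g(z;x)}.
\end{align*}
Combining this with the growth property shows that the event $\{\textup{dist}(\hz^{ER}_n(x),S^*(x)) \geq \eta\}$ is contained in $\bigl\{\uset{z \in \Z}{\sup}\abs*{\hg^{ER}_n(z;x) - g(z;x)} \geq \tfrac{1}{2}\delta(\eta,x)\bigr\}$, so it suffices to produce an exponential bound for the latter.

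Finally I would split the uniform deviation via the triangle inequality~\eqref{eqn:ersaa_error_decomp} and a union bound, so that the event above lies in $\bigl\{\sup_z \abs*{\hg^{ER}_n(z;x) - g^*_n(z;x)} \geq \tfrac{1}{4}\delta\bigr\} \cup \bigl\{\sup_z\abs*{g^*_n(z;x) - g(z;x)} \geq \tfrac{1}{4}\delta\bigr\}$. Lemma~\ref{lem:largedevofdev} (whose hypotheses are exactly Assumptions~\ref{ass:equilipschitz} and~\ref{ass:tradsaalargedev}--\ref{ass:reglargedev}) bounds the first event by $\bar{K}(\tfrac{1}{4}\delta,x)\exp(-\bar{\beta}(n,\tfrac{1}{4}\delta,x))$, and Assumption~\ref{ass:tradsaalargedev} bounds the second by $K(\tfrac{1}{4}\delta,x)\exp(-n\beta(\tfrac{1}{4}\delta,x))$. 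Summing and using $A e^{-a} + B e^{-b} \leq (A+B)e^{-\min(a,b)}$ gives the claim with $Q(\eta,x) := \bar{K}(\tfrac{1}{4}\delta,x) + K(\tfrac{1}{4}\delta,x)$ and $\gamma(n,\eta,x) := \min\{\bar{\beta}(n,\tfrac{1}{4}\delta,x),\, n\beta(\tfrac{1}{4}\delta,x)\}$, which diverges in $n$ since $\bar{\beta}(n,\cdot,x) \to \infty$ by Lemma~\ref{lem:largedevofdev} and $\beta(\tfrac{1}{4}\delta,x)$ is a fixed positive constant. I expect the main obstacle to be the first, deterministic step: one must argue that the gap $\delta(\eta,x)$ is strictly positive, which is where compactness of~$\Z$ and lower semicontinuity of $g(\cdot;x)$ are essential, and the degenerate case $\Z_\eta(x) = \emptyset$ must be dispatched separately. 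Everything downstream is a mechanical union-bound combination of the two already-established large-deviation estimates.
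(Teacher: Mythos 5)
Your proposal is correct and follows essentially the same route as the paper: the deterministic growth step is exactly the paper's Lemma~\ref{lem:lscsuboptimal} (compactness of $\Z$ plus lower semicontinuity of $g(\cdot;x)$), and the reduction of the event $\{\textup{dist}(\hz^{ER}_n(x),S^*(x)) \geq \eta\}$ to a uniform objective-deviation event, split via the triangle inequality and bounded by Assumption~\ref{ass:tradsaalargedev} together with Lemma~\ref{lem:largedevofdev}, mirrors the paper's argument (the paper phrases the factor-of-two step through $\sup_{z \in \hS^{ER}_n(x)}$ and $\sup_{z \in S^*(x)}$ separately rather than your two-term telescoping at $\hz$ and $z^*$, but the content is identical). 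No gaps.
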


{Assumptions~\ref{ass:varlargedev},~\ref{ass:errorlargedev},~\ref{ass:reglargedev_point2}, and~\ref{ass:reglargedev_mse2} are not required to establish Lemma~\ref{lem:largedevofdev} and Theorem~\ref{thm:exponentialconv} in the homoscedastic case {($\hQ_n := Q^* \equiv I$)}. Additionally, Assumption~\ref{ass:reglargedev_mse} can be weakened in this setting to $\mathbb{P}\bigl\{\frac{1}{n} \sum_{i=1}^{n} \norm{f^*(x^i) - \hf_n(x^i)} > \kappa \bigr\} \leq \bar{K}_f(\kappa) \exp\bigl(-\bar{\beta}_f(n,\kappa)\bigr)$ on account of Lemma~\ref{lem:meandeviation}.}

{To give an example of how Theorem \ref{thm:exponentialconv} can be used to provide a qualitative estimate of sample size required to obtain a desired accuracy with high probability,} we now specialize the results in this section to the {homoscedastic} setting when the ER-SAA formulation is applied to two-stage stochastic LP with OLS, Lasso, or kNN regression for estimating $f^*$.
{Given $\kappa > 0$, let $S^{\kappa}(x) := \Set{z \in \Z}{g(z;x) \leq v^*(x) + \kappa}$ denote the set of~$\kappa$-optimal solutions to the true problem~\eqref{eqn:speq}.
Given an unreliability level $\delta \in (0,1)$, our goal is to estimate the sample size~$n$ required for every solution to the ER-SAA problem~\eqref{eqn:app} to be $\kappa$-optimal to the true problem~\eqref{eqn:speq} with probability at least $1-\delta$, i.e., to estimate~$n$ such that $\mathbb{P}\bigl\{\hS^{ER}_n(x) \subseteq S^{\kappa}(x)\bigr\} \geq 1 - \delta$.}
In the following, we make stronger than necessary assumptions on the regression setups for readability.

{Our sample size estimate for ER-SAA proceeds by estimating the sample size required for the full-information SAA problem~\eqref{eqn:fullinfsaa} to be `close to' the true problem~\eqref{eqn:speq} \textit{and} for the ER-SAA problem~\eqref{eqn:app} to be `close to' the FI-SAA problem~\eqref{eqn:fullinfsaa}; see also \eqref{eqn:ersaa_error_decomp}.
From Section~5.3 of~\citet{shapiro2009lectures}, we have the following sample size estimate for every solution to the FI-SAA problem~\eqref{eqn:fullinfsaa} to be $\kappa$-optimal to the true problem~\eqref{eqn:speq} with probability at least $1-\delta$:}
\[
{n \geq n^* := \dfrac{O(1)\sigma^2_c(x)}{\kappa^2} \left[ d_z \log\left(\dfrac{O(1) D}{\kappa}\right) + \log\left(\dfrac{O(1)}{\delta}\right) \right].}
\]
{Learning of the regression function~$f^*$ introduces additional terms in the estimate for ER-SAA that depend on the dimensions $d_y$ and $d_x$ of the random vector~$Y$ and the random covariates~$X$.}

\begin{restatable}{proposition}{propspecfinsamp}
\label{prop:specfinsamp}
Consider Example~\ref{exm:runningexample} {and assume $\hQ_n = Q^* \equiv I$}.
Suppose~$\Z$ is compact with diameter $D$ and for each $z \in \Z$ and a.e.\ $x \in \X$,  the random variable $c(z,f^*(x)+\varepsilon) -
\expect{c(z,f^*(x)+\varepsilon)}$
is sub-Gaussian with variance proxy $\sigma^2_c(x)$.
Let $\{\varepsilon^i\}_{i \in [n]}$ be i.i.d.\ sub-Gaussian random vectors with variance proxy $\sigma^2$, $\kappa > 0$ be the target optimality gap, and $\delta \in (0,1)$ be the desired unreliability level.
\begin{enumerate}
\smallskip

\item Suppose the regression function $f^*$ is linear, the regression step~\eqref{eqn:regr} is OLS regression, 
the covariance matrix $\Sigma_X$ of the covariates is positive definite,
and the random vector $\Sigma_X^{-\frac{1}{2}} X$ is sub-Gaussian.
Then, we have $\mathbb{P}\bigl\{\hS^{ER}_n(x) \subseteq S^{\kappa}(x)\bigr\} \geq 1 - \delta$ for sample size $n$ satisfying
\[
n \geq {n^*} + \dfrac{O(1)\sigma^2 d_y}{\kappa^2} \left[ \log\left(\dfrac{O(1)}{\delta}\right) + d_x \right].
\]

\item Suppose the regression function $f^*$ is linear with $\norm{\theta^*_{[j]}}_0 \leq s$, $\forall j \in [d_y]$, the regression step~\eqref{eqn:regr} is Lasso regression,
the support~$\X$ of the covariates~$X$ is compact, $\expect{\abs{X_j}^2} > 0$, $\forall j \in [d_x]$, and the matrix $\expect{X\tr{X}} - \tau \textup{diag}(\expect{X\tr{X}})$ is positive semidefinite for some constant $\tau \in (0,1]$.
Then, we have $\mathbb{P}\bigl\{\hS^{ER}_n(x) \subseteq S^{\kappa}(x)\bigr\} \geq 1 - \delta$ for sample size $n$ satisfying
\[
n \geq {n^*} + \dfrac{O(1)\sigma^2 s d_y}{\kappa^2} \left[ \log\left(\dfrac{O(1)}{\delta}\right) + \log(d_x) \right].
\]

\item Suppose the regression function $f^*$ is Lipschitz continuous, the regression step~\eqref{eqn:regr} is kNN regression with parameter $k = \ceil{O(1)n^{\gamma}}$ for some constant $\gamma \in (0,1)$, 
the support~$\X$ of the covariates~$X$ is compact, and there exists a constant $\tau > 0$ such that $\prob{X \in \mathcal{B}_{\kappa}(x)} \geq \tau \kappa^{d_x}$, $\forall x \in \X$ and $\kappa > 0$.
Then, we have $\mathbb{P}\bigl\{\hS^{ER}_n(x) \subseteq S^{\kappa}(x)\bigr\} \geq 1 - \delta$ for sample size $n$ satisfying $n \geq O(1)\left(\dfrac{O(1)}{\kappa}\right)^{\frac{d_x}{1-\gamma}}$, $\dfrac{n^{\gamma}}{\log(n)} \geq \dfrac{O(1) d_x d_y \sigma^2}{\kappa^2}$, and
{
\small
\begin{align*}
\hspace*{-0.2in}n &\geq {n^*} + \left(\dfrac{O(1)\sigma^2 d_y}{\kappa^2}\right)^{\frac{1}{\gamma}} \left[ d_x \log\left(\dfrac{O(1)}{d_x}\right) + \log\left(\dfrac{O(1)}{\delta}\right)\right]^{\frac{1}{\gamma}} + \left(\dfrac{O(1)d_y}{\kappa^2}\right)^{d_x} \left[ \dfrac{d_x}{2} \log\left(\dfrac{O(1)d_x d_y}{\kappa^2}\right) + \log\left(\dfrac{O(1)}{\delta}\right)\right].
\end{align*}
}%
\end{enumerate}
\end{restatable}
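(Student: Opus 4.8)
The plan is to combine the error decomposition~\eqref{eqn:ersaa_error_decomp} with the standard sample-size argument for SAA (Section~5.3 of~\citet{shapiro2009lectures}) and then substitute method-specific finite-sample regression bounds. The key reduction is that if $\sup_{z \in \Z} \abs{\hg^{ER}_n(z;x) - g(z;x)} < \kappa/2$, then any optimal solution $\hz^{ER}_n(x)$ obeys $g(\hz^{ER}_n(x);x) \leq \hg^{ER}_n(\hz^{ER}_n(x);x) + \kappa/2 \leq \hg^{ER}_n(z^*;x) + \kappa/2 \leq v^*(x) + \kappa$ for any $z^* \in S^*(x)$, so that $\hS^{ER}_n(x) \subseteq S^{\kappa}(x)$. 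By~\eqref{eqn:ersaa_error_decomp} it therefore suffices to make each of $\sup_z \abs{g^*_n(z;x) - g(z;x)}$ and $\sup_z \abs{\hg^{ER}_n(z;x) - g^*_n(z;x)}$ smaller than $\kappa/4$ with probability at least $1 - \delta/2$, and then apply a union bound.

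First I would handle the full-information term. Under the assumed sub-Gaussianity of $c(z,f^*(x)+\varepsilon) - \expect{c(z,f^*(x)+\varepsilon)}$ with variance proxy $\sigma^2_c(x)$, the sample size for $\sup_z \abs{g^*_n(z;x) - g(z;x)} < \kappa/4$ to hold with probability at least $1-\delta/2$ is exactly the estimate $n \geq n^*$ displayed before the proposition, obtained from the covering-number argument of Section~5.3 of~\citet{shapiro2009lectures} applied to the $\Z$-diameter $D$.

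Next I would handle the ER-SAA-versus-FI-SAA term. Since $\hQ_n \equiv I$, Lemmas~\ref{lem:ersaa_error_decomp_term1} and~\ref{lem:meandeviation} together with Cauchy--Schwarz give
\[
\sup_{z \in \Z} \abs{\hg^{ER}_n(z;x) - g^*_n(z;x)} \leq \Bigl(\sup_{z \in \Z} L(z)\Bigr)\biggl(\norm{\hf_n(x) - f^*(x)} + \Bigl(\tfrac{1}{n}\textstyle\sum_{i=1}^n \norm{\hf_n(x^i) - f^*(x^i)}^2\Bigr)^{1/2}\biggr),
\]
where $\sup_z L(z) < +\infty$ for Example~\ref{exm:runningexample} (verified in Section~\ref{sec:tssp}). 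Hence it suffices to drive both the pointwise prediction error $\norm{\hf_n(x)-f^*(x)}$ and the empirical $L^2$ error below a constant multiple of $\kappa$ with high probability---these are precisely the events controlled by Assumptions~\ref{ass:reglargedev_point} and~\ref{ass:reglargedev_mse}, which I verify in Section~\ref{sec:regression} for each regression setup: for OLS under sub-Gaussian covariates with $\Sigma_X \succ 0$ the two errors concentrate at rate $\sigma^2 d_y(d_x + \log(1/\delta))/n$; for the Lasso under the stated compatibility condition they concentrate at rate $\sigma^2 s d_y(\log d_x + \log(1/\delta))/n$; and for kNN with $k = \lceil O(1)n^{\gamma}\rceil$ a bias--variance split yields a variance contribution governed by $n^{\gamma}/\log n$ and a Lipschitz bias contribution governed by the neighbor radius, which through the density lower bound $\prob{X \in \mathcal{B}_\kappa(x)} \geq \tau\kappa^{d_x}$ is controlled by the sample size. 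Inverting each concentration inequality for target accuracy $\kappa$ and reliability $\delta/2$ produces the additional sample-size terms in parts~1--3, and a union bound over the full-information and regression events delivers the overall guarantee (the stated $n \geq n^* + (\text{regression term})$ meets both requirements simultaneously).

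I expect the kNN case to be the main obstacle. Unlike OLS and Lasso, kNN has a genuine bias--variance tradeoff, so the errors cannot be made small by sample size alone: the bias vanishes only once the neighbor radius is small, which via the density bound forces $n \geq O(1)(O(1)/\kappa)^{d_x/(1-\gamma)}$, while variance control requires enough neighbors, forcing $n^{\gamma}/\log n \geq O(1)d_x d_y \sigma^2/\kappa^2$. Tracking the dimensional dependence (the curse of dimensionality reflected in the $d_x$ exponents) and its interaction with the choice $k = \lceil O(1)n^{\gamma}\rceil$ through the concentration and covering arguments of Section~\ref{sec:regression} is where the careful bookkeeping concentrates; by contrast, the OLS and Lasso cases follow fairly directly from standard high-dimensional regression bounds~\citep{wainwright2019high}.
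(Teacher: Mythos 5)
Your proposal is correct and follows essentially the same route as the paper's proof: the decomposition~\eqref{eqn:ersaa_error_decomp}, the Section~5.3/Theorems~7.66--7.67 argument of \citet{shapiro2009lectures} for the full-information term yielding $n^*$, and inversion of the method-specific concentration bounds from Section~\ref{sec:regression} (Propositions~\ref{prop:ols-ass},~\ref{prop:lasso-ass}, and~\ref{prop:knn-ass}) for the ER-SAA-versus-FI-SAA term, finished with a union bound. The only cosmetic difference is that you pass through Lemma~\ref{lem:meandeviation} to control the pointwise and empirical errors separately, whereas the paper quotes a single combined large-deviation inequality for $\frac{1}{n}\sum_{i=1}^{n}\norm{\teps^i_{n}(x)}^2$ for each regression setup; the two are equivalent up to constants.
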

\medskip

Proposition~\ref{prop:specfinsamp} illustrates the tradeoff between using parametric and nonparametric regression approaches within the ER-SAA framework.
{The sample size estimates in Proposition~\ref{prop:specfinsamp} involve the sum of two contributions, the FI-SAA contribution $n^*$ and additional regression-related terms introduced by the use of estimates of~$f^*$ within the ER-SAA.}
\textit{Assuming that} the regression function $f^*$ satisfies the necessary structural properties, using OLS regression or the Lasso for the regression step~\eqref{eqn:regr} can yield sample size estimates that depend modestly on the accuracy $\kappa$ and the dimensions $d_x$ and $d_y$
compared to kNN regression.
On the other hand, unlike OLS and Lasso regression, the sample size estimates for kNN regression are valid under mild assumptions on the regression function~$f^*$.
Nevertheless, we empirically demonstrate in Section~\ref{sec:computexp} that it may be beneficial to use a structured but misspecified prediction model when we do not have an abundance of data.
Note that the OLS estimate includes a term that depends linearly on the dimension~$d_x$ of the covariates, whereas the corresponding term in the Lasso estimate only depends logarithmically on~$d_x$.

\subsection{Outline of analysis for the {jackknife}-based estimators}
\label{subsec:jackoutline}

The results thus far carry over to the J-SAA and J+-SAA estimators if the assumptions that ensure that the ER-SAA mean deviation term $\frac{1}{n} \sum_{i=1}^{n}\norm{\teps^{i}_{n}(x)} \convinprob 0$ at a certain rate are adapted to ensure that the J-SAA and J+-SAA mean deviation terms $\frac{1}{n} \sum_{i=1}^{n}\norm{\teps^{i,J}_{n}(x)}$ and $\frac{1}{n} \sum_{i=1}^{n}\norm{\teps^{i,J+}_{n}(x)}$ converge to zero in probability at a certain rate, where
\begin{align*}
\teps^{i,J}_{n}(x) &:= \left( \hf_n(x) + {\hQ_n(x)}\heps^{i}_{n,J} \right) - \left( f^*(x) + {Q^*(x)}\varepsilon^i \right), \quad \forall i \in [n], \\
\teps^{i,J+}_{n}(x) &:= \left( \hf_{-i}(x) + {\hQ_{-i}(x)}\heps^{i}_{n,J} \right) - \left( f^*(x) + {Q^*(x)}\varepsilon^i \right), \quad \forall i \in [n].
\end{align*}
{Lemma~\ref{lem:jack_meandeviation} in Section~\ref{sec:jackknife} of the Appendix presents the analogue of Lemma~\ref{lem:meandeviation} for the jackknife-based mean deviation terms.
It also provides guidance for how the assumptions on the quantities appearing in inequality~\eqref{eqn:meandeviation} could be replaced with assumptions on the quantities appearing in the bounds of the jackknife-based mean deviation terms to derive similar results for the J-SAA and J+-SAA estimators as the ER-SAA estimator.}
Since the formal statements of the assumptions and results for the J-SAA and J+-SAA estimators closely mirror those for the ER-SAA given in Sections~\ref{subsec:consistency} and~\ref{subsec:finitesample}, we present these details in Section~\ref{sec:jackknife} of the Appendix.

\section{Computational experiments}
\label{sec:computexp}

We consider instances of the following resource allocation model adapted from~\citet{luedtke2014branch}:
\[
\uset{z \in \mathbb{R}^{\abs{\I}}_+}{\min} \: \tr{c}_z z + \expect{Q(z,Y)} ,
\]
where the second-stage function is defined as
\[
Q(z,y) := \min_{v \in \mathbb{R}^{\abs{\I} \times \abs{\J}}_+, w \in \mathbb{R}^{\abs{\J}}_+} \: \Bigl\{ \tr{q}_w w \, : \, \sum_{j \in \J} v_{ij} \leq \rho_i z_i, \:\: \forall i \in \I, \:\: \sum_{i \in \I} \mu_{ij} v_{ij} + w_{j} \geq y_{j}, \:\: \forall j \in \J \Bigr\}.
\]
The first-stage variables $z_{i}$ denote the order quantities of resources $i \in \I$, and the second-stage variables $v_{ij}$ and $w_j$ denote the amount of resource $i \in \I$ allocated to customer type $j \in \J$ and the unmet demand of customer type $j$, respectively. We consider instances with $\abs{\I} = 20$ and $\abs{\J} = 30$.
The yield and service rate parameters $\rho$ and $\mu$ and the cost coefficients $c_z$ and $q_w$ are assumed to be deterministic. Parameters $c_z$, $\rho$, and $\mu$ are set using the procedure described in~\citet{luedtke2014branch}, and the coefficients $q_w$ are determined by $q_{w} := \tau \norm{c_{z}}_{\infty}$, where each component of the vector $\tau$ is drawn independently from a lognormal $\mathcal{LN}(0.5,0.05)$ distribution.

The demands $y_j$, $j \in \J$, of the customer types are considered to be stochastic {with $\Y = \R^{\abs{\J}}_+$}.  
We assume that some of the variability in the demands can be explained with knowledge of covariates $X_l$, $l \in \mathcal{L}$, where $\abs{\mathcal{L}} = d_x$.
The demands $Y$ are assumed to be related to the covariates through
\[
Y_j = \varphi^*_j + \sum_{l \in \mathcal{L}^*} \zeta^*_{jl} (X_l)^p + {q^*_j(X)}\varepsilon_j, \quad \forall j \in \J,
\]
where $p \in \{0.5,1,2\}$ is a fixed parameter that determines the model class, {$q^*_j : \X \to \R_+$ is defined as 
$q^*_j(X) = \bigl(s_j \exp\bigl(\sum_{l \in \mathcal{L}^*} \pi^*_{jl} \log\abs{1 + X_l}\bigr)\bigr)^{1/2}$
for parameters $\{\pi^*_{jl}\}_{l \in \mathcal{L}^*}$~\citep{romano2017resurrecting}, errors} $\varepsilon_j \sim \mathcal{N}\left(0,\sigma^2_j\right)$ {are independent of $X$}, $\varphi^*$, $\zeta^*$ and $\sigma_j$ are {additional} model parameters, and $\mathcal{L}^* \subseteq \mathcal{L}$ contains the indices of a subset of covariates with predictive power (note that $\mathcal{L}^*$ does not depend on~$j \in \J$). 
{In the definition of $q^*_j(X)$, the scaling factor $s_j$ is a numerical approximation of the median of $\exp\bigl(\sum_{l \in \mathcal{L}^*} \pi^*_{jl} \log\abs{1 + X_l}\bigr)$ so that $\mathbb{P}_X(q^*_j(X) > 1) \approx 0.5$.
The form of the heteroscedasticity functions $q^*_j$ is chosen to simulate increasing error variance with increasing magnitude of the covariates~\citep{romano2017resurrecting}.}
Throughout, we assume $\abs{\mathcal{L}^*} = 3$, i.e., the demands truly depend only on three covariates.
We simulate i.i.d.\ data $\{(x^i,\varepsilon^i)\}$ with $\varphi^*$ and $\zeta^*$ randomly generated, $\sigma_j = \sigma = 5$, $\forall j \in \J$, unless otherwise specified, and draw covariate samples $\{x^i\}_{i=1}^{n}$ from a multivariate folded normal distribution (see Section~\ref{sec:computexp-details} in the Appendix for details).
{We vary the heteroscedasticity level $\omega \in \{1,2,3\}$, where $\omega = 1$, $2$, and $3$ correspond to zero, moderate, and severe heteroscedasiticity, respectively, and sample $\{\pi^*_{jl}\}_{l \in \mathcal{L}^*}$ independently from the uniform distribution $U(0, 2(\omega - 1)^2)$.
Figure~\ref{fig:het_stats} plots statistics of the functions $q^*_j$ for moderate and severe heteroscedasticity.}

\begin{figure}[t!]
    \centering
        \includegraphics[width=0.45\textwidth]{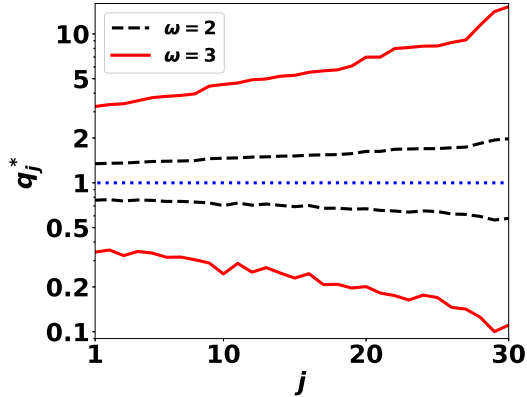}
    \caption{{Median (dotted line) and the $10$th and $90$th quantiles (dashed lines for $\omega = 2$ [moderate heteroscedasticity] and solid lines for $\omega = 3$ [severe heteroscedasticity]) of the heteroscedasticity function $q^*_j$ over $1100$ independent realizations of the covariates. The $y$-axis is in logarithmic scale and the indices $j$ are reordered to generate approximately monotonic plots.}}
    \label{fig:het_stats}
\end{figure}

Given data $\D_n$ on the demands and covariates, we estimate the coefficients of the linear model
\[
Y_j = \varphi_j + \sum_{l \in \mathcal{L}} \zeta_{jl} X_l + \eta_j, \quad \forall j \in \J,
\]
where $\eta_j$ are zero-mean errors, using OLS or Lasso regression and use this prediction model within the ER-SAA, J-SAA, and J+-SAA frameworks.
We use this linear prediction model even when the degree $p \neq 1$, in which case the prediction model is misspecified.
{We also evaluate the performance of the ER-SAA approach when kNN regression is used to predict $Y$ from $X$ (the parameter $k$ in kNN regression is chosen from $\left[ \floor{n^{0.1}},\ceil{n^{0.9}} \right]$ to minimize the $5$-fold CV test error).
We compare these ER-SAA and J-SAA estimators with the point prediction-based (PP) deterministic approximation~\eqref{eqn:pointpred} that uses OLS or Lasso regression (assuming a linear model).
While we do not estimate the heteroscedasticity functions $\{q^*_j\}$ by default in our experiments (equivalent to setting $\hat{Q}_n \equiv I$), we also compare the performance of ER-SAA formulations that ignore heteroscedasticity with heteroscedasticity-aware ER-SAA formulations where each $q^*_j$ is estimated using Lasso regression (cf.\ Example~\ref{exm:regrexample}).
When the function $f^*$ is estimated using OLS regression, we use the estimate of $Q^*$ to update the estimate $\hf_n$ of $f^*$ using weighted least squares~\citep{romano2017resurrecting}.
While estimating~$Q^*$, we assume that its parametric form is known; model misspecification for~$f^*$ also simulates model misspecification for heteroscedasticity in our two-step approach for estimating~$Q^*$.}

We compare our data-driven SAA estimators with  the kNN-based reweighted SAA (kNN-SAA) approach of~\citet{bertsimas2014predictive} on a few test instances by varying the dimensions of the covariates~$d_x$, the sample size~$n$, the degree~$p$, the standard deviation~$\sigma$ of the errors~$\varepsilon$, {and the degree of heteroscedasticity~$\omega$}.
While our case studies illustrate the potential advantages of employing parametric regression models (such as OLS and the Lasso) within our data-driven formulations, we do not claim that this advantage holds for arbitrary model instances. 
We choose the kNN-SAA approach to compare against because it is easy to implement and tune this approach, and the empirical results of~\citet{bertsimas2014predictive} and~\citet{bertsimas2019predictions} show that this approach is one of the better performing reweighted SAA approaches.The parameter $k$ in kNN-SAA is {once again chosen from $\left[ \floor{n^{0.1}},\ceil{n^{0.9}} \right]$ to minimize the $5$-fold CV test error.}

Solutions from the different approaches are compared by estimating a normalized version of the upper bound of a $99\%$ confidence interval (UCB) on their optimality gaps using the multiple replication procedure of~\citet{mak1999monte} (see Section~\ref{sec:computexp-details} for details).
{We choose to compare $99\%$ UCBs of the different estimators as it provides a conservative estimate of their suboptimality and mitigates the variability of a single random evaluation sample}.
Because the data-driven solutions depend on the realization of samples~$\D_n$, we perform $100$ replications per test instance and report our results in the form of box plots of these UCBs (the boxes denote the $25^{\text{th}}$, $50^{\text{th}}$, and $75^{\text{th}}$ percentiles of the $99\%$ UCBs, and the whiskers denote their {$5^{\text{th}}$ and $95^{\text{th}}$} percentiles over the $100$ replicates).

Source code and data for the test instances are available at \url{https://github.com/rohitkannan/DD-SAA}.
Our codes are written in Julia~0.6.4~\citep{bezanson2017julia}, use Gurobi~8.1.0 to solve LPs through the JuMP~0.18.5 interface~\citep{dunning2017jump}, and use \texttt{glmnet}~0.3.0~\citep{friedman2010regularization} for Lasso regression. 
All computational tests were conducted through the UW-Madison high throughput computing software \texttt{HTCondor} (\url{http://chtc.cs.wisc.edu/}).

\paragraph{Effect of varying covariate dimension.} 
Figure~\ref{fig:comp_ols} compares the performance of the kNN-SAA, {ER-SAA+kNN}, ER-SAA+OLS, {and PP+OLS} approaches {for the homoscedastic case ($\omega = 1$)} by varying the model degree $p$, the covariate dimension among $d_x \in \{3,10,100\}$, and the sample size among $n \in \{{5(d_x + 1)}, 20(d_x + 1), 100(d_x + 1)\}$. 
Note that OLS regression estimates $d_x + 1$ parameters for each $j \in \J$.
{The ER-SAA approaches in this case study do not estimate $Q^* \equiv I$, but directly assume $\hat{Q}_n \equiv I$.}
{The kNN-SAA and ER-SAA+kNN approaches exhibit similar performance overall.}
When the prediction model is correctly specified (i.e., $p = 1$), the ER-SAA+OLS approach unsurprisingly dominates the {ER-SAA+kNN and kNN-SAA approaches}.
When $p \neq 1$, as anticipated, the ER-SAA+OLS approach does not yield a consistent estimator, whereas the {ER-SAA+kNN and kNN-SAA approaches} yields consistent estimators, albeit with a slow rate of convergence (cf.\ Proposition~\ref{prop:specfinsamp}).
However, the {ER-SAA+OLS} approach consistently outperforms the {ER-SAA+kNN and kNN-SAA approaches} when $p = 0.5$ even for the largest sample size of $n = 100(d_x + 1)$. 
When the degree $p = 2$, the {ER-SAA+kNN and kNN-SAA approaches} fare better than the ER-SAA+OLS approach only for a sample size of $n \geq 80$ when the covariate dimension is small ($d_x = 3$), and lose this advantage in the larger covariate dimensions.
{Although the PP+OLS does not yield a consistent estimator even when $p = 1$, it performs better than the ER-SAA+kNN and kNN-SAA approaches in many cases even for large sample sizes~$n$.
However, the ER-SAA+OLS approach outperforms the PP+OLS approach across all cases.}
While we do not show results, we mention that the N-SAA estimator is not asymptotically optimal for all three model instances with the median values of the $99\%$ UCBs of its percentage optimality gaps being about {$11\%$, $5\%$, and $26\%$} for the $p = 1$, $p = 0.5$, and $p = 2$ instances, respectively, for {$n = 10100$}. 
This indicates that using covariate information can be advantageous in these instances.

\begin{figure}[t!]
    \centering
    \begin{subfigure}[t]{0.33\textwidth}
        \centering
        \includegraphics[width=\textwidth]{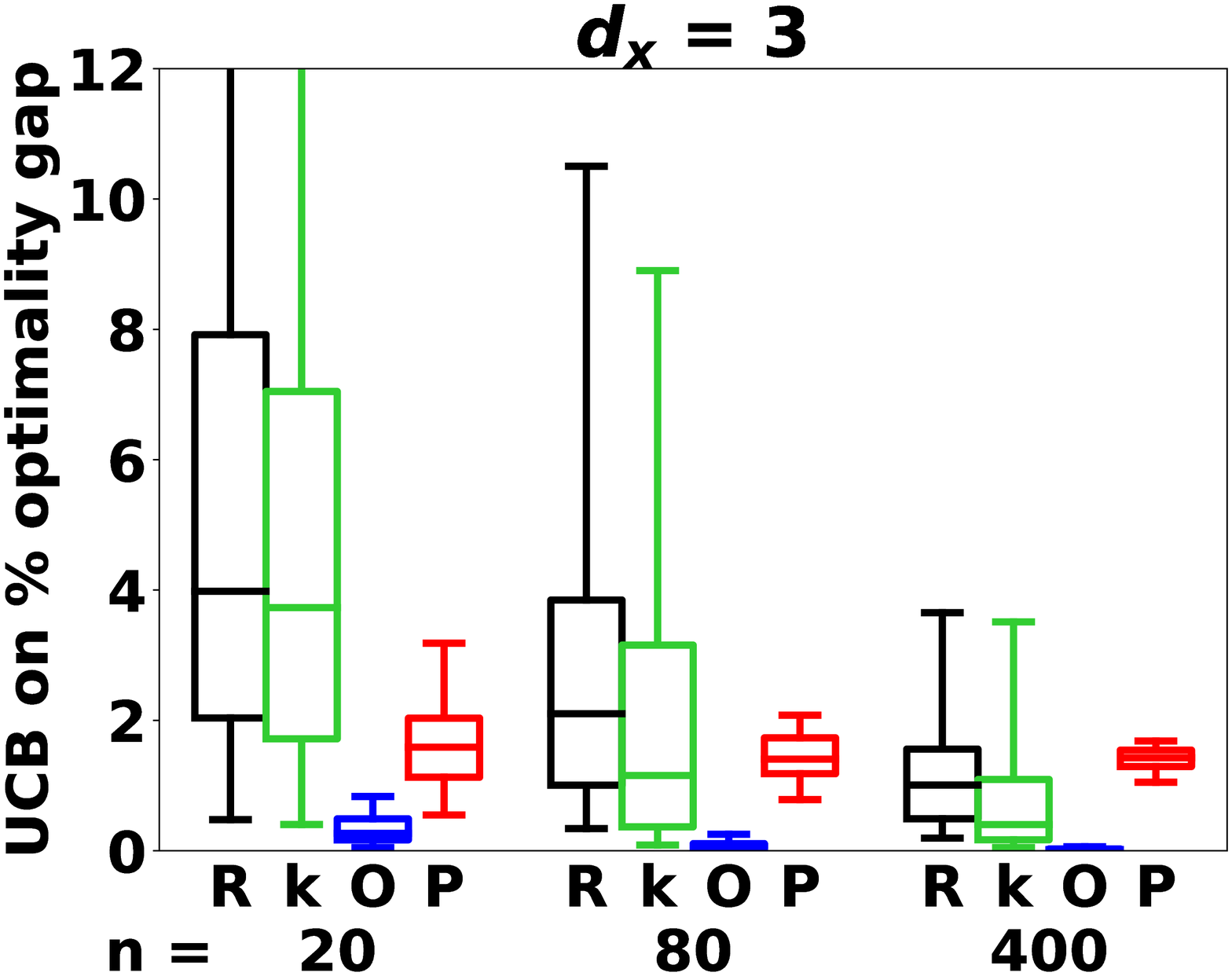}
    \end{subfigure}%
    ~ 
    \begin{subfigure}[t]{0.33\textwidth}
        \centering
        \includegraphics[width=\textwidth]{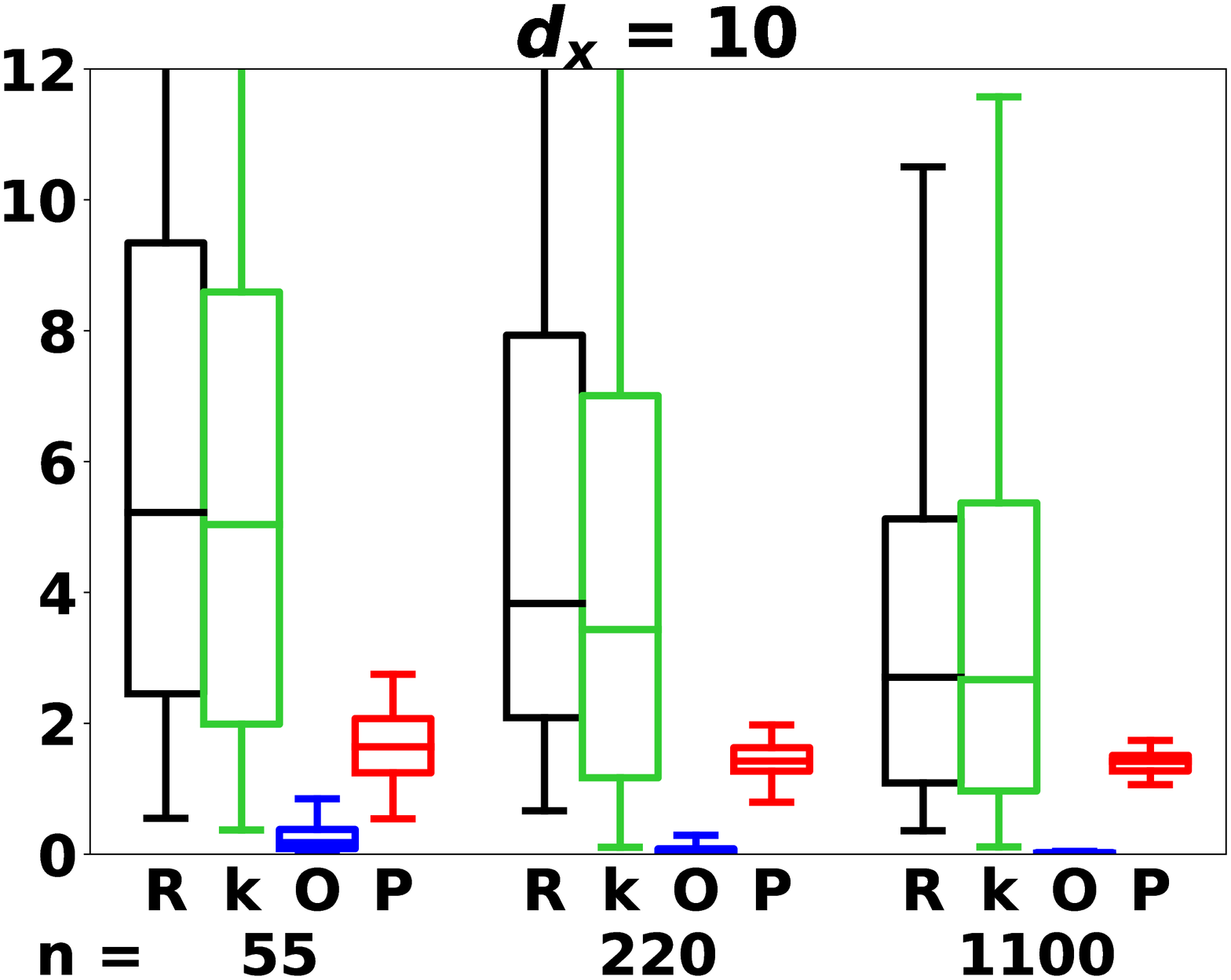}
    \end{subfigure}%
    ~ 
    \begin{subfigure}[t]{0.33\textwidth}
        \centering
        \includegraphics[width=\textwidth]{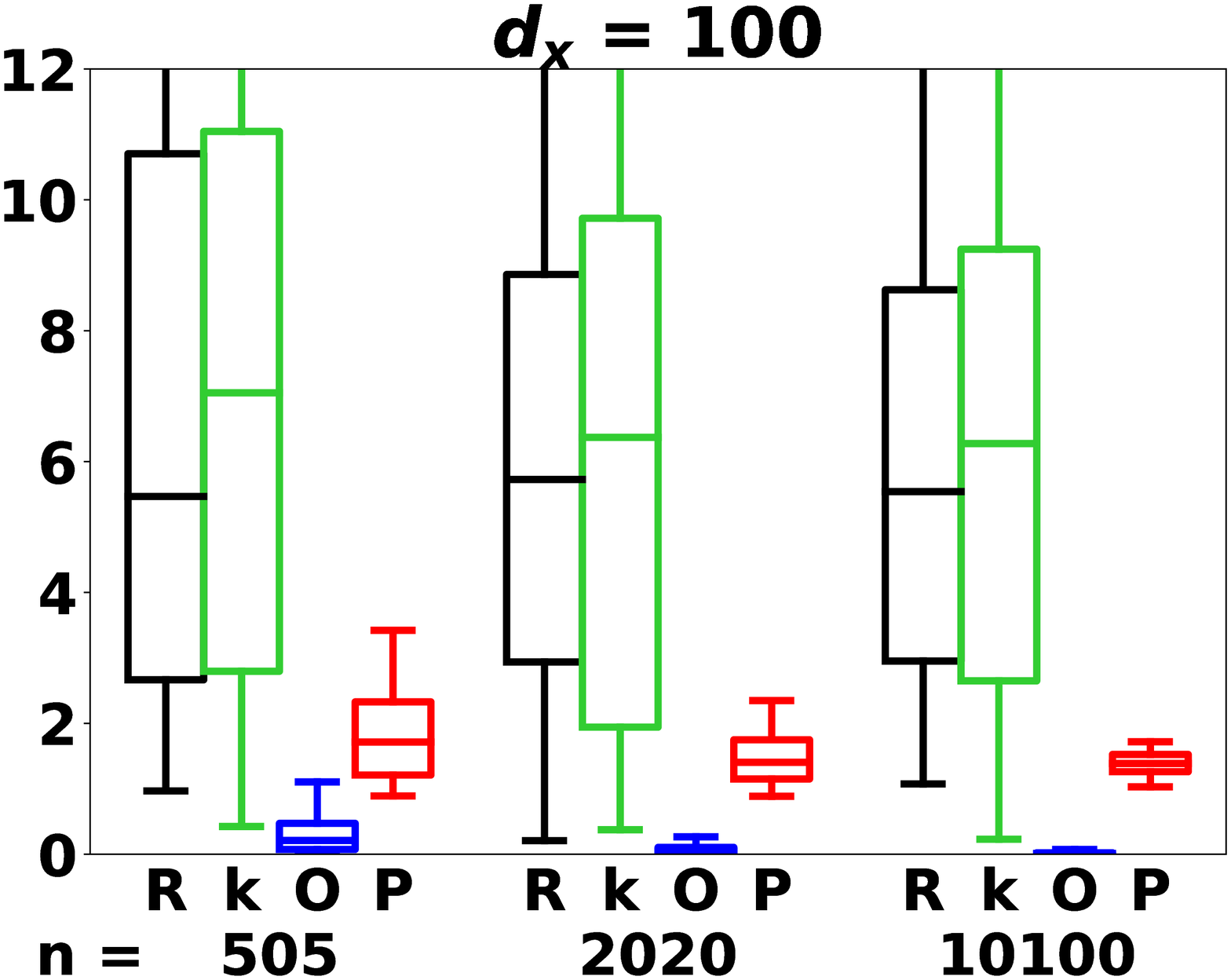}
    \end{subfigure}\\
    \begin{subfigure}[t]{0.33\textwidth}
        \centering
        \includegraphics[width=\textwidth]{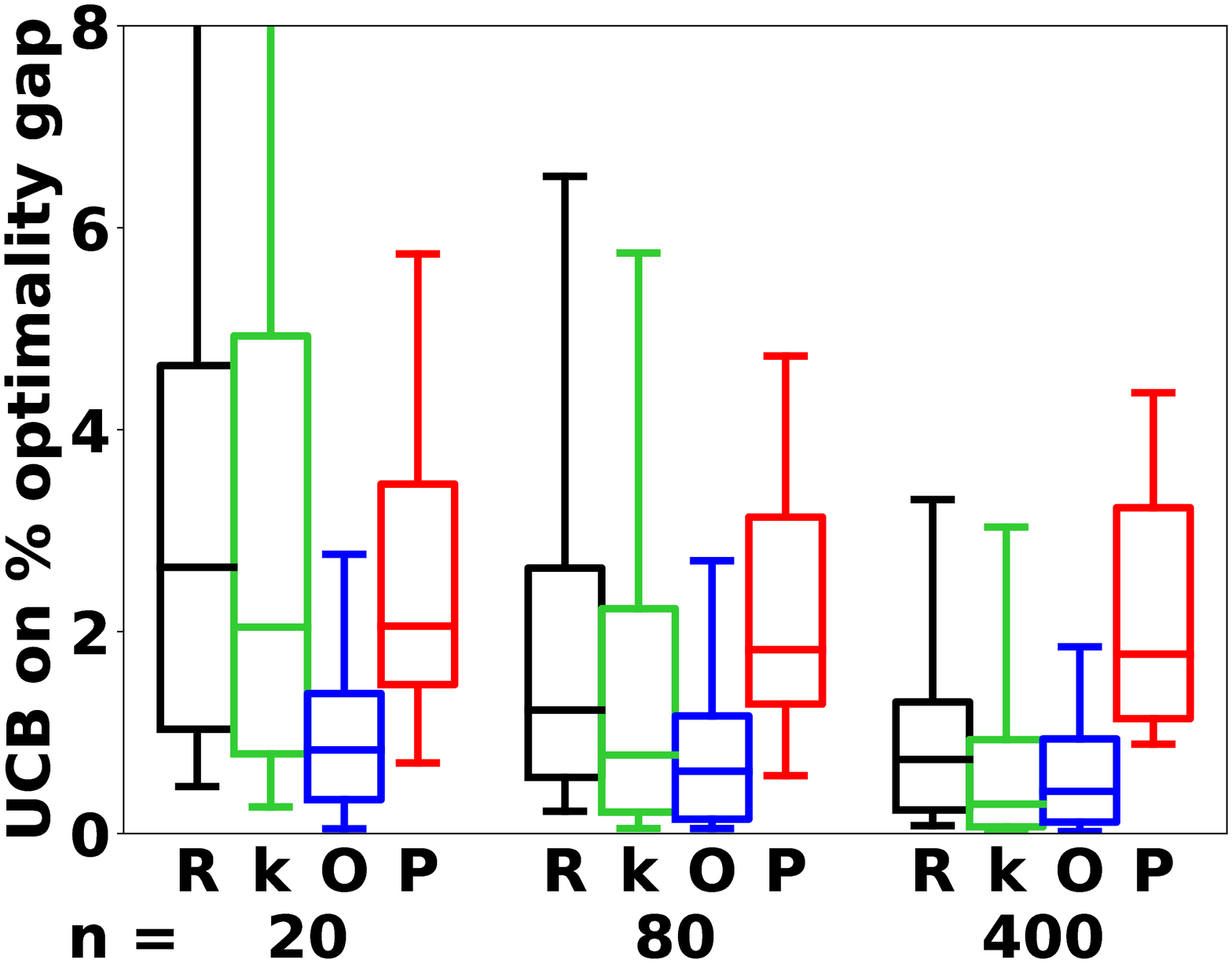}
    \end{subfigure}%
    ~ 
    \begin{subfigure}[t]{0.33\textwidth}
        \centering
        \includegraphics[width=\textwidth]{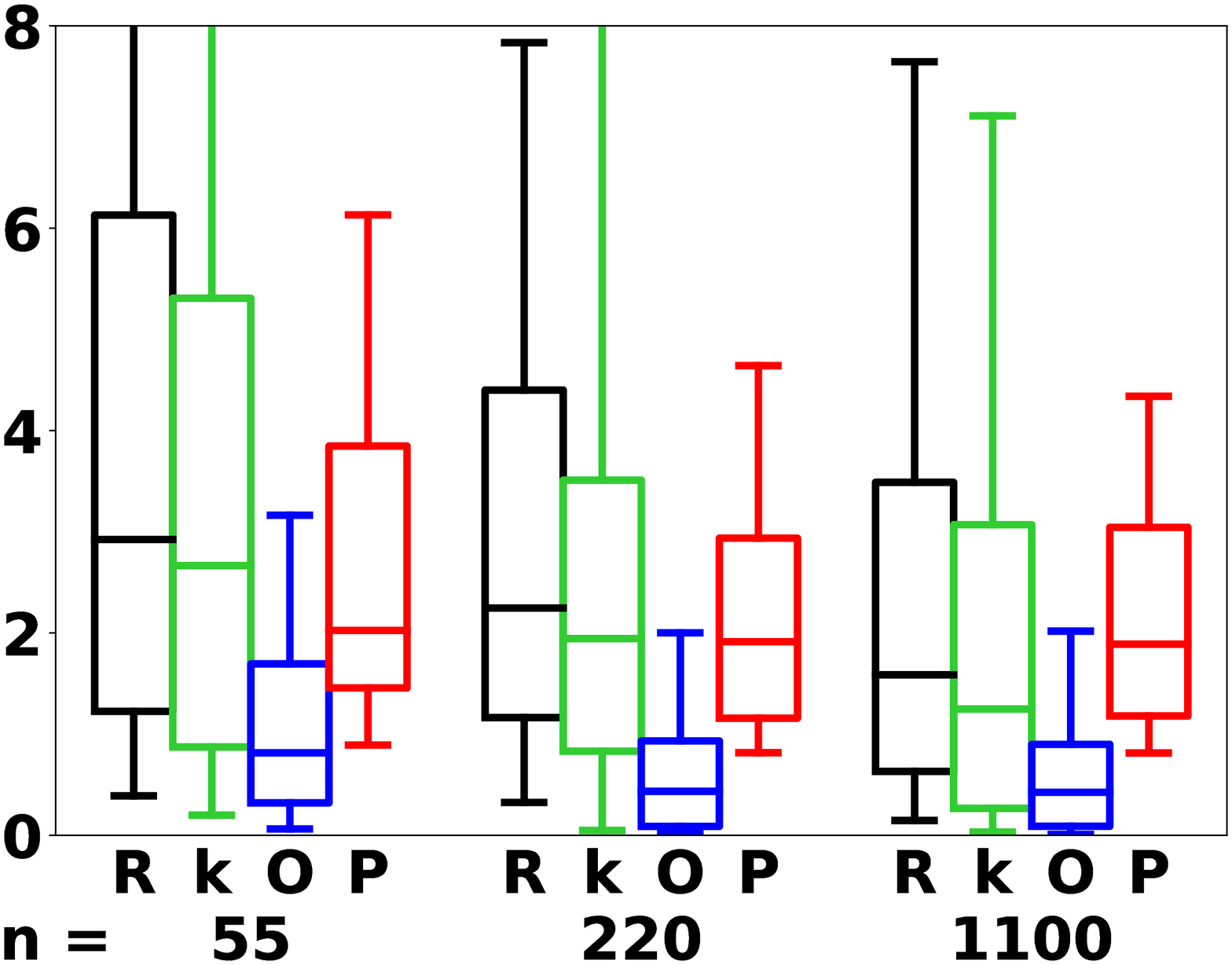}
    \end{subfigure}%
    ~ 
    \begin{subfigure}[t]{0.33\textwidth}
        \centering
        \includegraphics[width=\textwidth]{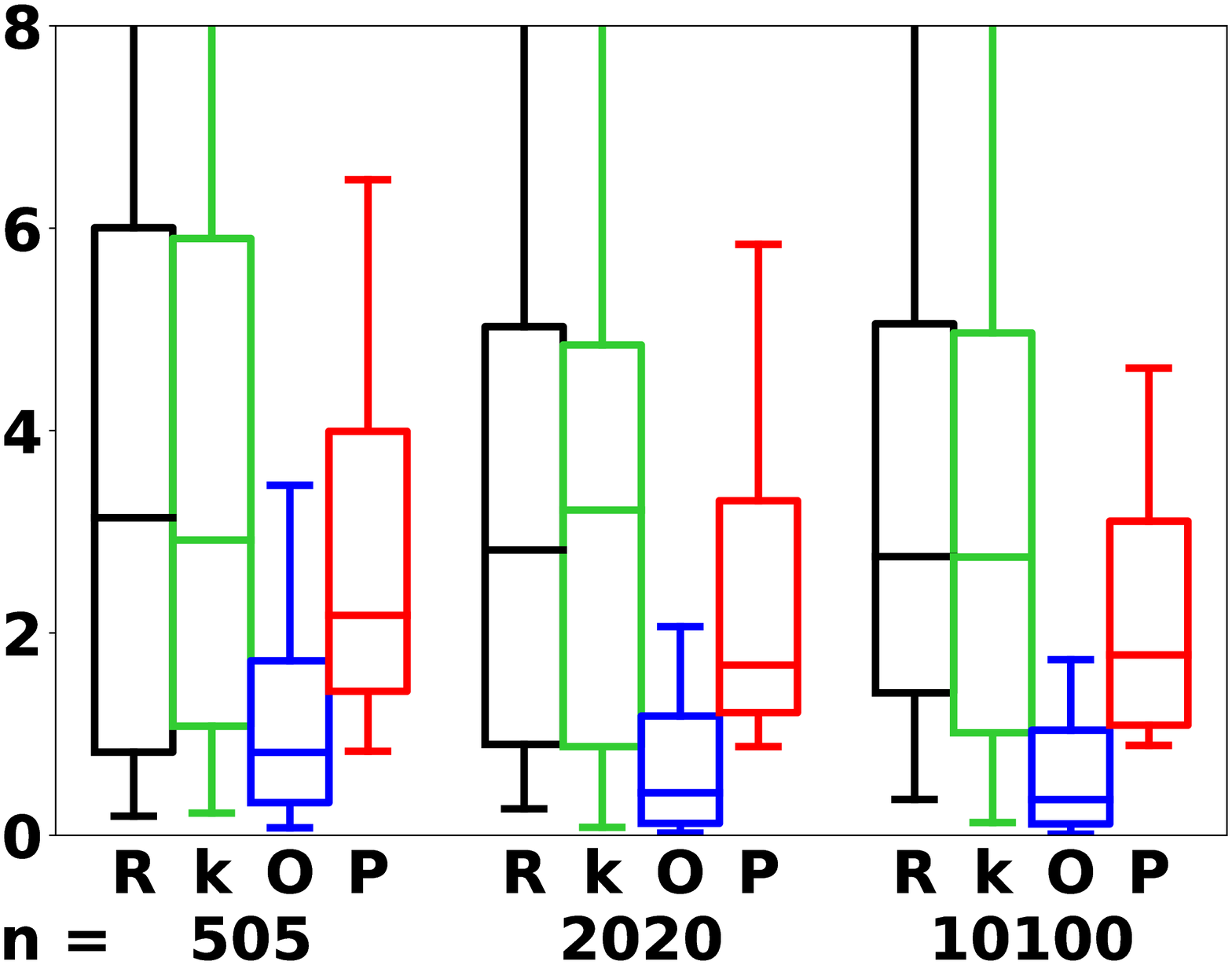}
    \end{subfigure}\\
    \begin{subfigure}[t]{0.33\textwidth}
        \centering
        \includegraphics[width=\textwidth]{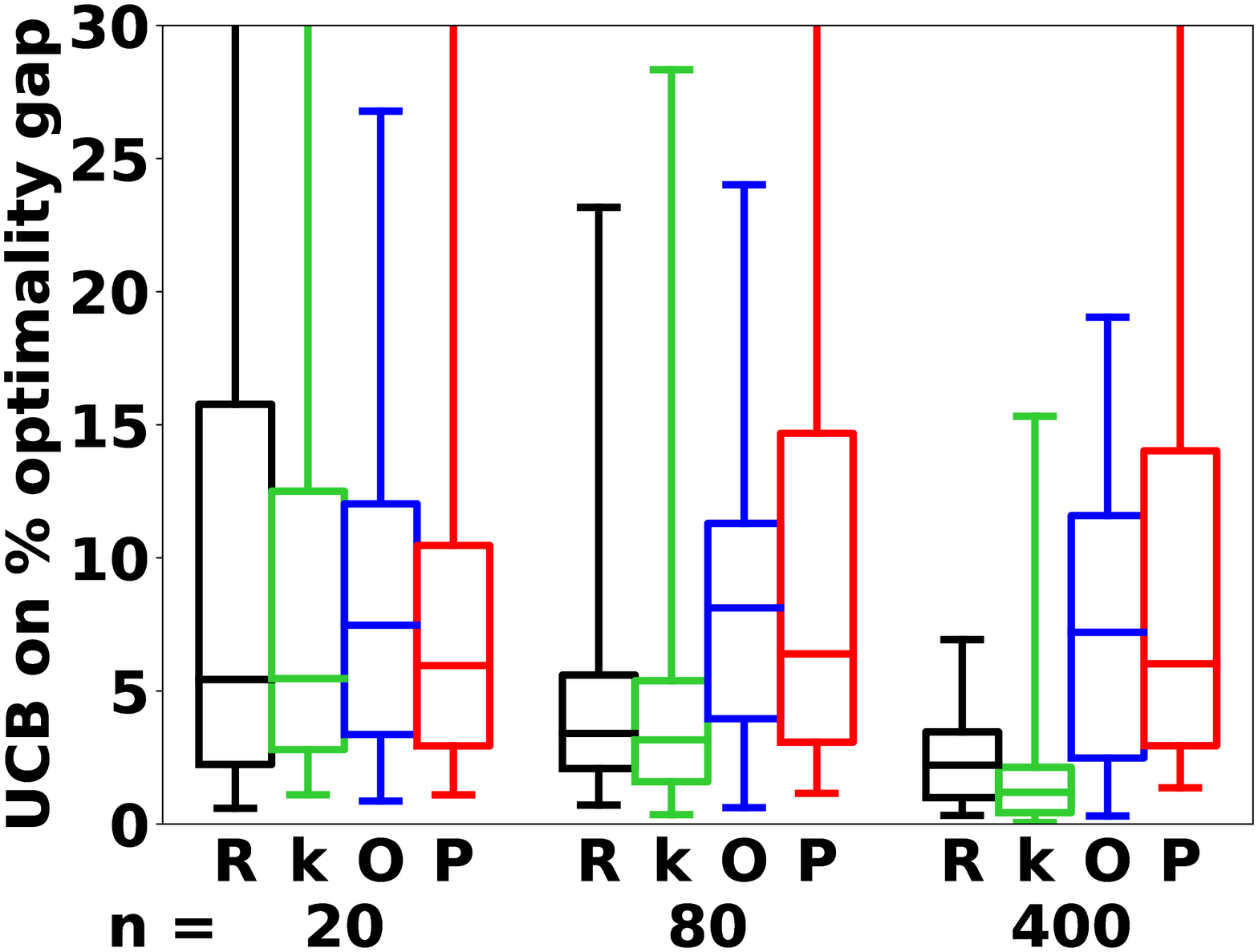}
    \end{subfigure}%
    ~ 
    \begin{subfigure}[t]{0.33\textwidth}
        \centering
        \includegraphics[width=\textwidth]{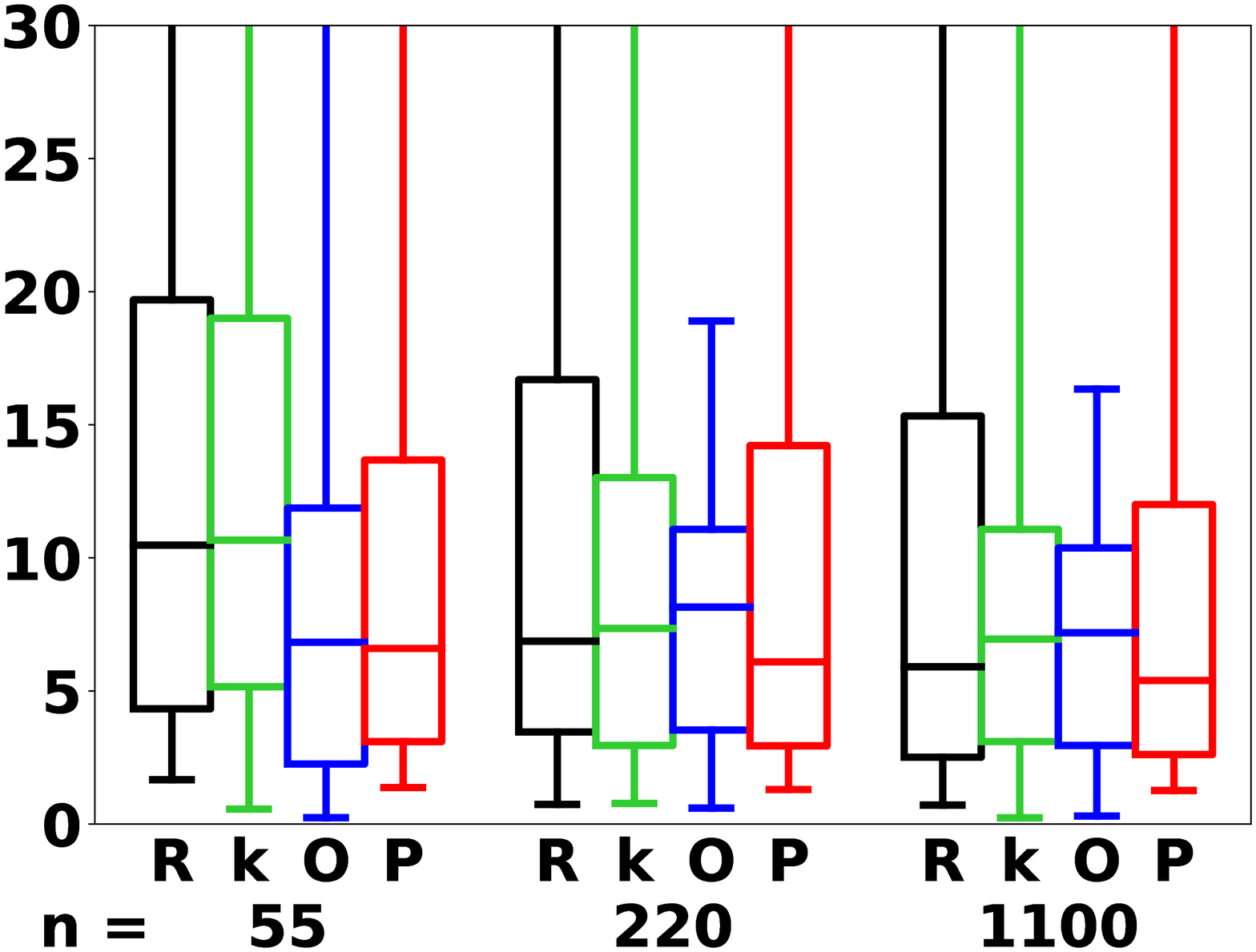}
    \end{subfigure}%
    ~ 
    \begin{subfigure}[t]{0.33\textwidth}
        \centering
        \includegraphics[width=\textwidth]{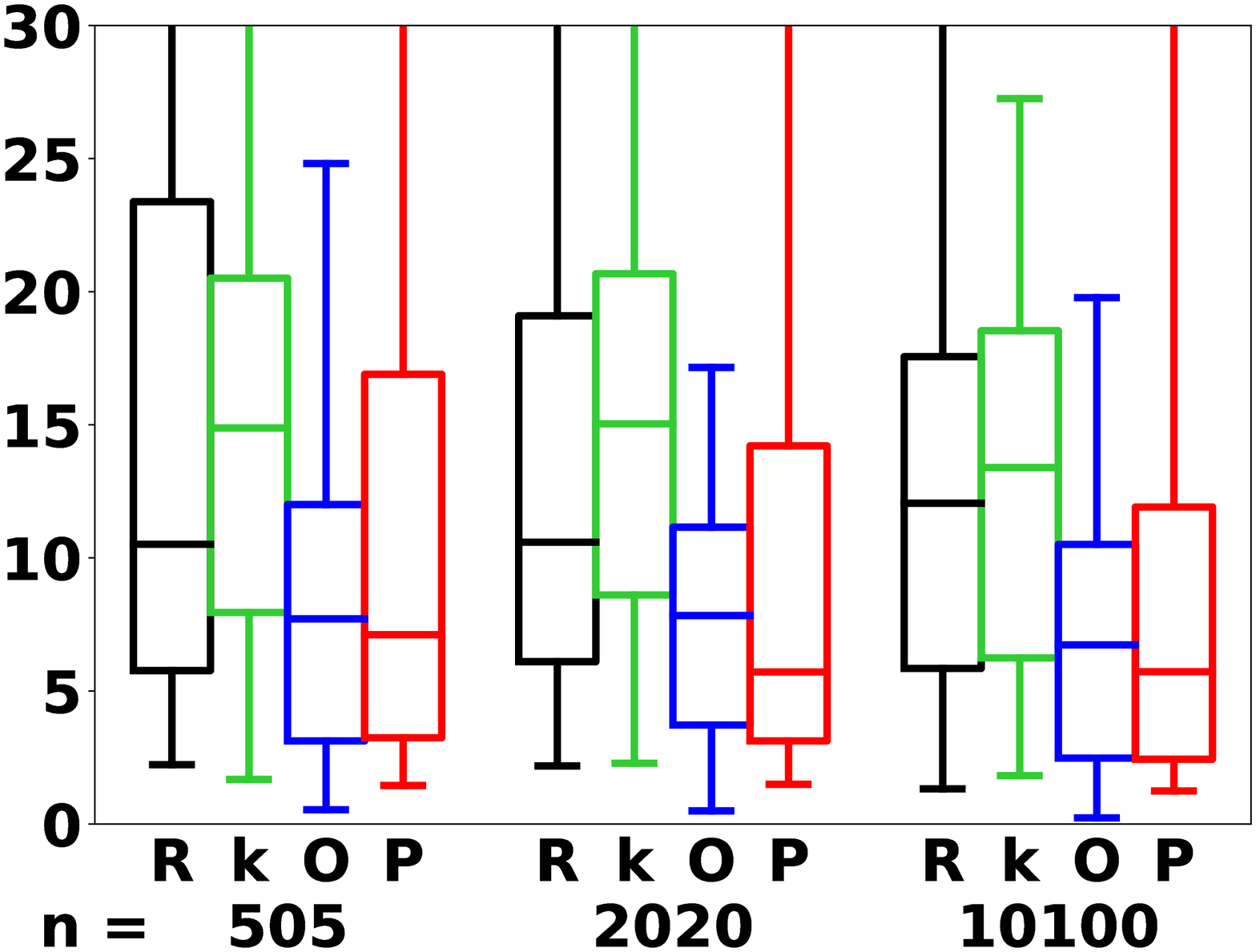}
    \end{subfigure}
    \caption{{Comparison of kNN-SAA (\texttt{R}), ER-SAA+kNN (\texttt{k}), ER-SAA+OLS (\texttt{O}), and PP+OLS (\texttt{P}) approaches for $\omega = 1$. Top row: $p = 1$. Middle row: $p = 0.5$. Bottom row: $p = 2$. Left column: $d_x = 3$. Middle column: $d_x = 10$. Right column: $d_x = 100$.}}
    \label{fig:comp_ols}
\end{figure}

\paragraph{Impact of the {jackknife}-based formulations.} 
Figure~\ref{fig:comp_jack_lasso} compares the performance of the ER-SAA and J-SAA approaches with OLS regression by varying the model degree $p$, the covariate dimension among $d_x \in \{10,100\}$, and the sample size among $n \in \{{1.3(d_x + 1)}, 1.5(d_x + 1), {2(d_x + 1)}\}$ {for $\omega = 1$}.
{Once again, we directly assume $\hat{Q}_n \equiv I$.}
We employ smaller sample sizes in these experiments to see if the {jackknife}-based SAAs perform better in the limited data regime.
We observe that the solutions obtained from the J-SAA formulation typically have smaller $75^{\text{th}}$ and $95^{\text{th}}$ percentiles of the $99\%$ UCBs than those from the ER-SAA formulation, particularly when the sample size $n$ is small.
Performance gains are more pronounced for larger sample sizes when the covariate dimension is larger ($d_x = 100$), possibly because the OLS estimators overfit more.
Note that, as expected, the J-SAA results converge to the ER-SAA results when the sample size increases.
We do not plot the results for the J+-SAA formulation because they are similar to those of the J-SAA formulation.

\begin{figure}[t!]
    \centering
    \begin{subfigure}[t]{0.33\textwidth}
        \centering
        \includegraphics[width=\textwidth,right]{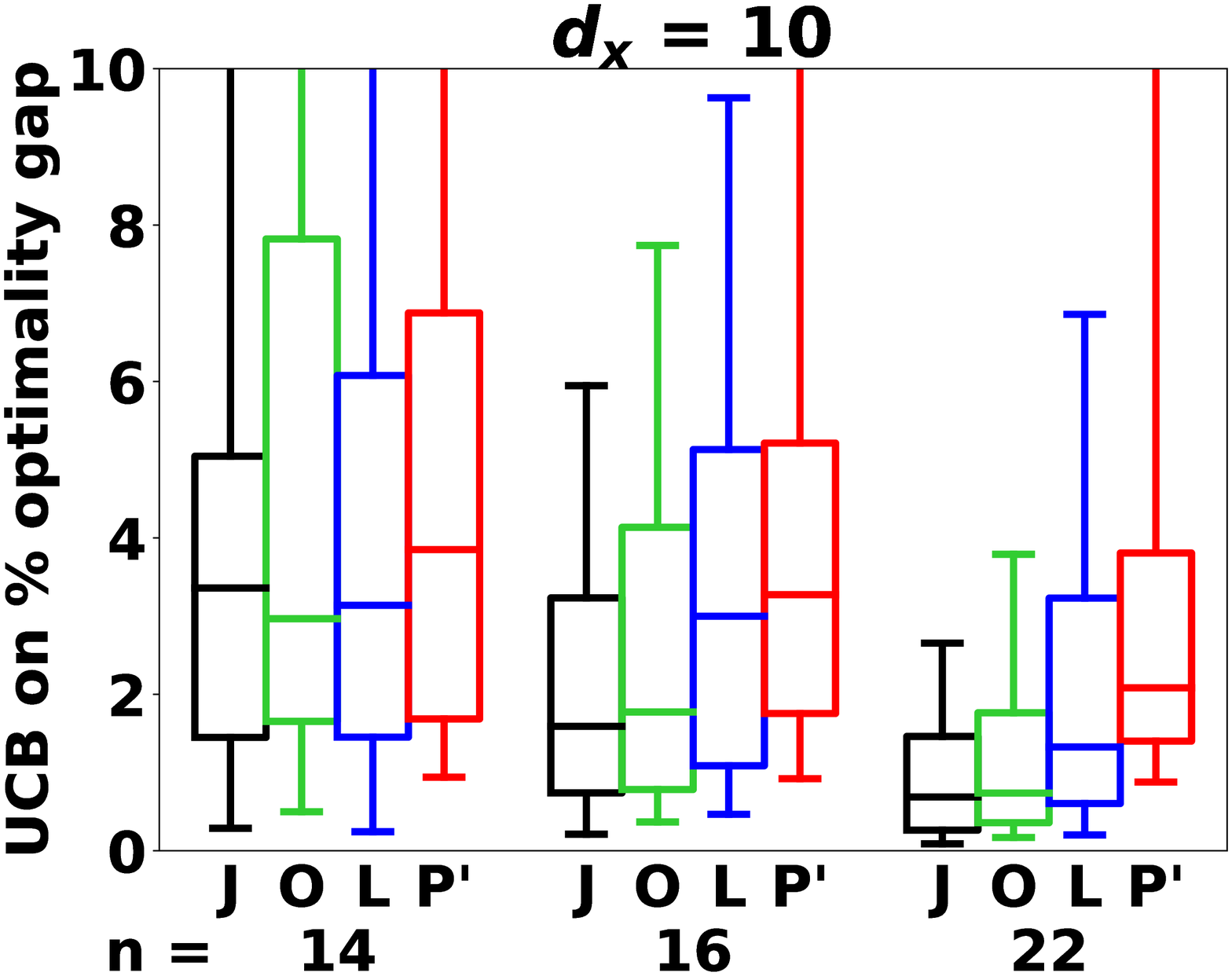}
    \end{subfigure}%
    ~ 
    \begin{subfigure}[t]{0.33\textwidth}
        \centering
        \includegraphics[width=\textwidth,left]{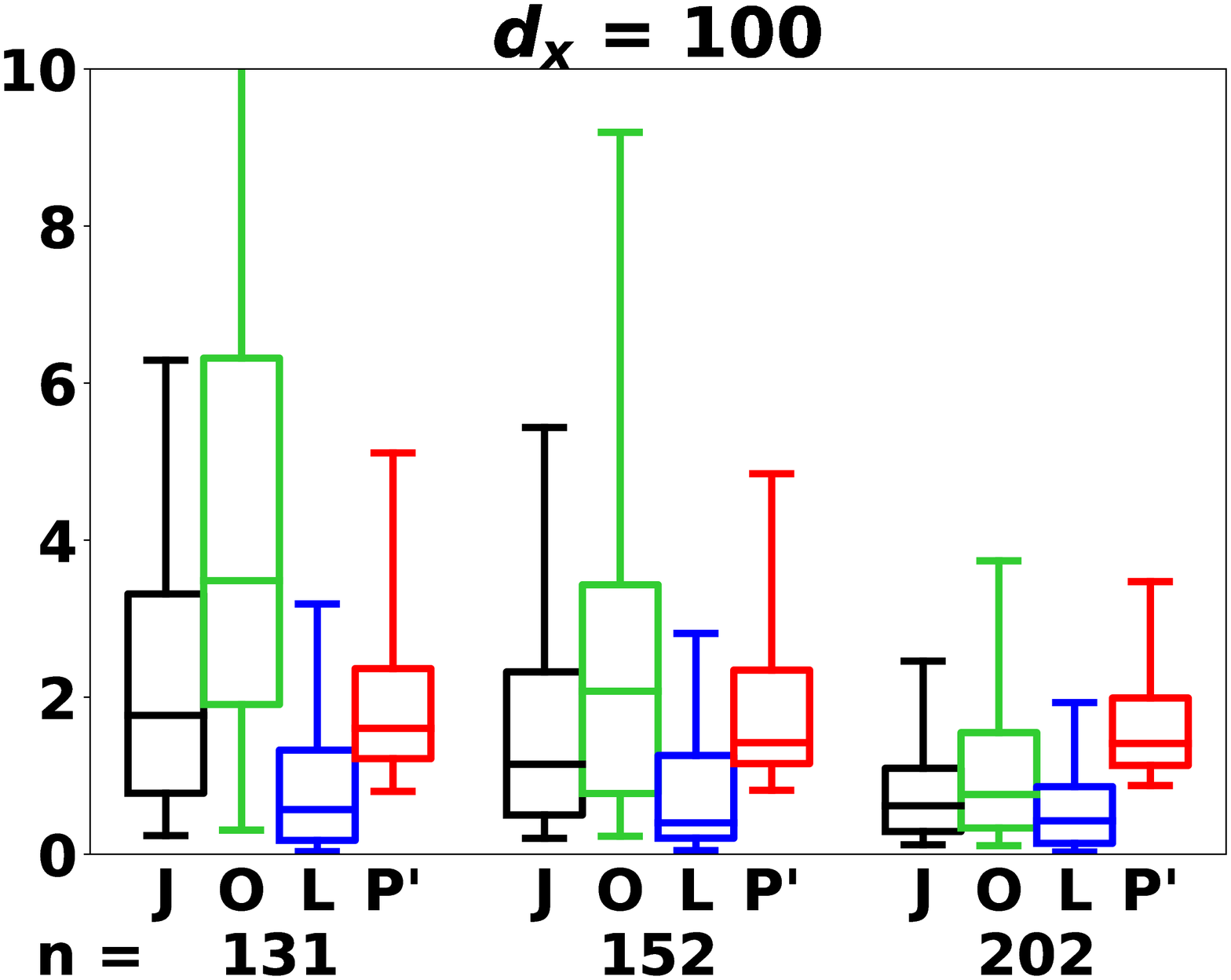}
    \end{subfigure}\\
    \begin{subfigure}[t]{0.33\textwidth}
        \centering
        \includegraphics[width=\textwidth,right]{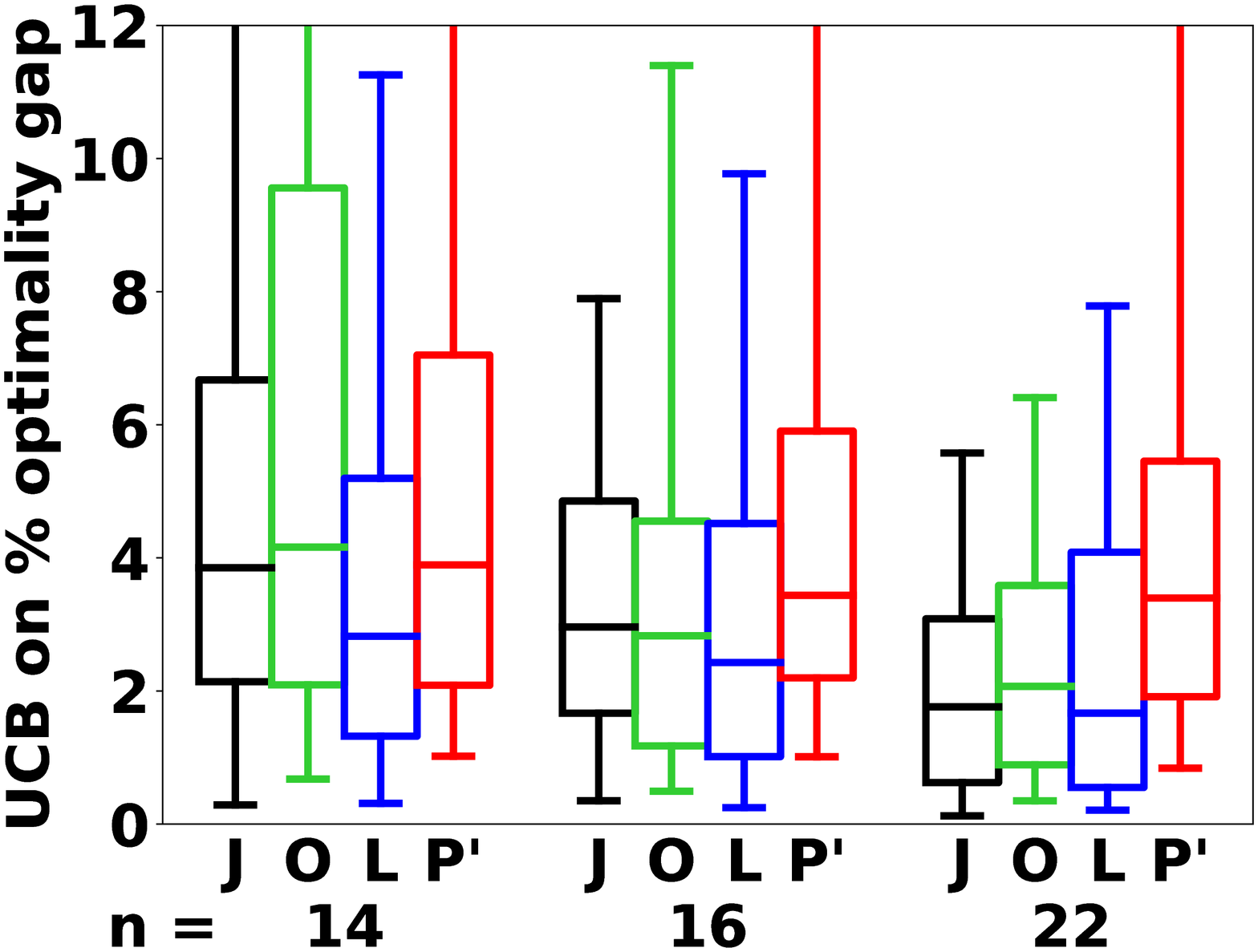}
    \end{subfigure}%
    ~
    \begin{subfigure}[t]{0.33\textwidth}
        \centering
        \includegraphics[width=\textwidth,left]{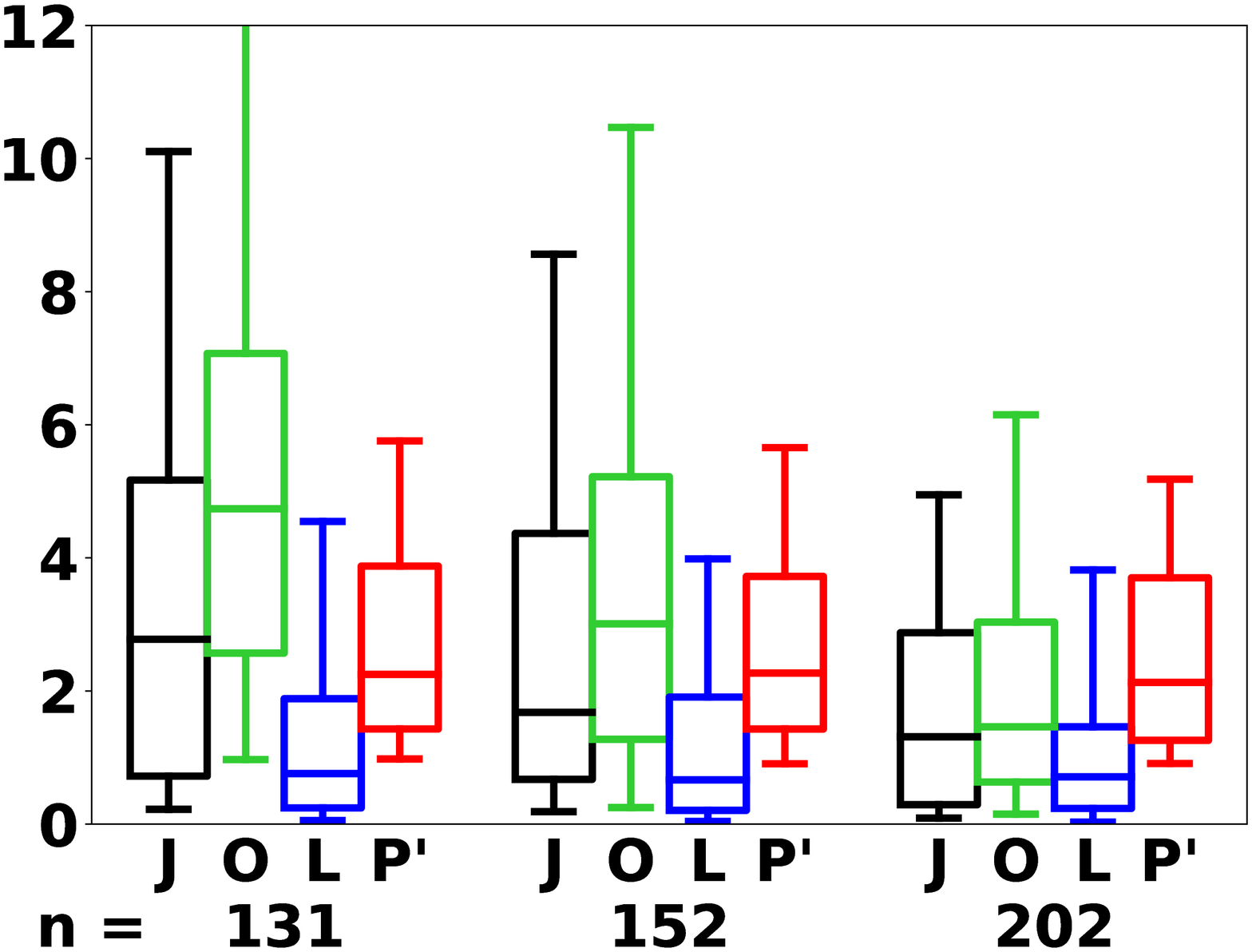}
    \end{subfigure}\\
    \begin{subfigure}[t]{0.33\textwidth}
        \centering
        \includegraphics[width=\textwidth,right]{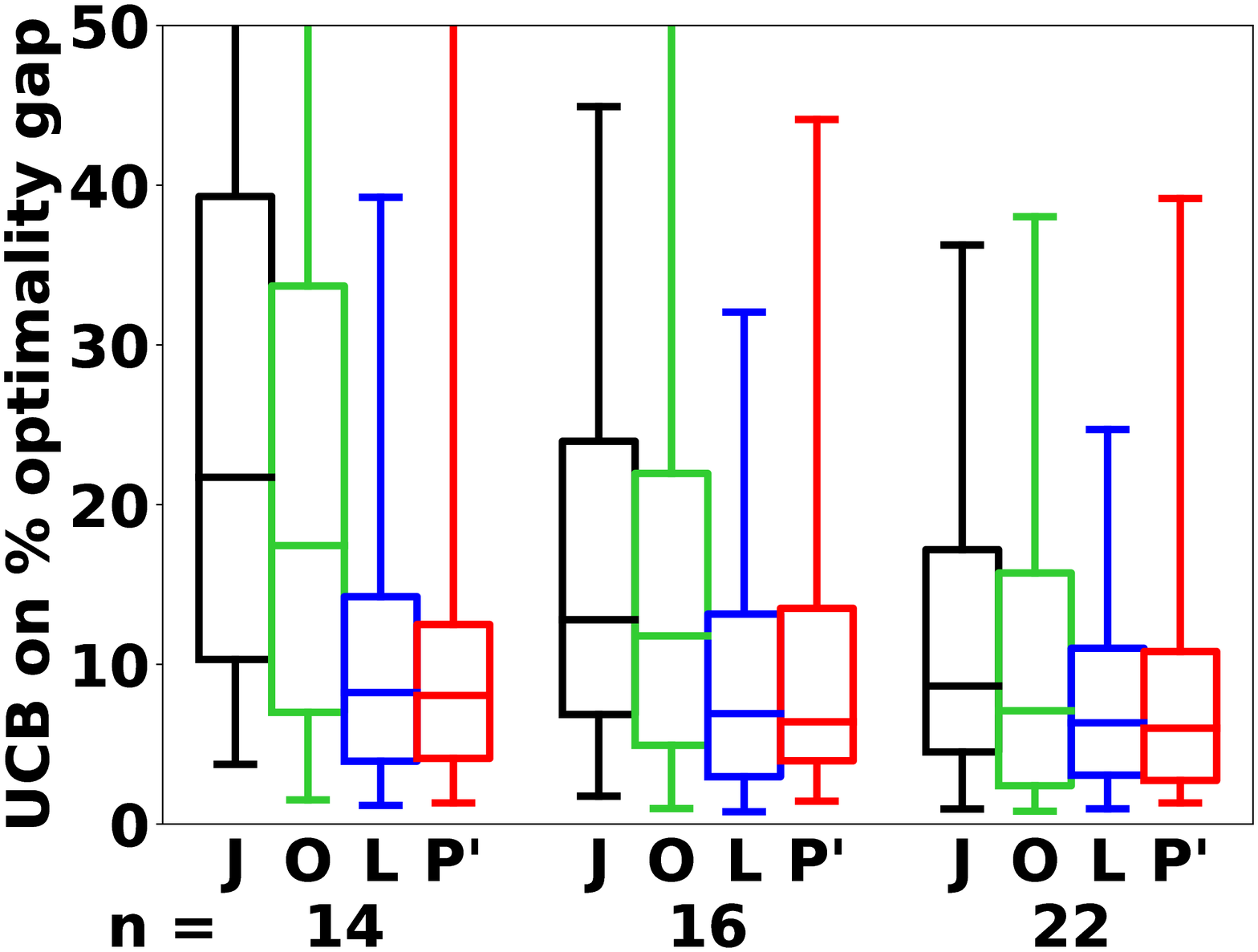}
    \end{subfigure}%
    ~ 
    \begin{subfigure}[t]{0.33\textwidth}
        \centering
        \includegraphics[width=\textwidth,left]{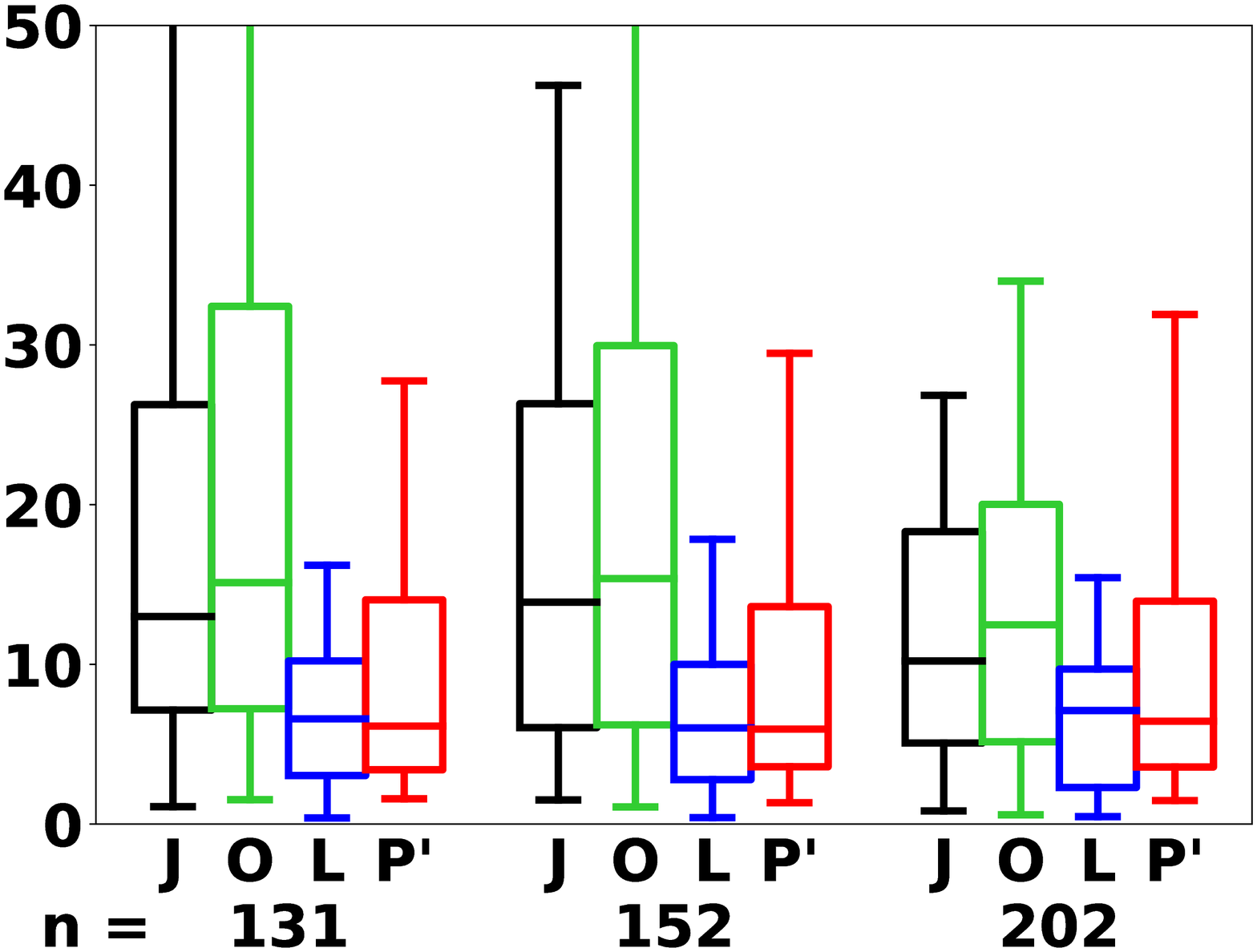}
    \end{subfigure}
    \caption{{Comparison of J-SAA+OLS (\texttt{J}), ER-SAA+OLS (\texttt{O}), ER-SAA+Lasso (\texttt{L}), and PP+Lasso ($\texttt{P}'$) approaches for $\omega = 1$. Top row: $p = 1$. Middle row: $p = 0.5$. Bottom row: $p = 2$. Left column: $d_x = 10$. Right column: $d_x = 100$.}}
    \label{fig:comp_jack_lasso}
\end{figure}

\paragraph{Impact of the prediction setup.} 
Figure~\ref{fig:comp_jack_lasso} also compares the performance of the ER-SAA+OLS, ER-SAA+Lasso, {and PP+Lasso} approaches by varying the model degree $p$, the covariate dimension among $d_x \in \{10,100\}$, and the sample size among $n \in \{{1.3(d_x + 1)}, 1.5(d_x + 1), {2(d_x + 1)}\}$ {\mbox{for $\omega = 1$}}.
{We again assume $\hat{Q}_n \equiv I$ in the ER-SAA approaches.}
We observe that the ER-SAA+Lasso formulation yields better estimators than the ER-SAA+OLS formulation when the sample size $n$ is small relative to the covariate dimension $d_x$.
This effect is accentuated when the covariate dimension is larger ($d_x = 100$), in which case the OLS-based estimators overfit more and there is increased benefit in using the Lasso to fit a sparser model.
The advantage of the Lasso-based estimators shrinks as the sample size increases.
{The ER-SAA+Lasso approach outperforms the PP+Lasso approach with increased gains for larger sample sizes and covariate dimensions.}

\paragraph{Impact of the error variance.} 
Figure~\ref{fig:sigma_plots} {and Figure~\ref{fig:sigma_plots_2} in Section~\ref{sec:computexp-details} of the Appendix} compare the performance of the kNN-SAA, {ER-SAA+kNN}, ER-SAA+OLS, and {PP+OLS} approaches by varying the standard deviation of the errors~$\varepsilon$ among $\sigma \in \{5,10,20\}$ (the case studies thus far used $\sigma = 5$), the model degree $p$, and the sample size among $n \in \{{5(d_x + 1)}, 20(d_x + 1), 100(d_x + 1)\}$ for $d_x = 10$ and {$\omega = 1$}.
We observe that the ER-SAA+OLS formulation needs a larger sample size to yield a similar certificate of optimality as the standard deviation $\sigma$ increases.
On the other hand, the performance of the {ER-SAA+kNN and} kNN-SAA approaches appear to be unaffected (and even slightly improve!) with increasing error variance.
{The performance of the PP+OLS approach deteriorates significantly with increasing $\sigma$, especially for $p \neq 2$, and it no longer dominates the ER-SAA+kNN and kNN-SAA approaches for $\sigma = 20$.}

\begin{figure}[t!]
    \centering
    \begin{subfigure}[t]{0.33\textwidth}
        \centering
        \includegraphics[width=\textwidth]{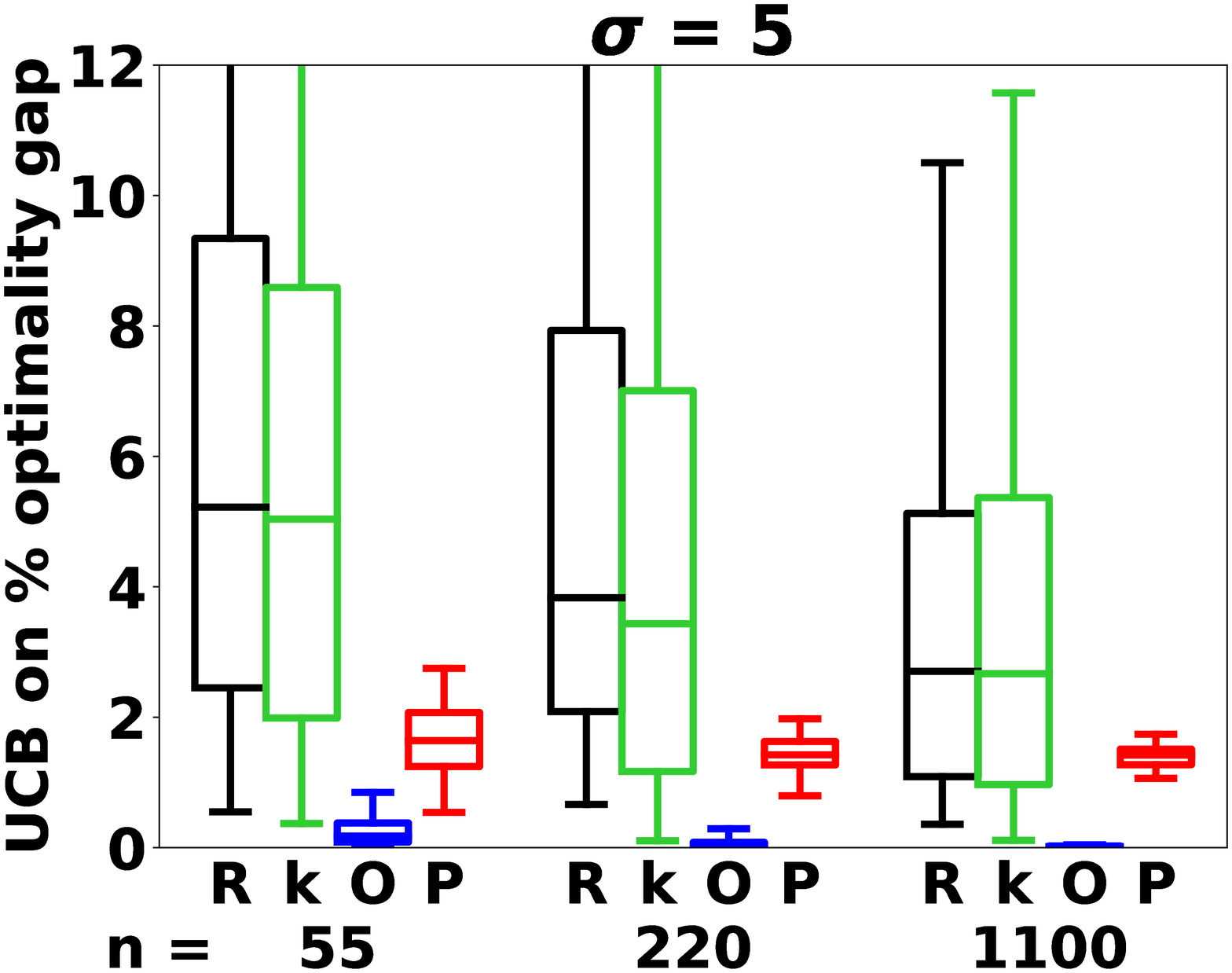}
    \end{subfigure}%
    ~ 
    \begin{subfigure}[t]{0.33\textwidth}
        \centering
        \includegraphics[width=\textwidth]{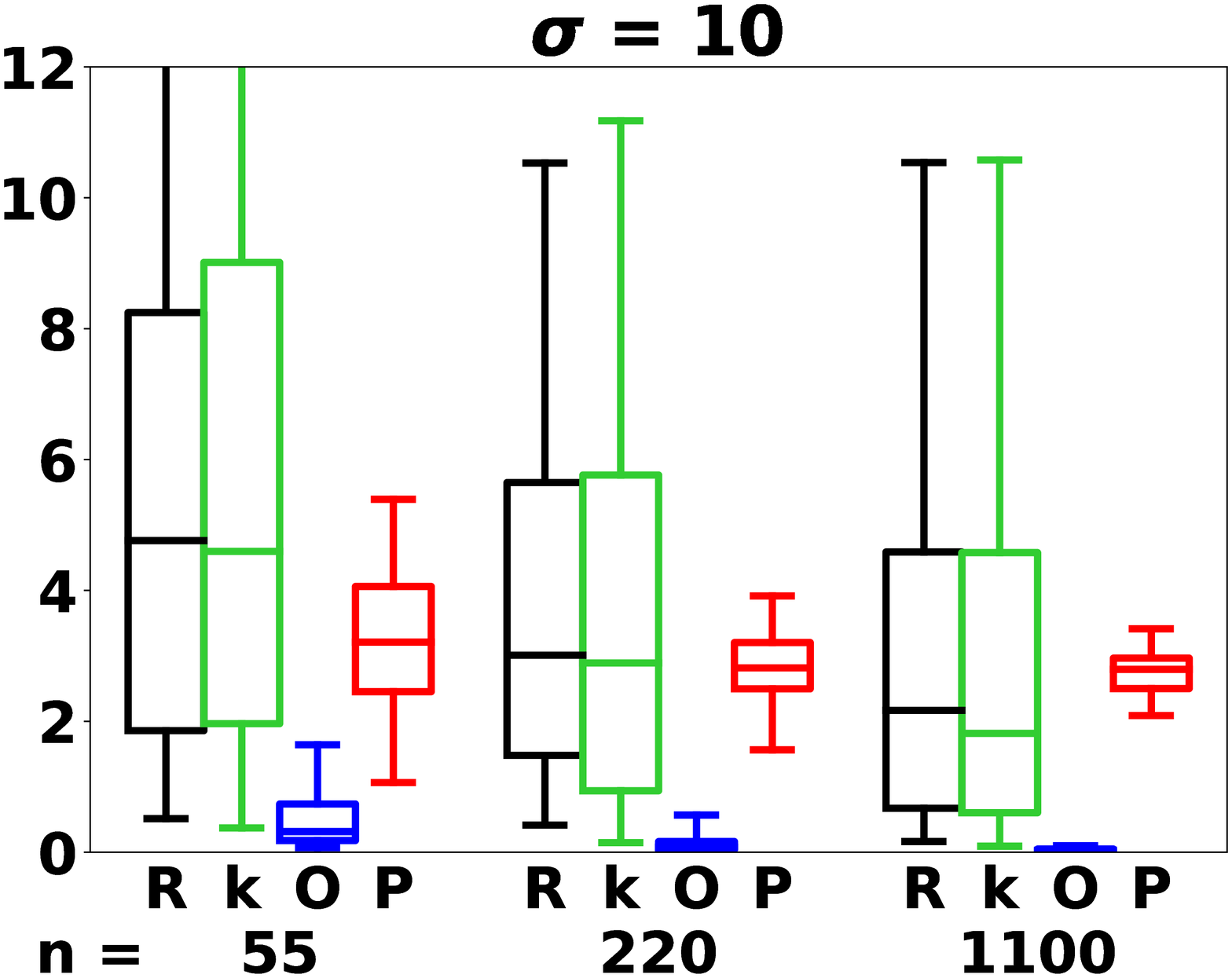}
    \end{subfigure}%
    ~ 
    \begin{subfigure}[t]{0.33\textwidth}
        \centering
        \includegraphics[width=\textwidth]{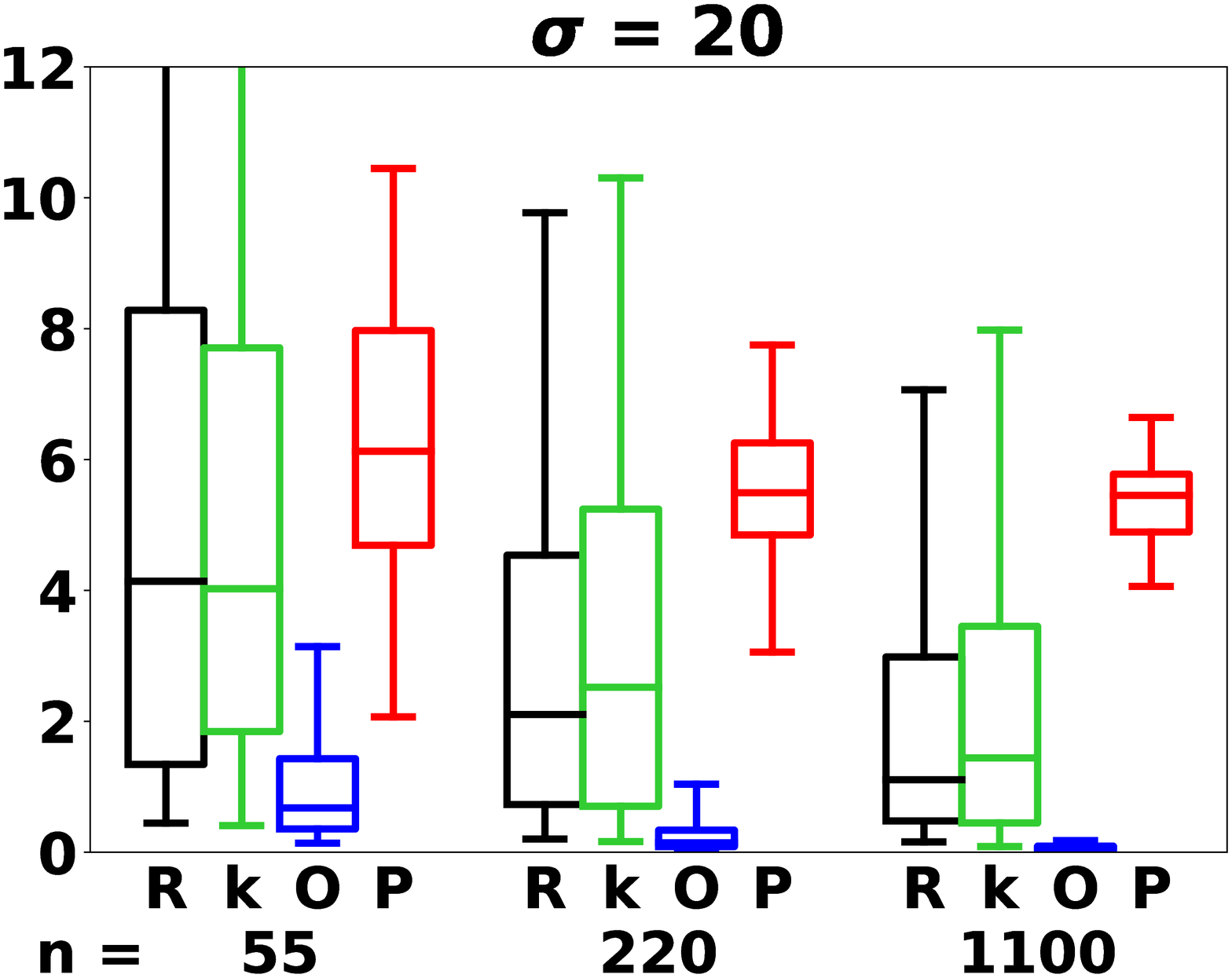}
    \end{subfigure}
    \caption{{Effect of increasing $\sigma$ on kNN-SAA (\texttt{R}), ER-SAA+kNN (\texttt{k}), ER-SAA+OLS (\texttt{O}), and PP+OLS (\texttt{P}) approaches when $d_x = 10$, $p = 1$, and $\omega = 1$.}}
    \label{fig:sigma_plots}
\end{figure}

\paragraph{{Impact of heteroscedasticity.}} 
{Figures~\ref{fig:het_plots} and~\ref{fig:het_plots_2} compare the performance of the kNN-SAA approach, ER-SAA+kNN approaches with and without estimation of $Q^*$, ER-SAA+OLS approaches with and without estimation of $Q^*$, and the PP+OLS approach by varying the heteroscedasticity level $\omega \in \{1,2,3\}$, the model degree $p$, and the sample size among $n \in \{5(d_x + 1), 20(d_x + 1), 100(d_x + 1)\}$ for $d_x = 10$ and $\sigma = 5$.
We assume $\hat{Q}_n \equiv I$ in the ER-SAA approaches that do not estimate $Q^*$.
The kNN-SAA approach and the ER-SAA+kNN approach without heteroscedasticity estimation again exhibit similar performance across the different test instances, with the former performing slightly better on instances with severe heteroscedasticity ($\omega = 3$) and the latter performing slightly better on instances with zero and moderate heteroscedasticity ($\omega \in \{1,2\}$). 
The ER-SAA+kNN approach with heteroscedasticity estimation outperforms the kNN-SAA approach and the ER-SAA+kNN approach without heteroscedasticity estimation, particularly for model degree $p = 2$ and for severe heteroscedasticity ($\omega = 3$).
The ER-SAA+OLS approach without heteroscedasticity estimation outperforms the kNN-SAA approach and the ER-SAA+kNN approaches for $\omega = 1$ and $\omega = 2$ irrespective of the model degree $p$.
However, the kNN-SAA approach outperforms the ER-SAA+OLS approach without heteroscedasticity estimation for $\omega = 3$, $p \in \{0.5,1\}$, and small sample sizes, with the reverse holding true for large sample sizes.
Similar to the observation for kNN regression, the ER-SAA+OLS approach with estimation of $Q^*$ outperforms the ER-SAA+OLS approach without estimation of $Q^*$ in several instances, particularly for large sample sizes and for severe heteroscedasticity ($\omega = 3$).
Finally, the ER-SAA+OLS approaches demonstrate significant gains over the PP+OLS approach, especially when the degree of model misspecification is low ($p = 1$ or $p = 0.5$).
The results for the PP+OLS and ER-SAA+OLS approaches for $\omega = 3$ may be partly explained by an increase in the variance of the error terms (cf.\ Figure~\ref{fig:het_stats}).
Note that the median values of the $99\%$ UCBs of the percentage optimality gaps of the N-SAA estimator (for $n = 10100$) for both $\omega = 2$ and $\omega = 3$ are roughly $11\%$, $5\%$, and $25\%$ for $p = 1$, $0.5$, and $2$, respectively, which underscores the benefit of using covariate information. 
}

\begin{figure}[t!]
    \centering
    \begin{subfigure}[t]{0.33\textwidth}
        \centering
        \includegraphics[width=\textwidth]{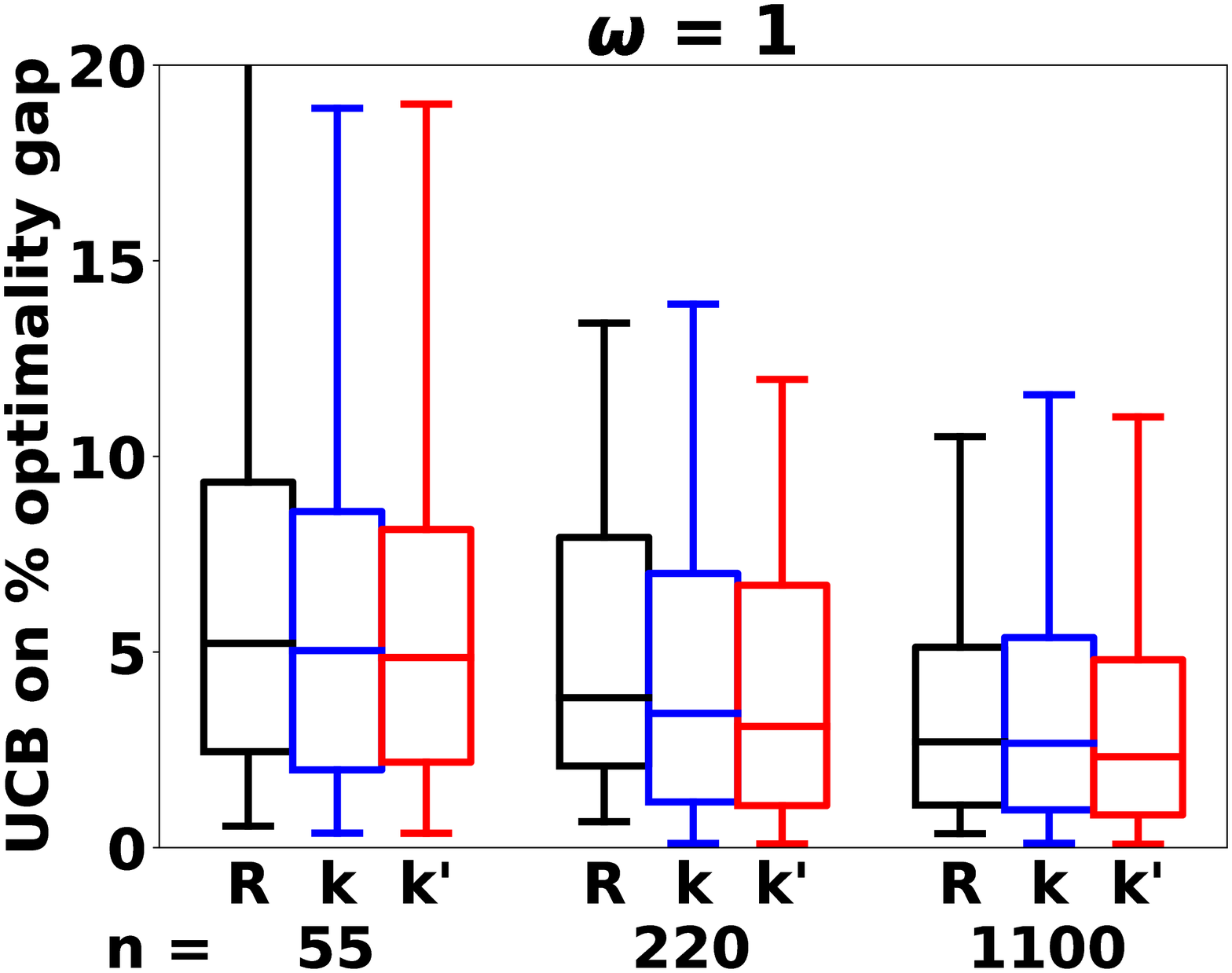}
    \end{subfigure}%
    ~ 
    \begin{subfigure}[t]{0.33\textwidth}
        \centering
        \includegraphics[width=\textwidth]{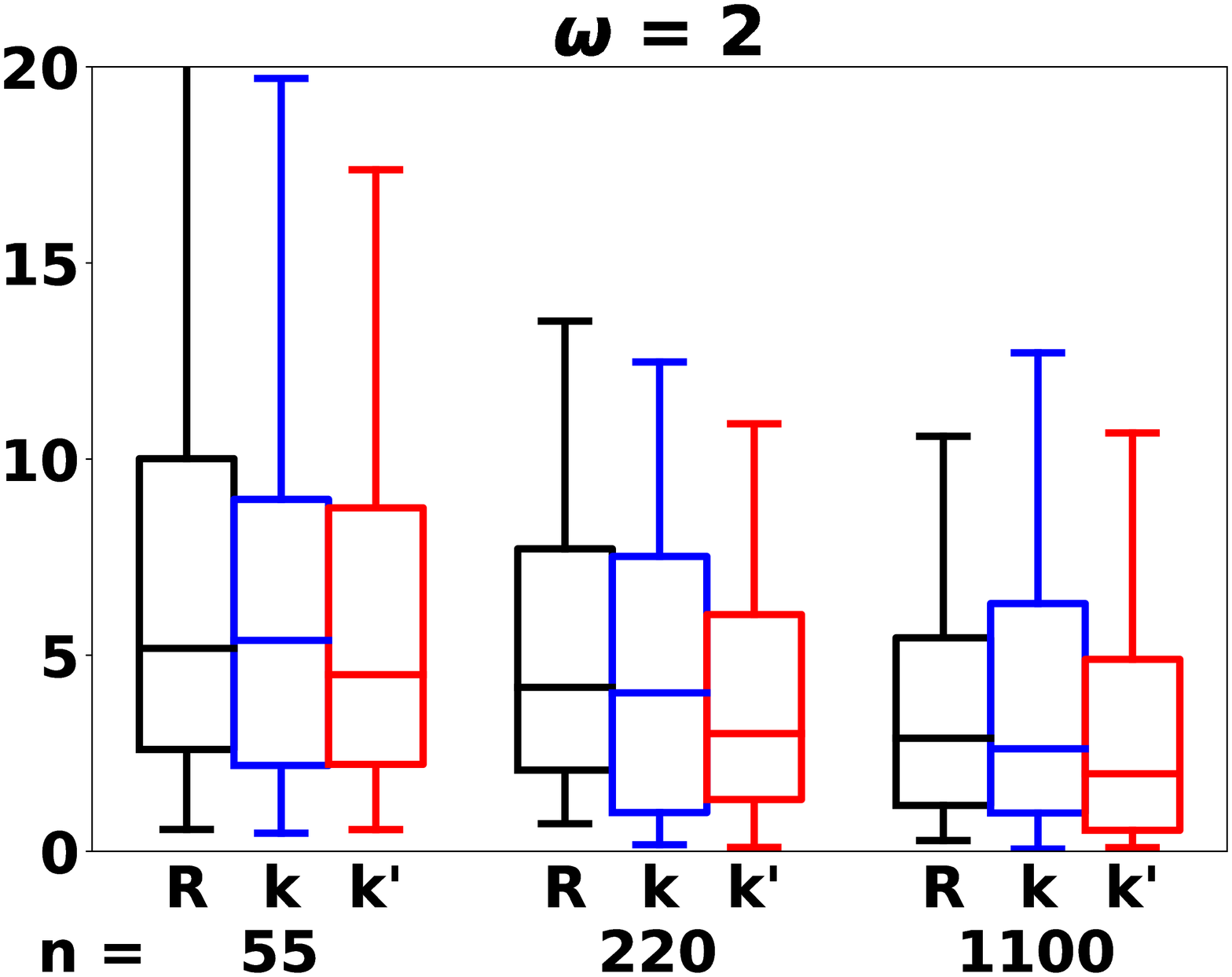}
    \end{subfigure}%
    ~ 
    \begin{subfigure}[t]{0.33\textwidth}
        \centering
        \includegraphics[width=\textwidth]{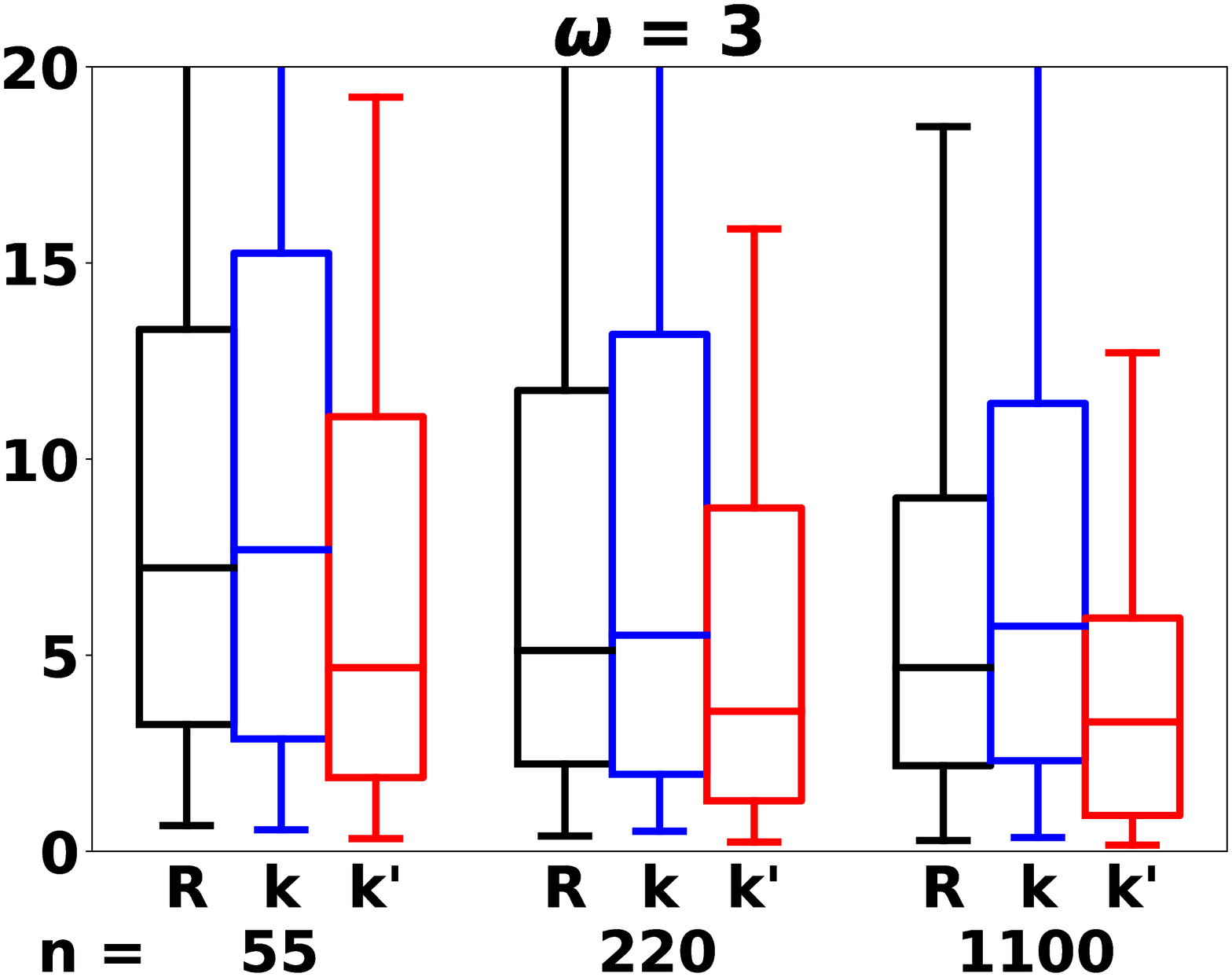}
    \end{subfigure}\\
    \begin{subfigure}[t]{0.33\textwidth}
        \centering
        \includegraphics[width=\textwidth]{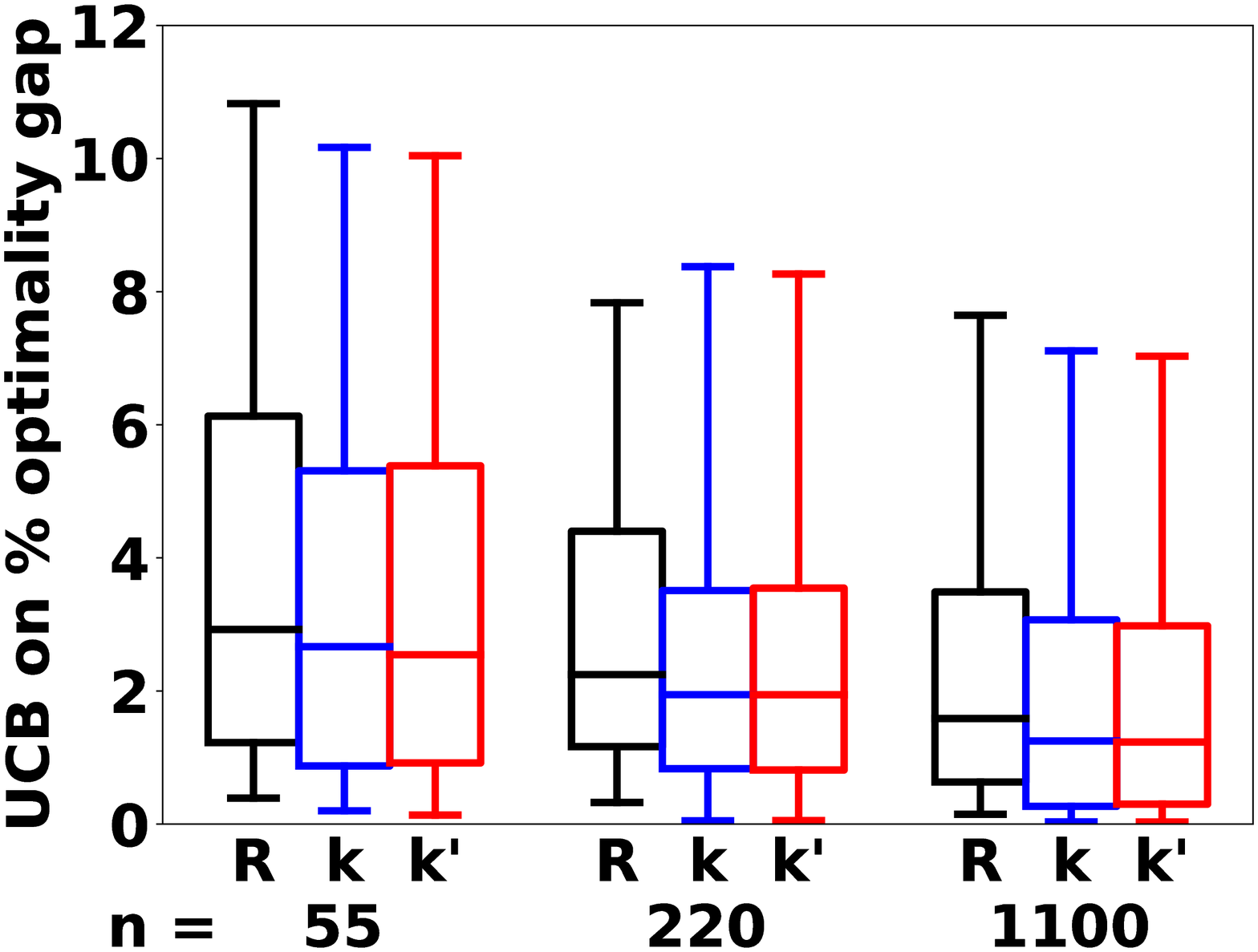}
    \end{subfigure}%
    ~ 
    \begin{subfigure}[t]{0.33\textwidth}
        \centering
        \includegraphics[width=\textwidth]{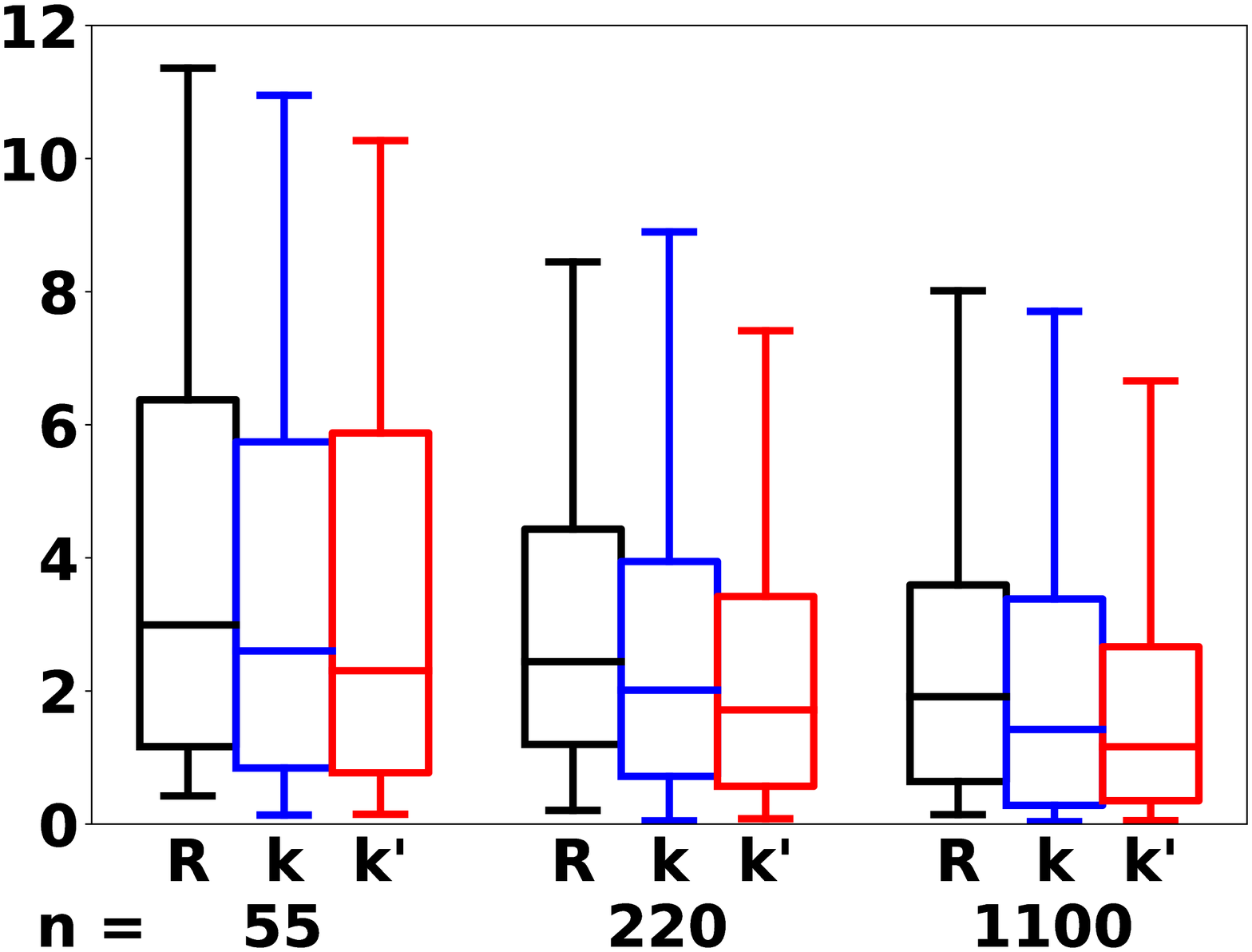}
    \end{subfigure}%
    ~ 
    \begin{subfigure}[t]{0.33\textwidth}
        \centering
        \includegraphics[width=\textwidth]{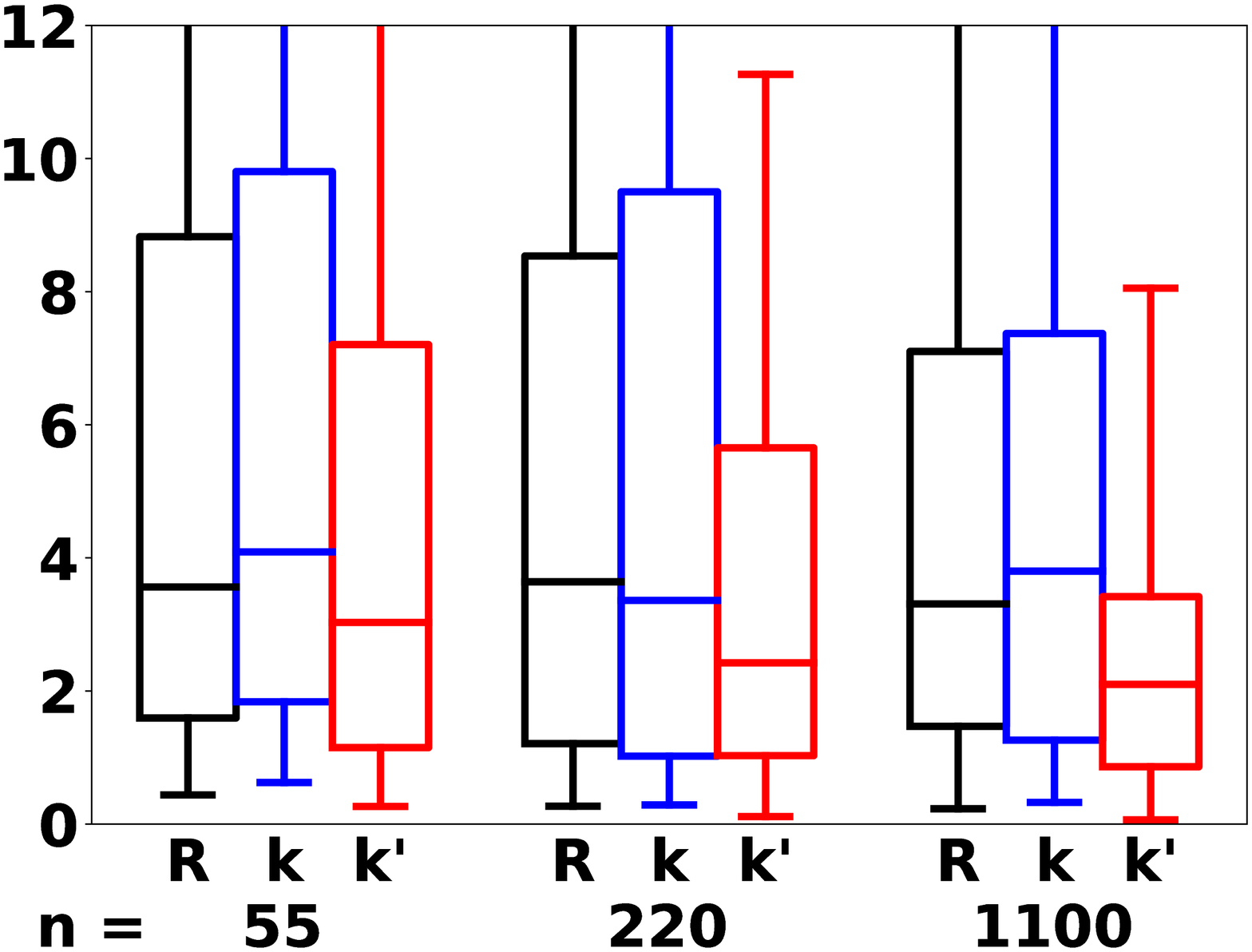}
    \end{subfigure}\\
    \begin{subfigure}[t]{0.33\textwidth}
        \centering
        \includegraphics[width=\textwidth]{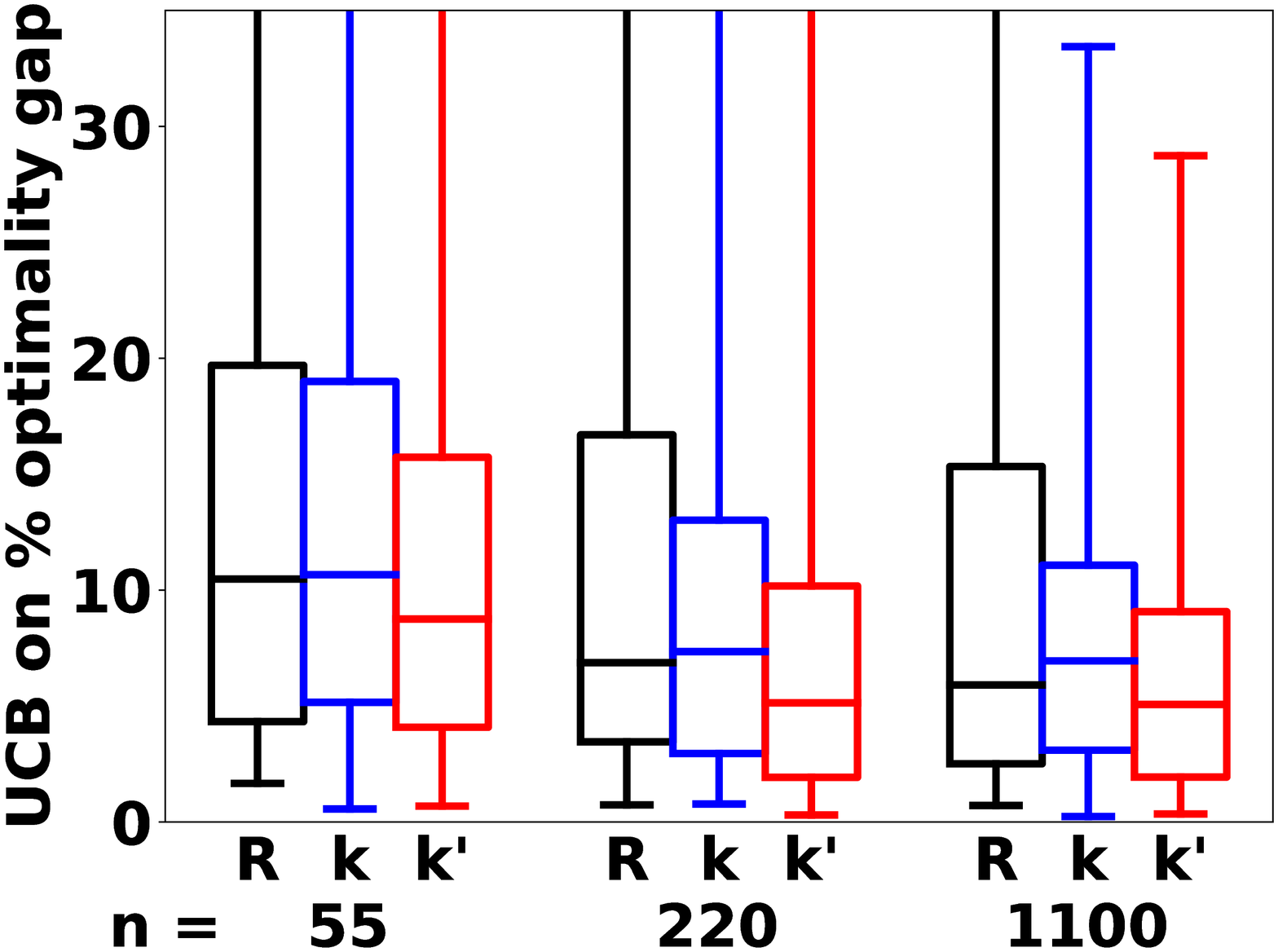}
    \end{subfigure}%
    ~ 
    \begin{subfigure}[t]{0.33\textwidth}
        \centering
        \includegraphics[width=\textwidth]{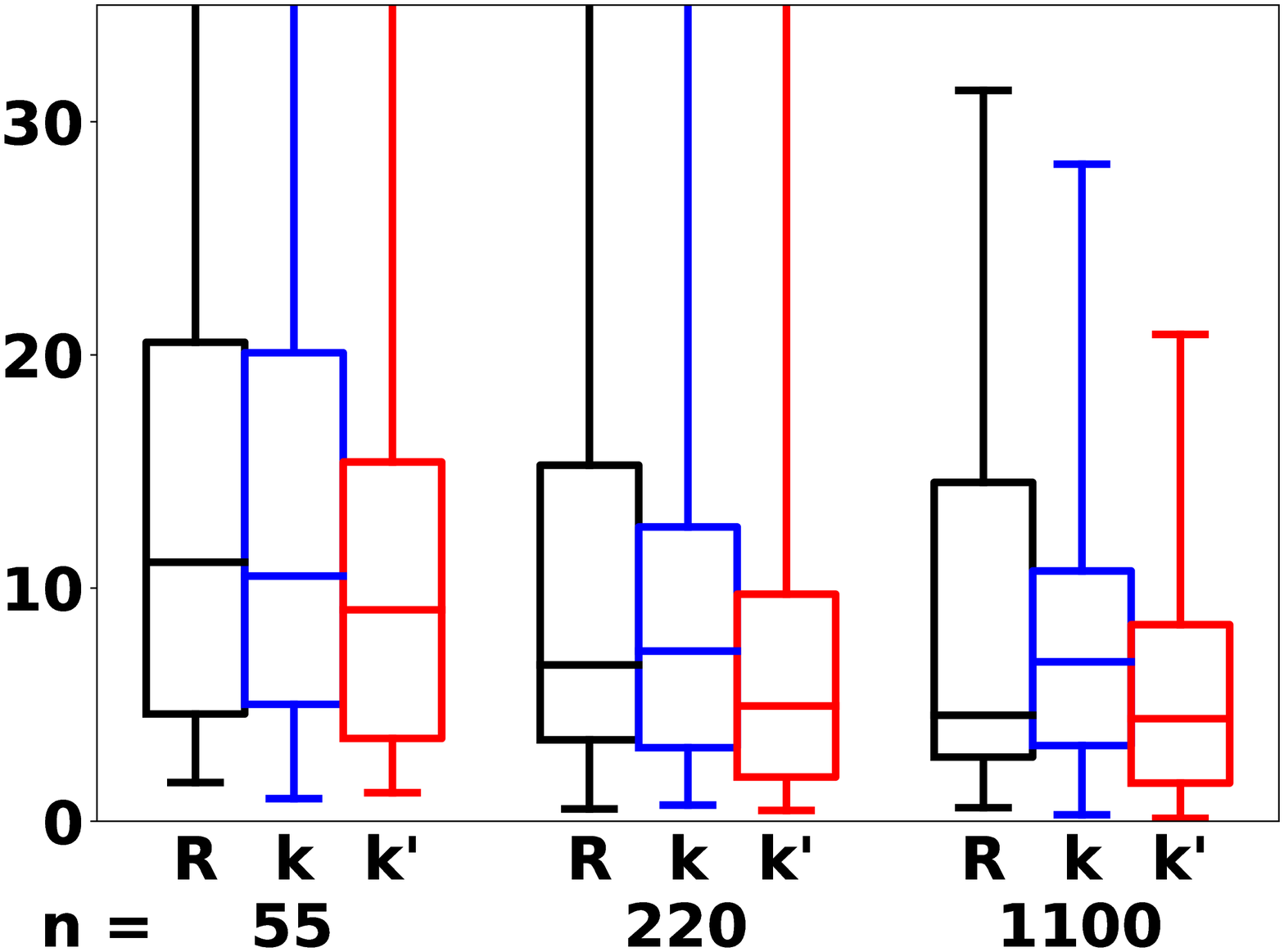}
    \end{subfigure}%
    ~ 
    \begin{subfigure}[t]{0.33\textwidth}
        \centering
        \includegraphics[width=\textwidth]{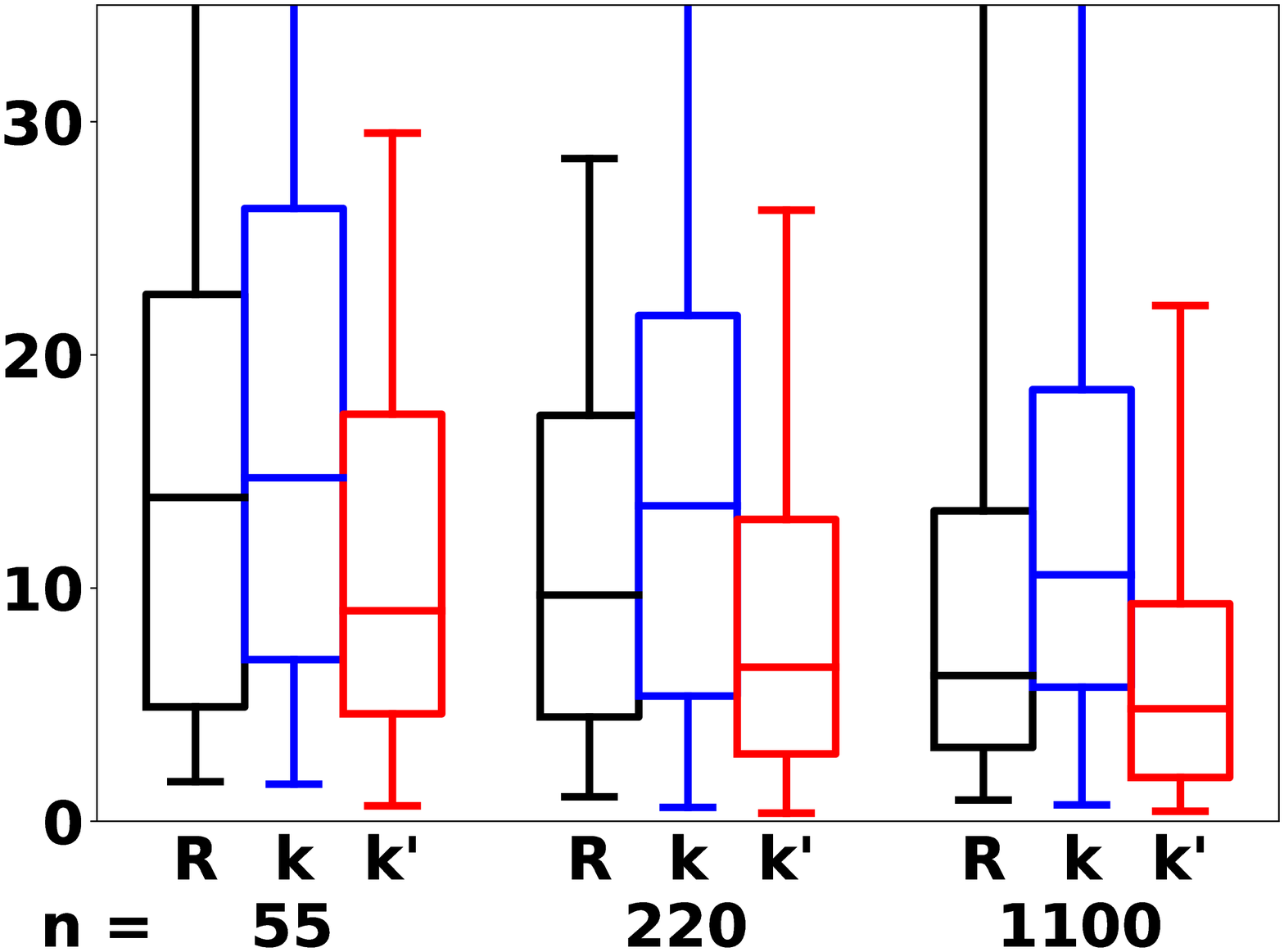}
    \end{subfigure}
    \caption{{Comparison of kNN-SAA (\texttt{R}), ER-SAA+kNN without heteroscedasticity estimation (\texttt{k}) and ER-SAA+kNN with heteroscedasticity estimation ($\texttt{k}'$) approaches for $d_x = 10$. Top row: $p = 1$. Middle row: $p = 0.5$. Bottom row: $p = 2$. Left column: $\omega = 1$. Middle column: $\omega = 2$. Right column: $\omega = 3$.}}
    \label{fig:het_plots}
\end{figure}

\begin{figure}[t!]
    \centering
    \begin{subfigure}[t]{0.33\textwidth}
        \centering
        \includegraphics[width=\textwidth]{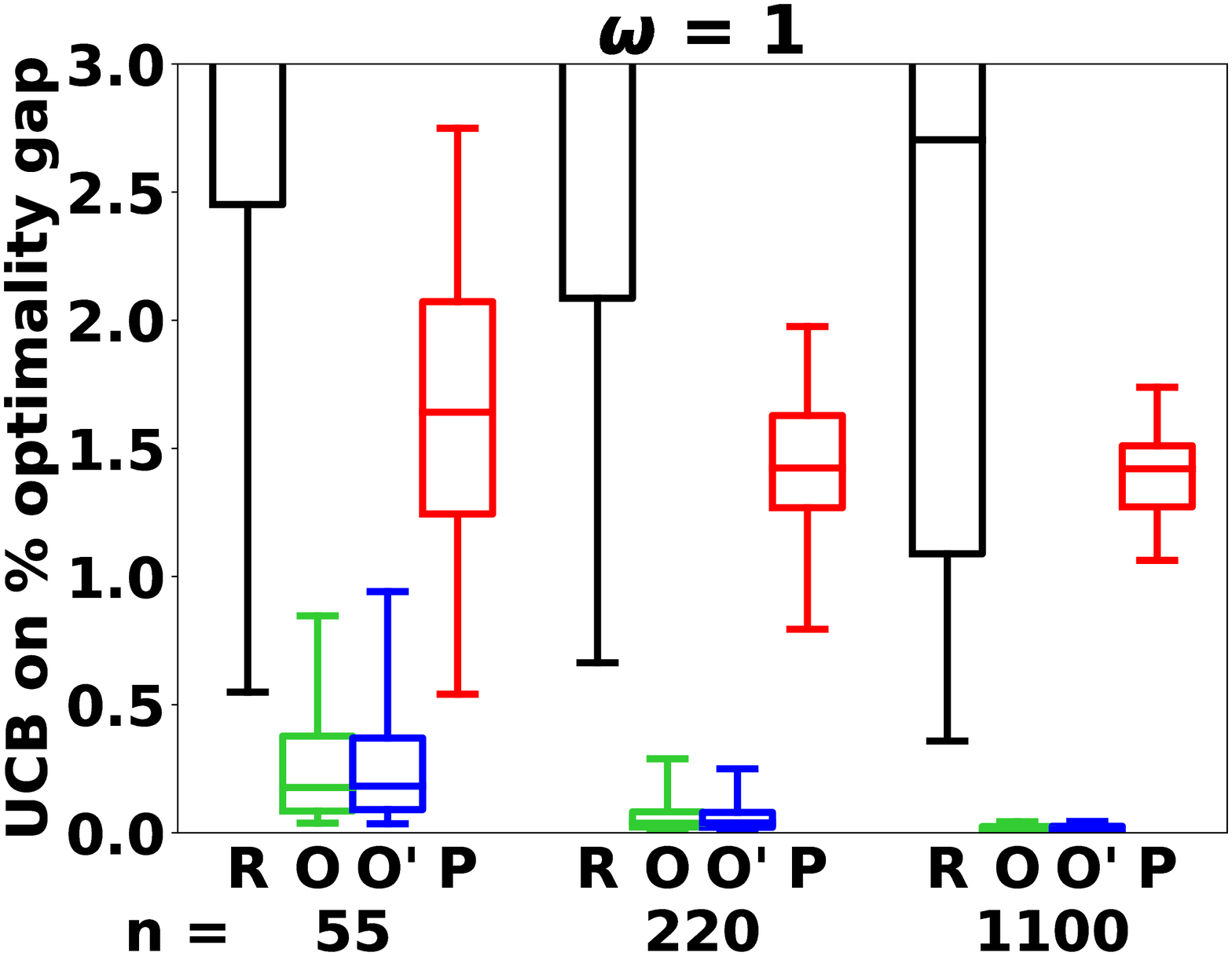}
    \end{subfigure}%
    ~ 
    \begin{subfigure}[t]{0.33\textwidth}
        \centering
        \includegraphics[width=\textwidth]{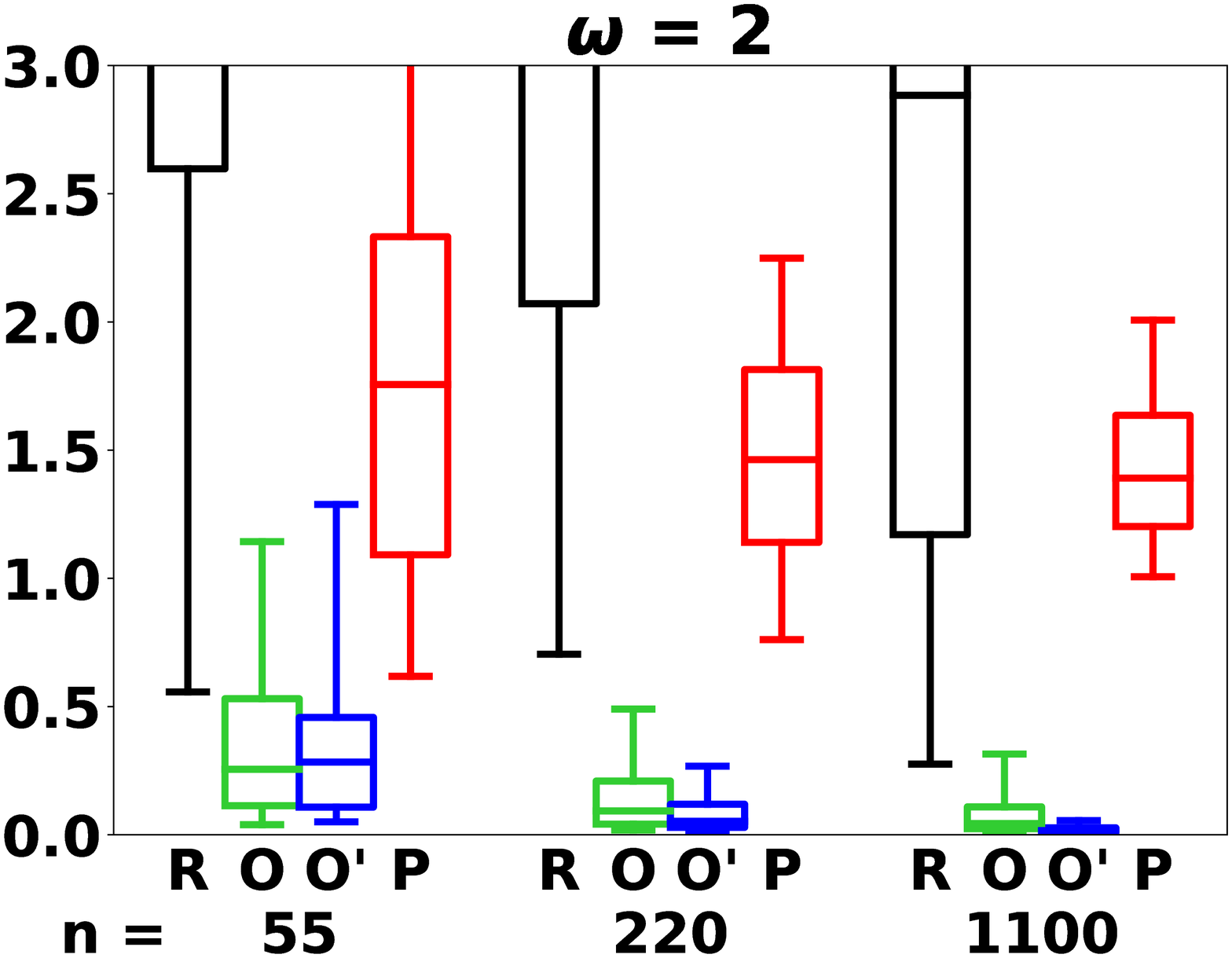}
    \end{subfigure}%
    ~ 
    \begin{subfigure}[t]{0.33\textwidth}
        \centering
        \includegraphics[width=\textwidth]{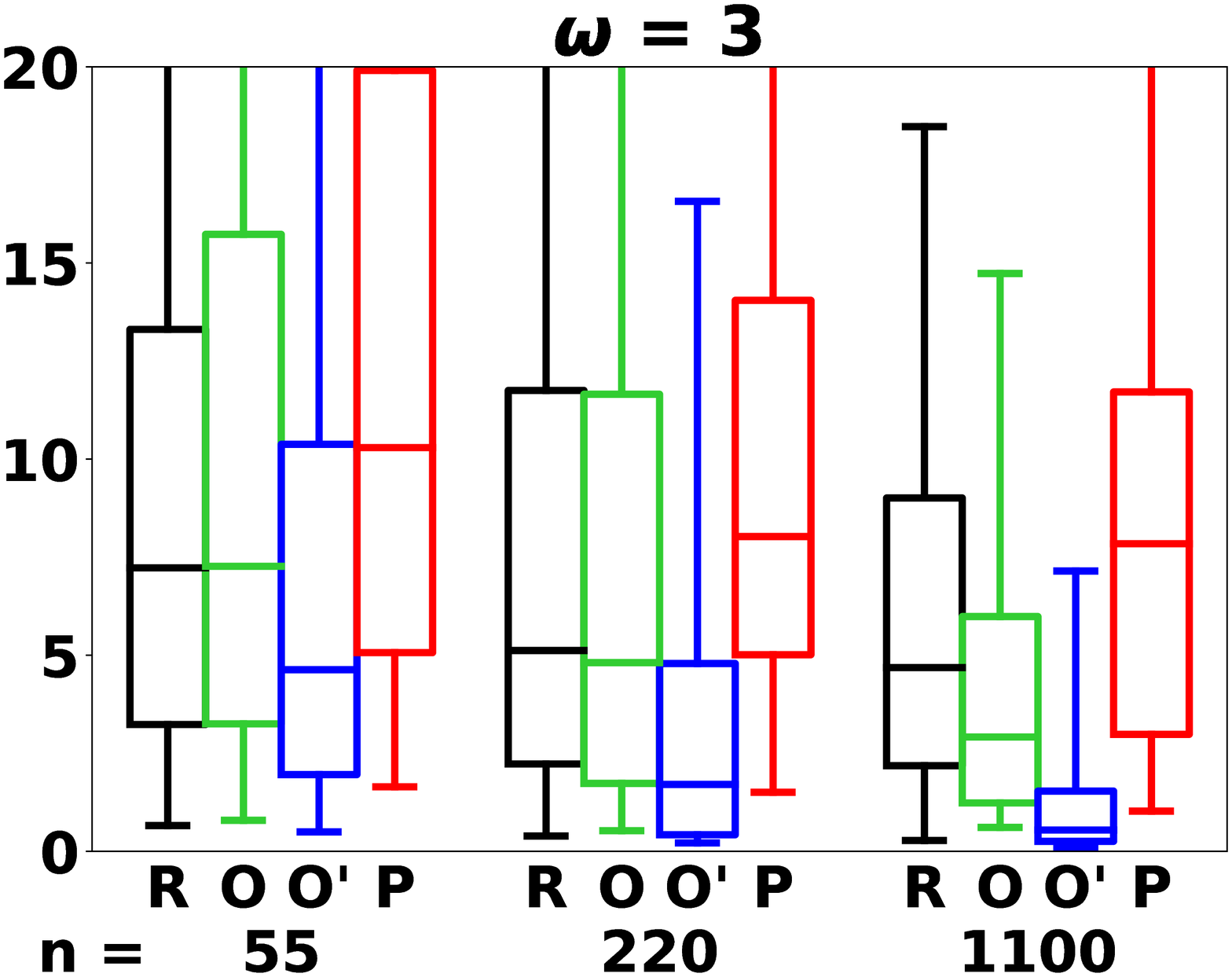}
    \end{subfigure}\\
    \begin{subfigure}[t]{0.33\textwidth}
        \centering
        \includegraphics[width=\textwidth]{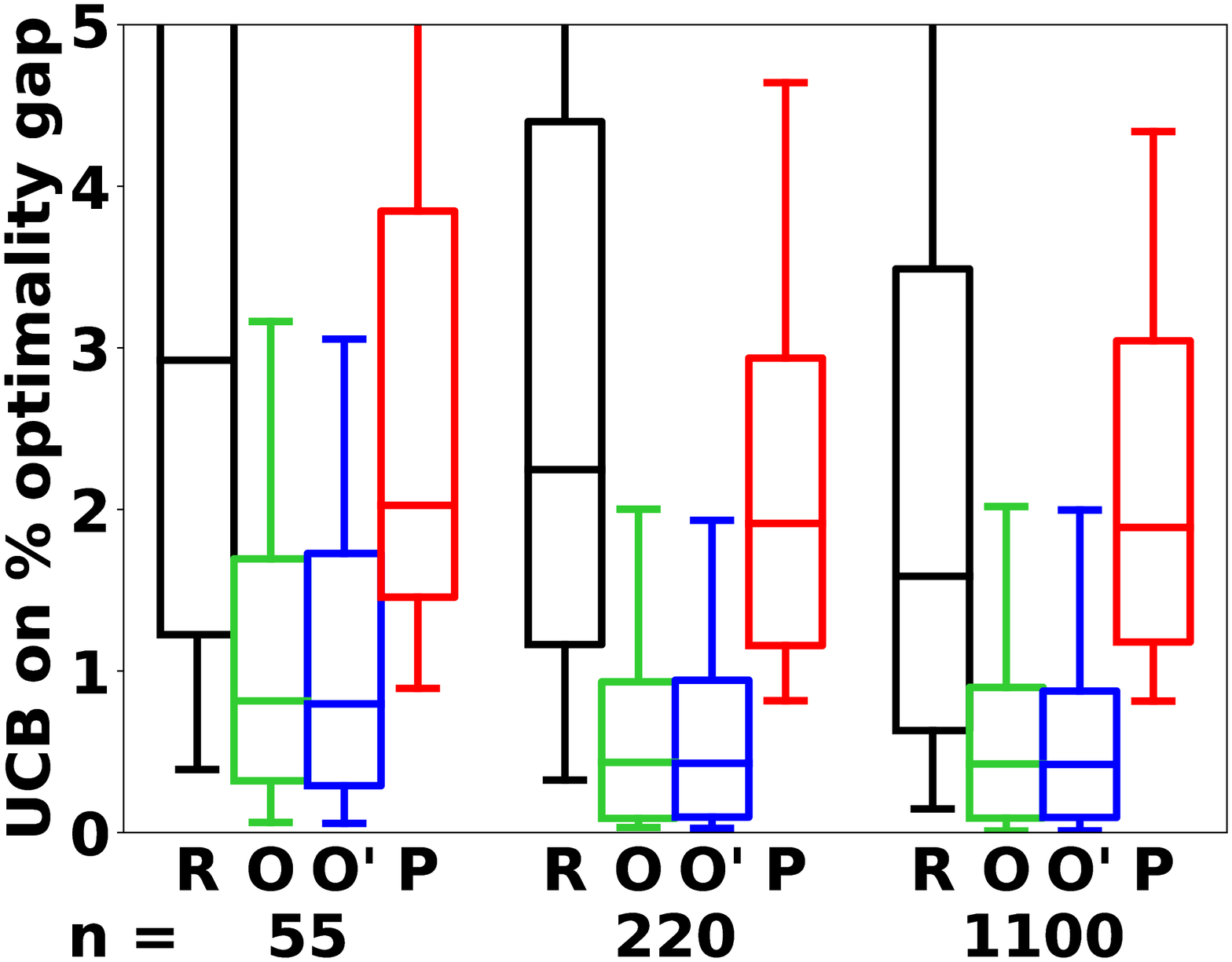}
    \end{subfigure}%
    ~ 
    \begin{subfigure}[t]{0.33\textwidth}
        \centering
        \includegraphics[width=\textwidth]{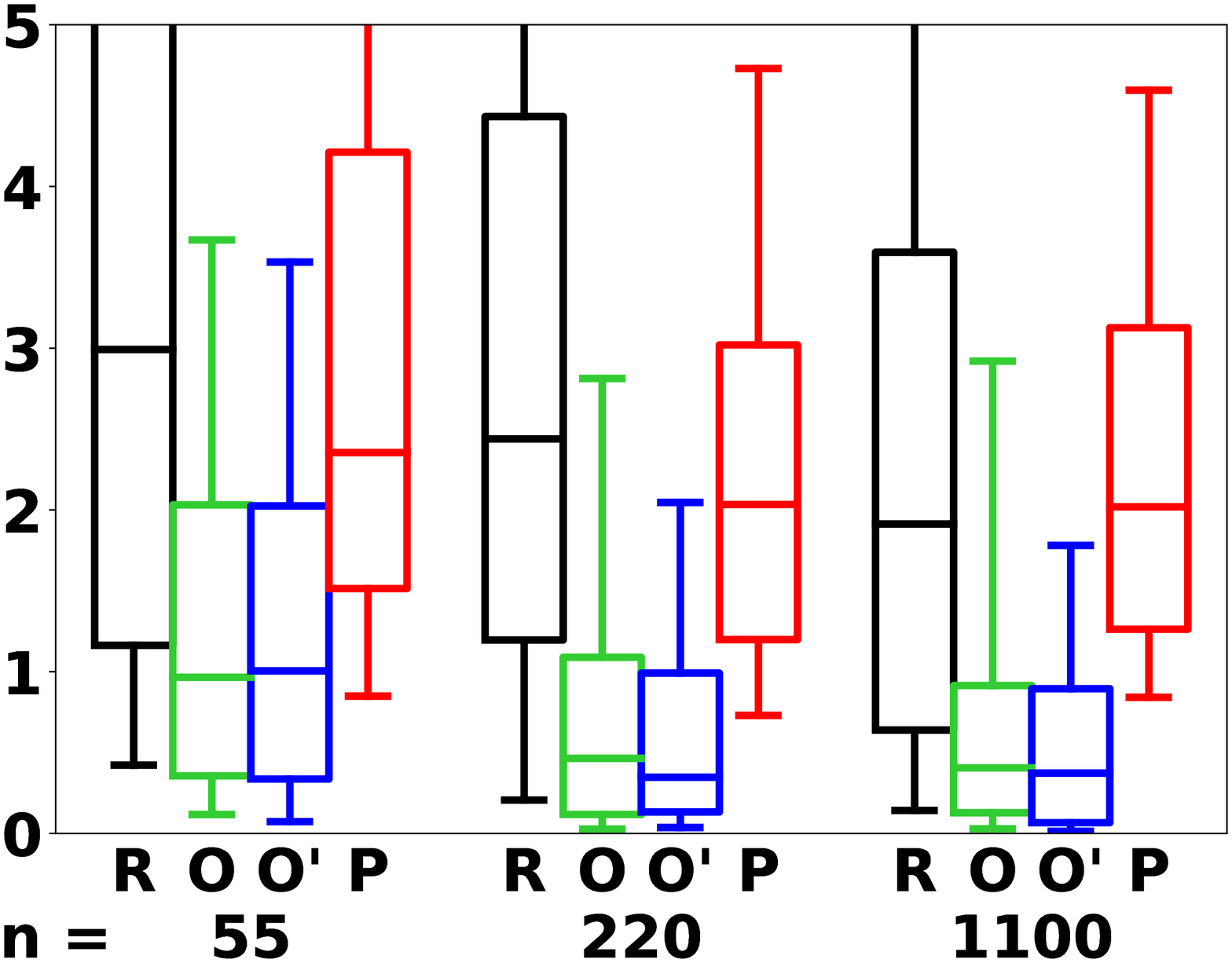}
    \end{subfigure}%
    ~ 
    \begin{subfigure}[t]{0.33\textwidth}
        \centering
        \includegraphics[width=\textwidth]{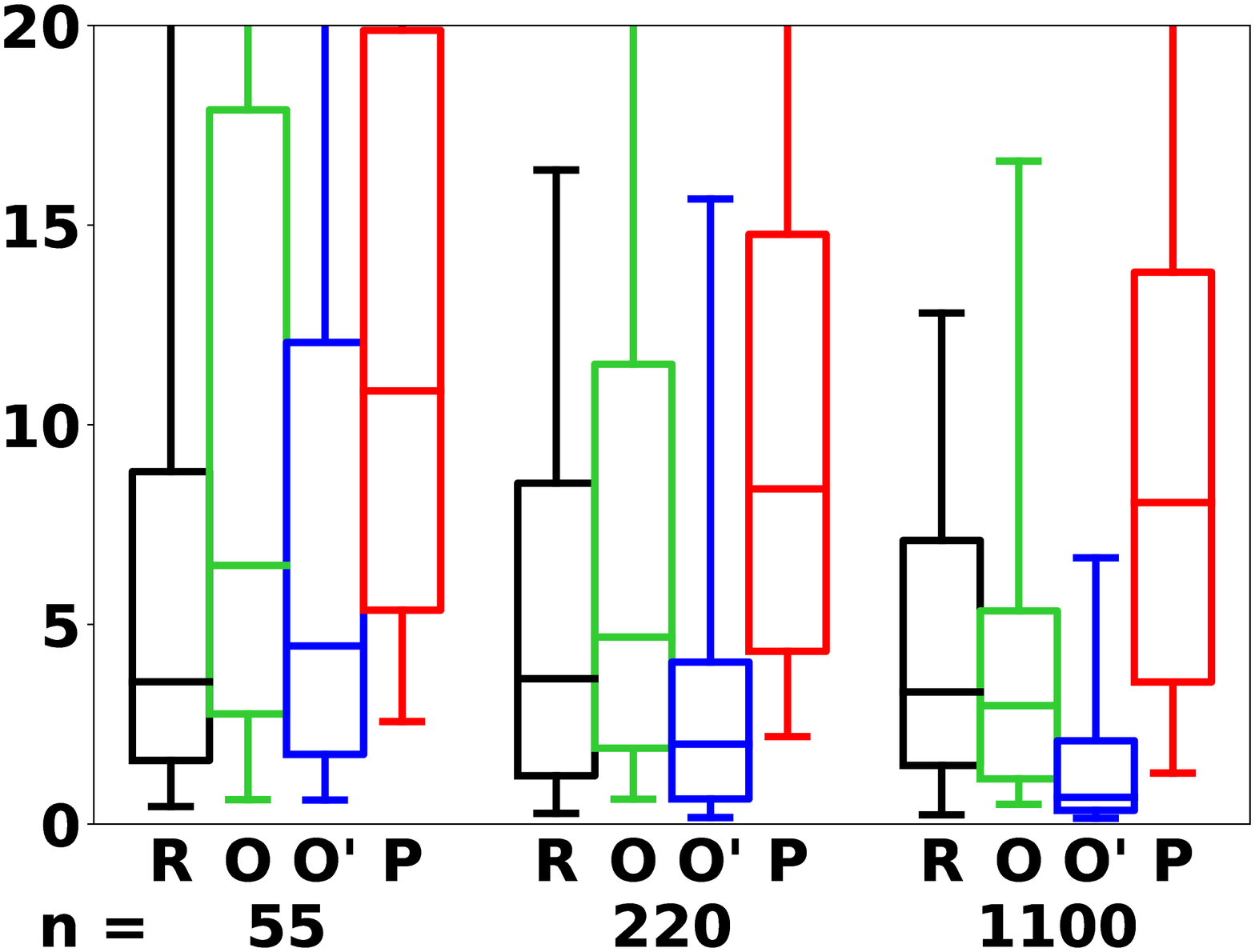}
    \end{subfigure}\\
    \begin{subfigure}[t]{0.33\textwidth}
        \centering
        \includegraphics[width=\textwidth]{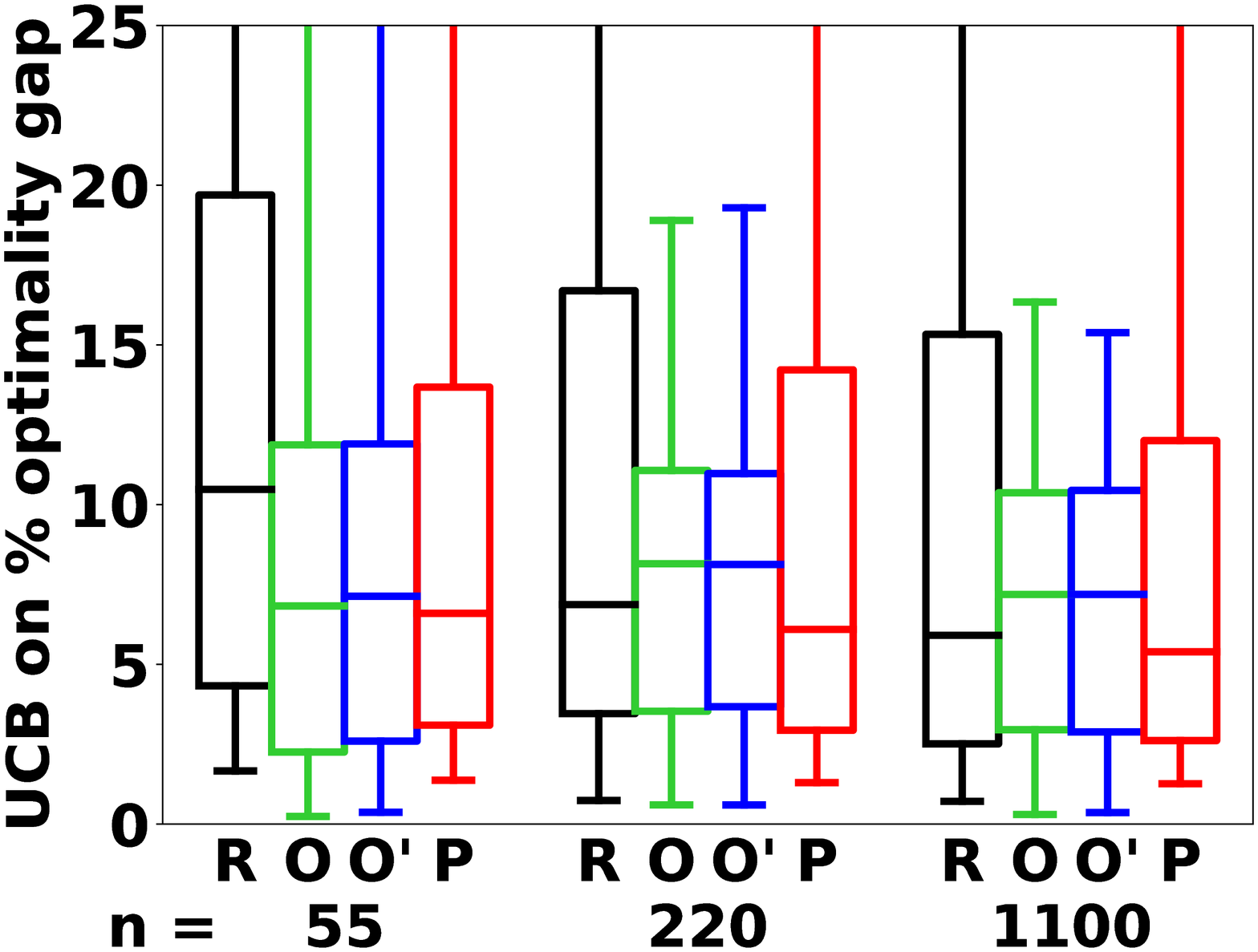}
    \end{subfigure}%
    ~ 
    \begin{subfigure}[t]{0.33\textwidth}
        \centering
        \includegraphics[width=\textwidth]{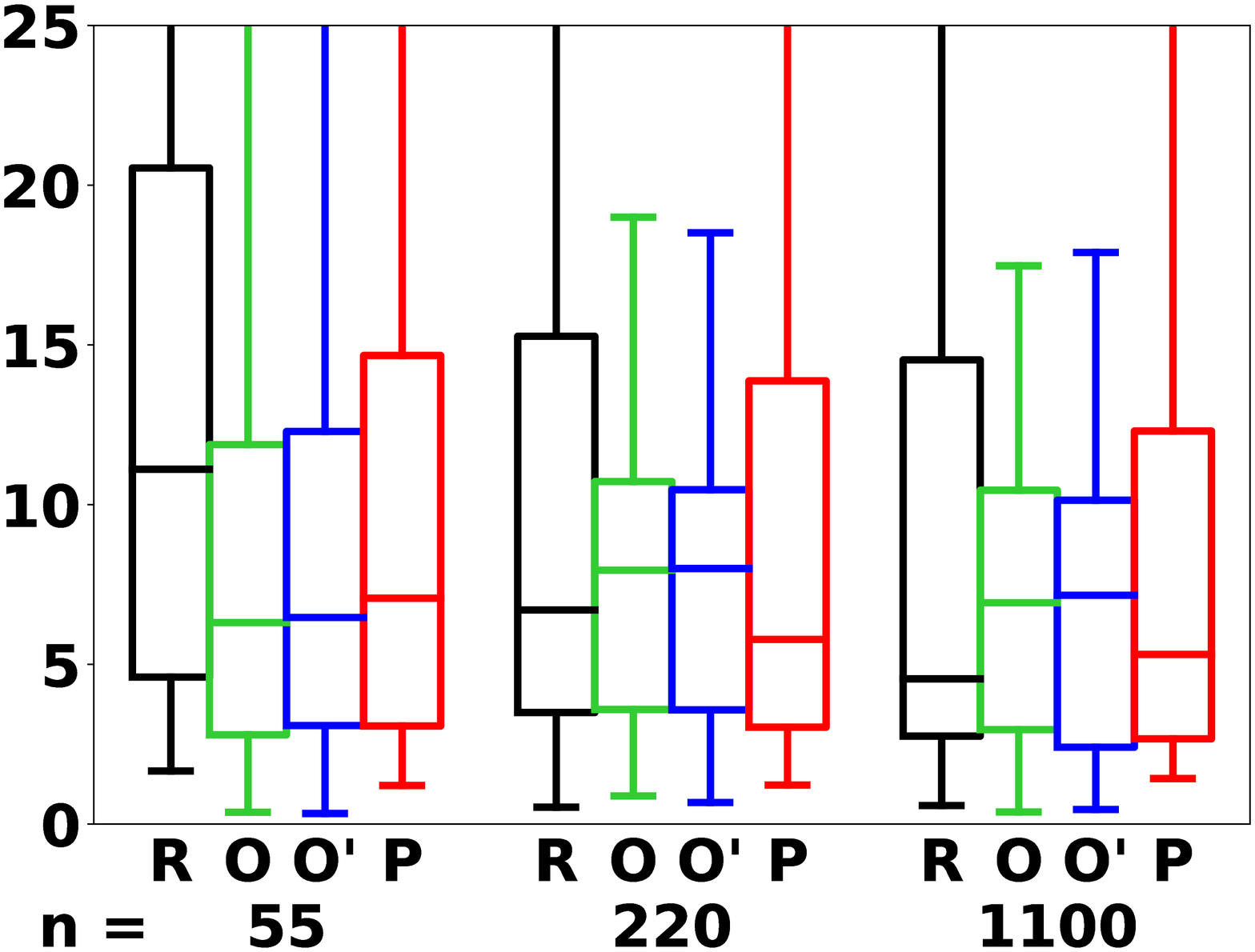}
    \end{subfigure}%
    ~ 
    \begin{subfigure}[t]{0.33\textwidth}
        \centering
        \includegraphics[width=\textwidth]{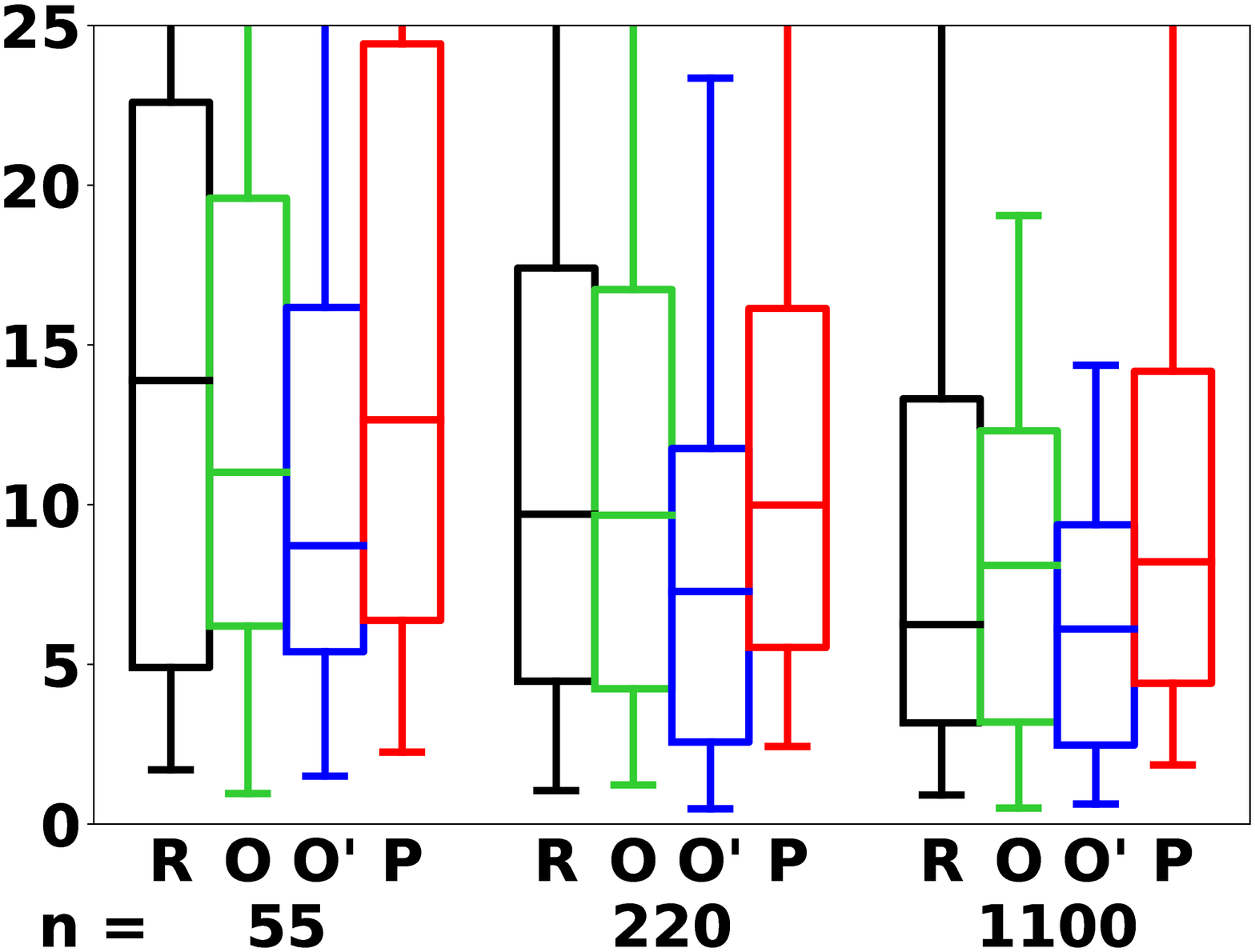}
    \end{subfigure}
    \caption{{Comparison of kNN-SAA (\texttt{R}), ER-SAA+OLS without heteroscedasticity estimation (\texttt{O}), ER-SAA+OLS with heteroscedasticity estimation ($\texttt{O}'$) and PP+OLS ($\texttt{P}$) approaches for $d_x = 10$. Top row: $p = 1$. Middle row: $p = 0.5$. Bottom row: $p = 2$. Left column: $\omega = 1$. Middle column: $\omega = 2$. Right column: $\omega = 3$.}}
    \label{fig:het_plots_2}
\end{figure}

\section{Conclusion and future work}
\label{sec:conclandfw}

We propose three data-driven SAA frameworks for approximating the solution to two-stage stochastic programs when the DM has access to a finite number of samples of random variables and concurrently observed covariates.
These formulations fit a model to predict the random variables given covariate values, and use the prediction model and its (out-of-sample) residuals on the given data to construct scenarios for the original stochastic program at a new covariate realization.
We provide conditions on the prediction and optimization frameworks and the data generation process under which these data-driven estimators are asymptotically optimal, possess a certain rate of convergence, and possess finite sample guarantees.
In particular, we show that our assumptions hold for two-stage stochastic LP in conjunction with popular regression setups such as OLS, Lasso, kNN, and RF regression under various assumptions on the data generation process.
Numerical experiments demonstrate the benefits of our data-driven SAA frameworks, in particular, those of our new data-driven formulations in the limited data regime.

Verifying the assumptions on the prediction setup for other frameworks of interest is an important task to be undertaken by the DM.
Ongoing work includes analysis of an extension of the ER-SAA approach to multistage stochastic programming \citep[cf.][]{ban2018dynamic,bertsimas2019predictions}.
Designing asymptotically optimal estimators for problems with stochastic constraints \citep{hombay_15} 
and problems with decision-dependent uncertainty \citep{bertsimas2014predictive} are interesting avenues for future work.
{Modifications of the ER-SAA that achieve better performance guarantees for a given covariate~$x \in \X$ would also be interesting to explore.}

\section*{Acknowledgments}
This research is supported by the Department of Energy, Office of Science, Office of Advanced Scientific Computing Research, Applied Mathematics program under Contract Number DE-AC02-06CH11357, and was performed using the computing resources of the UW-Madison Center For High Throughput Computing (CHTC) in the Dept.\ of Computer Sciences. The CHTC is supported by UW-Madison, the Advanced Computing Initiative, the Wisconsin Alumni Research Foundation, the Wisconsin Institutes for Discovery, and the National Science Foundation.
{R.K.\ also gratefully acknowledges the support of the U.S.\ Department of Energy through the LANL/LDRD Program and the Center for Nonlinear Studies.}
{The authors thank the three anonymous reviewers, the associate editor, and Prof.\ Erick Delage for suggestions that helped improve this paper.}
R.K.\ also thanks Dr.\ Rui Chen and Prof.\ Garvesh Raskutti for helpful \mbox{discussions}.

{
\footnotesize
\section*{References}
\begingroup
\renewcommand{\section}[2]{}%
\bibliographystyle{abbrvnat}
\bibliography{main}
\endgroup
}

\appendix

\section*{Appendix}

Section~\ref{sec:proofs} provides omitted proofs for results in Section~\ref{sec:ersaa}.
We continue in Section~\ref{sec:alt_assumptions} with a discussion of alternative assumptions under which the results of Section~\ref{sec:ersaa} hold and derive rates of convergence for the ER-SAA in Section~\ref{sec:ersaa_rate}.
We then outline the analysis for the {jackknife}-based SAA variants in Section~\ref{sec:jackknife}.
Then, in Section~\ref{sec:tssp}, we present a class of two-stage stochastic programs that satisfy our assumptions. 
Section~\ref{sec:regression} lists several prediction setups (including M-estimators, OLS, Lasso, kNN, CART, and RF regression) that satisfy the assumptions in our analysis.  Finally, we end with Section~\ref{sec:computexp-details} by providing  omitted details for the computational experiments.

\section{Omitted proofs}
\label{sec:proofs}

\subsection{Proof of Lemma~\ref{lem:ersaa_error_decomp_term1}}

{We have}
\begin{align*}
{\uset{z \in \Z}{\sup} \: \abs*{\hg^{ER}_n(z;x) - g^*_n(z;x)}} &{= \uset{z \in \Z}{\sup} \: \bigg\lvert \dfrac{1}{n}\displaystyle\sum_{i=1}^{n} c \bigl( z,\proj{\Y}{\hf_n(x) + \hat{Q}_n(x)\heps^i_{n}} \bigr) - \dfrac{1}{n}\displaystyle\sum_{i=1}^{n} c \left( z,f^*(x) + Q^*(x)\varepsilon^i \right) \bigg\rvert }\nonumber \\
&{\leq \uset{z \in \Z}{\sup} \: \dfrac{1}{n}\displaystyle\sum_{i=1}^{n} \big\lvert c \bigl( z,\proj{\Y}{\hf_n(x) + \hat{Q}_n(x)\heps^i_{n}} \bigr) - c \left( z,f^*(x) + Q^*(x)\varepsilon^i \right) \big\rvert} \nonumber \\
&{\leq \uset{z \in \Z}{\sup} \: \dfrac{1}{n}\displaystyle\sum_{i=1}^{n} L(z) \norm{\proj{\Y}{\hf_n(x) + \hat{Q}_n(x)\heps^i_{n}} - \left( f^*(x) + Q^*(x)\varepsilon^i \right)}} \nonumber \\
&{\leq \Bigl(\uset{z \in \Z}{\sup} \: L(z)\Bigr) \biggl(\dfrac{1}{n}\displaystyle\sum_{i=1}^{n} \norm{\teps^i_{n}(x)}\biggr),}
\end{align*}
{where the second inequality above follows by Assumption~\ref{ass:equilipschitz} and the final inequality follows by the Lipschitz continuity of orthogonal projections.}
\qed

\subsection{Proof of Theorem~\ref{thm:approxconv}}

Before we prove Theorem~\ref{thm:approxconv}, we present the following lemma that is needed in its proof.

\begin{lemma}
\label{lem:lscsuboptimal}
Let $W \subset \R^{d_w}$ be a nonempty and compact set and $h: W \to \R$ be a lower semicontinuous function. 
Define 
$W^* := \argmin_{w \in W} h(w)$.
Suppose there exists $\delta > 0$ and $\bar{w} \in W$ such that $\textup{dist}(\bar{w},W^*) \geq \delta$.
Then, there exists $\kappa > 0$ such that $h(\bar{w}) \geq \min_{w \in W} h(w) + \kappa$.
\end{lemma}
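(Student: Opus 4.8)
The plan is to reduce the claim to the elementary observation that a point lying at positive distance from the minimizer set of $h$ cannot itself be a minimizer, so its objective value strictly exceeds the minimum value. The only genuine ingredient is that this minimum is actually attained, which is exactly what the lower-semicontinuity-plus-compactness hypotheses provide.

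First I would invoke the generalized Weierstrass theorem to pin down the minimum. Since $h$ is lower semicontinuous on the nonempty compact set $W$, it attains its infimum: taking a minimizing sequence $\{w_k\} \subseteq W$ with $h(w_k) \to \inf_{w \in W} h(w)$, compactness yields a subsequence $w_{k_j} \to w^\dagger \in W$, and lower semicontinuity gives $h(w^\dagger) \leq \liminf_j h(w_{k_j}) = \inf_{w \in W} h(w)$. Because $h$ is real-valued this forces $h(w^\dagger) = \min_{w \in W} h(w) =: m \in \R$. Hence $W^* = \argmin_{w \in W} h(w)$ is nonempty and $m$ is well-defined and finite, which in particular makes $\textup{dist}(\bar w, W^*)$ meaningful.

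Next I would exploit the hypothesis $\textup{dist}(\bar w, W^*) \geq \delta > 0$ to conclude $\bar w \notin W^*$: were $\bar w \in W^*$, then $\textup{dist}(\bar w, W^*) \leq \norm{\bar w - \bar w} = 0 < \delta$, a contradiction. Since $W^*$ is by definition the full set of minimizers of $h$ over $W$, any $\bar w \in W \setminus W^*$ satisfies $h(\bar w) \neq m$; combined with $h(\bar w) \geq m$ (as $m$ is the minimum value), this gives $h(\bar w) > m$. Setting $\kappa := h(\bar w) - m > 0$ then yields $h(\bar w) = \min_{w \in W} h(w) + \kappa$, as required.

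There is essentially no obstacle here beyond attainment of the minimum; the single point to handle carefully is ensuring $W^* \neq \emptyset$ (otherwise the distance $\textup{dist}(\bar w, W^*)$ is vacuous), and this is precisely what lower semicontinuity and compactness deliver. I would also record that the identical argument, applied to the nonempty compact set $\{w \in W : \textup{dist}(w, W^*) \geq \delta\}$ (closed as the preimage of $[\delta, \infty)$ under the $1$-Lipschitz, hence continuous, map $w \mapsto \textup{dist}(w, W^*)$, and nonempty since $\bar w$ belongs to it), produces a single $\kappa = \kappa(\delta) > 0$ valid simultaneously for \emph{every} such point. This uniform refinement is the form most useful when feeding the lemma into the proof of Theorem~\ref{thm:approxconv}, where one wants suboptimality of $g(\cdot;x)$ bounded below uniformly over decisions at distance at least $\delta$ from $S^*(x)$.
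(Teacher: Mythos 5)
Your proof is correct and, once you include the final refinement over the set $\{w \in W : \textup{dist}(w,W^*) \geq \delta\}$, it is essentially the paper's own argument: the paper proves the lemma by minimizing $h$ over exactly that compact set $W_\delta$ and taking $\kappa = \min_{W_\delta} h - \min_W h$. Your observation that the uniform $\kappa(\delta)$ (rather than a $\kappa$ depending on $\bar w$) is what the downstream applications in Theorems~\ref{thm:approxconv} and~\ref{thm:exponentialconv} actually require is accurate, since there the point at distance $\geq \delta$ from $S^*(x)$ varies with the sample path and the index $n$.
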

\begin{proof}

Let $W_{\delta}:= \Set{w \in W}{\text{dist}(w,W^*) \geq \delta}$, and note that $\bar{w} \in W_{\delta}$.
Since $h$ is lower semicontinuous and $W_{\delta}$ is nonempty and compact, $\uset{w \in W_{\delta}}{\inf} \: h(w)$ is attained.
Furthermore, we readily have $\uset{w \in W_{\delta}}{\min} \: h(w) > \uset{w \in W}{\min} \: h(w)$.
Setting $\kappa = \uset{w \in W_{\delta}}{\min} \: h(w) - \uset{w \in W}{\min} \: h(w)$ yields the desired result.
\end{proof}

\bigskip

\noindent{\it Proof of Theorem~\ref{thm:approxconv}.}
Let $z^*(x) \in S^*(x)$ and $\hz^{ER}_n(x) \in \hS^{ER}_n(x)$.
Consider any constant~$\delta > 0$.
From Proposition~\ref{prop:uniformconvofobj}, we have for a.e.~$x \in \X$:
\begin{align*}
\prob{\abs*{\hg^{ER}_n(z^*(x);x) - v^*(x)} > \delta} \to 0 &\implies \prob{\hv^{ER}_n(x) > v^*(x) + \delta} \to 0 \quad \text{and}\\
\prob{\abs*{\hv^{ER}_n(x) - g(\hz^{ER}_n(x);x)} > \delta} \to 0 &\implies \prob{v^*(x) > \hv^{ER}_n(x) + \delta} \to 0. \\ 
\therefore \quad \prob{\abs*{\hv^{ER}_n(x) - v^*(x)} > \delta} \to 0 &\implies \hv^{ER}_n(x) \convinprob v^*(x) \text{ for a.e. } x \in \X.
\end{align*}%
Suppose for contradiction that $\dev{\hS^{ER}_n(x)}{S^*(x)} \not\xrightarrow{p} 0$, $\forall x \in \bar{\X}$, where $\bar{\X} \subseteq \X$ with $P_X(\bar{\X}) > 0$. This implies for any~$\bar{x} \in \bar{\X}$, there exist constants $\delta > 0$ and $\beta > 0$ and a subsequence $\{n_q\}$ of $\mathbb{N}$ such that
$\prob{\dev{\hS^{ER}_{n_q}(\bar{x})}{S^*(\bar{x})} \geq \delta} \geq \beta$, $\forall q \in \mathbb{N}$.
Lemma~\ref{lem:lscsuboptimal} then implies that for a.e.\ $\bx \in \bar{\X}$, there exists $\kappa(\bar{x}) > 0$ such that
\begin{align}
\label{eqn:ersaanotconv}
&\pr \Bigl\{\uset{z \in \hS^{ER}_{n_q}(\bar{x})}{\sup} g(z;\bar{x}) > v^*(\bar{x}) + \kappa(\bar{x}) \Bigr\} \geq \beta, \quad \forall q \in \mathbb{N}.
\end{align}
From Proposition~\ref{prop:uniformconvofobj}, we have for a.e.\ $\bx \in \bar{\X}$:
\begin{align*}
\pr \Bigl\{\uset{z \in \hS^{ER}_n(\bx)}{\sup} \abs*{\hg^{ER}_n(z;\bx) - g(z;\bx)} \leq 0.5\kappa(\bx) \Bigr\} \to 1 &\implies \pr \Bigl\{\uset{z \in \hS^{ER}_n(\bx)}{\sup} g(z;\bx) \leq \hv^{ER}_n(\bx) + 0.5\kappa(\bx) \Bigr\} \to 1, \\
\pr \Bigl\{\uset{z \in S^*(\bx)}{\sup} \: \abs*{\hg^{ER}_n(z,\bx) - g(z,\bx)} \leq 0.5\kappa(\bx) \Bigr\} \to 1 &\implies \pr\Bigl\{\uset{z \in S^*(\bx)}{\sup} \hg^{ER}_n(z;\bx) \leq v^*(\bx) + 0.5\kappa(\bx)\Bigr\} \to 1.
\end{align*}
Since $\hv^{ER}_n(\bx) \leq \sup_{z \in S^*(\bx)} \hg^{ER}_n(z;\bx)$ by definition, the above inequalities in turn imply
\[
\pr \Bigl\{\uset{z \in \hS^{ER}_n(\bx)}{\sup} g(z;\bx) \leq v^*(\bx) + \kappa(\bx) \Bigr\} \to 1,
\]
which contradicts the inequality~\eqref{eqn:ersaanotconv}.
The above arguments also readily imply that the ER-SAA estimators are asymptotically optimal, i.e., $\sup_{z \in \hS^{ER}_n(x)} g(z;x) \xrightarrow{p} v^*(x)$ for a.e.~$x \in \X$.
\qed

\subsection{Proof of Theorem~\ref{thm:averageconsist}}

{Since}
\begin{align*}
&{\norm*{\hv^{ER}_n(X) - v^*(X)}_{L^q} = \big\lVert \min_{z \in \Z} \hg^{ER}_n(z;X) - \min_{z \in \Z} g(z;X)\big\rVert_{L^q} \leq \Big\lVert \uset{z \in \Z}{\sup} \abs*{\hg^{ER}_{n}(z;X) - g(z;X)}\Big\rVert_{L^q},} \\
&{\norm*{g(\hz^{ER}_n(X);X) - v^*(X)}_{L^q}} {\leq \norm*{g(\hz^{ER}_n(X);X) - \hv^{ER}_n(X)}_{L^q} + \norm*{\hv^{ER}_n(X) - v^*(X)}_{L^q},}
\end{align*}
{we focus on establishing convergence of $\hg^{ER}_n(\cdot;X)$ to $g(\cdot;X)$ with respect to the $L^q$-norm on~$\X$.
From inequality~\eqref{eqn:ersaa_error_decomp} and the triangle inequality for the $L^q$-norm, we have}
\[
{\Big\lVert\uset{z \in \Z}{\sup}\: \abs*{\hg^{ER}_n(z;X) - g(z;X)}\Big\rVert_{L^q} \leq \Big\lVert\uset{z \in \Z}{\sup} \: \abs*{\hg^{ER}_n(z;X) - g^*_n(z;X)}\Big\rVert_{L^q} + \Big\lVert\uset{z \in \Z}{\sup} \: \abs*{g^*_n(z;X) - g(z;X)}\Big\rVert_{L^q}.}
\]
{Assumption~\ref{ass:uniflln_lq} implies that the second term on the r.h.s.\ of the above inequality converges to zero in probability.
We now show that the first term also converges to zero in probability. We have}
\begin{align*}
&{\Big\lVert\uset{z \in \Z}{\sup} \: \abs*{\hg^{ER}_n(z;X) - g^*_n(z;X)}\Big\rVert_{L^q}} \\
\leq& {\Bigl(\uset{z \in \Z}{\sup} \: L(z)\Bigr) \Big\lVert\dfrac{1}{n}\displaystyle\sum_{i=1}^{n} \norm{\teps^i_{n}(X)}\Big\rVert_{L^q}} \\
\leq& {O(1)\big\lVert\norm{\hf_n(X) - f^*(X)}\big\rVert_{L^q} + O(1)\big\lVert\norm{\hQ_n(X) - Q^*(X)}\big\rVert_{L^q}\biggl(\frac{1}{n} \sum_{i=1}^{n} \norm{\varepsilon^i}\biggr) +} \\
& { O(1)\big\lVert\norm{\hQ_n(X)}\big\rVert_{L^q} \biggl(\frac{1}{n} \sum_{i=1}^{n} \bigl\lVert \bigl[\hQ_n(x^i)\bigr]^{-1} - \bigl[Q^*(x^i)\bigr]^{-1}\bigr\rVert^2\biggr)^{1/2} \biggl(\frac{1}{n} \sum_{i=1}^{n} \norm{Q^*(x^i)}^4\biggr)^{1/4} \biggl(\frac{1}{n} \sum_{i=1}^{n} \norm{\varepsilon^i}^4\biggr)^{1/4} +} \\ 
& {O(1)\big\lVert\norm{\hQ_n(X)}\big\rVert_{L^q} \biggl(\frac{1}{n} \sum_{i=1}^{n} \bigl\lVert \bigl[\hQ_n(x^i)\bigr]^{-1}\bigr\rVert^2\biggr)^{1/2} \biggl(\frac{1}{n} \sum_{i=1}^{n} \norm{f^*(x^i) - \hf_n(x^i)}^2\biggr)^{1/2},}
\end{align*}
{where the first inequality follows from Lemma~\ref{lem:ersaa_error_decomp_term1} and the second inequality follows from Lemma~\ref{lem:meandeviation} and the triangle inequality.
The rest of the arguments for $\big\lVert\sup_{z \in \Z} \: \abs*{\hg^{ER}_n(z;X) - g^*_n(z;X)}\big\rVert_{L^q} \convinprob 0$ follow by similar arguments as in the proof of Proposition~\ref{prop:uniformconvofobj} upon noting that}
\[
{\big\lVert\norm{\hQ_n(X)}\big\rVert_{L^q} \leq \big\lVert\norm{\hQ_n(X) - Q^*(X)}\big\rVert_{L^q} + \big\lVert\norm{Q^*(X)}\big\rVert_{L^q}}
\]
{and by replacing Assumptions~\ref{ass:uniflln} and~\ref{ass:regconsist} with Assumptions~\ref{ass:uniflln_lq} and~\ref{ass:regconsist_lq}.}
\qed

\subsection{Proofs of Lemma~\ref{lem:largedevofdev},  Theorem~\ref{thm:exponentialconv}, and Proposition~\ref{prop:specfinsamp}}

{\it Proof of Lemma~\ref{lem:largedevofdev}.}
Assumption~\ref{ass:equilipschitz} and Lemma~\ref{lem:ersaa_error_decomp_term1} imply
\begin{align}
\label{eqn:meandeviation_finitesamp_0}
\mathbb{P}\Bigl\{\uset{z \in \Z}{\sup} \abs*{ \hg^{ER}_n(z;x) - g^*_n(z;x)} > \kappa\Bigr\} \leq \: & \pr \biggl\{\Bigl(\uset{z \in \Z}{\sup} \: L(z) \Bigr) \biggl(\dfrac{1}{n}\displaystyle\sum_{i=1}^{n} \norm{\teps^i_{n}(x)}\biggr) > \kappa \biggr\} \nonumber \\
\leq \: & \pr \Biggl\{\biggl(\dfrac{1}{n}\displaystyle\sum_{i=1}^{n} \norm{\teps^i_{n}(x)}\biggr) > \dfrac{\kappa}{\Bigl(\uset{z \in \Z}{\sup} \: L(z) \Bigr)} \Biggr\}.
\end{align}
Therefore, it suffices to bound $\pr \bigl\{\frac{1}{n}\sum_{i=1}^{n} \norm{\teps^i_{n}(x)} > \kappa \bigr\}$ for any $\kappa > 0$.

{From Lemma~\ref{lem:meandeviation} and the inequality $\mathbb{P}\{V + W > c_1 + c_2\} \leq \mathbb{P}\{V > c_1\} + \mathbb{P}\{W > c_2\}$ for any random variables $V$, $W$ and constants $c_1$, $c_2$, we have:}
\begin{align}
\label{eqn:meandeviation_finitesamp}
&{\mathbb{P}\Bigl\{\dfrac{1}{n} \displaystyle\sum_{i=1}^{n} \norm{\teps^i_{n}(x)} > \kappa \Bigr\}} \nonumber\\
\leq& {\mathbb{P}\Bigl\{\norm{\hf_n(x) - f^*(x)} > \frac{\kappa}{4}\Bigr\} + \mathbb{P}\biggl\{\biggl(\frac{1}{n} \sum_{i=1}^{n} \norm{\varepsilon^i}\biggr) \norm{\hQ_n(x) - Q^*(x)} > \frac{\kappa}{4}\biggr\} +} \nonumber \\
&\:\: {\mathbb{P}\biggl\{ \norm{\hQ_n(x)} \biggl(\frac{1}{n} \sum_{i=1}^{n} \bigl\lVert \bigl[\hQ_n(x^i)\bigr]^{-1} - \bigl[Q^*(x^i)\bigr]^{-1}\bigr\rVert^2\biggr)^{1/2} \biggl(\frac{1}{n} \sum_{i=1}^{n} \norm{Q^*(x^i)}^4\biggr)^{1/4} \biggl(\frac{1}{n} \sum_{i=1}^{n} \norm{\varepsilon^i}^4\biggr)^{1/4} > \frac{\kappa}{4} \biggr\} +} \nonumber \\
&\:\: {\mathbb{P}\biggl\{ \norm{\hQ_n(x)} \biggl(\frac{1}{n} \sum_{i=1}^{n} \bigl\lVert \bigl[\hQ_n(x^i)\bigr]^{-1}\bigr\rVert^2\biggr)^{1/2} \biggl(\frac{1}{n} \sum_{i=1}^{n} \norm{f^*(x^i) - \hf_n(x^i)}^2\biggr)^{1/2} > \frac{\kappa}{4}\biggr\}.}
\end{align}
{For a.e.\ $x \in \X$, the first term on the r.h.s.\ of~\eqref{eqn:meandeviation_finitesamp} can be bounded using Assumption~\ref{ass:reglargedev_point} as}
\begin{align*}
&{\mathbb{P}\Bigl\{\norm{\hf_n(x) - f^*(x)} > \frac{\kappa}{4}\Bigr\} \leq K_f(\tfrac{\kappa}{4},x) \exp(-\beta_f(n,\tfrac{\kappa}{4},x)).}
\end{align*}

{Next, consider the second term on the r.h.s.\ of inequality~\eqref{eqn:meandeviation_finitesamp}.
We have for a.e.\ $x \in \X$}
\begin{align*}
&{\mathbb{P}\biggl\{\biggl(\frac{1}{n} \sum_{i=1}^{n} \norm{\varepsilon^i}\biggr) \norm{\hQ_n(x) - Q^*(x)} > \frac{\kappa}{4}\biggr\}} \\
\leq& {\mathbb{P}\biggl\{\frac{1}{n} \sum_{i=1}^{n} \norm{\varepsilon^i} > \mathbb{E}[\norm{\varepsilon}] + \kappa\biggr\} + \mathbb{P}\Bigl\{(\mathbb{E}[\norm{\varepsilon}] + \kappa) \norm{\hQ_n(x) - Q^*(x)} > \frac{\kappa}{4}\Bigr\}} \\
\leq& {J_{\varepsilon}(\kappa)\exp(-\gamma_{\varepsilon}(n,\kappa)) + \mathbb{P}\biggl\{ \norm{\hQ_n(x) - Q^*(x)} > \frac{\kappa}{4(\mathbb{E}[\norm{\varepsilon}] + \kappa)}\biggr\}} \\
\leq& {J_{\varepsilon}(\kappa)\exp(-\gamma_{\varepsilon}(n,\kappa)) + K_Q\bigl(\tfrac{\kappa}{4(\mathbb{E}[\norm{\varepsilon}] + \kappa)},x\bigr) \exp\bigl(-\beta_Q\bigl( n,\tfrac{\kappa}{4(\mathbb{E}[\norm{\varepsilon}] + \kappa)},x\bigr)\bigr),}
\end{align*}
{where the second inequality follows by Assumption~\ref{ass:errorlargedev} and the last step by Assumption~\ref{ass:reglargedev_point2}.}

{The third term on the r.h.s.\ of inequality~\eqref{eqn:meandeviation_finitesamp} can be bounded for a.e.\ $x \in \X$ as}
\begin{align*}
&{\mathbb{P}\biggl\{ \norm{\hQ_n(x)} \biggl(\frac{1}{n} \sum_{i=1}^{n} \bigl\lVert \bigl[\hQ_n(x^i)\bigr]^{-1} - \bigl[Q^*(x^i)\bigr]^{-1}\bigr\rVert^2\biggr)^{1/2} \biggl(\frac{1}{n} \sum_{i=1}^{n} \norm{Q^*(x^i)}^4\biggr)^{1/4} \biggl(\frac{1}{n} \sum_{i=1}^{n} \norm{\varepsilon^i}^4\biggr)^{1/4} > \frac{\kappa}{4} \biggr\}} \\
\leq& {\mathbb{P}\bigl\{ \norm{\hQ_n(x)} > \norm{Q^*(x)} + \kappa \bigr\} + \mathbb{P}\biggl\{ \biggl(\frac{1}{n} \sum_{i=1}^{n} \norm{Q^*(x^i)}^4\biggr)^{1/4} > \bigl(\mathbb{E}[\norm{Q^*(X)}^4]\bigr)^{1/4} + \kappa\biggr\} +} \\
& {\mathbb{P}\biggl\{ \biggl(\frac{1}{n} \sum_{i=1}^{n} \norm{\varepsilon^i}^4\biggr)^{1/4} > \bigl(\mathbb{E}[\norm{\varepsilon}^4]\bigr)^{1/4} + \kappa\biggr\} +} \\
& {\mathbb{P}\biggl\{ \bigl( \norm{Q^*(x)} + \kappa \bigr) \bigl(\bigl(\mathbb{E}[\norm{Q^*(X)}^4]\bigr)^{1/4} + \kappa\bigr) \bigl(\bigl(\mathbb{E}[\norm{\varepsilon}^4]\bigr)^{1/4} + \kappa\bigr) \biggl(\frac{1}{n} \sum_{i=1}^{n} \bigl\lVert \bigl[\hQ_n(x^i)\bigr]^{-1} - \bigl[Q^*(x^i)\bigr]^{-1}\bigr\rVert^2\biggr)^{1/2} > \frac{\kappa}{4}\biggr\}} \\
\leq& {K_Q(\kappa,x) \exp\left(-\beta_Q(n,\kappa,x)\right) + \bar{J}_Q(\kappa)\exp(-\bar{\gamma}_{Q}(n,\kappa)) + \bar{J}_{\varepsilon}(\kappa)\exp(-\bar{\gamma}_{\varepsilon}(n,\kappa)) +} \\
& {\mathbb{P}\biggl\{ \biggl(\frac{1}{n} \sum_{i=1}^{n} \bigl\lVert \bigl[\hQ_n(x^i)\bigr]^{-1} - \bigl[Q^*(x^i)\bigr]^{-1}\bigr\rVert^2\biggr)^{1/2} > h_1(\kappa,x)\biggr\}} \\
\leq& {K_Q(\kappa,x) \exp\left(-\beta_Q(n,\kappa,x)\right) + \bar{J}_Q(\kappa)\exp(-\bar{\gamma}_{Q}(n,\kappa)) + \bar{J}_{\varepsilon}(\kappa)\exp(-\bar{\gamma}_{\varepsilon}(n,\kappa)) +} \\
& {\bar{K}_Q(h_1(\kappa,x)) \exp(-\bar{\beta}_Q(n,h_1(\kappa,x))),}
\end{align*}
{where the second inequality follows from Assumptions~\ref{ass:varlargedev},~\ref{ass:errorlargedev}, and~\ref{ass:reglargedev_point2}, the final inequality follows from Assumption~\ref{ass:reglargedev_mse2}, and}
\[
{h_1(\kappa,x) := \frac{\kappa}{4\bigl( \norm{Q^*(x)} + \kappa \bigr) \bigl(\bigl(\mathbb{E}[\norm{Q^*(X)}^4]\bigr)^{1/4} + \kappa\bigr) \bigl(\bigl(\mathbb{E}[\norm{\varepsilon}^4]\bigr)^{1/4} + \kappa\bigr)}.}
\]

{Finally, the fourth term on the r.h.s.\ of inequality~\eqref{eqn:meandeviation_finitesamp} can be bounded for a.e.\ $x \in \X$ as}
\begin{align*}
&{\mathbb{P}\biggl\{ \norm{\hQ_n(x)} \biggl(\frac{1}{n} \sum_{i=1}^{n} \bigl\lVert \bigl[\hQ_n(x^i)\bigr]^{-1}\bigr\rVert^2\biggr)^{1/2} \biggl(\frac{1}{n} \sum_{i=1}^{n} \norm{f^*(x^i) - \hf_n(x^i)}^2\biggr)^{1/2} > \frac{\kappa}{4}\biggr\}} \\
\leq& {\mathbb{P}\bigl\{ \norm{\hQ_n(x)} > \norm{Q^*(x)} + \kappa \bigr\} + \mathbb{P}\biggl\{ \biggl(\frac{1}{n} \sum_{i=1}^{n} \bigl\lVert \bigl[\hQ_n(x^i)\bigr]^{-1}\bigr\rVert^2\biggr)^{1/2} > \Bigl(\mathbb{E}\bigl[ \bigl\lVert \bigl[Q^*(X)\bigr]^{-1} \bigr\rVert^2 \bigr]\Bigr)^{1/2} + 2\kappa\biggr\} +} \\
& {\mathbb{P}\biggl\{ \bigl( \norm{Q^*(x)} + \kappa \bigr) \Bigl(\Bigl(\mathbb{E}\bigl[ \bigl\lVert \bigl[Q^*(X)\bigr]^{-1} \bigr\rVert^2 \bigr]\Bigr)^{1/2} + 2\kappa\Bigr) \biggl(\frac{1}{n} \sum_{i=1}^{n} \norm{f^*(x^i) - \hf_n(x^i)}^2\biggr)^{1/2} > \frac{\kappa}{4}\biggr\}} \\
\leq& {K_Q(\kappa,x) \exp\left(-\beta_Q(n,\kappa,x)\right) + \mathbb{P}\biggl\{ \biggl(\frac{1}{n} \sum_{i=1}^{n} \norm{f^*(x^i) - \hf_n(x^i)}^2\biggr)^{1/2} > h_2(\kappa,x)\biggr\} +} \\
&{\mathbb{P}\biggl\{ \biggl(\frac{1}{n} \sum_{i=1}^{n} \bigl\lVert \bigl[\hQ_n(x^i)\bigr]^{-1} - \bigl[Q^*(x^i)\bigr]^{-1}\bigr\rVert^2\biggr)^{1/2} + \biggl(\frac{1}{n} \sum_{i=1}^{n} \bigl\lVert \bigl[Q^*(x^i)\bigr]^{-1}\bigr\rVert^2\biggr)^{1/2} > \Bigl(\mathbb{E}\bigl[ \bigl\lVert \bigl[Q^*(X)\bigr]^{-1} \bigr\rVert^2 \bigr]\Bigr)^{1/2} + 2\kappa\biggr\}} \\
\leq& {K_Q(\kappa,x) \exp\left(-\beta_Q(n,\kappa,x)\right) + \bar{K}_f(h_2(\kappa,x)) \exp(-\bar{\beta}_f(n,h_2(\kappa,x))) + \bar{K}_Q(\kappa) \exp(-\bar{\beta}_Q(n,\kappa)) +} \\
&{J_Q(\kappa)\exp(-\gamma_{Q}(n,\kappa)),}
\end{align*}
{where the second inequality follows by Assumption~\ref{ass:reglargedev} and Lemma~\ref{lem:varbound}, the final inequality follows by Assumptions~\ref{ass:reglargedev} and~\ref{ass:varlargedev} and the probability inequality stated at the beginning of this proof, and}
\[
{h_2(\kappa,x) := \frac{\kappa}{4\bigl( \norm{Q^*(x)} + \kappa \bigr) \Bigl(\Bigl(\mathbb{E}\bigl[ \bigl\lVert \bigl[Q^*(X)\bigr]^{-1} \bigr\rVert^2 \bigr]\Bigr)^{1/2} + 2\kappa\Bigr)}.}
\]
{Putting the above bounds together in inequality~\eqref{eqn:meandeviation_finitesamp}, we have for a.e.\ $x \in \X$:}
\begin{align}
\label{eqn:meandeviation_finitesamp2}
{\mathbb{P}\biggl\{\dfrac{1}{n} \displaystyle\sum_{i=1}^{n} \norm{\teps^i_{n}(x)} > \kappa \biggr\}} &{\leq J_{\varepsilon}(\kappa)\exp(-\gamma_{\varepsilon}(n,\kappa)) + \bar{J}_{\varepsilon}(\kappa)\exp(-\bar{\gamma}_{\varepsilon}(n,\kappa)) + } \\
&\quad { J_Q(\kappa)\exp(-\gamma_{Q}(n,\kappa)) + \bar{J}_Q(\kappa)\exp(-\bar{\gamma}_{Q}(n,\kappa)) +} \nonumber \\
&\quad {K_f(\tfrac{\kappa}{4},x) \exp(-\beta_f(n,\tfrac{\kappa}{4},x)) + \bar{K}_f(h_2(\kappa,x)) \exp(-\bar{\beta}_f(n,h_2(\kappa,x))) +} \nonumber \\
&\quad {K_Q\bigl(\tfrac{\kappa}{4(\mathbb{E}[\norm{\varepsilon}] + \kappa)},x\bigr) \exp\bigl(-\beta_Q(n,\tfrac{\kappa}{4(\mathbb{E}[\norm{\varepsilon}] + \kappa)},x)\bigr) + 2K_Q(\kappa,x) \exp\left(-\beta_Q(n,\kappa,x)\right) +} \nonumber\\
&\quad {\bar{K}_Q(\kappa) \exp(-\bar{\beta}_Q(n,\kappa)) + \bar{K}_Q(h_1(\kappa,x)) \exp(-\bar{\beta}_Q(n,h_1(\kappa,x))) \nonumber,}
\end{align}
{which along with inequality~\eqref{eqn:meandeviation_finitesamp_0} implies the desired result.}
\qed

\bigskip

\noindent {\it Proof of Theorem~\ref{thm:exponentialconv}.}
Note that for any $\kappa > 0$:
\begin{align*}
\pr \Bigl\{\uset{z \in \Z}{\sup} \: \abs*{\hg^{ER}_n(z;x) - g(z;x)} > \kappa \Bigr\} \leq & \: \pr \Bigl\{\uset{z \in \Z}{\sup} \: \abs*{ \hg^{ER}_n(z;x) - g^*_n(z;x)} > \dfrac{\kappa}{2} \Bigr\} + \pr \Bigl\{\uset{z \in \Z}{\sup} \: \abs*{g^*_n(z;x) - g(z;x)} > \dfrac{\kappa}{2} \Bigr\}.
\end{align*}
Bounding the two terms on the r.h.s.\ of the above inequality using Assumption~\ref{ass:tradsaalargedev} and Lemma~\ref{lem:largedevofdev} yields for a.e.~$x \in \X$:
\begin{align}
\label{eqn:largedeveqn}
&\pr \Bigl\{\uset{z \in \Z}{\sup} \: \abs*{\hg^{ER}_n(z;x) - g(z;x)} > \kappa \Bigr\} \leq \tilde{K}(\kappa,x) \exp\bigl(- \tilde{\beta}(n,\kappa,x) \bigr),
\end{align}
where $\tilde{K}(\kappa,x) := 2\max\left\lbrace K(0.5\kappa,x), \bar{K}(0.5\kappa,x) \right\rbrace$ and $\tilde{\beta}(n,\kappa,x) := \min\left\lbrace n\beta(0.5\kappa,x), \bar{\beta}(n,0.5\kappa,x)  \right\rbrace$.
{Inequality~\eqref{eqn:largedeveqn} implies for $n \in \mathbb{N}$, a.e.\ $x \in \X$, and any $\kappa := \kappa(x) > 0$:}
\begin{align*}
&{\pr \Bigl\{\uset{z \in \hS^{ER}_n(x)}{\sup} \: \abs*{\hg^{ER}_n(z;x) - g(z;x)} \leq 0.5\kappa \Bigr\} \geq 1-\tilde{K}(0.5\kappa,x) \exp\bigl(- \tilde{\beta}(n,0.5\kappa,x) \bigr)} \\
\implies & {\pr \Bigl\{\uset{z \in \hS^{ER}_n(x)}{\sup} g(z;x) \leq \hv^{ER}_n(x) + 0.5\kappa \Bigr\} \geq 1-\tilde{K}(0.5\kappa,x) \exp\bigl(- \tilde{\beta}(n,0.5\kappa,x) \bigr),}\\
\text{and} \quad &{\pr\Bigl\{\uset{z \in S^*(x)}{\sup} \: \abs*{\hg^{ER}_n(z;x) - g(z;x)} \leq 0.5\kappa \Bigr\} \geq  1-\tilde{K}(0.5\kappa,x) \exp\bigl(- \tilde{\beta}(n,0.5\kappa,x) \bigr)}\\
\implies & {\pr \Bigl\{\uset{z \in S^*(x)}{\sup} \hg^{ER}_n(z;x) \leq v^*(x) + 0.5\kappa \Bigr\} \geq 1-\tilde{K}(0.5\kappa,x) \exp\bigl(- \tilde{\beta}(n,0.5\kappa,x) \bigr).}
\end{align*}
{Since $\hv^{ER}_n(x) \leq \sup_{z \in S^*(x)} \hg^{ER}_n(z;x)$ by the definition of $\hv^{ER}_n(x)$, the above two inequalities imply}
\begin{align}
\label{eqn:largedeveqn2}
\pr \Bigl\{\uset{z \in \hS^{ER}_n(x)}{\sup} g(z;x) \leq v^*(x) + \kappa \Bigr\} &\geq 1-2\tilde{K}(0.5\kappa,x) \exp\bigl(- \tilde{\beta}(n,0.5\kappa,x) \bigr), \\
\implies \prob{g(\hz^{ER}_n(x);x) \leq v^*(x) + \kappa(x)} &\geq 1-2\tilde{K}(0.5\kappa,x) \exp\bigl(- \tilde{\beta}(n,0.5\kappa,x) \bigr). \nonumber
\end{align}

Suppose $\text{dist}(\hz^{ER}_n(x),S^*(x)) \geq \eta$ for some $x \in \X$ and some sample path.
Since $g(\cdot;x)$ is lsc on the compact set~$\Z$ for a.e.\ $x \in \X$,
Lemma~\ref{lem:lscsuboptimal} implies that 
there exists $\kappa(\eta,x) > 0$ such that $g(\hz^{ER}_n(x);x) > v^*(x) + \kappa(\eta,x)$ on that path (except for some paths of measure zero).
We now provide a bound on the probability of this event. 
By the above arguments, we have for a.e.\ $x \in \X$:
\begin{align*}
\prob{\text{dist}(\hz^{ER}_n(x),S^*(x)) \geq \eta} &\leq \prob{g(\hz^{ER}_n(x);x) > v^*(x) + \kappa(\eta,x)} \\
&\leq 2\tilde{K}(0.5\kappa(\eta,x),x) \exp\bigl(- \tilde{\beta}(n,0.5\kappa(\eta,x),x) \bigr). \tag*{\qed}
\end{align*}

\bigskip

\noindent {\it Proof of Proposition~\ref{prop:specfinsamp}.}
{We show that for each $n \in \mathbb{N}$ and a.e.~$x \in \X$, there exist positive constants $\tilde{K}(\kappa,x)$ and $\tilde{\beta}(n,\kappa,x)$ such that inequality~\eqref{eqn:largedeveqn} holds.}
{Following the arguments in the proof of Theorem~\ref{thm:exponentialconv}, inequality~\eqref{eqn:largedeveqn2} then implies}
\[
\pr \Bigl\{\uset{z \in \hS^{ER}_n(x)}{\sup} g(z;x) \leq v^*(x) + \kappa \Bigr\} \geq 1-2\tilde{K}(0.5\kappa,x) \exp\bigl(- \tilde{\beta}(n,0.5\kappa,x) \bigr),
\]
which {along with the definition of $S^{\kappa}(x)$} in turn implies that
\[
\prob{\hS^{ER}_n(x) \subseteq S^{\kappa}(x)} \geq 1-2\tilde{K}(0.5\kappa,x) \exp\bigl(- \tilde{\beta}(n,0.5\kappa,x) \bigr).
\]

We now state results that can be used to bound the constants $\tilde{K}(\kappa,x)$ and $\tilde{\beta}(\kappa,x)$ in inequality~\eqref{eqn:largedeveqn}; we ignore their dependence on $x$ to keep the exposition simple.
Theorems~7.66 and~7.67 of~\citet{shapiro2009lectures} imply for our setting of two-stage stochastic LP the bound 
\begin{align}
\label{eqn:unifexpboundlp}
&\mathbb{P}\Big\lbrace \uset{z \in \Z}{\sup} \: \abs*{g^*_n(z;x) - g(z;x)} > \kappa \Big\rbrace \leq O(1) \left(\dfrac{O(1)D}{\kappa}\right)^{d_z} \exp\left(-\dfrac{n\kappa^2}{O(1)\sigma^2_c(x)}\right)
\end{align}
for a.e.\ $x \in \X$. The following large deviation inequalities for our three different regression setups (see Section~\ref{sec:regression}) can be used to specialize the bound afforded by Lemma~\ref{lem:largedevofdev}:

\begin{enumerate}[itemsep=0.5em]
\item OLS regression: $\pr \Bigl\{\dfrac{1}{n} \displaystyle\sum_{i=1}^{n} \norm{\teps^i_{n}(x)}^2 > \kappa^2 \Bigr\} \leq \exp(d_x) \exp\left( -\dfrac{n\kappa^2}{O(1) \sigma^2 d_y}  \right)$, which follows from Remark~12 of~\citet{hsu2012random}, Theorem~2.2 and Remark~2.3 of~\citet{rigollet2015high}.

\item Lasso regression: $\pr \Bigl\{\dfrac{1}{n} \displaystyle\sum_{i=1}^{n} \norm{\teps^i_{n}(x)}^2 > \kappa^2 \Bigr\} \leq 2 d_x \exp\left( -\dfrac{n\kappa^2}{O(1) \sigma^2 s d_y}  \right)$, which follows from Theorem~2.1 and Corollary~1 of~\citet{bunea2007sparsity}.

\item kNN regression: Whenever $n \geq O(1)\left(\dfrac{O(1)}{\kappa}\right)^{\frac{d_x}{1-\gamma}}$ and $\dfrac{n^{\gamma}}{\log(n)} \geq \dfrac{O(1) d_x d_y \sigma^2}{\kappa^2}$, we have
{
\footnotesize
\[
\prob{\dfrac{1}{n} \displaystyle\sum_{i=1}^{n} \norm{\teps^i_{n}(x)}^2 > \kappa^2d_y} \leq \left(\dfrac{O(1) \sqrt{d_x}}{\kappa}\right)^{d_x} \exp\left( - O(1)n (O(1)\kappa)^{2d_x} \right) + O(1) n^{2d_x} \left(\dfrac{O(1)}{d_x}\right)^{d_x} \exp\left( - \dfrac{n^{\gamma}\kappa^2}{O(1)\sigma^2}  \right)
\]
}%
from Lemma~10 of~\citet{bertsimas2019predictions}.
\end{enumerate}
Suppose the regression step~\eqref{eqn:regr} is Lasso regression.
We have from Lemma~\ref{lem:largedevofdev} that
\[
\prob{\uset{z \in \Z}{\sup} \: \abs*{ \hg^{ER}_n(z;x) - g^*_n(z;x)} > \kappa} \leq O(1) d_x \exp\left( -\dfrac{n\kappa^2}{O(1) \sigma^2 s d_y}  \right).
\]
Along with the uniform exponential bound inequality~\eqref{eqn:unifexpboundlp}, this yields for a.e.\ $x \in \X$:
{
\small
\begin{align*}
\prob{\uset{z \in \Z}{\sup} \: \abs*{\hg^{ER}_n(z;x) - g(z;x)} > \kappa} &\leq O(1) \left(\dfrac{O(1)D}{\kappa}\right)^{d_z} \exp\left(-\dfrac{n\kappa^2}{O(1)\sigma^2_c(x)}\right) + O(1) d_x \exp\left( -\dfrac{n\kappa^2}{O(1) \sigma^2 s d_y}  \right).
\end{align*}
}%
Requiring each term in the r.h.s.\ of the above inequality to be $\leq 0.5\delta$ and using the union bound yields the stated conservative sample size estimates.
Sample complexities for OLS and kNN regression can be similarly derived.
\qed

\section{Alternative assumptions for analysis of the ER-SAA}
\label{sec:alt_assumptions}

{
In this section, we present alternative sets of assumptions under which theoretical guarantees similar to those in Section~\ref{sec:ersaa} hold for the ER-SAA problem.
The following assumption provides an alternative to the Lipschitz continuity Assumption~\ref{ass:equilipschitz}, and can be used to derive asymptotic guarantees for the ER-SAA.}

\begin{assumption}
\label{ass:equilipschitz_alt}
\setcounter{mycounter}{\value{assumption}}

Problem~\eqref{eqn:speq}, the regression step~\eqref{eqn:regr}, and the data~$\D_n$ satisfy:

\begin{enumerate}[label=(\themycounter\alph*),itemsep=0em]
\item \label{ass:boundederrorsa}  there exists a function $\delta: \X \to \R_+$ such that {for a.e.\ $x \in \X$}, $\norm{\teps_n^i(x)} \leq \delta(x)$, $\forall i \in [n]$, a.s.\ for $n$ large enough,

\item \label{ass:boundederrorsb} 
{for a.e.\ $x \in \X$ and for each $(z,\varepsilon) \in \Z \times \Xi$, the function~$c$ in problem~\eqref{eqn:speq} satisfies the following local Lipschitz inequality for each $\bar{y} \in \mathcal{B}_{\delta(x)}(f^*(x)+Q^*(x)\varepsilon) \cap \Y$:}
\[
{\abs*{c(z,\bar{y}) - c(z,f^*(x)+Q^*(x)\varepsilon)} \leq L_{\delta(x)}(z,f^*(x)+Q^*(x)\varepsilon) \norm{\bar{y} - (f^*(x)+Q^*(x)\varepsilon)},}
\]
{with the `local Lipschitz constant' $L_{\delta(x)}(z,f^*(x)+Q^*(x)\varepsilon)$ satisfying }
\[
{\uset{z \in \Z}{\sup} \: \dfrac{1}{n}\displaystyle\sum_{i=1}^{n} L^2_{\delta(x)}(z,f^*(x)+Q^*(x)\varepsilon^i) = O_p(1).}
\]
\end{enumerate}

\end{assumption}

Unlike Assumption~\ref{ass:equilipschitz},
Assumption~\ref{ass:boundederrorsb} only requires the function~$c(z,\cdot)$ to be locally Lipschitz continuous for each $z \in \Z$ with the local Lipschitz constant satisfying a uniform stochastic boundedness condition, 
but using this weaker assumption necessitates the stronger Assumption~\ref{ass:boundederrorsa} on the regression step~\eqref{eqn:regr}.
{Consider the homoscedastic setting (i.e., $\hQ_n := Q^* \equiv I$).}
Because the deviation terms satisfy $\norm{\teps^i_{n}(x)} \leq \norm{f^*(x) - \hf_n(x)} + \norm{f^*(x^i) - \hf_n(x^i)}$, 
$\forall i \in [n]$,
Assumption~\ref{ass:boundederrorsa} is satisfied {in this setting} for our running example of OLS regression, e.g., if the support~$\X$ is compact, the population regression problem has a unique solution~$\sth$, the strong pointwise LLN holds for the objective function (i.e., the empirical loss) of the regression problem~\eqref{eqn:regr},
and $\expect{\norm{\varepsilon}^2} < +\infty$
(see Theorem~5.4 of~\citet{shapiro2009lectures} for details).
{The support~$\X$ of the covariates being compact may not be an overly restrictive assumption because in many applications the covariates (e.g., temperature, precipitation, wind, or location) can be assumed to be bounded for all practical purposes.
For the heteroscedastic setting in Example~\ref{exm:regrexample}, following the proof of Lemma~\ref{lem:meandeviation}, we can show that Assumption~\ref{ass:boundederrorsa} holds if in addition the supports~$\X$ and~$\Xi$ of the covariates~$X$ and the errors~$\varepsilon$ are compact and a.s.\ for $n$ large enough the parameter estimates~$\hpi_n$ of~$\spi$ lie in a compact set.}
We present conditions under which Assumption~\ref{ass:boundederrorsb} holds in Section~\ref{sec:tssp} of the Appendix {(note that it readily holds for our running example of two-stage stochastic LP).}

{Replacing Assumption~\ref{ass:equilipschitz} with Assumption~\ref{ass:equilipschitz_alt} yields the following analogue of Lemma~\ref{lem:ersaa_error_decomp_term1}.}

\begin{lemma}
\label{lem:ersaa_error_decomp_term1_alt}
{Let Assumption~\ref{ass:equilipschitz_alt} hold. Then, for a.e.\ $x \in \X$, we a.s.\ have for $n$ large enough:}
\begin{align*}
{\uset{z \in \Z}{\sup} \: \abs*{\hg^{ER}_n(z;x) - g^*_n(z;x)}} &{\leq \uset{z \in \Z}{\sup} \biggl(\dfrac{1}{n}\displaystyle\sum_{i=1}^{n} L^2_{\delta(x)}(z,f^*(x)+Q^*(x)\varepsilon^i)\biggr)^{1/2} \biggl(\dfrac{1}{n}\displaystyle\sum_{i=1}^{n} \norm{\teps^i_{n}(x)}^2\biggr)^{1/2}.}
\end{align*}
\end{lemma}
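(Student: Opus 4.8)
The plan is to follow the proof of Lemma~\ref{lem:ersaa_error_decomp_term1} almost verbatim, substituting the global Lipschitz bound of Assumption~\ref{ass:equilipschitz} by the local Lipschitz inequality of Assumption~\ref{ass:boundederrorsb} and replacing the factoring-out of a constant by an application of the Cauchy--Schwarz inequality. First I would use the triangle inequality to bound, for each $z \in \Z$,
\[
\abs*{\hg^{ER}_n(z;x) - g^*_n(z;x)} \leq \dfrac{1}{n}\sum_{i=1}^{n} \abs*{c\bigl(z,\proj{\Y}{\hf_n(x)+\hQ_n(x)\heps^i_{n}}\bigr) - c\bigl(z,f^*(x)+Q^*(x)\varepsilon^i\bigr)},
\]
so that it suffices to control each summand.

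The crux is to apply Assumption~\ref{ass:boundederrorsb} to each summand with $\bar{y} := \proj{\Y}{\hf_n(x)+\hQ_n(x)\heps^i_{n}}$ and $\varepsilon := \varepsilon^i$, which is legitimate only once I verify that $\bar{y} \in \mathcal{B}_{\delta(x)}(f^*(x)+Q^*(x)\varepsilon^i) \cap \Y$. Membership in $\Y$ is immediate because $\bar{y}$ is an orthogonal projection onto $\Y$. To establish membership in the ball I would note that each FI-SAA scenario $f^*(x)+Q^*(x)\varepsilon^i$ is itself a realization of $Y$ given $X = x$ and hence lies in $\Y$, so it is fixed by the projection; the nonexpansiveness of the Euclidean projection onto the convex set $\Y$ then gives
\[
\norm*{\bar{y} - \bigl(f^*(x)+Q^*(x)\varepsilon^i\bigr)} = \norm*{\proj{\Y}{\hf_n(x)+\hQ_n(x)\heps^i_{n}} - \proj{\Y}{f^*(x)+Q^*(x)\varepsilon^i}} \leq \norm{\teps^i_{n}(x)}.
\]
By Assumption~\ref{ass:boundederrorsa} the right-hand side is at most $\delta(x)$ for every $i \in [n]$, a.s.\ for $n$ large enough, which places $\bar{y}$ in the required ball. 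This verification is the main obstacle: the local Lipschitz inequality is simply inapplicable without it, and it is precisely why the conclusion is asserted only a.s.\ for $n$ large enough rather than deterministically as in Lemma~\ref{lem:ersaa_error_decomp_term1}.

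With the inclusion in hand, Assumption~\ref{ass:boundederrorsb} together with the displayed nonexpansiveness bound yields, for each $z$ and each $i$, the estimate $\abs*{c(z,\bar{y}) - c(z,f^*(x)+Q^*(x)\varepsilon^i)} \leq L_{\delta(x)}(z,f^*(x)+Q^*(x)\varepsilon^i)\norm{\teps^i_{n}(x)}$. Summing over $i$ and applying the Cauchy--Schwarz inequality to the sequences $\{L_{\delta(x)}(z,f^*(x)+Q^*(x)\varepsilon^i)\}_{i=1}^{n}$ and $\{\norm{\teps^i_{n}(x)}\}_{i=1}^{n}$ produces the product of the two root-mean-square factors. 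Finally, since the deviation factor $\bigl(\frac{1}{n}\sum_{i=1}^{n}\norm{\teps^i_{n}(x)}^2\bigr)^{1/2}$ does not depend on $z$, taking the supremum over $z \in \Z$ leaves it outside the supremum and delivers the stated bound.
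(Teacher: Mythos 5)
Your proposal is correct and follows essentially the same route as the paper's proof: triangle inequality, the local Lipschitz bound of Assumption~\ref{ass:boundederrorsb} applied after using nonexpansiveness of the projection onto $\Y$ together with Assumption~\ref{ass:boundederrorsa} to place the projected ER-SAA scenario in the required ball, and then Cauchy--Schwarz. Your explicit verification of the ball membership (and of why the FI-SAA scenario is fixed by the projection) is a step the paper leaves implicit, but it is exactly the intended justification.
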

\begin{proof}
{We have}
\begin{align*}
&{\uset{z \in \Z}{\sup} \: \abs*{\hg^{ER}_n(z;x) - g^*_n(z;x)}} \\
=&\: { \uset{z \in \Z}{\sup} \: \bigg\lvert \dfrac{1}{n}\displaystyle\sum_{i=1}^{n} c \bigl( z,\proj{\Y}{\hf_n(x) + \hat{Q}_n(x)\heps^i_{n}} \bigr) - \dfrac{1}{n}\displaystyle\sum_{i=1}^{n} c \left( z,f^*(x) + Q^*(x)\varepsilon^i \right) \bigg\rvert }\nonumber \\
\leq&\: { \uset{z \in \Z}{\sup} \: \dfrac{1}{n}\displaystyle\sum_{i=1}^{n} \big\lvert c \bigl( z,\proj{\Y}{\hf_n(x) + \hat{Q}_n(x)\heps^i_{n}} \bigr) - c \left( z,f^*(x) + Q^*(x)\varepsilon^i \right) \big\rvert} \nonumber \\
\leq&\: { \uset{z \in \Z}{\sup} \: \dfrac{1}{n}\displaystyle\sum_{i=1}^{n} L_{\delta(x)}(z,f^*(x)+Q^*(x)\varepsilon^i) \norm{\proj{\Y}{\hf_n(x) + \hat{Q}_n(x)\heps^i_{n}} - \left( f^*(x) + Q^*(x)\varepsilon^i \right)}} \nonumber \\
\leq&\: { \uset{z \in \Z}{\sup} \biggl(\dfrac{1}{n}\displaystyle\sum_{i=1}^{n} L^2_{\delta(x)}(z,f^*(x)+Q^*(x)\varepsilon^i)\biggr)^{1/2} \biggl(\dfrac{1}{n}\displaystyle\sum_{i=1}^{n} \norm{\teps^i_{n}(x)}^2\biggr)^{1/2},}
\end{align*}
{where the second inequality follows by Assumption~\ref{ass:equilipschitz_alt} and the final inequality follows by the Cauchy-Schwarz inequality and the Lipschitz continuity of orthogonal projections.}
\end{proof}

{Comparing with Lemma~\ref{lem:ersaa_error_decomp_term1}, Proposition~\ref{prop:uniformconvofobj}, and Theorem~\ref{thm:approxconv}, we see that convergence in probability of the root mean square deviation term $\bigl(\frac{1}{n}\sum_{i=1}^{n} \norm{\teps^i_{n}(x)}^2\bigr)^{1/2} \convinprob 0$ directly translates to the asymptotic guarantees for ER-SAA in Section~\ref{subsec:consistency} when Assumption~\ref{ass:equilipschitz} is replaced with Assumption~\ref{ass:equilipschitz_alt}.
The following analogue of Lemma~\ref{lem:meandeviation}, which we state without proof, bounds the root mean square deviation term $\bigl(\frac{1}{n}\sum_{i=1}^{n} \norm{\teps^i_{n}(x)}^2\bigr)^{1/2}$.}

\begin{lemma}
\label{lem:meandeviation_alt}
{Given estimates $\hf_n$ of $f^*$ and $\hQ_n$ of $Q^*$ with $[\hQ_n(\bar{x})]^{-1} \succ 0$ for each $\bar{x} \in \X$:}
\begin{enumerate}
\item {In the homoscedastic setting (i.e., $\hQ_n = Q^* \equiv I$), we have}
\[
{\biggl(\frac{1}{n} \sum_{i=1}^{n} \norm{\teps^i_n(x)}^2\biggr)^{1/2} \leq \sqrt{2}\norm{\hf_n(x) - f^*(x)} + \biggl(\frac{2}{n} \sum_{i=1}^{n} \norm{\hf_n(x^i) - f^*(x^i)}^2\biggr)^{1/2}}.
\]

\item {In the heteroscedastic setting, we have}
\begin{align*}
&{\biggl(\frac{1}{n} \sum_{i=1}^{n} \norm{\teps^i_n(x)}^2\biggr)^{1/2}} \nonumber \\
{\leq} &{O(1)\norm{\hf_n(x) - f^*(x)} + O(1)\norm{\hQ_n(x) - Q^*(x)}\biggl(\frac{1}{n} \sum_{i=1}^{n} \norm{\varepsilon^i}^2\biggr)^{1/2} +} \nonumber \\
& { O(1)\norm{\hQ_n(x)} \biggl(\frac{1}{n} \sum_{i=1}^{n} \bigl\lVert \bigl[\hQ_n(x^i)\bigr]^{-1} - \bigl[Q^*(x^i)\bigr]^{-1}\bigr\rVert^4\biggr)^{1/4} \biggl(\frac{1}{n} \sum_{i=1}^{n} \norm{Q^*(x^i)}^8\biggr)^{1/8} \biggl(\frac{1}{n} \sum_{i=1}^{n} \norm{\varepsilon^i}^8\biggr)^{1/8} +} \nonumber\\ 
& {O(1)\norm{\hQ_n(x)} \biggl(\frac{1}{n} \sum_{i=1}^{n} \bigl\lVert \bigl[\hQ_n(x^i)\bigr]^{-1}\bigr\rVert^4\biggr)^{1/4} \biggl(\frac{1}{n} \sum_{i=1}^{n} \norm{f^*(x^i) - \hf_n(x^i)}^4\biggr)^{1/4}.}
\end{align*}
\end{enumerate}
\end{lemma}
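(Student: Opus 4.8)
The plan is to follow the proof of Lemma~\ref{lem:meandeviation} verbatim through its algebraic rearrangement of the deviation vector, and then to replace the averaging-plus-triangle-inequality step (valid for the $\ell_1$ average) with its $\ell_2$ analogue (Minkowski's inequality), feeding in Hölder splits whose exponents are chosen to reproduce the stated fourth- and eighth-root factors. Concretely, I would reuse the identity established inside the proof of Lemma~\ref{lem:meandeviation}, namely that for each $i \in [n]$ the deviation vector decomposes as
\begin{align*}
\teps^i_n(x) = {}& \bigl(\hf_n(x) - f^*(x)\bigr) + \hQ_n(x)\bigl[\hQ_n(x^i)\bigr]^{-1}\bigl(f^*(x^i) - \hf_n(x^i)\bigr) \\
& {}+ \hQ_n(x)\bigl(\bigl[\hQ_n(x^i)\bigr]^{-1} - \bigl[Q^*(x^i)\bigr]^{-1}\bigr)Q^*(x^i)\varepsilon^i + \bigl(\hQ_n(x) - Q^*(x)\bigr)\varepsilon^i,
\end{align*}
where only the first summand is independent of $i$.

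Viewing $\bigl(\frac{1}{n}\sum_{i=1}^{n} \norm{\teps^i_n(x)}^2\bigr)^{1/2}$ as the normalized $\ell_2$-norm of the vector $\bigl(\norm{\teps^1_n(x)},\dots,\norm{\teps^n_n(x)}\bigr)$, I would apply the triangle inequality for this norm to bound the root mean square deviation by the sum of the four root-mean-square terms associated with the four summands above; this Minkowski step is the one structural change from Lemma~\ref{lem:meandeviation}. The constant first summand contributes exactly $\norm{\hf_n(x) - f^*(x)}$, and the last summand is bounded by submultiplicativity of the operator norm as $\norm{\hQ_n(x) - Q^*(x)}\bigl(\frac{1}{n}\sum_{i=1}^{n}\norm{\varepsilon^i}^2\bigr)^{1/2}$, matching the first two terms of the claimed heteroscedastic bound.

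For the third summand I would pull out $\norm{\hQ_n(x)}^2$ via submultiplicativity and then bound $\frac{1}{n}\sum_{i=1}^{n} \norm{[\hQ_n(x^i)]^{-1} - [Q^*(x^i)]^{-1}}^2 \norm{Q^*(x^i)}^2 \norm{\varepsilon^i}^2$ using Hölder's inequality with exponents $(2,4,4)$; after the outer square root this yields the fourth root of the inverse-difference fourth moment together with the eighth roots of the averages of $\norm{Q^*(x^i)}^8$ and $\norm{\varepsilon^i}^8$, matching the stated term. The fourth (estimation-error) summand is handled by the same device, with Cauchy--Schwarz (Hölder with exponents $(2,2)$) producing the fourth roots of the averages of $\norm{[\hQ_n(x^i)]^{-1}}^4$ and $\norm{f^*(x^i) - \hf_n(x^i)}^4$. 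The homoscedastic claim is then the degenerate case $\hQ_n = Q^* \equiv I$: the third and fourth summands vanish, $\teps^i_n(x)$ collapses to $\bigl(\hf_n(x) - f^*(x)\bigr) + \bigl(f^*(x^i) - \hf_n(x^i)\bigr)$, and applying $\norm{a+b}^2 \leq 2\norm{a}^2 + 2\norm{b}^2$ before averaging and taking square roots produces the $\sqrt{2}$ constants.

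The only genuine departure from Lemma~\ref{lem:meandeviation} is bookkeeping: choosing the Hölder exponents so that the outer square root yields precisely the $1/4$ and $1/8$ powers displayed. Since the averages of fourth and eighth moments are already fixed by the target statement, the exponents $(2,4,4)$ and $(2,2)$ are essentially forced once the Minkowski splitting is in place, so I expect this matching to be mechanical rather than an obstacle; no probabilistic input beyond that used for Lemma~\ref{lem:meandeviation} is required. The mild subtlety worth flagging is only that the $\ell_2$ triangle inequality, not the scalar one, must be invoked to separate the four terms without incurring cross terms.
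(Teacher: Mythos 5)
Your proposal is correct and is exactly the route the paper intends: its proof of Lemma~\ref{lem:meandeviation_alt} is the one-line instruction to ``use the AM-QM inequality and follow the arguments in the proof of Lemma~\ref{lem:meandeviation},'' which is precisely your plan of reusing that lemma's four-term decomposition of $\teps^i_n(x)$, replacing the $\ell_1$-averaging step with the normalized-$\ell_2$ (Minkowski) triangle inequality, and rechoosing the H\"older exponents as $(2,4,4)$ and $(2,2)$ to produce the stated fourth and eighth roots. Your verification of the exact constants, including the $\sqrt{2}$ factors in the homoscedastic case via $\norm{a+b}^2 \leq 2\norm{a}^2 + 2\norm{b}^2$, checks out.
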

\begin{proof}
{Use the AM-QM inequality and follow the arguments in the proof of Lemma~\ref{lem:meandeviation}.}
\end{proof}

{Lemma~\ref{lem:meandeviation_alt} informs how Assumptions~\ref{ass:varweaklln},~\ref{ass:errorsweaklln}, and~\ref{ass:regconsist} may to be adapted to derive similar theoretical guarantees for ER-SAA as in Section~\ref{subsec:consistency}.} 
{Next, we consider the following strengthening of Assumption~\ref{ass:equilipschitz_alt} under which similar finite guarantees as in Section~\ref{subsec:finitesample} hold for the ER-SAA}.

\begin{assumption}
\label{ass:equilipschitz_v2}
There exists a function $\delta:\X \to \R_+$ such that for a.e.\ $x \in \X$, the regression step and the data~$\D_n$ satisfy $\norm{\teps^i_n(x)} \leq \delta(x)$, $\forall i \in [n]$ and $n \in \mathbb{N}$.
Furthermore, Assumption~\ref{ass:boundederrorsb} holds with {the local Lipschitz constant satisfying for a.e.\ $x \in \X$, $\kappa > 0$, and $n \in \mathbb{N}$:}
{
\footnotesize
\[
\pr \biggl\{\biggl(\dfrac{1}{n}\displaystyle\sum_{i=1}^{n} \uset{z \in \Z}{\sup} \: L^2_{\delta(x)}(z,f^*(x)+Q^*(x)\varepsilon^i)\biggr)^{1/2} > \biggl(\mathbb{E}\biggl[\uset{z \in \Z}{\sup} \: L^2_{\delta(x)}(z,f^*(x)+Q^*(x)\varepsilon)\biggr]\biggr)^{1/2} + \kappa \biggr\} \leq J_L(\kappa)\exp(-\gamma_L(n,\kappa;x)),
\]%
}%
where $J_L(\kappa), \gamma_L(n,\kappa;x) > 0$ with $\lim_{n \to \infty} \gamma_L(n,\kappa;x) = \infty$ for each $\kappa > 0$ and a.e.\ $x \in \X$.
\end{assumption}

{
The discussion following Assumption~\ref{ass:equilipschitz_alt} provides conditions under which $\norm{\teps^i_n(x)} \leq \delta(x)$, $\forall i \in [n]$, $n \in \mathbb{N}$, if we further assume the estimates $\hth_n$ and $\hpi_n$ lie in compact sets.
Section~\ref{sec:tssp} identifies conditions under which the uniform Lipschitz condition in Assumption~\ref{ass:equilipschitz_v2} holds (again, it readily holds for our running example of two-stage stochastic LP).}

{When Assumption~\ref{ass:equilipschitz_v2} is used in place of Assumption~\ref{ass:equilipschitz}, we can derive the following analogue of inequality~\eqref{eqn:meandeviation_finitesamp_0} in the proof of Lemma~\ref{lem:largedevofdev}:}
\begin{align*}
&{\mathbb{P}\Bigl\{\uset{z \in \Z}{\sup} \abs*{ \hg^{ER}_n(z;x) - g^*_n(z;x)} > \kappa\Bigr\}} \\
\leq \: & {\pr \biggl\{ \biggl(\dfrac{1}{n}\displaystyle\sum_{i=1}^{n} \uset{z \in \Z}{\sup} \: L^2_{\delta(x)}(z,f^*(x)+Q^*(x)\varepsilon^i)\biggr)^{1/2} \biggl(\dfrac{1}{n}\displaystyle\sum_{i=1}^{n} \norm{\teps^i_{n}(x)}^2\biggr)^{1/2} > \kappa \biggr\}} \\
\leq \: &  {\pr \biggl\{\biggl(\dfrac{1}{n}\displaystyle\sum_{i=1}^{n} \uset{z \in \Z}{\sup} \: L^2_{\delta(x)}(z,f^*(x)+Q^*(x)\varepsilon^i)\biggr)^{1/2} > \biggl(\mathbb{E}\biggl[\uset{z \in \Z}{\sup} \: L^2_{\delta(x)}(z,f^*(x)+Q^*(x)\varepsilon)\biggr]\biggr)^{1/2} + \kappa \biggr\} \: +} \\
& \qquad {\pr \biggl\{ \biggl[\biggl(\mathbb{E}\biggl[\uset{z \in \Z}{\sup} \: L^2_{\delta(x)}(z,f^*(x)+Q^*(x)\varepsilon)\biggr]\biggr)^{1/2} + \kappa\biggr] \biggl(\dfrac{1}{n}\displaystyle\sum_{i=1}^{n} \norm{\teps^i_{n}(x)}^2\biggr)^{1/2} > \kappa \biggr\}} \\
\leq \: & { J_L(\kappa)\exp(-\gamma_L(n,\kappa;x)) + \pr\biggl\{ \biggl(\dfrac{1}{n}\displaystyle\sum_{i=1}^{n} \norm{\teps^i_{n}(x)}^2\biggr)^{1/2} > \dfrac{\kappa}{B_L(\kappa;x)} \biggr\},}
\end{align*}
{where $B_L(\kappa;x) := \Bigl(\mathbb{E}\bigl[\sup_{z \in \Z} L^2_{\delta(x)}(z,f^*(x)+Q^*(x)\varepsilon)\bigr]\Bigr)^{1/2} + \kappa$ and the last inequality follows by Assumption~\ref{ass:equilipschitz_v2}.
}
{Therefore, finite sample guarantees for the root mean square deviation term $\bigl(\frac{1}{n}\sum_{i=1}^{n} \norm{\teps^i_{n}(x)}^2\bigr)^{1/2}$ directly translate to finite sample guarantees for ER-SAA of the form in Lemma~\ref{lem:largedevofdev} and Theorem~\ref{thm:exponentialconv}.
Once again, Lemma~\ref{lem:meandeviation_alt} provides guidance for how Assumptions~\ref{ass:varlargedev},~\ref{ass:errorlargedev}, and~\ref{ass:reglargedev} may to be adapted to derive similar theoretical guarantees for ER-SAA as in Section~\ref{subsec:finitesample}.}

\section{Rate of convergence of the ER-SAA estimator}
\label{sec:ersaa_rate}

We investigate the rate of convergence of the optimal objective value of the sequence of ER-SAA problems~\eqref{eqn:app} to that of the true problem~\eqref{eqn:speq}.
This analysis requires the following additional assumptions on the true problem~\eqref{eqn:speq} and the regression step~\eqref{eqn:regr}.

\begin{assumption}
\label{ass:functionalclt}
The function~$c$ in problem~\eqref{eqn:speq} and the errors~$\{\varepsilon^i\}$ satisfy the following functional central limit theorem (CLT) for the full-information SAA objective~\eqref{eqn:fullinfsaa}:
\[
\sqrt{n} \left( g^*_n(\cdot;x) - g(\cdot;x) \right) \xrightarrow{d} V(\cdot;x), \quad \text{for a.e. } x \in \X,
\]
{where $g^*_n(\cdot;x)$, $g(\cdot;x)$, and $V(\cdot;x)$ are (random) elements of $\mathcal{C}(\Z)$, the Banach space of real-valued continuous functions on $\Z$ equipped with the supremum norm.}
\end{assumption}

Assumption~\ref{ass:functionalclt} holds, for instance, when the errors $\{\varepsilon^i\}$ are i.i.d.,
the function~$c(\cdot,y)$ is Lipschitz continuous on~$\Z$ for a.e.\ $y \in \Y$ with an $L^2(\Y)$ Lipschitz constant,
and, for a.e.~$x \in \X$, there exists \mbox{$\tz \in \Z$} such that $\expv \bigl[\left(c(\tz,f^*(x)+Q^*(x)\varepsilon)\right)^2 \bigr] < +\infty$ (see page~164 of~\citet{shapiro2009lectures} for details).
Theorem~1 of~\citet{doukhan1995invariance}, Theorem~2.1 of~\citet{arcones1994central}, Theorem~9 of~\citet{arcones1994limit}, and Corollary~2.3 of~\citet{andrews1994introduction} provide conditions under which the functional CLT holds 
under mixing assumptions on $\{\varepsilon^i\}$.
Theorems~1.5.4 and~1.5.6 of~\citet{vaart1996weak} present a general set of conditions under which the functional CLT holds.

The next assumption, which strengthens Assumption~\ref{ass:regconsist}, ensures that the deviation of the ER-SAA problem~\eqref{eqn:app} from the full-information SAA problem~\eqref{eqn:fullinfsaa} converges at a certain rate.

\begin{assumption}
\label{ass:regconvrate}
\setcounter{mycounter}{\value{assumption}}
There is a constant $0 < \alpha \leq 1$ (that is independent of the number of samples~$n$, but could depend on the dimension~$d_x$ of the covariates~$X$) such that the {regression estimates~$\hf_n$ and~$\hQ_n$} satisfy the following convergence rate criteria:
\begin{enumerate}[label=(\themycounter\alph*),itemsep=0.2em]
\item \label{ass:regconvrate_point} $\norm{\hf_n(x) - f^*(x)} = O_p(n^{-\alpha/2})$ for a.e.\ $x \in \X$,

\item \label{ass:regconvrate_mse} $\dfrac{1}{n} \displaystyle\sum_{i=1}^{n} \norm{\hf_n(x^i) - f^*(x^i)}^2 = O_p(n^{-\alpha})$,

\item \label{ass:regconvrate_point2} ${\norm{\hQ_n(x) - Q^*(x)} = O_p(n^{-\alpha/2})}$ for a.e.\ $x \in \X$,

\item \label{ass:regconvrate_mse2} {$\dfrac{1}{n} \displaystyle\sum_{i=1}^{n} \bigl\lVert \bigl[\hQ_n(x^i)\bigr]^{-1} - \bigl[Q^*(x^i)\bigr]^{-1}\bigr\rVert^2 = O_p(n^{-\alpha})$.}
\end{enumerate}
\end{assumption}

Note that the $O_p(\cdot)$ terms in Assumption~\ref{ass:regconvrate} hide factors proportional to the dimension~$d_y$ of the random vector~$Y$.
For our running example of OLS regression, Assumptions~\ref{ass:regconvrate_point} and~\ref{ass:regconvrate_mse} hold with~$\alpha = 1$ 
under mild assumptions on the data~$\D_n$ and the distribution~$P_X$ of the covariates~\citep[see Chapter~5 of][]{white2014asymptotic}.
A similar rate holds for Lasso, best subset selection, and many other parametric regression procedures under mild assumptions.
Nonparametric regression procedures such as kNN and RF regression, on the other hand, typically only satisfy these assumptions with constant~$\alpha = \frac{O(1)}{d_x}$. This rate cannot be improved upon in general, and is commonly referred to as the curse of dimensionality.
Structured nonparametric regression methods such as sparse additive models~\citep{raskutti2012minimax} can hope to break the curse of dimensionality and achieve rates with $\alpha = 1$.
{Assumptions~\ref{ass:regconvrate_point2} and~\ref{ass:regconvrate_mse2} also hold for our running Example~\ref{exm:regrexample} if the parameter estimates $\hpi_n$ satisfy $\norm{\hpi_n - \spi} = O_p(n^{-\alpha/2})$.}
Section~\ref{sec:regression} of the Appendix verifies that Assumption~\ref{ass:regconvrate} holds for these prediction setups with the stated constants~$\alpha$.

Our main result of this section extends 
Theorem~5.7 of~\citet{shapiro2009lectures} to establish a rate at which the optimal objective value of the ER-SAA problem~\eqref{eqn:app} converges to that of the true problem~\eqref{eqn:speq}. 
We hide the dependence of the convergence rate on the dimensions~$d_x$ and~$d_y$ of the covariates~$X$ and random vector~$Y$. We discuss how these dimensions affect the rate of convergence via a non-asymptotic/finite sample analysis in Section~\ref{subsec:finitesample}.
Note that the convergence rate analysis in Theorem~5.7 of~\citet{shapiro2009lectures} for the full-information SAA problem~\eqref{eqn:fullinfsaa} is sharper than Theorem~\ref{thm:rateofconv} in the sense that it also characterizes the asymptotic distribution of the optimal objective value, see equations~(5.25) and~(5.26) therein.
{Deriving the asymptotic distribution of the optimal value of the ER-SAA is an interesting direction for future work.}

\begin{restatable}{theorem}{thmrateofconv}
\label{thm:rateofconv}
{Suppose Assumptions~\ref{ass:equilipschitz},~\ref{ass:varweaklln},~\ref{ass:errorsweaklln},~\ref{ass:functionalclt} and~\ref{ass:regconvrate} hold.
Then, we have $\abs{\hv^{ER}_n(x) - v^*(x)} = O_p(n^{-\frac{\alpha}{2}})$ and $\abs{g(\hz^{ER}_n(x);x) - \hv^{ER}_n(x)} = O_p(n^{-\frac{\alpha}{2}})$ for a.e.\ $x \in \X$.}
\end{restatable}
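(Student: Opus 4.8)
The plan is to follow the template of Theorem~5.7 of~\citet{shapiro2009lectures}: first establish that the ER-SAA objective converges to the true objective uniformly on~$\Z$ at rate~$O_p(n^{-\alpha/2})$, and then convert this uniform bound into the two stated estimates. The starting point is the decomposition~\eqref{eqn:ersaa_error_decomp}, which bounds $\sup_{z \in \Z} \abs{\hg^{ER}_n(z;x) - g(z;x)}$ by the sum of the ER-SAA-versus-FI-SAA deviation and the FI-SAA-versus-true deviation. I would show each summand is $O_p(n^{-\alpha/2})$ and then invoke $O_p(n^{-\alpha/2}) + O_p(n^{-\alpha/2}) = O_p(n^{-\alpha/2})$.

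For the FI-SAA-versus-true term, I would apply the functional CLT of Assumption~\ref{ass:functionalclt}. Since the supremum-norm functional $h \mapsto \sup_{z \in \Z} \abs{h(z)}$ is continuous on $\mathcal{C}(\Z)$, the continuous mapping theorem gives $\sqrt{n}\,\sup_{z \in \Z} \abs{g^*_n(z;x) - g(z;x)} \xrightarrow{d} \sup_{z \in \Z} \abs{V(z;x)}$, an a.s.-finite random variable, so this term is $O_p(n^{-1/2})$. Because $\alpha \leq 1$, we have $n^{-1/2} = O(n^{-\alpha/2})$, and hence this term is $O_p(n^{-\alpha/2})$ for a.e.\ $x \in \X$.

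For the ER-SAA-versus-FI-SAA term, Assumption~\ref{ass:equilipschitz} and Lemma~\ref{lem:ersaa_error_decomp_term1} reduce the problem to bounding the mean deviation $\frac{1}{n}\sum_{i=1}^{n} \norm{\teps^i_n(x)}$, for which Lemma~\ref{lem:meandeviation} supplies the four-term bound~\eqref{eqn:meandeviation}. I would then verify each term is $O_p(n^{-\alpha/2})$ by pairing a single ``small'' factor with ``$O_p(1)$'' factors: the errors $\norm{\hf_n(x) - f^*(x)}$ and $\norm{\hQ_n(x) - Q^*(x)}$ are $O_p(n^{-\alpha/2})$ by Assumptions~\ref{ass:regconvrate_point} and~\ref{ass:regconvrate_point2}; the averages $\frac{1}{n}\sum_{i=1}^{n} \norm{\varepsilon^i}$ and $(\frac{1}{n}\sum_{i=1}^{n} \norm{\varepsilon^i}^4)^{1/4}$ are $O_p(1)$ by Assumption~\ref{ass:errorsweaklln}; the average $(\frac{1}{n}\sum_{i=1}^{n} \norm{Q^*(x^i)}^4)^{1/4}$ is $O_p(1)$ by Assumption~\ref{ass:varweaklln}; the factor $\norm{\hQ_n(x)}$ is $O_p(1)$ since $\hQ_n(x) \convinprob Q^*(x)$ by Assumption~\ref{ass:regconvrate_point2}; the factor $(\frac{1}{n}\sum_{i=1}^{n} \norm{[\hQ_n(x^i)]^{-1} - [Q^*(x^i)]^{-1}}^2)^{1/2}$ is $O_p(n^{-\alpha/2})$ by Assumption~\ref{ass:regconvrate_mse2}; and $(\frac{1}{n}\sum_{i=1}^{n} \norm{f^*(x^i) - \hf_n(x^i)}^2)^{1/2}$ is $O_p(n^{-\alpha/2})$ by Assumption~\ref{ass:regconvrate_mse}. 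The one factor needing care is $(\frac{1}{n}\sum_{i=1}^{n} \norm{[\hQ_n(x^i)]^{-1}}^2)^{1/2}$ in the last term, which I would show is $O_p(1)$ via Lemma~\ref{lem:varbound} together with Assumptions~\ref{ass:varweaklln} and~\ref{ass:regconvrate_mse2}. Multiplying, each of the four terms is the product of one $O_p(n^{-\alpha/2})$ factor with $O_p(1)$ factors, hence $O_p(n^{-\alpha/2})$; since $\sup_{z\in\Z} L(z) < +\infty$, the full term is $O_p(n^{-\alpha/2})$.

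Finally, I would convert the uniform objective bound into the two conclusions using the elementary inequality $\abs{\inf_z h_1(z) - \inf_z h_2(z)} \leq \sup_z \abs{h_1(z) - h_2(z)}$: taking $h_1 = \hg^{ER}_n(\cdot;x)$ and $h_2 = g(\cdot;x)$ yields $\abs{\hv^{ER}_n(x) - v^*(x)} \leq \sup_{z \in \Z} \abs{\hg^{ER}_n(z;x) - g(z;x)} = O_p(n^{-\alpha/2})$. For the second claim I would use that $\hv^{ER}_n(x) = \hg^{ER}_n(\hz^{ER}_n(x);x)$, so $\abs{g(\hz^{ER}_n(x);x) - \hv^{ER}_n(x)} = \abs{g(\hz^{ER}_n(x);x) - \hg^{ER}_n(\hz^{ER}_n(x);x)} \leq \sup_{z \in \Z} \abs{\hg^{ER}_n(z;x) - g(z;x)} = O_p(n^{-\alpha/2})$. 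The main difficulty is not conceptual but bookkeeping: ensuring in the heteroscedastic four-term bound that every multiplicative factor is genuinely $O_p(1)$ except for a single factor carrying the $O_p(n^{-\alpha/2})$ rate, which is exactly where the moment Assumptions~\ref{ass:varweaklln} and~\ref{ass:errorsweaklln} and the auxiliary Lemma~\ref{lem:varbound} enter; in the homoscedastic case these collapse the bound to two terms and the argument is immediate.
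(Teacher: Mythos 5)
Your proposal is correct and follows essentially the same route as the paper's proof: the decomposition~\eqref{eqn:ersaa_error_decomp}, the functional CLT plus continuous mapping for the FI-SAA term, Lemmas~\ref{lem:ersaa_error_decomp_term1} and~\ref{lem:meandeviation} (with Lemma~\ref{lem:varbound} and Assumptions~\ref{ass:varweaklln},~\ref{ass:errorsweaklln},~\ref{ass:regconvrate}) for the mean deviation term, and a final conversion of the uniform bound into the two stated estimates. Your term-by-term bookkeeping of the four-term heteroscedastic bound is more explicit than the paper's one-line invocation of the continuous mapping theorem, and your use of the inequality $\abs{\inf_z h_1 - \inf_z h_2} \leq \sup_z\abs{h_1 - h_2}$ is a marginally cleaner packaging of the paper's probability-inequality argument, but the substance is identical.
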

\begin{proof}
{We begin by showing that $\sup_{z \in \Z} \abs*{\hg^{ER}_{n}(z;x) - g(z;x)} = O_p(n^{-\alpha/2})$ for a.e.\ $x \in \X$.}

{Assumption~\ref{ass:functionalclt} implies $\sqrt{n} \sup_{z \in \Z} \abs*{g^*_n(z;x) - g(z;x)} = O_p(1)$ for a.e.\ $x \in \X$, which in turn implies $\sup_{z \in \Z} \abs*{g^*_n(z;x) - g(z;x)} = O_p(n^{-1/2})$ for a.e.\ $x \in \X$.}

{We now bound $\sup_{z \in \Z} \: \abs*{\hg^{ER}_{n}(z;x) - g(z;x)}$ from above using Lemmas~\ref{lem:ersaa_error_decomp_term1} and~\ref{lem:meandeviation}.
Assumption~\ref{ass:equilipschitz} and Lemma~\ref{lem:ersaa_error_decomp_term1} imply}
\[
{\uset{z \in \Z}{\sup} \: \abs*{\hg^{ER}_n(z;x) - g^*_n(z;x)} \leq \Bigl(\uset{z \in \Z}{\sup} \: L(z)\Bigr) \biggl(\dfrac{1}{n}\displaystyle\sum_{i=1}^{n} \norm{\teps^i_{n}(x)}\biggr) = O(1) \biggl(\dfrac{1}{n}\displaystyle\sum_{i=1}^{n} \norm{\teps^i_{n}(x)}\biggr).}
\]
{Lemma~\ref{lem:meandeviation} along with Assumptions~\ref{ass:varweaklln},~\ref{ass:errorsweaklln}, and~\ref{ass:regconvrate} and the continuous mapping theorem imply $\frac{1}{n}\sum_{i=1}^{n} \norm{\teps^i_{n}(x)} = O_p(n^{-\alpha/2})$ for a.e.\ $x \in \X$.
Consequently, $\sup_{z \in \Z} \: \abs*{\hg^{ER}_{n}(z;x) - g^*_n(z;x)} = O_p(n^{-\alpha/2})$ for a.e.\ $x \in \X$.
Since $\alpha \in (0,1]$, the above two probability inequalities yield}
\[
{\sup_{z \in \Z} \abs*{\hg^{ER}_{n}(z;x) - g(z;x)} = O_p(n^{-\alpha/2}), \quad \text{for a.e. } x \in \X.}
\]
{This implies that for a.e.\ $x \in \X$ and any $\beta > 0$, there exists $M_{\beta} > 0$ such that
\[
\mathbb{P}\biggl\{\uset{z \in \Z}{\sup} \abs*{\hg^{ER}_{n}(z;x) - g(z;x)} > M_{\beta}n^{-\alpha/2}\biggr\} < \beta.
\]
Consequently, we have for a.e.\ $x \in \X$:}
\begin{align*}
{\prob{\hv^{ER}_n(x) > v^*(x) + M_{\beta}n^{-\alpha/2}}} &{\leq \prob{\hg^{ER}_{n}(z^*(x);x) > v^*(x) + M_{\beta}n^{-\alpha/2}}} \\
&{\leq \prob{\abs*{\hg^{ER}_{n}(z^*(x);x) - v^*(x)} > M_{\beta}n^{-\alpha/2}} \leq \beta,}\\
{\prob{v^*(x) > \hv^{ER}_n(x) + M_{\beta}n^{-\alpha/2}}} &{\leq \prob{g(\hz^{ER}_n(x);x) > \hv^{ER}_n(x) + M_{\beta}n^{-\alpha/2}}} \\
&{\leq \prob{\abs*{\hv^{ER}_n(x) - g(\hz^{ER}_n(x);x)} > M_{\beta}n^{-\alpha/2}} \leq \beta.}
\end{align*}
{Therefore, both $\abs{\hv^{ER}_n(x) - v^*(x)}$ and $\abs*{g(\hz^{ER}_n(x);x) - v^*(x)}$ are $O_p(n^{-\alpha/2})$.}
\end{proof}

{Note that Assumptions~\ref{ass:varweaklln},~\ref{ass:errorsweaklln},~\ref{ass:regconvrate_point2}, and~\ref{ass:regconvrate_mse2} are not required to establish Theorem~\ref{thm:rateofconv} in the homoscedastic case {($\hQ_n := Q^* \equiv I$)}. Additionally, Assumption~\ref{ass:regconvrate_mse} may be weakened in this setting to $\frac{1}{n} \sum_{i=1}^{n} \norm{f^*(x^i) - \hf_n(x^i)} = O_p(n^{-\alpha/2})$ on account of Lemma~\ref{lem:meandeviation}.
When Assumption~\ref{ass:equilipschitz} is replaced with Assumption~\ref{ass:equilipschitz_alt},
rates of convergence for the root mean square deviation term $\bigl(\frac{1}{n}\sum_{i=1}^{n} \norm{\teps^i_{n}(x)}^2\bigr)^{1/2} \convinprob 0$ directly translate to rates of convergence of the ER-SAA estimators similar to Theorem~\ref{thm:rateofconv} (cf.\ Section~\ref{sec:alt_assumptions} of the Appendix).}

\section{Analysis for the {jackknife} and {jackknife+} estimators}
\label{sec:jackknife}

In this section, we analyze the consistency, rate of convergence, and finite sample guarantees of the J-SAA and J+-SAA estimators obtained by solving problems~\eqref{eqn:jackknife} and~\eqref{eqn:jackknife+}, respectively, under certain assumptions on the true problem~\eqref{eqn:speq} and the prediction step~\eqref{eqn:regr}.
We omit proofs because they are similar to the proofs of results in Section~\ref{sec:ersaa}.
In place of the sequence of deviation terms~$\{\teps^i_{n}(x)\}$ considered in Section~\ref{sec:ersaa}, we consider the following deviation sequences~$\{\teps^{i,J}_{n}(x)\}$ and~$\{\teps^{i,J+}_{n}(x)\}$:
\begin{align*}
\teps^{i,J}_{n}(x) &:= \left( \hf_n(x) + {\hQ_n(x)}\heps^{i}_{n,J} \right) - \left( f^*(x) + {Q^*(x)}\varepsilon^i \right), \quad \forall i \in [n], \\
\teps^{i,J+}_{n}(x) &:= \left( \hf_{-i}(x) + {\hQ_{-i}(x)}\heps^{i}_{n,J} \right) - \left( f^*(x) + {Q^*(x)}\varepsilon^i \right), \quad \forall i \in [n].
\end{align*}
We let~$\hz^{J}_n(x)$ and~$\hz^{J+}_n(x)$ denote an optimal solution to problem~\eqref{eqn:jackknife} and~\eqref{eqn:jackknife+}, respectively, and $\hS^{J}_n(x)$ and~$\hS^{J+}_n(x)$ denote the corresponding sets of optimal solutions.
We assume throughout that the sets~$\hS^{J}_n(x)$ and~$\hS^{J+}_n(x)$ are nonempty for a.e.~$x \in \X$.

{We have the following analogue of Lemma~\ref{lem:meandeviation} for the jackknife-based mean deviation terms.}

\begin{lemma}
\label{lem:jack_meandeviation}
{Given regression estimates $\hf_n$, $\{\hf_{-i}\}$ of $f^*$ and $\hQ_n$, $\{\hQ_{-i}\}$ of $Q^*$ with $[\hQ_n(\bar{x})]^{-1} \succ 0$ and $[\hQ_{-i}(\bar{x})]^{-1} \succ 0$ for each $i \in [n]$ and $\bar{x} \in \X$:}
\begin{enumerate}
\item {In the homoscedastic setting (i.e., $\hQ_n = \hQ_{-i} = Q^* \equiv I$), we have}
\begin{align*}
{\frac{1}{n} \sum_{i=1}^{n} \norm{\teps^{i,J}_n(x)}} &{\leq \norm{\hf_n(x) - f^*(x)} + \frac{1}{n} \sum_{i=1}^{n} \norm{\hf_{-i}(x^i) - f^*(x^i)}}, \\
{\frac{1}{n} \sum_{i=1}^{n} \norm{\teps^{i,J+}_n(x)}} &{\leq  \frac{1}{n} \sum_{i=1}^{n} \norm{\hf_{-i}(x) - f^*(x)} + \frac{1}{n} \sum_{i=1}^{n} \norm{\hf_{-i}(x^i) - f^*(x^i)}}.
\end{align*}

\item {In the heteroscedastic setting, we have}
\small
\begin{align*}
{ \frac{1}{n}} &{\sum_{i=1}^{n} \norm{\teps^{i,J}_n(x)} \leq} \\
& {\norm{\hf_n(x) - f^*(x)} + \norm{\hQ_n(x) - Q^*(x)}\biggl(\frac{1}{n} \sum_{i=1}^{n} \norm{\varepsilon^i}\biggr) +}  \\
& { \norm{\hQ_n(x)} \biggl(\frac{1}{n} \sum_{i=1}^{n} \bigl\lVert \bigl[\hQ_{-i}(x^i)\bigr]^{-1} - \bigl[Q^*(x^i)\bigr]^{-1}\bigr\rVert^2\biggr)^{1/2} \biggl(\frac{1}{n} \sum_{i=1}^{n} \norm{Q^*(x^i)}^4\biggr)^{1/4} \biggl(\frac{1}{n} \sum_{i=1}^{n} \norm{\varepsilon^i}^4\biggr)^{1/4} +} \\ 
& {\norm{\hQ_n(x)} \biggl(\frac{1}{n} \sum_{i=1}^{n} \bigl\lVert \bigl[\hQ_{-i}(x^i)\bigr]^{-1}\bigr\rVert^2\biggr)^{1/2} \biggl(\frac{1}{n} \sum_{i=1}^{n} \norm{f^*(x^i) - \hf_{-i}(x^i)}^2\biggr)^{1/2}}
\end{align*}
{and} 
\begin{align*} 
{\frac{1}{n}} &{\sum_{i=1}^{n} \norm{\teps^{i,J+}_n(x)} \leq}  \\
 &{\frac{1}{n} \sum_{i=1}^{n}\norm{\hf_{-i}(x) - f^*(x)} + \biggl(\frac{1}{n} \sum_{i=1}^{n} \norm{\hQ_{-i}(x) - Q^*(x)}^2\biggr)^{1/2} \biggl(\frac{1}{n} \sum_{i=1}^{n} \norm{\varepsilon^i}^2\biggr)^{1/2} +}  \\
& { \biggl(\frac{1}{n} \sum_{i=1}^{n} \norm{\hQ_{-i}(x)}^2\biggr)^{1/2}  \biggl(\frac{1}{n} \sum_{i=1}^{n} \bigl\lVert \bigl[\hQ_{-i}(x^i)\bigr]^{-1} - \bigl[Q^*(x^i)\bigr]^{-1}\bigr\rVert^4\biggr)^{1/4} \biggl(\frac{1}{n} \sum_{i=1}^{n} \norm{Q^*(x^i)}^8\biggr)^{1/8} \biggl(\frac{1}{n} \sum_{i=1}^{n} \norm{\varepsilon^i}^8\biggr)^{1/8} +} \\ 
& { \biggl(\frac{1}{n} \sum_{i=1}^{n} \norm{\hQ_{-i}(x)}^2\biggr)^{1/2} \biggl(\frac{1}{n} \sum_{i=1}^{n} \bigl\lVert \bigl[\hQ_{-i}(x^i)\bigr]^{-1}\bigr\rVert^4\biggr)^{1/4} \biggl(\frac{1}{n} \sum_{i=1}^{n} \norm{f^*(x^i) - \hf_{-i}(x^i)}^4\biggr)^{1/4}.}
\end{align*}
\normalsize
\end{enumerate}
\end{lemma}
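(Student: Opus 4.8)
The plan is to mirror the proof of Lemma~\ref{lem:meandeviation}, exploiting the fact that the leave-one-out residual admits the same algebraic expansion as the empirical residual. Writing $y^i = f^*(x^i) + Q^*(x^i)\varepsilon^i$, the definition $\heps^{i}_{n,J} := [\hQ_{-i}(x^i)]^{-1}(y^i - \hf_{-i}(x^i))$ gives $\heps^{i}_{n,J} = [\hQ_{-i}(x^i)]^{-1}(f^*(x^i) - \hf_{-i}(x^i) + Q^*(x^i)\varepsilon^i)$, which is exactly the expression for $\heps^i_{n}$ used in the proof of Lemma~\ref{lem:meandeviation}, but with the inner estimates $\hf_n$ and $\hQ_n$ evaluated at the training point $x^i$ replaced by their leave-one-out counterparts $\hf_{-i}$ and $\hQ_{-i}$. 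This single substitution drives the entire argument.

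For the J-SAA bound I would proceed verbatim as in Lemma~\ref{lem:meandeviation}. The triangle inequality first peels off $\norm{\hf_n(x) - f^*(x)}$, leaving $\frac{1}{n}\sum_i \norm{\hQ_n(x)\heps^{i}_{n,J} - Q^*(x)\varepsilon^i}$. Inserting the expansion of $\heps^{i}_{n,J}$ and applying the leave-one-out analogue of the key operator identity, namely $\hQ_n(x)[\hQ_{-i}(x^i)]^{-1}Q^*(x^i) - Q^*(x) = \hQ_n(x)([\hQ_{-i}(x^i)]^{-1} - [Q^*(x^i)]^{-1})Q^*(x^i) + (\hQ_n(x) - Q^*(x))$, reduces the problem to the same three cross terms as before. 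Since $\hQ_n(x)$ carries no index~$i$, it factors out of each average as a constant, and the identical Cauchy--Schwarz splits (exponents $(2,4,4)$ on the term involving $[\hQ_{-i}(x^i)]^{-1} - [Q^*(x^i)]^{-1}$, and $(2,2)$ on the term involving $f^*(x^i) - \hf_{-i}(x^i)$) yield the stated J-SAA bound with every inner $[\hQ_n(x^i)]^{-1}$ and $\hf_n(x^i)$ replaced by $[\hQ_{-i}(x^i)]^{-1}$ and $\hf_{-i}(x^i)$.

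The J+-SAA bound is where the extra work lies, and this is the step I expect to be the main obstacle---though it is more bookkeeping than conceptual. Here the outer estimates $\hf_{-i}(x)$ and $\hQ_{-i}(x)$ at the new point~$x$ also carry the index~$i$, so after the triangle inequality isolates $\frac{1}{n}\sum_i\norm{\hf_{-i}(x) - f^*(x)}$, I can no longer pull $\hQ_{-i}(x)$ out of the averages as a constant. Instead, after applying the same operator identity with $\hQ_{-i}(x)$ in place of $\hQ_n(x)$, I would bound each resulting average by a generalized H\"older inequality: the cross term splits across the four factors $\hQ_{-i}(x)$, $[\hQ_{-i}(x^i)]^{-1} - [Q^*(x^i)]^{-1}$, $Q^*(x^i)$, and $\varepsilon^i$ using the conjugate exponents $(2,4,8,8)$ (note $\tfrac12+\tfrac14+\tfrac18+\tfrac18 = 1$); the term involving $\hQ_{-i}(x) - Q^*(x)$ and $\varepsilon^i$ uses exponents $(2,2)$; and the term involving $\hQ_{-i}(x)$, $[\hQ_{-i}(x^i)]^{-1}$, and $f^*(x^i) - \hf_{-i}(x^i)$ uses exponents $(2,4,4)$. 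Collecting these yields precisely the fourth- and eighth-root averages appearing in the J+-SAA statement; the elevated powers relative to the J-SAA bound are exactly the price of this additional factorization. The only care needed is to verify that the chosen exponents are conjugate at each step.

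Finally, the homoscedastic specialization $\hQ_n = \hQ_{-i} = Q^* \equiv I$ falls out directly, with no H\"older inequality required: all covariance factors collapse to the identity, $\heps^{i}_{n,J} = y^i - \hf_{-i}(x^i) = f^*(x^i) - \hf_{-i}(x^i) + \varepsilon^i$, and the cross term reduces to $\frac{1}{n}\sum_i\norm{\hf_{-i}(x^i) - f^*(x^i)}$, so a single application of the triangle inequality recovers both stated homoscedastic inequalities.
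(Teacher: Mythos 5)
Your proposal is correct and follows exactly the route the paper intends: the paper omits this proof, stating only that it is "similar to the proofs of results in Section~\ref{sec:ersaa}," and your argument is the faithful adaptation of the proof of Lemma~\ref{lem:meandeviation} with the leave-one-out substitutions, the operator identity applied with $\hQ_{-i}(x)$ in place of $\hQ_n(x)$ for the J+ case, and correctly conjugate H\"older exponents ($\tfrac12+\tfrac14+\tfrac18+\tfrac18=1$ and $\tfrac12+\tfrac14+\tfrac14=1$) that reproduce every stated term.
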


Therefore, assumptions on the quantities appearing in Lemma~\ref{lem:meandeviation} for the mean deviation term of the ER-SAA may be replaced with assumptions on the quantities appearing in the above inequalities to derive similar results for the jackknife-based estimators as the ER-SAA estimator.
{While we focus on analyzing the jackknife-based SAAs when Assumption~\ref{ass:equilipschitz} holds, note that a similar analysis can be carried out when Assumptions~\ref{ass:equilipschitz_alt} and~\ref{ass:equilipschitz_v2} are adapted for the jackknife-based SAAs by following the arguments in Section~\ref{sec:alt_assumptions}. We omit these details for brevity.}

\subsection{Consistency and asymptotic optimality}
\label{subsec:jack_consistency}

We present conditions under which the optimal value and optimal solutions to the J-SAA and J+-SAA problems~\eqref{eqn:jackknife} and~\eqref{eqn:jackknife+} asymptotically converge to those of the true problem~\eqref{eqn:speq}.
We make the following assumptions on the consistency of the (leave-one-out version of the) regression procedure~\eqref{eqn:regr} that adapts Assumption~\ref{ass:regconsist} for the J-SAA and J+-SAA approaches.

\begin{assjack}{ass:regconsist}
\label{ass:jack_regconsist}
The regression estimates $\hf_n$ and $\{\hf_{-i}\}$ of $f^*$ and the regression estimates $\hQ_n$ and $\{\hQ_{-i}\}$ of $Q^*$ satisfy the following consistency properties:
\vspace*{-0.1in}
\begin{multicols}{2}
\begin{enumerate}[label=(\ref{ass:regconsist}J\alph*),itemsep=0em]
\item \label{ass:jack_regconsist_point} $\hf_n(x) \xrightarrow{p} f^*(x)$ for a.e.\ $x \in \X$,

\item \label{ass:jack_regconsist_mse} $\dfrac{1}{n} \displaystyle\sum_{i=1}^{n} \norm{f^*(x^i) - \hf_{-i}(x^i)}^2 \xrightarrow{p} 0$,

\columnbreak

\item \label{ass:jack_regconsist_point2} {$\hQ_n(x) \xrightarrow{p} Q^*(x)$} for a.e.\ $x \in \X$,

\item \label{ass:jack_regconsist_mse2} {$\dfrac{1}{n} \displaystyle\sum_{i=1}^{n} \bigl\lVert \bigl[\hQ_{-i}(x^i)\bigr]^{-1} - \bigl[Q^*(x^i)\bigr]^{-1}\bigr\rVert^2 \xrightarrow{p} 0$.}
\end{enumerate}%
\end{multicols}%
\end{assjack}

\begin{assjackplus}{ass:regconsist}
\label{ass:jack+_regconsist}
The regression estimates $\hf_n$ and $\{\hf_{-i}\}$ of $f^*$ and the regression estimates $\hQ_n$ and $\{\hQ_{-i}\}$ of $Q^*$ satisfy the following consistency properties:
\vspace*{-0.1in}
\begin{multicols}{2}
\begin{enumerate}[label=(\ref{ass:regconsist}J+\alph*),itemsep=0em]
\item \label{ass:jack+_regconsist_point} $\dfrac{1}{n} \displaystyle\sum_{i=1}^{n} \norm{f^*(x) - \hf_{-i}(x)} \convinprob 0$ for a.e.\ $x \in \X$,

\item \label{ass:jack+_regconsist_mse} {$\dfrac{1}{n} \displaystyle\sum_{i=1}^{n} \norm{f^*(x^i) - \hf_{-i}(x^i)}^4 \xrightarrow{p} 0$,}

\columnbreak

\item \label{ass:jack+_regconsist_point2} \mbox{{$\dfrac{1}{n} \displaystyle\sum_{i=1}^{n} \norm{Q^*(x) - \hQ_{-i}(x)}^2 \convinprob 0$} for a.e.\ $x \in \X$,}

\item \label{ass:jack+_regconsist_mse2} {$\dfrac{1}{n} \displaystyle\sum_{i=1}^{n} \bigl\lVert \bigl[\hQ_{-i}(x^i)\bigr]^{-1} - \bigl[Q^*(x^i)\bigr]^{-1}\bigr\rVert^4 \xrightarrow{p} 0$.}
\end{enumerate}%
\end{multicols}%
\end{assjackplus}

Section~\ref{sec:regression} identifies conditions under which Assumptions~\ref{ass:jack_regconsist} and~\ref{ass:jack+_regconsist} hold for OLS, Lasso, kNN, and RF regression with i.i.d.\ data~$\{(x^i,\varepsilon^i)\}$.
We also require the following strengthening of Assumptions~\ref{ass:varweaklln} and~\ref{ass:errorsweaklln} for the J+-SAA problem.

\begin{assjackplus}{ass:varweaklln}
\label{ass:jack+_varweaklln}
{The function~$Q^*$ and the data~$\{x^i\}$ satisfy the weak LLNs}
\[
{\frac{1}{n} \sum_{i=1}^{n} \norm{Q^*(x^i)}^8 \convinprob \mathbb{E}[\norm{Q^*(X)}^8] \quad \text{and} \quad \frac{1}{n} \sum_{i=1}^{n} \bigl\lVert \bigl[Q^*(x^i)\bigr]^{-1}\bigr\rVert^4 \convinprob \mathbb{E}\bigl[ \bigl\lVert \bigl[Q^*(X)\bigr]^{-1} \bigr\rVert^4 \bigr].}
\]
\end{assjackplus}

\begin{assjackplus}{ass:errorsweaklln}
\label{ass:jack+_errorsweaklln}
{The error samples~$\{\varepsilon^i\}_{i=1}^{n}$ satisfy the weak LLN $\dfrac{1}{n} \displaystyle\sum_{i=1}^{n} \norm{\varepsilon^i}^8 \convinprob \mathbb{E}[\norm{\varepsilon}^8]$.}
\end{assjackplus}

We now state conditions under which the sequence of objective functions of problems~\eqref{eqn:jackknife} and~\eqref{eqn:jackknife+} converge uniformly to the objective function of the true problem~\eqref{eqn:speq} on the set~$\Z$.
We group the results for the J-SAA and J+-SAA problems for brevity (the individual results are apparent).

\begin{proposition}
\label{prop:jack_uniformconvofobj}
Suppose Assumptions~\ref{ass:equilipschitz} through~\ref{ass:errorsweaklln} and Assumptions~\ref{ass:jack+_varweaklln},~\ref{ass:jack+_errorsweaklln},~\ref{ass:jack_regconsist}, and~\ref{ass:jack+_regconsist} hold. 
Then, for a.e.~$x \in \X$, the sequences of objective functions of J-SAA and J+-SAA problems~\eqref{eqn:jackknife} and~\eqref{eqn:jackknife+} converge uniformly in probability to the objective function of the true problem~\eqref{eqn:speq} on the feasible region~$\Z$.
\end{proposition}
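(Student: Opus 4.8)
The plan is to mirror the proof of Proposition~\ref{prop:uniformconvofobj} separately for each of the two jackknife estimators, replacing the deviation sequence $\{\teps^i_n(x)\}$ by $\{\teps^{i,J}_n(x)\}$ and $\{\teps^{i,J+}_n(x)\}$, respectively. First I would write the triangle-inequality decomposition $\uset{z \in \Z}{\sup}\,\abs*{\hg^{J}_n(z;x) - g(z;x)} \leq \uset{z \in \Z}{\sup}\,\abs*{\hg^{J}_n(z;x) - g^*_n(z;x)} + \uset{z \in \Z}{\sup}\,\abs*{g^*_n(z;x) - g(z;x)}$, together with the analogous decomposition for $\hg^{J+}_n$, exactly as in~\eqref{eqn:ersaa_error_decomp}. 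The second term converges to zero in probability for a.e.\ $x \in \X$ by Assumption~\ref{ass:uniflln}, which is shared with the ER-SAA analysis, so only the first (ER-to-FI-type) term in each decomposition remains to be handled.

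Next I would observe that, since Assumption~\ref{ass:equilipschitz} holds and orthogonal projection onto the convex set $\Y$ is $1$-Lipschitz, the argument in the proof of Lemma~\ref{lem:ersaa_error_decomp_term1} carries over verbatim with $\teps^i_n(x)$ replaced by $\teps^{i,J}_n(x)$ or $\teps^{i,J+}_n(x)$. This yields $\uset{z \in \Z}{\sup}\,\abs*{\hg^{J}_n(z;x) - g^*_n(z;x)} \leq \bigl(\uset{z \in \Z}{\sup}\,L(z)\bigr)\,\frac{1}{n}\sum_{i=1}^{n}\norm{\teps^{i,J}_n(x)}$ and the corresponding bound for J+-SAA. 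It therefore suffices to show that the jackknife mean-deviation terms $\frac{1}{n}\sum_{i=1}^{n}\norm{\teps^{i,J}_n(x)}$ and $\frac{1}{n}\sum_{i=1}^{n}\norm{\teps^{i,J+}_n(x)}$ converge to zero in probability for a.e.\ $x \in \X$, which I would establish by bounding them with Lemma~\ref{lem:jack_meandeviation} and checking term by term that every factor on the right-hand side either vanishes or is $O_p(1)$.

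For the J-SAA term the bound in Lemma~\ref{lem:jack_meandeviation} has exactly the structure of Lemma~\ref{lem:meandeviation}, with the training-point quantities computed from the leave-one-out fits, so I would conclude as in Proposition~\ref{prop:uniformconvofobj}: the new-point errors $\norm{\hf_n(x) - f^*(x)}$ and $\norm{\hQ_n(x) - Q^*(x)}$ vanish by Assumptions~\ref{ass:jack_regconsist_point} and~\ref{ass:jack_regconsist_point2}; the averaged training errors $\frac{1}{n}\sum_i\norm{f^*(x^i) - \hf_{-i}(x^i)}^2$ and $\frac{1}{n}\sum_i\norm{[\hQ_{-i}(x^i)]^{-1} - [Q^*(x^i)]^{-1}}^2$ vanish by Assumptions~\ref{ass:jack_regconsist_mse} and~\ref{ass:jack_regconsist_mse2}; and the remaining factors $\frac{1}{n}\sum_i\norm{\varepsilon^i}$, $(\frac{1}{n}\sum_i\norm{\varepsilon^i}^4)^{1/4}$, $(\frac{1}{n}\sum_i\norm{Q^*(x^i)}^4)^{1/4}$, and $\norm{\hQ_n(x)} \leq \norm{\hQ_n(x) - Q^*(x)} + \norm{Q^*(x)}$ are $O_p(1)$ by Assumptions~\ref{ass:errorsweaklln},~\ref{ass:varweaklln}, while $(\frac{1}{n}\sum_i\norm{[\hQ_{-i}(x^i)]^{-1}}^2)^{1/2}$ is controlled by the leave-one-out analogue of Lemma~\ref{lem:varbound} together with Assumptions~\ref{ass:jack_regconsist_mse2} and~\ref{ass:varweaklln}.

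The harder part will be the J+-SAA term. Because every scenario now uses a \emph{different} model $(\hf_{-i},\hQ_{-i})$ evaluated at the new point $x$, the heteroscedastic scaling $\hQ_{-i}(x)$ cannot be pulled out of the empirical average as the single factor $\hQ_n(x)$ was in the ER-SAA and J-SAA bounds; instead, the J+ bound in Lemma~\ref{lem:jack_meandeviation} separates the products through H\"older's inequality using $\ell_2$, $\ell_4$, and $\ell_8$ averages. This is precisely why the stronger moment Assumptions~\ref{ass:jack+_varweaklln} and~\ref{ass:jack+_errorsweaklln} (fourth and eighth powers of $\norm{Q^*(X)}$, $\norm{[Q^*(X)]^{-1}}$, and $\norm{\varepsilon}$) and the fourth-power consistency Assumption~\ref{ass:jack+_regconsist} are needed. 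I would close by verifying that $\frac{1}{n}\sum_i\norm{\hf_{-i}(x) - f^*(x)} \convinprob 0$ and $(\frac{1}{n}\sum_i\norm{\hQ_{-i}(x) - Q^*(x)}^2)^{1/2} \convinprob 0$ by Assumptions~\ref{ass:jack+_regconsist_point} and~\ref{ass:jack+_regconsist_point2}, that the averaged fourth-power training errors vanish by Assumptions~\ref{ass:jack+_regconsist_mse} and~\ref{ass:jack+_regconsist_mse2}, and that the higher-moment factors $(\frac{1}{n}\sum_i\norm{Q^*(x^i)}^8)^{1/8}$, $(\frac{1}{n}\sum_i\norm{\varepsilon^i}^8)^{1/8}$, $(\frac{1}{n}\sum_i\norm{\hQ_{-i}(x)}^2)^{1/2}$, and $(\frac{1}{n}\sum_i\norm{[\hQ_{-i}(x^i)]^{-1}}^4)^{1/4}$ are $O_p(1)$, the last via a fourth-power version of Lemma~\ref{lem:varbound} and Assumption~\ref{ass:jack+_varweaklln}. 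With both mean-deviation terms shown to be $o_p(1)$, applying $o_p(1) + o_p(1) = o_p(1)$ to each decomposition gives the claimed uniform convergence in probability for a.e.\ $x \in \X$.
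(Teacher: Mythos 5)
Your proposal is correct and follows exactly the route the paper intends: the paper omits this proof precisely because it is the argument of Proposition~\ref{prop:uniformconvofobj} repeated with the deviation sequences $\{\teps^{i,J}_n(x)\}$ and $\{\teps^{i,J+}_n(x)\}$, the bounds of Lemma~\ref{lem:jack_meandeviation} in place of Lemma~\ref{lem:meandeviation}, and the J/J+ versions of the consistency and moment assumptions checked factor by factor as you do. Your observation about why the J+ case forces the higher-moment Assumptions~\ref{ass:jack+_varweaklln} and~\ref{ass:jack+_errorsweaklln} (the scenario-dependent $\hQ_{-i}(x)$ cannot be factored out of the average) is exactly the point the paper's Lemma~\ref{lem:jack_meandeviation} is built around.
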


Proposition~\ref{prop:jack_uniformconvofobj} helps us establish conditions under which the optimal objective values and solutions of the J-SAA and J+-SAA problems~\eqref{eqn:jackknife} and~\eqref{eqn:jackknife+} converge to those of the true problem~\eqref{eqn:speq}.

\begin{theorem}
\label{thm:jack_approxconv}
Suppose Assumptions~\ref{ass:equilipschitz} through~\ref{ass:errorsweaklln} and Assumptions~\ref{ass:jack+_varweaklln},~\ref{ass:jack+_errorsweaklln},~\ref{ass:jack_regconsist}, and~\ref{ass:jack+_regconsist} hold. 
Then, we have $\hv^{J}_n(x) \xrightarrow{p} v^*(x)$, $\hv^{J+}_n(x) \xrightarrow{p} v^*(x)$, $\dev{\hS^{J}_n(x)}{S^*(x)} \xrightarrow{p} 0$, $\dev{\hS^{J+}_n(x)}{S^*(x)} \xrightarrow{p} 0$, $\uset{z \in \hS^{J}_n(x)}{\sup} g(z;x) \xrightarrow{p} v^*(x)$, and $\uset{z \in \hS^{J+}_n(x)}{\sup} g(z;x) \xrightarrow{p} v^*(x)$ for a.e.~$x \in \X$.
\end{theorem}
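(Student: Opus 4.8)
The plan is to establish Theorem~\ref{thm:jack_approxconv} by reducing it to the uniform convergence of objectives already supplied by Proposition~\ref{prop:jack_uniformconvofobj}, and then replaying the argument used to prove Theorem~\ref{thm:approxconv} with the ER-SAA objects replaced by their jackknife counterparts. Because the J-SAA and J+-SAA estimators are handled identically once uniform convergence is in hand, I would present the argument for the J-SAA problem~\eqref{eqn:jackknife}; the J+-SAA case is verbatim with $\hg^{J}_n$, $\hv^{J}_n$, $\hS^{J}_n$, $\hz^{J}_n$ replaced by $\hg^{J+}_n$, $\hv^{J+}_n$, $\hS^{J+}_n$, $\hz^{J+}_n$. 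This yields all six conclusions of the theorem simultaneously.

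First I would invoke Proposition~\ref{prop:jack_uniformconvofobj} to obtain, for a.e.\ $x \in \X$, that $\uset{z \in \Z}{\sup} \abs*{\hg^{J}_n(z;x) - g(z;x)} \convinprob 0$. Fixing any $z^*(x) \in S^*(x)$ and any $\delta > 0$, this uniform convergence gives $\hg^{J}_n(z^*(x);x) \convinprob v^*(x)$, whence $\prob{\hv^{J}_n(x) > v^*(x) + \delta} \to 0$; conversely, evaluating the uniform bound at the minimizer $\hz^{J}_n(x)$ yields $\prob{v^*(x) > \hv^{J}_n(x) + \delta} \to 0$. Combining the two directions delivers $\hv^{J}_n(x) \convinprob v^*(x)$ for a.e.\ $x \in \X$.

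Next I would prove $\dev{\hS^{J}_n(x)}{S^*(x)} \convinprob 0$ by contradiction, exactly as in the proof of Theorem~\ref{thm:approxconv}. Assuming the contrary on a set of positive $P_X$-measure produces, for some $\bar{x}$, a subsequence $\{n_q\}$ along which $\prob{\dev{\hS^{J}_{n_q}(\bar{x})}{S^*(\bar{x})} \geq \delta} \geq \beta$. Since $g(\cdot;\bar{x})$ is lsc on the compact set $\Z$, Lemma~\ref{lem:lscsuboptimal} then supplies a gap $\kappa(\bar{x}) > 0$ with $\prob{\uset{z \in \hS^{J}_{n_q}(\bar{x})}{\sup} g(z;\bar{x}) > v^*(\bar{x}) + \kappa(\bar{x})} \geq \beta$ for all $q$. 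On the other hand, applying the uniform convergence from Proposition~\ref{prop:jack_uniformconvofobj} on both $\hS^{J}_n(\bar{x})$ and $S^*(\bar{x})$, together with the defining optimality inequality $\hv^{J}_n(\bar{x}) \leq \uset{z \in S^*(\bar{x})}{\sup} \hg^{J}_n(z;\bar{x})$, forces $\prob{\uset{z \in \hS^{J}_n(\bar{x})}{\sup} g(z;\bar{x}) \leq v^*(\bar{x}) + \kappa(\bar{x})} \to 1$, which contradicts the previous bound. The same chain of inequalities simultaneously yields the asymptotic optimality statement $\uset{z \in \hS^{J}_n(x)}{\sup} g(z;x) \convinprob v^*(x)$.

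The step demanding the most care is not in this reduction but underneath Proposition~\ref{prop:jack_uniformconvofobj}, which I would take as given: the jackknife mean-deviation terms $\frac{1}{n}\sum_{i} \norm{\teps^{i,J}_n(x)}$ and $\frac{1}{n}\sum_i \norm{\teps^{i,J+}_n(x)}$ must be driven to zero in probability using Lemma~\ref{lem:jack_meandeviation}. The J+-SAA bound is the delicate one, since it averages leave-one-out quantities and involves fourth- and eighth-moment factors; this is precisely why the strengthened weak-LLN Assumptions~\ref{ass:jack+_varweaklln} and~\ref{ass:jack+_errorsweaklln} and the higher-order consistency Assumption~\ref{ass:jack+_regconsist} are imposed, and the main verification is that these assumptions make every term on the right-hand side of Lemma~\ref{lem:jack_meandeviation} vanish in probability. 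Once that is secured, the remainder of the argument is entirely insensitive to whether residuals are empirical or leave-one-out, so Theorem~\ref{thm:jack_approxconv} follows by the transcription above.
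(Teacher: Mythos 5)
Your proposal is correct and matches the paper's intended argument: the paper explicitly omits this proof because it is obtained by replaying the proof of Theorem~\ref{thm:approxconv} with Proposition~\ref{prop:jack_uniformconvofobj} supplying the uniform convergence in place of Proposition~\ref{prop:uniformconvofobj}, which is exactly the reduction you carry out. Your identification of the genuinely delicate content as residing in Lemma~\ref{lem:jack_meandeviation} and the strengthened LLN/consistency assumptions underlying Proposition~\ref{prop:jack_uniformconvofobj}, rather than in this transcription step, is also accurate.
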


{Assumptions~\ref{ass:varweaklln},~\ref{ass:errorsweaklln},~\ref{ass:jack+_varweaklln},~\ref{ass:jack+_errorsweaklln},~\ref{ass:jack_regconsist_point2},~\ref{ass:jack+_regconsist_point2},~\ref{ass:jack_regconsist_mse2}, and~\ref{ass:jack+_regconsist_mse2} are not required to establish Proposition~\ref{prop:jack_uniformconvofobj} and Theorem~\ref{thm:jack_approxconv} in the homoscedastic case (assumptions that only involve $\hQ_n$ and $\{\hQ_{-i}\}$ may be omitted in this setting). Additionally, Assumptions~\ref{ass:jack_regconsist_mse} and~\ref{ass:jack+_regconsist_mse} may be weakened in this setting to $\frac{1}{n} \sum_{i=1}^{n} \norm{f^*(x^i) - \hf_{-i}(x^i)} \xrightarrow{p} 0$ on account of Lemma~\ref{lem:jack_meandeviation}.}

\subsection{Rates of convergence}
\label{subsec:jack_rateofconv}

We derive rates of convergence of the optimal objective value of the sequence of J-SAA and \mbox{J+-SAA} problems~\eqref{eqn:jackknife} and~\eqref{eqn:jackknife+} to the optimal objective value of the true problem~\eqref{eqn:speq}.
To enable this, we make the following assumptions on the regression procedure~\eqref{eqn:regr} that adapt Assumption~\ref{ass:regconvrate} to strengthen Assumptions~\ref{ass:jack_regconsist} and~\ref{ass:jack+_regconsist}. 
Assumptions~\ref{ass:jack_regconvrate} and~\ref{ass:jack+_regconvrate} ensure that the deviations of the J-SAA and J+-SAA problems from the FI-SAA problem~\eqref{eqn:fullinfsaa} converge at a certain rate.

\begin{assjack}{ass:regconvrate}
\label{ass:jack_regconvrate}
There is a constant $0 < \alpha \leq 1$ (that is independent of the number of samples~$n$, but could depend on the dimension~$d_x$ of the covariates~$X$) such that the regression procedure~\eqref{eqn:regr} satisfies the following asymptotic convergence rate criterion:
\begin{enumerate}[label=(\ref{ass:regconvrate}J\alph*),itemsep=0em]
\item \label{ass:jack_regconvrate_point} $\norm{\hf_n(x) - f^*(x)} = O_p(n^{-\alpha/2})$ for a.e.\ $x \in \X$,

\item \label{ass:jack_regconvrate_mse} $\dfrac{1}{n} \displaystyle\sum_{i=1}^{n} \norm{f^*(x^i) - \hf_{-i}(x^i)}^2 = O_p(n^{-\alpha})$,

\item \label{ass:jack_regconvrate_point2} {$\norm{\hQ_n(x) - Q^*(x)} = O_p(n^{-\alpha/2})$} for a.e.\ $x \in \X$,

\item \label{ass:jack_regconvrate_mse2} {$\dfrac{1}{n} \displaystyle\sum_{i=1}^{n} \bigl\lVert \bigl[\hQ_{-i}(x^i)\bigr]^{-1} - \bigl[Q^*(x^i)\bigr]^{-1}\bigr\rVert^2 = O_p(n^{-\alpha})$.}
\end{enumerate}
\end{assjack}

\begin{assjackplus}{ass:regconvrate}
\label{ass:jack+_regconvrate}
There is a constant $0 < \alpha \leq 1$ (that is independent of the number of samples~$n$, but could depend on the dimension~$d_x$ of the covariates~$X$) such that the regression procedure~\eqref{eqn:regr} satisfies the following asymptotic convergence rate criterion:
\begin{enumerate}[label=(\ref{ass:regconvrate}J+\alph*),itemsep=0em]
\item \label{ass:jack+_regconvrate_point} $\dfrac{1}{n} \displaystyle\sum_{i=1}^{n} \norm{f^*(x) - \hf_{-i}(x)} = O_p(n^{-\alpha/2})$ for a.e.\ $x \in \X$,

\item \label{ass:jack+_regconvrate_mse} $\dfrac{1}{n} \displaystyle\sum_{i=1}^{n} \norm{f^*(x^i) - \hf_{-i}(x^i)}^4 = O_p(n^{-2\alpha})$,

\item \label{ass:jack+_regconvrate_point2} {$\dfrac{1}{n} \displaystyle\sum_{i=1}^{n} \norm{Q^*(x) - \hQ_{-i}(x)}^2 = O_p(n^{-\alpha})$} for a.e.\ $x \in \X$,

\item \label{ass:jack+_regconvrate_mse2} {$\dfrac{1}{n} \displaystyle\sum_{i=1}^{n} \bigl\lVert \bigl[\hQ_{-i}(x^i)\bigr]^{-1} - \bigl[Q^*(x^i)\bigr]^{-1}\bigr\rVert^4 = O_p(n^{-2\alpha})$.}
\end{enumerate}
\end{assjackplus}

Section~\ref{sec:regression} demonstrates that Assumptions~\ref{ass:jack_regconvrate} and~\ref{ass:jack+_regconvrate} hold with rates similar to those in Assumption~\ref{ass:regconvrate} when the data~$\{(x^i,\varepsilon^i)\}$ is i.i.d.
Along with Lemma~\ref{lem:jack_meandeviation}, Assumptions~\ref{ass:jack_regconvrate} and~\ref{ass:jack+_regconvrate} imply that the mean deviation terms for the J-SAA and J+-SAA approaches can be bounded as $\frac{1}{n}\sum_{i=1}^{n} \norm{\teps^{i,J}_{n}(x)} = O_p(n^{-\alpha})$ and~$\frac{1}{n}\sum_{i=1}^{n} \norm{\teps^{i,J+}_{n}(x)} = O_p(n^{-\alpha})$ for a.e.~$x \in \X$.

We now establish rates at which the optimal objective value of the J-SAA and J+-SAA problems converge to the optimal objective value of the true problem~\eqref{eqn:speq}. 
We hide the dependence of the convergence rate on the dimensions~$d_x$ and~$d_y$ of the covariates~$X$ and random vector~$Y$. The analysis in the next section can account for how these dimensions affect the rate of convergence.

\begin{theorem}
\label{thm:jack_rateofconv}
Suppose Assumptions~\ref{ass:equilipschitz},~\ref{ass:varweaklln},~\ref{ass:errorsweaklln},~\ref{ass:jack+_varweaklln},~\ref{ass:jack+_errorsweaklln},~\ref{ass:jack_regconvrate}, and~\ref{ass:jack+_regconvrate} hold.
Then, for a.e.~$x \in \X$, we have $\abs{\hv^{J}_n(x) - v^*(x)} = O_p(n^{-\frac{\alpha}{2}})$, $\abs{\hv^{J+}_n(x) - v^*(x)} = O_p(n^{-\frac{\alpha}{2}})$, $\abs*{g(\hz^{J}_n(x);x) - v^*(x)} = O_p(n^{-\frac{\alpha}{2}})$, and $\abs*{g(\hz^{J+}_n(x);x) - v^*(x)} = O_p(n^{-\frac{\alpha}{2}})$.
\end{theorem}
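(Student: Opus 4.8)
The plan is to follow the proof of Theorem~\ref{thm:rateofconv} essentially verbatim, replacing the ER-SAA deviation sequence $\{\teps^i_n(x)\}$ by the jackknife sequences $\{\teps^{i,J}_n(x)\}$ and $\{\teps^{i,J+}_n(x)\}$ and invoking Lemma~\ref{lem:jack_meandeviation} in place of Lemma~\ref{lem:meandeviation}. I would carry out the argument for the J-SAA estimator and note that the J+-SAA estimator is treated identically; the four conclusions then follow simultaneously.

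The first step is to show that the jackknife mean-deviation terms decay at rate $n^{-\alpha/2}$, i.e.\ $\frac{1}{n}\sum_{i=1}^{n}\norm{\teps^{i,J}_n(x)} = O_p(n^{-\alpha/2})$ and $\frac{1}{n}\sum_{i=1}^{n}\norm{\teps^{i,J+}_n(x)} = O_p(n^{-\alpha/2})$ for a.e.\ $x \in \X$. This is obtained by bounding each summand in Lemma~\ref{lem:jack_meandeviation} using Cauchy--Schwarz/Hölder and matching the resulting averages to the hypotheses: the pointwise prediction error $\norm{\hf_n(x)-f^*(x)}$ (resp.\ the averaged term $\frac1n\sum_i\norm{\hf_{-i}(x)-f^*(x)}$) is $O_p(n^{-\alpha/2})$ by Assumption~\ref{ass:jack_regconvrate_point} (resp.~\ref{ass:jack+_regconvrate_point}); the averaged estimation errors for $f^*$ are $O_p(n^{-\alpha/2})$ after Cauchy--Schwarz via Assumptions~\ref{ass:jack_regconvrate_mse}/\ref{ass:jack+_regconvrate_mse}; the factors built from $Q^*$ and $\varepsilon$ are $O_p(1)$ by the weak LLNs in Assumptions~\ref{ass:varweaklln},~\ref{ass:errorsweaklln},~\ref{ass:jack+_varweaklln}, and~\ref{ass:jack+_errorsweaklln}; and the heteroscedasticity-estimation factor $\bigl(\frac1n\sum_i\norm{[\hQ_{-i}(x^i)]^{-1}-[Q^*(x^i)]^{-1}}^2\bigr)^{1/2}$ (resp.\ its fourth-power J+ analogue) is $O_p(n^{-\alpha/2})$ by Assumption~\ref{ass:jack_regconvrate_mse2} (resp.~\ref{ass:jack+_regconvrate_mse2}). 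The continuous mapping theorem then yields the claimed product rates.

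Next I would transfer this bound to the objectives and close with the standard optimization argument. The analogue of Lemma~\ref{lem:ersaa_error_decomp_term1} holds verbatim for the jackknife objectives, since its proof uses only Assumption~\ref{ass:equilipschitz} and the non-expansiveness of the projection, so $\sup_{z\in\Z}\abs{\hg^{J}_n(z;x)-g^*_n(z;x)} \leq \bigl(\sup_{z\in\Z}L(z)\bigr)\frac1n\sum_{i=1}^{n}\norm{\teps^{i,J}_n(x)} = O_p(n^{-\alpha/2})$, and likewise for J+. The full-information deviation $\sup_{z\in\Z}\abs{g^*_n(z;x)-g(z;x)} = O_p(n^{-1/2})$ is controlled by the functional CLT (Assumption~\ref{ass:functionalclt}) exactly as in the proof of Theorem~\ref{thm:rateofconv}. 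Since $\alpha\in(0,1]$ makes $n^{-\alpha/2}$ the slower rate, the triangle inequality gives $\sup_{z\in\Z}\abs{\hg^{J}_n(z;x)-g(z;x)} = O_p(n^{-\alpha/2})$ (and the same for J+). The proof then finishes with the sandwich used at the end of Theorem~\ref{thm:rateofconv}: for any $\beta>0$ choose $M_\beta$ with $\prob{\sup_{z}\abs{\hg^{J}_n(z;x)-g(z;x)} > M_\beta n^{-\alpha/2}}<\beta$, and use $\hv^{J}_n(x)\leq \hg^{J}_n(z^*(x);x)$ together with $v^*(x)\leq g(\hz^{J}_n(x);x)$ to squeeze both $\abs{\hv^{J}_n(x)-v^*(x)}$ and $\abs{g(\hz^{J}_n(x);x)-v^*(x)}$ to $O_p(n^{-\alpha/2})$, and identically for the J+ quantities.

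The main obstacle is the term-by-term bookkeeping of the first step in the heteroscedastic case: unlike the ER-SAA, the leave-one-out estimators $\hf_{-i},\hQ_{-i}$ vary with $i$ and cannot be pulled out of the averages, and the J+ bound in Lemma~\ref{lem:jack_meandeviation} involves fourth- and eighth-power averages. The jackknife-specific Assumptions~\ref{ass:jack+_varweaklln},~\ref{ass:jack+_errorsweaklln},~\ref{ass:jack_regconvrate}, and~\ref{ass:jack+_regconvrate} are calibrated precisely so that each Hölder factor is either $O_p(1)$ or $O_p(n^{-\alpha/2})$; the care lies in pairing the correct moment exponent with the correct rate so that every product collapses to $O_p(n^{-\alpha/2})$ rather than a slower rate. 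One must also check that magnitude factors such as $\bigl(\frac1n\sum_i\norm{\hQ_{-i}(x)}^2\bigr)^{1/2}$ are $O_p(1)$, which follows by combining the relevant convergence-rate assumption (giving $o_p(1)$ estimation error) with finiteness of $\norm{Q^*(x)}$ for a.e.\ $x$. Once this step is secured, the remaining steps are a transcription of the ER-SAA argument.
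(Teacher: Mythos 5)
Your proposal is correct and is precisely the adaptation the paper intends: the paper omits this proof, stating only that it mirrors the ER-SAA argument, and your term-by-term matching of the bounds in Lemma~\ref{lem:jack_meandeviation} to Assumptions~\ref{ass:varweaklln},~\ref{ass:errorsweaklln},~\ref{ass:jack+_varweaklln},~\ref{ass:jack+_errorsweaklln},~\ref{ass:jack_regconvrate}, and~\ref{ass:jack+_regconvrate}, followed by the transcription of the optimization sandwich from Theorem~\ref{thm:rateofconv}, is exactly that argument. One small point worth flagging: your proof (correctly) invokes the functional CLT of Assumption~\ref{ass:functionalclt} to get $\sup_{z\in\Z}\abs{g^*_n(z;x)-g(z;x)} = O_p(n^{-1/2})$, but that assumption is absent from the hypothesis list of Theorem~\ref{thm:jack_rateofconv} as stated --- this appears to be an omission in the paper (its ER-SAA counterpart, Theorem~\ref{thm:rateofconv}, does list it), not a flaw in your argument.
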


{Assumptions~\ref{ass:varweaklln},~\ref{ass:errorsweaklln},~\ref{ass:jack+_varweaklln},~\ref{ass:jack+_errorsweaklln},~\ref{ass:jack_regconvrate_point2},~\ref{ass:jack+_regconvrate_point2},~\ref{ass:jack_regconvrate_mse2}, and~\ref{ass:jack+_regconvrate_mse2} are not required to establish Theorem~\ref{thm:jack_rateofconv} in the homoscedastic case {($\hQ_n = \hQ_{-i}= Q^* \equiv I$)}. Additionally, Assumptions~\ref{ass:jack_regconvrate_mse} and~\ref{ass:jack+_regconvrate_mse} may be weakened in this setting to $\frac{1}{n} \sum_{i=1}^{n} \norm{f^*(x^i) - \hf_{-i}(x^i)} = O_p(n^{-\alpha/2})$ on account of Lemma~\ref{lem:jack_meandeviation}.}

\subsection{Finite sample guarantees}
\label{subsec:jack_finitesample}

We now establish exponential convergence of solutions to the J-SAA and J+-SAA problems to solutions to the true problem~\eqref{eqn:speq} under additional assumptions.
We begin by adapting Assumption~\ref{ass:reglargedev} to assume that the regression procedure~\eqref{eqn:regr} satisfies the following large deviation properties.

\begin{assjack}{ass:reglargedev}
\label{ass:jack_reglargedev}
{The regression estimates $\hf_n$, $\{\hf_{-i}\}$, $\hQ_n$, and $\{\hQ_{-i}\}$ possess the following large deviation properties: for any constant $\kappa > 0$ and $n \in \mathbb{N}$, there exist positive constants $K^J_f(\kappa,x)$, $\bar{K}^J_f(\kappa)$, $\beta^J_f(n,\kappa,x)$, $\bar{\beta}^J_f(n,\kappa)$, $K^J_Q(\kappa,x)$, $\bar{K}^J_Q(\kappa)$, $\beta^J_Q(n,\kappa,x)$, and $\bar{\beta}^J_Q(n,\kappa)$, 
with $\lim_{n \to \infty} \beta^J_f(n,\kappa,x) = \infty$, 
$\lim_{n \to \infty} \bar{\beta}^J_f(n,\kappa) = \infty$, 
$\lim_{n \to \infty} \beta^J_Q(n,\kappa,x) = \infty$, and 
$\lim_{n \to \infty} \bar{\beta}^J_Q(n,\kappa) = \infty$ for each $\kappa > 0$ and a.e.\ $x \in \X$, satisfying}
\begin{enumerate}[label=(\ref{ass:reglargedev}J\alph*),itemsep=0em]
\item \label{ass:jack_reglargedev_point} $\mathbb{P}\bigl\{\norm{f^*(x) - \hf_n(x)} > \kappa \bigr\} \leq K^{J}_f(\kappa,x) \exp\bigl(-\beta^{J}_f(n,\kappa,x)\bigr)$ for a.e.\ $x \in \X$,

\item \label{ass:jack_reglargedev_mse} $\mathbb{P}\biggl\{\dfrac{1}{n} \displaystyle\sum_{i=1}^{n} \norm{f^*(x^i) - \hf_{-i}(x^i)}^2 > \kappa^2 \biggr\} \leq \bar{K}^{J}_f(\kappa) \exp\bigl(-\bar{\beta}^{J}_f(n,\kappa)\bigr)$,

\item \label{ass:jack_reglargedev_point2} {$\mathbb{P}\bigl\{\norm{Q^*(x) - \hQ_n(x)} > \kappa \bigr\} \leq K^J_Q(\kappa,x) \exp\bigl(-\beta^J_Q(n,\kappa,x)\bigr)$ for a.e.\ $x \in \X$,}

\item \label{ass:jack_reglargedev_mse2} {$\mathbb{P}\biggl\{\dfrac{1}{n} \displaystyle\sum_{i=1}^{n} \bigl\lVert \bigl[\hQ_{-i}(x^i)\bigr]^{-1} - \bigl[Q^*(x^i)\bigr]^{-1}\bigr\rVert^2 > \kappa^2 \biggr\} \leq \bar{K}^J_Q(\kappa) \exp\bigl(-\bar{\beta}^J_Q(n,\kappa)\bigr)$.}
\end{enumerate}
\end{assjack}

\begin{assjackplus}{ass:reglargedev}
\label{ass:jack+_reglargedev}
{The regression estimates $\hf_n$, $\{\hf_{-i}\}$, $\hQ_n$, and $\{\hQ_{-i}\}$ possess the following large deviation properties: for any constant $\kappa > 0$ and $n \in \mathbb{N}$, there exist positive constants $K^{J+}_f(\kappa,x)$, $\bar{K}^{J+}_f(\kappa)$, $\beta^{J+}_f(n,\kappa,x)$, $\bar{\beta}^{J+}_f(n,\kappa)$, $K^{J+}_Q(\kappa,x)$, $\bar{K}^{J+}_Q(\kappa)$, $\beta^{J+}_Q(n,\kappa,x)$, and $\bar{\beta}^{J+}_Q(n,\kappa)$, 
with $\lim_{n \to \infty} \beta^{J+}_f(n,\kappa,x) = \infty$, 
$\lim_{n \to \infty} \bar{\beta}^{J+}_f(n,\kappa) = \infty$, 
$\lim_{n \to \infty} \beta^{J+}_Q(n,\kappa,x) = \infty$, and 
$\lim_{n \to \infty} \bar{\beta}^{J+}_Q(n,\kappa) = \infty$ for each $\kappa > 0$ and a.e.\ $x \in \X$, satisfying}
\begin{enumerate}[label=(\ref{ass:reglargedev}J+\alph*),itemsep=0em]
\item \label{ass:jack+_reglargedev_point} $\mathbb{P}\biggl\{\dfrac{1}{n} \displaystyle\sum_{i=1}^{n} \norm{f^*(x) - \hf_{-i}(x)} > \kappa \biggr\} \leq K^{J+}_f(\kappa,x) \exp\bigl(-\beta^{J+}_f(n,\kappa,x)\bigr)$ for a.e.\ $x \in \X$,

\item \label{ass:jack+_reglargedev_mse} $\mathbb{P}\biggl\{\dfrac{1}{n} \displaystyle\sum_{i=1}^{n} \norm{f^*(x^i) - \hf_{-i}(x^i)}^4 > \kappa^4 \biggr\} \leq \bar{K}^{J+}_f(\kappa) \exp\bigl(-\bar{\beta}^{J+}_f(n,\kappa)\bigr)$,

\item \label{ass:jack+_reglargedev_point2} {$\mathbb{P}\biggl\{\dfrac{1}{n} \displaystyle\sum_{i=1}^{n} \norm{Q^*(x) - \hQ_{-i}(x)}^2 > \kappa^2 \biggr\} \leq K^{J+}_Q(\kappa,x) \exp\bigl(-\beta^{J+}_Q(n,\kappa,x)\bigr)$ for a.e.\ $x \in \X$,}

\item \label{ass:jack+_reglargedev_mse2} {$\mathbb{P}\biggl\{\dfrac{1}{n} \displaystyle\sum_{i=1}^{n} \bigl\lVert \bigl[\hQ_{-i}(x^i)\bigr]^{-1} - \bigl[Q^*(x^i)\bigr]^{-1}\bigr\rVert^4 > \kappa^4 \biggr\} \leq \bar{K}^{J+}_Q(\kappa) \exp\bigl(-\bar{\beta}^{J+}_Q(n,\kappa)\bigr)$.}
\end{enumerate}
\end{assjackplus}

Assumptions~\ref{ass:jack_reglargedev} and~\ref{ass:jack+_reglargedev} strengthen Assumptions~\ref{ass:jack_regconvrate} and~\ref{ass:jack+_regconvrate} by imposing restrictions on the tails of the regression estimators.
Please see the discussion in Section~\ref{sec:regression} for when these strengthened assumptions are satisfied with i.i.d.\ data~$\{(x^i,\varepsilon^i)\}$.
We also require the following strengthening of Assumptions~\ref{ass:varlargedev} and~\ref{ass:errorlargedev} for the J+-SAA problem.

\begin{assjackplus}{ass:varlargedev}
\label{ass:jack+_varlargedev}
For any $\kappa > 0$ and $n \in \mathbb{N}$, there exist positive constants $L^{J+}_Q(\kappa)$, $\gamma^{J+}_{Q}(n,\kappa)$, $\bar{L}^{J+}_Q(\kappa)$, and $\bar{\gamma}^{J+}_{Q}(n,\kappa)$, with $\uset{n \to \infty}{\lim} \gamma^{J+}_{Q}(n,\kappa) = \infty$ and $\uset{n \to \infty}{\lim} \bar{\gamma}^{J+}_{Q}(n,\kappa) = \infty$ for each $\kappa > 0$, such that
\begin{align*}
{\mathbb{P}\biggl\{ \biggl(\frac{1}{n}\sum_{i=1}^{n} \bigl\lVert \bigl[Q^*(x^i)\bigr]^{-1}\bigr\rVert^4\biggr)^{1/4} > \Bigl(\expect{\bigl\lVert \bigl[Q^*(X)\bigr]^{-1} \bigr\rVert^4}\Bigr)^{1/4} + \kappa \biggr\}} &{\leq L^{J+}_Q(\kappa)\exp(- \gamma^{J+}_{Q}(n,\kappa)),} \\
{\mathbb{P}\biggl\{ \biggl(\frac{1}{n} \sum_{i=1}^{n} \norm{Q^*(x^i)}^8\biggr)^{1/8} > \bigl(\expect{\norm{Q^*(X)}^8}\bigr)^{1/8} + \kappa \biggr\}} &{\leq \bar{L}^{J+}_Q(\kappa)\exp(- \bar{\gamma}^{J+}_{Q}(n,\kappa)).}
\end{align*}
\end{assjackplus}

\begin{assjackplus}{ass:errorlargedev}
\label{ass:jack+_errorlargedev}
For any $\kappa > 0$ and $n \in \mathbb{N}$, there exist positive constants $L^{J+}_{\varepsilon}(\kappa)$, $\gamma^{J+}_{\varepsilon}(n,\kappa)$, $\bar{L}^{J+}_{\varepsilon}(\kappa)$, and $\bar{\gamma}^{J+}_{\varepsilon}(n,\kappa)$, with $\uset{n \to \infty}{\lim} \gamma^{J+}_{\varepsilon}(n,\kappa) = \infty$ and $\uset{n \to \infty}{\lim} \bar{\gamma}^{J+}_{\varepsilon}(n,\kappa) = \infty$ for each $\kappa > 0$, such that
\begin{align*}
{\mathbb{P}\biggl\{ \biggl(\frac{1}{n}\sum_{i=1}^{n} \norm{\varepsilon^i}^2\biggr)^{1/2} > \Bigl(\expect{\norm{\varepsilon}^2}\Bigr)^{1/2} + \kappa \biggr\}} &{\leq L^{J+}_{\varepsilon}(\kappa)\exp(- \gamma^{J+}_{\varepsilon}(n,\kappa)),} \\
{\mathbb{P}\biggl\{ \biggl(\frac{1}{n}\sum_{i=1}^{n} \norm{\varepsilon^i}^8\biggr)^{1/8} > (\expect{\norm{\varepsilon}^8})^{1/8} + \kappa \biggr\}} &{\leq \bar{L}^{J+}_{\varepsilon}(\kappa)\exp(-\bar{\gamma}^{J+}_{\varepsilon}(n,\kappa)).}
\end{align*}
\end{assjackplus}

The next result presents conditions under which the maximum deviations of the J-SAA and J+-SAA objectives from the FI-SAA objective satisfy qualitatively similar large deviations bounds as that in Assumption~\ref{ass:tradsaalargedev}.

\begin{lemma}
\label{lem:jack_largedevofdev}
{Suppose Assumptions,~\ref{ass:equilipschitz},~\ref{ass:tradsaalargedev},~\ref{ass:varlargedev},~\ref{ass:errorlargedev},~\ref{ass:jack+_varlargedev},~\ref{ass:jack+_errorlargedev},~\ref{ass:jack_reglargedev} and~\ref{ass:jack+_reglargedev} hold.
Then for any constant $\kappa > 0$, $n \in \mathbb{N}$, and a.e.~$x \in \X$, there exist positive constants $\bar{K}^{J}(\kappa,x)$, $\bar{K}^{J+}(\kappa,x)$, $\bar{\beta}^{J}(n,\kappa,x)$, and $\bar{\beta}^{J+}(n,\kappa,x)$, with $\lim_{n \to \infty} \bar{\beta}^{J}(n,\kappa,x) = \infty$ and $\lim_{n \to \infty} \bar{\beta}^{J+}(n,\kappa,x) = \infty$ for each $\kappa > 0$ and a.e.\ $x \in \X$, satisfying}
\begin{align*}
\prob{\uset{z \in \Z}{\sup} \: \abs*{ \hg^{J}_n(z;x) - g^*_n(z;x)} > \kappa} &\leq \bar{K}^{J}(\kappa,x) \exp\left(-\bar{\beta}^{J}(n,\kappa,x)\right), \:\: \text{and} \\
\prob{\uset{z \in \Z}{\sup} \: \abs*{ \hg^{J+}_n(z;x) - g^*_n(z;x)} > \kappa} &\leq \bar{K}^{J+}(\kappa,x) \exp\left(-\bar{\beta}^{J+}(n,\kappa,x)\right).
\end{align*}
\end{lemma}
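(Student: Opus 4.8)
The plan is to prove Lemma~\ref{lem:jack_largedevofdev} by exactly mirroring the structure of the proof of Lemma~\ref{lem:largedevofdev}, replacing the ER-SAA deviation sequence $\{\teps^i_n(x)\}$ with the jackknife sequences $\{\teps^{i,J}_n(x)\}$ and $\{\teps^{i,J+}_n(x)\}$. First I would observe that, since Assumption~\ref{ass:equilipschitz} holds, Lemma~\ref{lem:ersaa_error_decomp_term1} applies verbatim to the jackknife objectives (its proof only used the Lipschitz bound on $c$ and the nonexpansiveness of the projection, both independent of how the scenarios are generated), yielding
\[
\uset{z \in \Z}{\sup} \: \abs*{\hg^{J}_n(z;x) - g^*_n(z;x)} \leq \Bigl(\uset{z \in \Z}{\sup} \: L(z)\Bigr) \biggl(\dfrac{1}{n}\displaystyle\sum_{i=1}^{n} \norm{\teps^{i,J}_{n}(x)}\biggr),
\]
and the analogous bound for the J+ case. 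Hence, just as in inequality~\eqref{eqn:meandeviation_finitesamp_0}, it suffices to produce large-deviation bounds of the form $\pr\bigl\{\frac{1}{n}\sum_i \norm{\teps^{i,J}_n(x)} > \kappa\bigr\} \leq \bar{K}^J(\kappa,x)\exp(-\bar{\beta}^J(n,\kappa,x))$ and its J+ counterpart.

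Next I would bound the jackknife mean deviation terms using Lemma~\ref{lem:jack_meandeviation} in place of Lemma~\ref{lem:meandeviation}. The J-SAA bound in Lemma~\ref{lem:jack_meandeviation} is structurally identical to inequality~\eqref{eqn:meandeviation}, with $\hQ_n(x^i)^{-1}$ and $\hf_n(x^i)$ replaced by their leave-one-out versions $\hQ_{-i}(x^i)^{-1}$ and $\hf_{-i}(x^i)$. I would therefore repeat the four-term splitting via $\mathbb{P}\{V+W>c_1+c_2\}\leq\mathbb{P}\{V>c_1\}+\mathbb{P}\{W>c_2\}$ exactly as in the proof of Lemma~\ref{lem:largedevofdev}, bounding each piece using Assumption~\ref{ass:jack_reglargedev} (for the $\hf$ and $\hQ$ terms), Assumption~\ref{ass:varlargedev} (for the $\frac1n\sum\norm{Q^*(x^i)}^4$ and $\frac1n\sum\lVert[Q^*(x^i)]^{-1}\rVert^2$ terms), and Assumption~\ref{ass:errorlargedev} (for the $\varepsilon$ moment terms). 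The crucial point is that the non-regression terms appearing in the J-SAA bound involve only $Q^*$ and $\varepsilon$, which are unchanged from the ER-SAA case, so Assumptions~\ref{ass:varlargedev} and~\ref{ass:errorlargedev} suffice there; only the terms involving the estimators require the jackknifed Assumption~\ref{ass:jack_reglargedev}. Collecting the resulting exponential bounds (taking $\bar{K}^J$ as a sum of constants and $\bar{\beta}^J$ as a minimum of the individual exponents, each of which diverges in $n$) establishes the J-SAA claim.

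For the J+-SAA case I would follow the same template but track the higher moment orders that appear in Lemma~\ref{lem:jack_meandeviation}: the J+ bound contains root-mean-square factors such as $\bigl(\frac1n\sum\norm{\hQ_{-i}(x)}^2\bigr)^{1/2}$, fourth-power averages of $\lVert[\hQ_{-i}(x^i)]^{-1}-[Q^*(x^i)]^{-1}\rVert$, eighth-power averages of $\norm{Q^*(x^i)}$ and $\norm{\varepsilon^i}$, and fourth-power averages of $\norm{f^*(x^i)-\hf_{-i}(x^i)}$. This is precisely why the strengthened Assumptions~\ref{ass:jack+_varlargedev} and~\ref{ass:jack+_errorlargedev} (controlling the eighth-moment error and $Q^*$ averages) and the fourth-power parts of Assumption~\ref{ass:jack+_reglargedev} are invoked for J+ but not for J. To handle the factor $\bigl(\frac1n\sum\lVert[\hQ_{-i}(x^i)]^{-1}\rVert^4\bigr)^{1/4}$, I would apply a triangle-inequality decomposition analogous to Lemma~\ref{lem:varbound} (splitting into the estimation-error term plus the true $\bigl(\frac1n\sum\lVert[Q^*(x^i)]^{-1}\rVert^4\bigr)^{1/4}$ term) and control the latter via Assumption~\ref{ass:jack+_varlargedev}. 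Each product term is again split using the elementary probability inequality above, bounding the first factor by its population value plus $\kappa$ on a high-probability event and the remaining factor by a suitable threshold $h_j(\kappa,x)$, exactly mimicking the $h_1$ and $h_2$ thresholds in the proof of Lemma~\ref{lem:largedevofdev}.

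I expect the main obstacle to be purely bookkeeping rather than conceptual: the J+ bound has more factors and higher moment orders, so the splitting of each product into separate high-probability events produces a larger collection of thresholds $h_j(\kappa,x)$, and one must verify that each resulting exponent still diverges in $n$ by construction (which it does, since every constituent $\gamma$ and $\beta$ diverges and $\bar{\beta}^{J+}$ is their minimum). The one genuinely new ingredient, compared to the verbatim transcription of the ER-SAA argument, is the fourth-power analogue of Lemma~\ref{lem:varbound} needed to bound $\frac1n\sum\lVert[\hQ_{-i}(x^i)]^{-1}\rVert^4$ in terms of the estimation error and the true quantity; this follows by the same triangle-inequality-plus-$\ell_4$-norm argument as Lemma~\ref{lem:varbound}, and since the proof is omitted (as stated at the start of Section~\ref{sec:jackknife}), I would simply note the structural parallel and defer the routine calculation.
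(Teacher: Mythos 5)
Your proposal is correct and follows exactly the route the paper intends: the paper omits this proof, stating only that it parallels Lemma~\ref{lem:largedevofdev} with Lemma~\ref{lem:jack_meandeviation} replacing Lemma~\ref{lem:meandeviation} and the jackknife/J+ assumptions replacing their ER-SAA counterparts, which is precisely your argument, including the correct identification of the $\ell_4$ analogue of Lemma~\ref{lem:varbound} as the only genuinely new (and routine) ingredient for the J+ case.
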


We now finite sample guarantees for the distances between solutions to the J-SAA and J+-SAA problems~\eqref{eqn:jackknife} and~\eqref{eqn:jackknife+} and the set of optimal solutions to the true problem~\eqref{eqn:speq}.

\begin{theorem}
\label{thm:jack_exponentialconv}
{Suppose Assumptions,~\ref{ass:equilipschitz},~\ref{ass:tradsaalargedev},~\ref{ass:varlargedev},~\ref{ass:errorlargedev},~\ref{ass:jack+_varlargedev},~\ref{ass:jack+_errorlargedev},~\ref{ass:jack_reglargedev} and~\ref{ass:jack+_reglargedev} hold.
Then, for each $n \in \mathbb{N}$ and a.e.\ $x \in \X$, given $\eta > 0$, there exist positive constants $Q^{J}(\eta,x)$, $Q^{J+}(\eta,x)$, $\gamma^{J}(n,\eta,x)$, and $\gamma^{J+}(n,\eta,x)$, with $\lim_{n \to \infty} \gamma^{J}(n,\eta,x) = \infty$ and $\lim_{n \to \infty} \gamma^{J+}(n,\eta,x) = \infty$ for each $\eta > 0$ and a.e.\ $x \in \X$, such that}
\begin{align*}
\prob{\textup{dist}(\hz^{J}_n(x),S^*(x)) \geq \eta} &\leq Q^{J}(\eta,x) \exp(-\gamma^{J}(n,\eta,x)), \qquad \text{and} \\
\prob{\textup{dist}(\hz^{J+}_n(x),S^*(x)) \geq \eta} &\leq Q^{J+}(\eta,x) \exp(-\gamma^{J+}(n,\eta,x)).
\end{align*}
\end{theorem}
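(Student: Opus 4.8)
The plan is to establish Theorem~\ref{thm:jack_exponentialconv} by directly mirroring the argument used to prove the ER-SAA finite sample result (Theorem~\ref{thm:exponentialconv}), substituting the jackknife-specific large deviations bound from Lemma~\ref{lem:jack_largedevofdev} for the role played by Lemma~\ref{lem:largedevofdev}. First I would observe that the two stated inequalities (for the J-SAA solution $\hz^J_n(x)$ and the J+-SAA solution $\hz^{J+}_n(x)$) are entirely symmetric, so it suffices to carry out the argument for one of them (say J-SAA) and note that the J+-SAA case follows identically upon replacing the superscript $J$ by $J+$ everywhere and invoking the corresponding constants from Lemma~\ref{lem:jack_largedevofdev}.

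The core decomposition is the same one used in the proof of Theorem~\ref{thm:exponentialconv}: for any $\kappa > 0$ and a.e.\ $x \in \X$,
\begin{align*}
\pr \Bigl\{\uset{z \in \Z}{\sup} \: \abs*{\hg^{J}_n(z;x) - g(z;x)} > \kappa \Bigr\} \leq{}& \pr \Bigl\{\uset{z \in \Z}{\sup} \: \abs*{ \hg^{J}_n(z;x) - g^*_n(z;x)} > \tfrac{\kappa}{2} \Bigr\} \\
&{}+ \pr \Bigl\{\uset{z \in \Z}{\sup} \: \abs*{g^*_n(z;x) - g(z;x)} > \tfrac{\kappa}{2} \Bigr\}.
\end{align*}
I would bound the first term on the right using Lemma~\ref{lem:jack_largedevofdev} and the second using Assumption~\ref{ass:tradsaalargedev}, yielding a combined bound of the form $\tilde{K}^J(\kappa,x)\exp(-\tilde{\beta}^J(n,\kappa,x))$ where $\tilde{K}^J(\kappa,x) := 2\max\{K(\tfrac{\kappa}{2},x), \bar{K}^J(\tfrac{\kappa}{2},x)\}$ and $\tilde{\beta}^J(n,\kappa,x) := \min\{n\beta(\tfrac{\kappa}{2},x), \bar{\beta}^J(n,\tfrac{\kappa}{2},x)\}$, with $\tilde{\beta}^J(n,\kappa,x) \to \infty$ because both $n\beta(\tfrac{\kappa}{2},x) \to \infty$ and $\bar{\beta}^J(n,\tfrac{\kappa}{2},x) \to \infty$ by Lemma~\ref{lem:jack_largedevofdev}. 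This is the exact analogue of inequality~\eqref{eqn:largedeveqn}.

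From this uniform large deviations bound, the remainder of the argument transcribes the proof of Theorem~\ref{thm:exponentialconv} verbatim. I would use the bound at both the optimal solution set $S^*(x)$ and the estimated set $\hS^J_n(x)$, combine via the definition $\hv^J_n(x) \leq \sup_{z \in S^*(x)} \hg^J_n(z;x)$ to obtain the analogue of inequality~\eqref{eqn:largedeveqn2}, namely that $g(\hz^J_n(x);x) \leq v^*(x) + \kappa$ holds with probability at least $1 - 2\tilde{K}^J(\tfrac{\kappa}{2},x)\exp(-\tilde{\beta}^J(n,\tfrac{\kappa}{2},x))$. Finally I would apply Lemma~\ref{lem:lscsuboptimal}: since $g(\cdot;x)$ is lsc on the compact set $\Z$ for a.e.\ $x \in \X$, the event $\text{dist}(\hz^J_n(x),S^*(x)) \geq \eta$ forces $g(\hz^J_n(x);x) > v^*(x) + \kappa^J(\eta,x)$ for some $\kappa^J(\eta,x) > 0$, so setting $Q^J(\eta,x) := 2\tilde{K}^J(\tfrac{1}{2}\kappa^J(\eta,x),x)$ and $\gamma^J(n,\eta,x) := \tilde{\beta}^J(n,\tfrac{1}{2}\kappa^J(\eta,x),x)$ gives the claimed bound, with $\gamma^J(n,\eta,x) \to \infty$ inherited from $\tilde{\beta}^J$.

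I do not anticipate any genuine obstacle here, since essentially all the substantive work is already packaged into Lemma~\ref{lem:jack_largedevofdev}; the only point requiring mild care is verifying that the hypotheses of the present theorem exactly match those of Lemma~\ref{lem:jack_largedevofdev} (they do: both invoke Assumptions~\ref{ass:equilipschitz},~\ref{ass:tradsaalargedev},~\ref{ass:varlargedev},~\ref{ass:errorlargedev},~\ref{ass:jack+_varlargedev},~\ref{ass:jack+_errorlargedev},~\ref{ass:jack_reglargedev}, and~\ref{ass:jack+_reglargedev}), and that the $J$ and $J+$ constants are handled separately throughout so that no cross-contamination of the two bounds occurs. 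Because the paper explicitly states these proofs are omitted as they mirror Section~\ref{sec:ersaa}, the main value of writing it out is confirming that the jackknife large deviations lemma slots cleanly into the Theorem~\ref{thm:exponentialconv} template without needing any new probabilistic input beyond what Lemma~\ref{lem:jack_largedevofdev} supplies.
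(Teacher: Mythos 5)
Your proposal is correct and follows exactly the route the paper intends: the paper omits this proof precisely because it is the proof of Theorem~\ref{thm:exponentialconv} with Lemma~\ref{lem:jack_largedevofdev} substituted for Lemma~\ref{lem:largedevofdev}, which is what you carry out, including the correct handling of the $\kappa/2$ splits, the inequality $\hv^{J}_n(x) \leq \sup_{z \in S^*(x)} \hg^{J}_n(z;x)$, and the final appeal to Lemma~\ref{lem:lscsuboptimal}. The hypothesis matching and the symmetric treatment of the $J$ and $J+$ cases are also handled correctly.
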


{Assumptions~\ref{ass:varlargedev},~\ref{ass:errorlargedev},~\ref{ass:jack+_varlargedev},~\ref{ass:jack+_errorlargedev},~\ref{ass:jack_reglargedev_point2},~\ref{ass:jack+_reglargedev_point2},~\ref{ass:jack_reglargedev_mse2}, and~\ref{ass:jack+_reglargedev_mse2} are not required to establish Theorem~\ref{thm:jack_exponentialconv} in the homoscedastic case. Additionally, Assumptions~\ref{ass:jack_reglargedev_mse} and~\ref{ass:jack+_reglargedev_mse} may be weakened in this setting to $\mathbb{P}\bigl\{\frac{1}{n} \sum_{i=1}^{n} \norm{f^*(x^i) - \hf_{-i}(x^i)} > \kappa \bigr\} \leq \bar{K}^J_f(\kappa) \exp\bigl(-\bar{\beta}^J_f(n,\kappa)\bigr)$ and $\mathbb{P}\bigl\{\frac{1}{n} \sum_{i=1}^{n} \norm{f^*(x^i) - \hf_{-i}(x^i)} > \kappa \bigr\} \leq \bar{K}^{J+}_f(\kappa) \exp\bigl(-\bar{\beta}^{J+}_f(n,\kappa)\bigr)$ on account of Lemma~\ref{lem:jack_meandeviation}.}

\section{Application to two-stage stochastic programming problems}
\label{sec:tssp}
We present a class of stochastic programs that satisfy Assumptions~\ref{ass:equilipschitz},~\ref{ass:uniflln},~{\ref{ass:uniflln_lq}},~\ref{ass:tradsaalargedev},~\ref{ass:equilipschitz_alt},~\ref{ass:equilipschitz_v2}, and~\ref{ass:functionalclt}.
We first consider a class of two-stage stochastic programs with continuous recourse decisions that subsumes Example~\ref{exm:runningexample}, our running example of two-stage stochastic LP.
We then briefly outline the verification of these assumptions for a broader class of stochastic programs.

Consider first the two-stage stochastic program
\begin{align}
\label{eqn:tssp-app}
&\uset{z \in \Z}{\min} \: \expect{c(z,Y)} := p(z) + \expect{V(z,Y)},
\end{align}
where the second-stage function~$V$ is defined by the optimal value of the following LP:
\begin{align*}
V(z,y) := \: &\uset{v \in \R^{d_v}_+}{\min} \Set{\tr{c}_v v}{Wv = h(y) - T(y,z)}.
\end{align*}
We make the following assumptions on problem~\eqref{eqn:tssp-app}.

\begin{assumption}
\label{ass:tssp-welldef}
The set~$\Z$ is nonempty and compact, the matrix~$W$ has full row rank, the set $\Lambda := \Set{\lambda}{\tr{\lambda}W \leq \tr{c}_v}$ is nonempty, and the value function $V(z,y) < +\infty$ for each $(z,y) \in \Z \times \Y$.
\end{assumption}

\begin{assumption}
\label{ass:tssp-lipschitzh}
The functions~$h$ and $T(\cdot,z)$ are Lipschitz continuous on $\Y$ for each $z \in \Z$ with Lipschitz constants $L_h$ and $L_{T,y}(z)$, and the functions~$p$ and $T(y,\cdot)$ are Lipschitz continuous on $\Z$ for each $y \in \Y$ with Lipschitz constants $L_p$ and $L_{T,z}(y)$.
Additionally, the Lipschitz constants for the function~$T$ satisfy $\uset{z \in \Z}{\sup} \: L_{T,y}(z) < +\infty$ and $\uset{y \in \Y}{\sup} \: L_{T,z}(y) < +\infty$.
\end{assumption}

\begin{assumption}
\label{ass:tssp-cov}
The support~$\X$ of the covariates~$X$ is compact and the functions $f^*$ and $Q^*$ are continuous on~$\X$.
Additionally, $\mathbb{E}\Bigl[\uset{(z,x) \in \Z \times \X}{\sup} \: \norm{h(f^*(x)+Q^*(x)\varepsilon) - T(f^*(x)+Q^*(x)\varepsilon,z)}\Bigr] < +\infty$.
\end{assumption}

Let $\text{vert}(\Lambda)$ denote the finite set of extreme points of the dual feasible region~$\Lambda$, and define $\mathcal{V}(Y,\lambda,z) := \tr{\lambda} \left( h(Y) - T(Y,z) \right)$ for each $Y \in \Y$, $\lambda \in \textup{vert}(\Lambda)$, and $z \in \Z$.

\begin{assumption}
\label{ass:tssp-largedev}
We have $\mathbb{E}\Bigl[\uset{z \in \Z}{\sup} \: \norm{h(Y) - T(Y,z)}^2\Bigr] < +\infty$.
Additionally, the random variable $\mathcal{V}(f^*(x)+Q^*(x)\varepsilon,\lambda,z) - \expectation{\bar{\varepsilon} \sim P_{\varepsilon}}{\mathcal{V}(f^*(x)+Q^*(x)\bar{\varepsilon},\lambda,z)}$ is sub-Gaussian with variance proxy $\sigma^2_c(x)$ for each $\lambda \in \textup{vert}(\Lambda)$, $z \in \Z$ and a.e.\ $x \in \X$.
\end{assumption}

Note that the first-stage feasible set~$\Z$ can include integrality constraints.
Our running example of two-stage stochastic LP with OLS regression fits within the above setup and readily satisfies Assumptions~\ref{ass:tssp-welldef} and~\ref{ass:tssp-lipschitzh}.
It also satisfies Assumption~\ref{ass:tssp-cov} when $\expect{\norm{\varepsilon}} < +\infty$ and Assumption~\ref{ass:tssp-largedev} when the error~$\varepsilon$ is sub-Gaussian.
Additionally, under Assumption~\ref{ass:tssp-welldef}, we have by LP duality that for each $y \in \Y$:
\begin{alignat}{2}
\label{eqn:tssp-dual}
V(z,y) = \: &\uset{\lambda \in \Lambda}{\max} \: && \tr{\lambda} \left( h(y) - T(y,z) \right) = \uset{\lambda \in \text{vert}(\Lambda)}{\max} \tr{\lambda} \left( h(y) - T(y,z) \right).
\end{alignat}

\begin{proposition}
\label{prop:tssp-checkass}
Suppose Assumptions~\ref{ass:tssp-welldef},~\ref{ass:tssp-lipschitzh},~\ref{ass:tssp-cov}, and~\ref{ass:tssp-largedev} hold and the data~$\{\varepsilon^i\}$ is i.i.d.
Then, problem~\eqref{eqn:tssp-app} satisfies Assumptions~\ref{ass:equilipschitz},~\ref{ass:uniflln},~\ref{ass:uniflln_lq},~\ref{ass:tradsaalargedev}, and~\ref{ass:functionalclt}.
\end{proposition}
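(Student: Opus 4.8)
The plan is to verify each of the five assumptions in turn by exploiting the LP-duality representation~\eqref{eqn:tssp-dual}, which expresses the second-stage value function $V(z,y)$ as a finite maximum of affine-in-$(h(y),T(y,z))$ functions over the extreme points $\lambda \in \textup{vert}(\Lambda)$. This finite-max structure is the workhorse: it converts Lipschitz and integrability statements about $V$ into statements about the finitely many functions $\mathcal{V}(y,\lambda,z) = \tr{\lambda}(h(y)-T(y,z))$, each of which is affine in the quantity $h(y)-T(y,z)$.

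First I would establish Assumption~\ref{ass:equilipschitz}. Fix $z \in \Z$ and take $y,\bar{y} \in \Y$. Using~\eqref{eqn:tssp-dual} and the elementary inequality $\abs{\max_i a_i - \max_i b_i} \leq \max_i \abs{a_i-b_i}$, I would bound $\abs{V(z,\bar{y})-V(z,y)}$ by $\max_{\lambda \in \textup{vert}(\Lambda)} \norm{\lambda}\,\norm{(h(\bar{y})-T(\bar{y},z)) - (h(y)-T(y,z))}$, then apply the Lipschitz continuity of $h$ and $T(\cdot,z)$ from Assumption~\ref{ass:tssp-lipschitzh} to get a Lipschitz constant $L(z) = \bigl(\max_{\lambda}\norm{\lambda}\bigr)(L_h + L_{T,y}(z))$. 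Since $\textup{vert}(\Lambda)$ is finite and $\sup_{z\in\Z}L_{T,y}(z)<+\infty$, the supremum $\sup_{z\in\Z}L(z)$ is finite, as required. The first-stage cost $p(z)$ does not depend on $y$ and so drops out of the $y$-Lipschitz bound.

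Next I would handle the two uniform LLN assumptions (\ref{ass:uniflln} and~\ref{ass:uniflln_lq}) and the uniform exponential bound (Assumption~\ref{ass:tradsaalargedev}) together, since all three are statements about the FI-SAA objective $g^*_n(\cdot;x)=p(\cdot)+\frac{1}{n}\sum_i V(\cdot, f^*(x)+Q^*(x)\varepsilon^i)$ relative to $g(\cdot;x)$. For Assumption~\ref{ass:uniflln} I would invoke Theorem~7.48 of~\citet{shapiro2009lectures}: continuity of $c(\cdot,y)=p(\cdot)+V(\cdot,y)$ on the compact $\Z$ follows from~\eqref{eqn:tssp-dual} (a finite max of continuous functions), and the integrable-domination hypothesis is exactly supplied by Assumption~\ref{ass:tssp-cov}, whose integrability condition dominates $\sup_{z}\abs{V(z,f^*(x)+Q^*(x)\varepsilon)}$ via the finite-max bound $\sup_z V(z,y)\leq \max_\lambda\norm{\lambda}\,\sup_z\norm{h(y)-T(y,z)}$. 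For Assumption~\ref{ass:uniflln_lq} I would strengthen this to uniform convergence over $\Z\times\X$: compactness of $\X$ (Assumption~\ref{ass:tssp-cov}) plus continuity of $f^*,Q^*$ lets me treat $x$ as an additional parameter ranging over a compact set, and the same domination argument gives the $L^q$ statement. For Assumption~\ref{ass:tradsaalargedev} I would apply Lemma~2.4 of~\citet{homem2008rates} together with the sub-Gaussianity in Assumption~\ref{ass:tssp-largedev}: the integrable Lipschitz constant comes from the $L^2$-bound $\mathbb{E}[\sup_z\norm{h(Y)-T(Y,z)}^2]<+\infty$, and the pointwise exponential bound for each fixed $z$ follows from Cramér's theorem applied to the sub-Gaussian random variable $\mathcal{V}(f^*(x)+Q^*(x)\varepsilon,\lambda,z)-\mathbb{E}[\cdots]$, maximized over the finite set $\textup{vert}(\Lambda)$ via a union bound.

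Finally, for the functional CLT (Assumption~\ref{ass:functionalclt}), I would verify the sufficient conditions stated immediately after that assumption in the excerpt (page~164 of~\citet{shapiro2009lectures}): i.i.d.\ errors are given; the $L^2(\Y)$-Lipschitz property of $c(\cdot,y)$ on $\Z$ follows from the Lipschitz bound already derived in the first step combined with the $L^2$-integrability in Assumption~\ref{ass:tssp-largedev} (which makes the Lipschitz modulus square-integrable); and the finite-second-moment condition $\mathbb{E}[(c(\tilde z,f^*(x)+Q^*(x)\varepsilon))^2]<+\infty$ at some $\tilde z\in\Z$ follows from the same $L^2$-domination. I expect the main obstacle to be the functional CLT: unlike the LLN-type statements, it requires controlling the oscillation of the empirical process uniformly over $\Z$, so I would need to confirm that the square-integrable Lipschitz modulus indeed yields the stochastic equicontinuity / Donsker property invoked in~\citet{shapiro2009lectures}, rather than merely pointwise convergence. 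The finite-max structure helps here too, since each $\mathcal{V}(\cdot,\lambda,\cdot)$ is well-behaved and the max of finitely many Donsker-type processes remains tractable, but the bookkeeping to reduce Assumption~\ref{ass:functionalclt} to the cited conditions is where the care is needed.
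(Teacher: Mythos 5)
Your proposal follows essentially the same route as the paper's proof: every step is driven by the finite-max dual representation~\eqref{eqn:tssp-dual}; Assumption~\ref{ass:equilipschitz} comes from the bound $\bigl(L_h + L_{T,y}(z)\bigr)\max_{\lambda\in\textup{vert}(\Lambda)}\norm{\lambda}$; the two uniform LLNs come from Theorem~7.48 of \citet{shapiro2009lectures} with dominating functions supplied by the integrability hypotheses (for Assumption~\ref{ass:uniflln_lq}, exactly as you say, by treating $x$ as an extra parameter on the compact set $\X$); the uniform exponential bound comes from combining a Lipschitz-in-$z$ property with a pointwise sub-Gaussian bound obtained by taking the finitely many vertices together; and the functional CLT is reduced to the sufficient conditions on page~164 of \citet{shapiro2009lectures}.

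One bookkeeping slip should be corrected. In your verification of Assumptions~\ref{ass:tradsaalargedev} and~\ref{ass:functionalclt} you claim the (square-)integrable Lipschitz-in-$z$ modulus ``comes from the $L^2$-bound $\mathbb{E}\bigl[\sup_z\norm{h(Y)-T(Y,z)}^2\bigr]<+\infty$'' and ``follows from the Lipschitz bound already derived in the first step.'' Neither is the right source: the first step gives Lipschitz continuity in $y$ for fixed $z$, and the $L^2$-bound controls the \emph{magnitude} of $c(z,Y)$ uniformly in $z$, not its variation in $z$. The Lipschitz-in-$z$ modulus requires a separate computation, namely $\abs{c(z,Y)-c(\bar z,Y)} \leq \bigl[L_p + L_{T,z}(Y)\max_{\lambda\in\textup{vert}(\Lambda)}\norm{\lambda}\bigr]\norm{z-\bar z}$, whose integrability (in fact uniform boundedness) is supplied by the condition $\sup_{y\in\Y}L_{T,z}(y)<+\infty$ in Assumption~\ref{ass:tssp-lipschitzh}. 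The $L^2$-bound in Assumption~\ref{ass:tssp-largedev} is instead what delivers $\mathbb{E}\bigl[(c(\tilde z,Y))^2\bigr]<+\infty$ for the CLT. All the needed ingredients are present in the hypotheses, so the argument goes through once these attributions are fixed; one further small point worth writing out is that $c(z,Y)-g(z;x)$ is centered by the expectation of the max rather than the max of the expectations, so the reduction to the vertexwise sub-Gaussian bounds uses the one-sided inequality $\max_\lambda \mathcal{V} - \mathbb{E}[\max_\lambda \mathcal{V}] \leq \max_\lambda\bigl(\mathcal{V}-\mathbb{E}[\mathcal{V}]\bigr)$ before the mgf (or union) bound is applied.
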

\begin{proof}
We have by Assumptions~\ref{ass:tssp-welldef} and~\ref{ass:tssp-lipschitzh} that for any $y,\by \in \Y$ and $z \in \Z$:
\begin{align*}
\abs{c(z,y) - c(z,\by)} &= \Big\lvert \uset{\lambda \in \text{vert}(\Lambda)}{\max} \: \tr{\lambda} \left( h(y) - T(y,z) \right) - \uset{\lambda \in \text{vert}(\Lambda)}{\max} \: \tr{\lambda} \left( h(\by) - T(\by,z) \right)\Big\rvert \\
&\leq \uset{\lambda \in \text{vert}(\Lambda)}{\max} \: \abs*{\tr{\lambda} \left(h(y) - h(\by) \right) + \tr{\lambda} \left( T(\by,z) - T(y,z) \right)} \\
&\leq \uset{\lambda \in \text{vert}(\Lambda)}{\max} \: \norm{\lambda} \norm{h(y) - h(\by)} + \uset{\lambda \in \text{vert}(\Lambda)}{\max} \: \norm{\lambda} \norm{T(\by,z) - T(y,z)} \\
&\leq \left[ L_h + L_{T,y}(z) \right] \Bigl(\uset{\lambda \in \text{vert}(\Lambda)}{\max} \: \norm{\lambda}\Bigr) \norm{y - \by}.
\end{align*}
Therefore, the Lipschitz continuity Assumption~\ref{ass:equilipschitz} holds since $\sup_{z \in \Z} L_{T,y}(z) < +\infty$ and $\max_{\lambda \in \text{vert}(\Lambda)} \norm{\lambda} < +\infty$. 

The function~$c(\cdot,y)$ is continuous on~$\Z$ for each~$y \in \Y$ by virtue of Assumptions~\ref{ass:tssp-welldef} and~\ref{ass:tssp-lipschitzh} since $p$ is Lipschitz continuous on~$\Z$ and equation~\eqref{eqn:tssp-dual} implies that $V(\cdot,y)$ is a finite maximum of continuous functions for each $y \in \Y$.
{Furthermore, for any $\bz \in \Z$:}
\begin{align*}
{\abs{c(\bz,Y)}} &{= \Big\lvert p(\bz) + \uset{\lambda \in \text{vert}(\Lambda)}{\max} \: \tr{\lambda} \left( h(Y) - T(Y,\bz) \right) \Big\rvert} \\
&{\leq \max_{z \in \Z}\abs{p(z)} + \Bigl(\uset{\lambda \in \text{vert}(\Lambda)}{\max} \: \norm{\lambda}\Bigr) \Bigl(\max_{z \in \Z} \norm{h(Y) - T(Y,z)}\Bigr).}
\end{align*}%
The uniform weak LLN Assumption~\ref{ass:uniflln} then holds by virtue of Assumption~\ref{ass:tssp-welldef}, the first part of Assumption~\ref{ass:tssp-largedev}, and Theorem~7.48 of~\citet{shapiro2009lectures},
which also implies that the objective function of the true problem~\eqref{eqn:speq} is continuous on~$\Z$.

{The function~$c(\cdot,f^*(\cdot)+Q^*(\cdot)\varepsilon)$ is continuous on~$\Z \times \X$ for each $\varepsilon \in \Xi$ by virtue of Assumptions~\ref{ass:tssp-welldef},~\ref{ass:tssp-lipschitzh}, and~\ref{ass:tssp-cov} since $p$ is Lipschitz continuous on~$\Z$, $c(\cdot,y)$ is Lipschitz continuous on~$\Z$ with a Lipschitz constant independent of~$y \in \Y$ (see below), and equation~\eqref{eqn:tssp-dual} implies that $V(\cdot,f^*(\cdot)+Q^*(\cdot)\varepsilon)$ is a finite maximum of continuous functions for each $\varepsilon \in \Xi$.
Additionally, for any $(\bz,\bx) \in \Z \times \X$:}
\begin{align*}
&{\abs{c(\bz,f^*(\bx) + Q^*(\bx)\varepsilon)}} \\
&{= \Big\lvert p(\bz) + \uset{\lambda \in \text{vert}(\Lambda)}{\max} \: \tr{\lambda} \left( h(f^*(\bx)+Q^*(\bx)\varepsilon) - T(f^*(\bx)+Q^*(\bx)\varepsilon,\bz) \right) \Big\rvert} \\
&{\leq \max_{z \in \Z}\abs{p(z)} + \Bigl(\uset{\lambda \in \text{vert}(\Lambda)}{\max} \: \norm{\lambda}\Bigr) \Bigl(\max_{(z,x) \in \Z \times \X} \norm{h(f^*(x)+Q^*(x)\varepsilon) - T(f^*(x)+Q^*(x)\varepsilon,z)}\Bigr).}
\end{align*}%
{Therefore, Assumptions~\ref{ass:tssp-welldef},~\ref{ass:tssp-lipschitzh}, and~\ref{ass:tssp-cov} along with Theorem~7.48 of~\citet{shapiro2009lectures} together implies that $\sup_{(z,x) \in \Z \times \X} \abs*{g^*_{n}(z;x) - g(z;x)} \convinprob 0$, which in turn implies Assumption~\ref{ass:uniflln_lq}.}

Next, note that for any $z, \bz \in \Z$:
\begin{align*}
\abs{c(z,Y) - c(\bz,Y)} &= \Big\lvert p(z) + \uset{\lambda \in \text{vert}(\Lambda)}{\max} \: \tr{\lambda} \left( h(Y) - T(Y,z) \right) - p(\bz) - \uset{\lambda \in \text{vert}(\Lambda)}{\max} \: \tr{\lambda} \left( h(Y) - T(Y,\bz) \right) \Big\rvert \\
&\leq \abs*{p(z) - p(\bz)} + \uset{\lambda \in \text{vert}(\Lambda)}{\max} \: \abs*{\tr{\lambda} \left( T(Y,\bz) - T(Y,z) \right)} \\
&\leq \Bigl[ L_p + L_{T,z}(Y) \Bigl(\uset{\lambda \in \text{vert}(\Lambda)}{\max} \: \norm{\lambda}\Bigr) \Bigr] \norm{z - \bz}.
\end{align*}
{Additionally, for any $\bz \in \Z$:}
\begin{align*}
{\expect{(c(\bz,Y))^2}} &{= \mathbb{E}\Bigl[\Bigl( p(\bz) + \uset{\lambda \in \text{vert}(\Lambda)}{\max} \: \tr{\lambda} \left( h(Y) - T(Y,\bz) \right) \Bigr)^2\Bigr]} \\
&{\leq 2(p(\bz))^2 + 2\Bigl(\uset{\lambda \in \text{vert}(\Lambda)}{\max} \: \norm{\lambda}^2\Bigr)\mathbb{E}\bigl[ \norm{h(Y) - T(Y,\bz)}^2\bigr]} \\
&{\leq 2(p(\bz))^2 + 2\Bigl(\uset{\lambda \in \text{vert}(\Lambda)}{\max} \: \norm{\lambda}^2\Bigr)\mathbb{E}\Bigl[ \sup_{z \in \Z} \norm{h(Y) - T(Y,z)}^2\Bigr].}
\end{align*}
Consequently, the functional CLT Assumption~\ref{ass:functionalclt} holds by virtue of Assumptions~\ref{ass:tssp-welldef} and~\ref{ass:tssp-lipschitzh}, and the first part of Assumption~\ref{ass:tssp-largedev}, see page~164 of~\citet{shapiro2009lectures} for details.

Finally, note that for any $z \in \Z$ and a.e.\ $x \in \X$:
\begin{align*}
&c(z,f^*(x)+Q^*(x)\varepsilon) - g(z;x) \\
=& V(z,f^*(x)+Q^*(x)\varepsilon) - \expectation{\bar{\varepsilon} \sim P_{\varepsilon}}{V(z,f^*(x)+Q^*(x)\bar{\varepsilon})} \\
=& \uset{\lambda \in \text{vert}(\Lambda)}{\max} \mathcal{V}(f^*(x)+Q^*(x)\varepsilon,\lambda,z) - \expectation{\bar{\varepsilon} \sim P_{\varepsilon}}{\uset{\lambda \in \text{vert}(\Lambda)}{\max} \mathcal{V}(f^*(x)+Q^*(x)\bar{\varepsilon},\lambda,z)}\\
\leq& \uset{\lambda \in \text{vert}(\Lambda)}{\max} \bigl\{\mathcal{V}(f^*(x)+Q^*(x)\varepsilon,\lambda,z) - \expectation{\bar{\varepsilon} \sim P_{\varepsilon}}{\mathcal{V}(f^*(x)+Q^*(x)\bar{\varepsilon},\lambda,z)}\bigr\}
\end{align*}
Consequently, for any $z \in \Z$ and a.e.\ $x \in \X$:
{
\small
\begin{align*}
&\expect{\exp\big(t\big(c(z,f^*(x)+Q^*(x)\varepsilon) - g(z;x) \big)\big)} \\
\leq & \expect{\exp\bigg(t\uset{\lambda \in \text{vert}(\Lambda)}{\max} \bigl\{ \mathcal{V}(f^*(x)+Q^*(x)\varepsilon,\lambda,z) - \expectation{\bar{\varepsilon} \sim P_{\varepsilon}}{\mathcal{V}(f^*(x)+Q^*(x)\bar{\varepsilon},\lambda,z)}\bigr\} \bigg)}\\
\leq & \expect{\uset{\lambda \in \text{vert}(\Lambda)}{\max} \exp\Big(t\big( \mathcal{V}(f^*(x)+Q^*(x)\varepsilon,\lambda,z) - \expectation{\bar{\varepsilon} \sim P_{\varepsilon}}{\mathcal{V}(f^*(x)+Q^*(x)\bar{\varepsilon},\lambda,z)} \big)\Big)} \\
\leq & \sum_{\lambda \in \text{vert}(\Lambda)} \expect{\exp\Big(t\big( \mathcal{V}(f^*(x)+Q^*(x)\varepsilon,\lambda,z) - \expectation{\bar{\varepsilon} \sim P_{\varepsilon}}{\mathcal{V}(f^*(x)+Q^*(x)\bar{\varepsilon},\lambda,z)} \big)\Big)} \\
\leq & \abs{\text{vert}(\Lambda)} \exp\left(\frac{\sigma^2_c(x) t^2}{2}\right)
\end{align*}
}%
where the last inequality follows by Assumption~\ref{ass:tssp-largedev}.
Therefore, the large deviation Assumption~\ref{ass:tradsaalargedev} follows by Assumptions~\ref{ass:tssp-welldef},~\ref{ass:tssp-lipschitzh}, and~\ref{ass:tssp-largedev} and Theorem~7.65 of~\citet{shapiro2009lectures}.
\end{proof}

The assumption $\sup_{y \in \Y} L_{T,z}(y) < +\infty$ can be relaxed to assume that the moment generating function (mgf) of $L_{T,z}(Y)$ is finite valued in a neighborhood of zero, see Assumption~(C3) and Theorem~7.65 in Section~7.2.9 of~\citet{shapiro2009lectures}.
The discussion in Section~\ref{sec:ersaa} following Assumptions~\ref{ass:uniflln} and~\ref{ass:tradsaalargedev} and the discussion following Assumption~\ref{ass:functionalclt} provide avenues for relaxing the i.i.d.\ assumption on the errors~$\{\varepsilon^i\}$.
The conclusions of Proposition~\ref{prop:tssp-checkass} can also be established for the case of objective uncertainty (i.e., only the objective coefficients $q$ depend on $Y$) if Assumptions~\ref{ass:tssp-welldef},~\ref{ass:tssp-lipschitzh},~\ref{ass:tssp-cov}, and~\ref{ass:tssp-largedev} are suitably modified.
{Note that the second parts of Assumptions~\ref{ass:equilipschitz_alt} and~\ref{ass:equilipschitz_v2} also readily hold for problem~\eqref{eqn:tssp-app} whenever Assumption~\ref{ass:equilipschitz} holds.}

\paragraph{Generalization to a broader class of stochastic programs.} 
{Suppose the feasible region~$\Z$ and the support~$\X$ are nonempty and compact, functions~$f^*$ and~$Q^*$ are continuous on~$\X$, and the function~$c$ in problem~\eqref{eqn:speq} is of the form:
\[
c(z,Y) := \sum_{i=1}^{\abs{\I}} c_i(z) \phi_i(Y)
\]
for functions $\{c_i : \Z \to \R\}$ and $\{\phi_i : \Y \to \R\}$, where $\abs{\I} < +\infty$.
Suppose for each $i \in \I$ the function~$c_i$ is Lipschitz continuous on~$\Z$ with Lipschitz constant~$L_{c,i}$ and the function~$\phi_i$ is continuously differentiable on~$\Y$.
Additionally, assume for each $i \in \I$ that $\expect{(\phi_i(Y))^2} < +\infty$, $\expect{\max_{x \in \X}\abs{\phi_i(f^*(x)+Q^*(x)\varepsilon)}} < +\infty$, and $\expect{\max_{v \in \mathcal{B}_{\delta(x)}(0)}\norm{\nabla \phi_i(f^*(x)+Q^*(x)\varepsilon + v)}^2} < +\infty$ for a.e.\ $x \in \X$, where $\delta$ is defined in Assumption~\ref{ass:equilipschitz_alt}.
The above conditions hold, e.g., if each function~$\phi_i$ is polynomial and the errors~$\varepsilon$ are sub-exponential.
We argue that the above class of stochastic programs satisfies Assumptions~\ref{ass:uniflln},~\ref{ass:uniflln_lq},~\ref{ass:equilipschitz_alt}, and~\ref{ass:functionalclt} whenever the errors~$\{\varepsilon^i\}$ are i.i.d.
Additionally, we argue that if finite sample guarantees of the form}
{
\small
\begin{align*}
&{\pr \biggl\{ \biggl(\dfrac{1}{n}\sum_{j=1}^{n}\max_{v \in \mathcal{B}_{\delta(x)}(0)}\norm{\nabla \phi_i(f^*(x)+Q^*(x)\varepsilon^j + v)}^2\biggr)^{1/2} > \biggl(\expect{\max_{v \in \mathcal{B}_{\delta(x)}(0)}\norm{\nabla \phi_i(f^*(x)+Q^*(x)\varepsilon + v)}^2}\biggr)^{1/2} + \kappa \biggr\}} \\
&{\leq J_{\phi,i}(\kappa;x)\exp(-\gamma_{\phi,i}(n,\kappa;x))}
\end{align*}%
}%
{hold for each $\kappa > 0$, $i \in \I$, $n \in \mathbb{N}$, and a.e.\ $x \in \X$, then the second part of Assumption~\ref{ass:equilipschitz_v2} holds.
Finally, we argue that if the moment generating functions of $\sum_{i = 1}^{\abs{\I}} c_i(z) \bigl[ \phi_i(Y) - \expect{\phi_i(Y)} \bigr]$ and $\sum_{i = 1}^{\abs{\I}} L_{c,i} \bigl[ \abs{\phi_i(Y)} - \expect{\abs{\phi_i(Y)}} \bigr]$ are finite-valued for all $z \in \Z$ in a neighborhood of the origin~\citep[cf.\ Section~7.2.9]{shapiro2009lectures}, then Assumption~\ref{ass:tradsaalargedev} also holds.
The above two conditions hold, e.g., if each function~$\phi_i$ is polynomial and the distribution of the errors~$\varepsilon$ is sufficiently light-tail.}

{The function~$c(\cdot,y)$ is continuous on~$\Z$ for each~$y \in \Y$ since each $c_i$ is assumed to be Lipschitz continuous on~$\Z$.
Furthermore, for any $\bz \in \Z$:
\[
\abs{c(\bz,Y)} = \bigg\lvert \sum_{i=1}^{\abs{\I}} c_i(\bz) \phi_i(Y) \bigg\rvert \leq \sum_{i=1}^{\abs{\I}} \Bigl( \sup_{z \in \Z} \abs{c_i(z)} \Bigr) \abs{\phi_i(Y)}.
\]
Therefore, the fact that $\expect{(\phi_i(Y))^2} < +\infty$, $\forall i \in \I$, along with Theorem~7.48 of~\citet{shapiro2009lectures} implies that the uniform weak LLN Assumption~\ref{ass:uniflln} holds.}

{The function~$c(\cdot,f^*(\cdot)+Q^*(\cdot)\varepsilon)$ is continuous on~$\Z \times \X$ for each~$\varepsilon \in \Xi$ since each $c_i$ and $\phi_i$ are assumed to be Lipschitz continuous and $f^*$ and $Q^*$ are assumed to be continuous.
Furthermore, for any $(\bz,\bx) \in \Z \times \X$:
\[
\abs{c(\bz,f^*(\bx)+Q^*(\bx)\varepsilon)} = \bigg\lvert \sum_{i=1}^{\abs{\I}} c_i(\bz) \phi_i(f^*(\bx)+Q^*(\bx)\varepsilon) \bigg\rvert \leq \sum_{i=1}^{\abs{\I}} \Bigl( \sup_{z \in \Z} \abs{c_i(z)} \Bigr) \Bigl(\sup_{x \in \X}\abs{\phi_i(f^*(x)+Q^*(x)\varepsilon)}\Bigr).
\]
Therefore, the fact that $\expect{\sup_{x \in \X}\abs{\phi_i(f^*(x)+Q^*(x)\varepsilon)}} < +\infty$, $\forall i \in \I$, along with Theorem~7.48 of~\citet{shapiro2009lectures} implies that $\sup_{(z,x) \in \Z \times \X} \abs*{g^*_{n}(z;x) - g(z;x)} \convinprob 0$, which in turn implies Assumption~\ref{ass:uniflln_lq}.}

{Next, note that for any $z, \bz \in \Z$:
\[
\abs{c(z,Y) - c(\bz,Y)} = \bigg\lvert \sum_{i=1}^{\abs{\I}} \big[ c_i(z) - c_i(\bz)\big] \phi_i(Y) \bigg\rvert \leq \sum_{i=1}^{\abs{\I}} \abs{c_i(z) - c_i(\bz)} \abs{\phi_i(Y)} \leq \biggl( \sum_{i=1}^{\abs{\I}} L_{c,i} \abs{\phi_i(Y)} \biggr) \norm{z - \bz}.
\]
with $\mathbb{E}\bigl[\bigl( \sum_{i=1}^{\abs{\I}} L_{c,i} \abs{\phi_i(Y)} \bigr)^2\bigr] \leq O(1) \sum_{i=1}^{\abs{\I}} \expect{(\phi_i(Y))^2} < +\infty$.
Additionally, for any $\bz \in \Z$:
\[
\expect{(c(\bz,Y))^2} = \mathbb{E}\biggl[\biggl( \sum_{i=1}^{\abs{\I}} c_i(\bz) \phi_i(Y) \biggr)^2\biggr] \leq \mathbb{E}\biggl[\abs{\I} \sum_{i=1}^{\abs{\I}} (c_i(\bz))^2 (\phi_i(Y))^2\biggr] \leq \abs{\I} \sum_{i=1}^{\abs{\I}} (c_i(\bz))^2 \expect{(\phi_i(Y))^2} < +\infty.
\]
Consequently, the CLT Assumption~\ref{ass:functionalclt} holds by arguments in page~164 of~\citet{shapiro2009lectures}.
Since $\sum_{i=1}^{\abs{\I}} L_{c,i} \abs{\phi_i(Y)}$ is a Lipschitz constant for $c(\cdot,Y)$ on~$\Z$, Assumption~\ref{ass:tradsaalargedev} holds if the stated conditions on the mgfs of $\sum_{i = 1}^{\abs{\I}} c_i(z) \bigl[ \phi_i(Y) - \expect{\phi_i(Y)} \bigr]$ and $\sum_{i = 1}^{\abs{\I}} L_{c,i} \bigl[ \abs{\phi_i(Y)} - \expect{\abs{\phi_i(Y)}} \bigr]$ hold.}

{We focus on verifying the second parts of Assumptions~\ref{ass:equilipschitz_alt} and~\ref{ass:equilipschitz_v2} next. We have for any $z \in \Z$, $y\in \Y$, and $\by \in \mathcal{B}_{\delta(x)}(y) \cap \Y$:}
\begin{align*}
{\abs{c(z,\by) - c(z,y)} = \bigg\lvert \sum_{i=1}^{\abs{\I}} c_i(z) \bigl[\phi_i(\by) - \phi_i(y)\bigr] \bigg\rvert} &{\leq \sum_{i=1}^{\abs{\I}} \abs{c_i(z)} \abs{\phi_i(\by) - \phi_i(y)}} \\
&{\leq \biggl( \sum_{i=1}^{\abs{\I}} \abs{c_i(z)} \max_{v \in \mathcal{B}_{\delta(x)}(y)} \norm{\nabla \phi_i(v)} \biggr) \norm{\by - y},}
\end{align*}
{where the last inequality follows by the mean-value theorem.
Therefore, we can choose 
\[
L_{\delta(x)}(z,y) := \biggl( \sum_{i=1}^{\abs{\I}} \abs{c_i(z)} \max_{v \in \mathcal{B}_{\delta(x)}(y)} \norm{\nabla \phi_i(v)} \biggr)
\]
in Assumptions~\ref{ass:equilipschitz_alt} and~\ref{ass:equilipschitz_v2}.
Noting that}
\begin{align*}
{\uset{z \in \Z}{\sup} \: \dfrac{1}{n}\displaystyle\sum_{j=1}^{n} L^2_{\delta(x)}(z,f^*(x)+Q^*(x)\varepsilon^j)} &{\leq \dfrac{1}{n}\displaystyle\sum_{j=1}^{n} \uset{z \in \Z}{\sup} \biggl( \sum_{i=1}^{\abs{\I}} \abs{c_i(z)} \max_{v \in \mathcal{B}_{\delta(x)}(0)} \norm{\nabla \phi_i(f^*(x)+Q^*(x)\varepsilon^j + v)} \biggr)^2} \\
&{\leq O(1) \sum_{i=1}^{\abs{\I}} \Bigl(\uset{z \in \Z}{\sup} \: \abs{c_i(z)}^2\Bigr) \dfrac{1}{n} \displaystyle\sum_{j=1}^{n} \max_{v \in \mathcal{B}_{\delta(x)}(0)} \norm{\nabla \phi_i(f^*(x)+Q^*(x)\varepsilon^j + v)}^2,}
\end{align*}
{we see that $\sup_{z \in \Z} \frac{1}{n}\sum_{j=1}^{n} L^2_{\delta(x)}(z,f^*(x)+Q^*(x)\varepsilon^j) = O_p(1)$ by the weak LLN, which implies that Assumption~\ref{ass:boundederrorsb} holds.
Additionally, we have for each $\varepsilon \in \Xi$ and a.e.\ $x \in \X$:
\[
\sup_{z \in \Z} L^2_{\delta(x)}(z,f^*(x)+Q^*(x)\varepsilon) \leq \abs{\I} \sum_{i=1}^{\abs{\I}} \Bigl(\sup_{z \in \Z}\abs{c_i(z)}^2\Bigr) \max_{v \in \mathcal{B}_{\delta(x)}(0)} \norm{\nabla \phi_i(f^*(x)+Q^*(x)\varepsilon + v)}^2.
\]
Therefore, finite sample guarantees for $\max_{v \in \mathcal{B}_{\delta(x)}(0)} \norm{\nabla \phi_i(f^*(x)+Q^*(x)\varepsilon + v)}^2$, $i \in \I$, directly translate to finite sample guarantees of the form in Assumption~\ref{ass:equilipschitz_v2}.}

\section{Some prediction setups that satisfy our assumptions}
\label{sec:regression}

We verify that Assumptions~\ref{ass:regconsist},~\ref{ass:reglargedev}, and~\ref{ass:regconvrate} and the corresponding assumptions for the J-SAA and J+-SAA problems hold for specific regression procedures, and point to resources within the literature for verifying these assumptions more broadly.
We do not attempt to be exhaustive and, for the most part, restrict our attention to M-estimators~\citep{van2000asymptotic,van2000empirical}, which encapsulate a rich class of prediction techniques.
We often also consider the special case where the true model can be written as $Y = f(X;\sth) + {Q(X;\pi^*)}\varepsilon$ and the goal of the regression procedure~\eqref{eqn:regr} is to estimate the finite-dimensional parameters~$\sth \in \Theta$ and~{$\pi^* \in \Pi$} using the data~$\D_n$. 
To summarize, we largely consider the regression setup (possibly with a regularization~term)
\[
\hth_n \in \uset{\theta \in \Theta}{\argmin} \dfrac{1}{n}\sum_{i=1}^{n} \ell \left( y^i,f(x^i;\theta) \right)
\]
for estimating $\sth$ with a particular emphasis on the squared loss~$\ell(y,\hat{y}) := \norm{y - \hat{y}}^2$.
We call the optimization problem
\[
\uset{\theta \in \Theta}{\min} \: \expectation{(Y,X)}{\ell \left( Y,f(X;\theta) \right)} 
\]
the population regression problem, where the above expectation is taken with respect to the joint distribution of $(Y,X)$.
We mostly assume that the solution set of the population regression problem is the singleton~$\{\sth\}$, {and assume throughout this section that $X_1 \equiv 1$}.

{We consider two distinct approaches for estimating the heteroscedasticity function~$Q^*$.
The first approach uses the fact that $\mathbb{E}\bigl[(Y - f^*(X))\tr{(Y - f^*(X))} \mid X = x\bigr] = Q^*(x) \tr{Q^*(x)}$ and plugs in $\hf_n$ instead of $f^*$ to determine the best regression estimate $\hat{Q}_n$ of $Q^*$ in the model class~$\Q$.
For the parametric case, once we obtain an estimate~$\hth_n$ of~$\sth$, we can estimate $\spi$ using}
\[
{\hpi_n \in \uset{\pi \in \Pi}{\argmin} \dfrac{1}{n}\sum_{i=1}^{n} \bar{\ell} \bigl( (y^i - f(x^i;\hth_n))\tr{(y^i - f(x^i;\hth_n))},Q(x^i;\pi) \bigr),}
\]
{where $\bar{\ell}(V,\hat{V}) := \norm{V - \hat{V}}^2_F$ is the squared Frobenius norm.
In special settings (cf.\ our running Example~\ref{exm:regrexample}), the above problem for estimating $\spi$ can be transformed into a linear regression problem.
An alternative is to estimate~$f^*$ and~$Q^*$ jointly using M-estimation:}
\[
{(\hth_n,\hpi_n) \in \uset{(\theta,\pi) \in \Theta \times \Pi}{\argmin} \dfrac{1}{n}\sum_{i=1}^{n} \bar{\ell} \bigl( (y^i - f(x^i;\theta))\tr{(y^i - f(x^i;\theta))},Q(x^i;\pi) \bigr)}
\]
{for some loss function $\bar{\ell}$~\citep{davidian1987variance}.}

Finally, we emphasize that we only deal with the random design case (where the covariates~$X$ are considered to be random) in this work. Much of the statistics literature presents results for the fixed design setting in which the covariate observations~$\{x^i\}_{i=1}^{n}$ are deterministic and designed by the DM. These results readily carry over to the random design setting \textit{if} $Q^* \equiv I$, the errors $\varepsilon$ are independent of $X$, and no restriction is made on the design points~$\{x^i\}_{i=1}^{n}$.

\subsection{Parametric regression techniques for estimating \texorpdfstring{$f^*$}{fstar}}
\label{subsec:parametricreg}

We verify that {the parts of} Assumptions~\ref{ass:regconsist},~\ref{ass:reglargedev}, and~\ref{ass:regconvrate} {relating to the estimate~$\hf_n$ of~$f^*$} hold for OLS regression, the Lasso and generalized linear regression models under suitable assumptions.
We also verify their counterparts for the J-SAA and J+-SAA problems in Section~\ref{sec:jackknife}.
{Note that Assumption~\ref{ass:regconsist_point_lq} holds for the regression techniques considered in this section whenever $\hth_n \convinprob \sth$ and $\norm{X}_{L^q} < +\infty$.}
Theorem~2.6 and Corollary~2.8 of~\citet{rigollet2015high} present conditions under which these assumptions hold for best subset selection regression {in the homoscedastic setting}, and Theorem~2.14 therein presents similar guarantees for the Bayes Information Criterion estimator.
\citet{koltchinskii2009dantzig} verifies these assumptions for the Dantzig selector under certain conditions ({including homoscedasticity of the error distribution}).
\citet{hsu2012random} verifies these conditions for ridge regression.
\citet{negahban2012unified} provides results for regularized M-estimators in the high-dimensional setting.

\subsubsection{Ordinary least squares regression}
\label{subsubsec:olsreg}

We present sufficient conditions from~\citet{white2014asymptotic},~\citet{hsu2012random}, and~\citet{rigollet2015high} under which {the parts of} Assumptions~\ref{ass:regconsist},~\ref{ass:reglargedev}, and~\ref{ass:regconvrate} {relating to the estimate~$\hf_n$ of~$f^*$} hold. 
Note that Theorems~2.31 and~4.25 of~\citet{white2014asymptotic} present a general set of sufficient conditions for $\hth_n \xrightarrow{p} \sth$ and for $\sqrt{n} (\hth_n - \sth)$ to be asymptotically normally distributed.
Chapters~3 to~5 of~\citet{white2014asymptotic} also present analyses that can handle \textit{instrumental variables}, which can be used to verify Assumptions~\ref{ass:regconsist},~\ref{ass:regconvrate}, and~\ref{ass:reglargedev} when the errors~$\varepsilon$ are correlated with the features~$X$.
We have the following result:

\begin{proposition}
\label{prop:ols-ass}
Suppose $f^*(X) = \sth X$ and we use OLS regression to estimate $\sth$. {Define $\bar{\varepsilon} := Q^*(X)\varepsilon$.}
\begin{enumerate}
\item Suppose for some $\gamma > 0$, $\expect{\abs{X_i \bar{\varepsilon}_j}^{1+\gamma}} < +\infty$, $\forall i \in [d_x]$ and $j \in [d_y]$, $\expect{\norm{X}^{2(1+\gamma)}} < +\infty$, and $\expect{X\tr{X}}$ is positive definite.
If $\{(x^i,\varepsilon^i)\}_{i=1}^{n}$ is i.i.d., then $\hth_n$ a.s.\ exists for $n$ large enough and $\hth_n \xrightarrow{a.s.} \sth$. Consequently, Assumptions~\ref{ass:regconsist_point} and~\ref{ass:regconsist_mse} hold.

\item Suppose for some $\gamma > 0$, $\expect{\abs{X_i \bar{\varepsilon}_j}^{2+\gamma}} < +\infty$, $\forall i \in [d_x]$ and $j \in [d_y]$, $\expect{\norm{X}^{2(1+\gamma)}} < +\infty$, $\expect{X\tr{X}}$ is positive definite, the covariance matrix of the random variable $\sum_{j=1}^{d_y} X \bar{\varepsilon}_j$ is positive definite, and $\{(x^i,\varepsilon^i)\}_{i=1}^{n}$ is i.i.d.
Then Assumptions~\ref{ass:regconvrate_point} and~\ref{ass:regconvrate_mse} hold with $\alpha = 1$.

\item Suppose $\{(x^i,\varepsilon^i)\}_{i=1}^{n}$ is i.i.d.,~$\bar{\varepsilon}$ is uniformly sub-Gaussian with variance proxy $\sigma^2$, i.e.,
\[
{\expect{\exp(s\tr{u}\bar{\varepsilon}) \mid X = x} \leq \exp(0.5s^2\sigma^2), \quad \forall s \in \R, \:\: \norm{u} = 1, \:\: \text{a.e. } x \in \X,}
\]
the covariance matrix $\Sigma_X$ of the covariates is positive definite,
and the random vector $\Sigma_X^{-\frac{1}{2}} X$ is sub-Gaussian.
Then Assumptions~\ref{ass:reglargedev_point} and~\ref{ass:reglargedev_mse} hold with constants $K_f(\kappa,x) = O(\exp(d_x))$, $\beta_f(\kappa,x) = O\left(\frac{\kappa^2}{\sigma^2 d_y \norm{x}^2}\right)$, $\bar{K}_f(\kappa) = O(\exp(d_x))$, and $\bar{\beta}_f(\kappa) = O\left(\frac{\kappa^2}{\sigma^2 d_y}\right)$.
\end{enumerate}
\end{proposition}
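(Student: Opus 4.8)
The plan is to work throughout from the OLS normal equations. Writing $\hat{\Sigma}_n := \frac{1}{n}\sum_{i=1}^n x^i \tr{(x^i)}$ and using $Y = \theta^* X + \bar{\varepsilon}$ with $\bar{\varepsilon} := Q^*(X)\varepsilon$, the estimator satisfies $\hat{\theta}_n - \theta^* = \bigl(\frac{1}{n}\sum_{i=1}^n \bar{\varepsilon}^i \tr{(x^i)}\bigr)\hat{\Sigma}_n^{-1}$ whenever $\hat{\Sigma}_n$ is nonsingular. The single most useful observation is that $\expect{\bar{\varepsilon} \tr{X}} = \expect{Q^*(X)\,\expect{\varepsilon \mid X}\,\tr{X}} = 0$, since $\varepsilon$ is zero-mean and independent of $X$; this makes the cross term $\frac{1}{n}\sum_i \bar{\varepsilon}^i \tr{(x^i)}$ an average of mean-zero terms, which drives all three parts. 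All three parts also reduce the two quantities of interest to a single object via $\hat{f}_n(x) - f^*(x) = (\hat{\theta}_n - \theta^*)x$ and $\frac{1}{n}\sum_i \norm{f^*(x^i)-\hat{f}_n(x^i)}^2 \leq \norm{\hat{\theta}_n - \theta^*}^2 \cdot \frac{1}{n}\sum_i \norm{x^i}^2$, so it suffices to control $\norm{\hat{\theta}_n - \theta^*}$ together with the empirical average $\frac{1}{n}\sum_i \norm{x^i}^2$.

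For Part~1, I would apply Kolmogorov's strong law to the i.i.d. averages: $\hat{\Sigma}_n \xrightarrow{a.s.} \expect{X\tr{X}}$ (finite under $\expect{\norm{X}^{2(1+\gamma)}} < +\infty$) and $\frac{1}{n}\sum_i \bar{\varepsilon}^i \tr{(x^i)} \xrightarrow{a.s.} 0$ (the integrability $\expect{\abs{X_i\bar{\varepsilon}_j}^{1+\gamma}} < +\infty$ supplies the first moment needed). Positive definiteness of $\expect{X\tr{X}}$ then guarantees $\hat{\Sigma}_n$ is a.s.\ invertible for $n$ large with $\hat{\Sigma}_n^{-1} \xrightarrow{a.s.} (\expect{X\tr{X}})^{-1}$ by continuity of inversion, so $\hat{\theta}_n \xrightarrow{a.s.} \theta^*$, giving Assumption~\ref{ass:regconsist_point}. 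Combining with $\frac{1}{n}\sum_i \norm{x^i}^2 \xrightarrow{a.s.} \expect{\norm{X}^2} < +\infty$ yields Assumption~\ref{ass:regconsist_mse} through the displayed bound. For Part~2, the same decomposition gives $\sqrt{n}(\hat{\theta}_n - \theta^*) = \bigl(\frac{1}{\sqrt{n}}\sum_i \bar{\varepsilon}^i \tr{(x^i)}\bigr)\hat{\Sigma}_n^{-1}$; the $2+\gamma$ moment condition makes the summands mean-zero with finite second moment, so the central limit theorem (the positive-definite covariance of $\sum_j X\bar{\varepsilon}_j$ ensuring a nondegenerate limit, though only tightness is actually needed) gives $\frac{1}{\sqrt{n}}\sum_i \bar{\varepsilon}^i \tr{(x^i)} = O_p(1)$, and since $\hat{\Sigma}_n^{-1} = O_p(1)$ we obtain $\norm{\hat{\theta}_n - \theta^*} = O_p(n^{-1/2})$. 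Feeding this into the two reductions yields Assumptions~\ref{ass:regconvrate_point} and~\ref{ass:regconvrate_mse} with $\alpha = 1$.

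Part~3 is the substantive one, and the cleanest route is to condition on the design $\{x^i\}_{i=1}^n$. Given the design, $\hat{\theta}_n - \theta^*$ is a deterministic linear image of the conditionally sub-Gaussian noise $\{\bar{\varepsilon}^i\}$, so both $(\hat{\theta}_n - \theta^*)x$ and the in-sample error $\frac{1}{n}\sum_i \norm{(\hat{\theta}_n-\theta^*)x^i}^2$ are (vector-valued, respectively quadratic) functions of sub-Gaussian variables and concentrate via sub-Gaussian tail and Hanson--Wright-type bounds. For Assumption~\ref{ass:reglargedev_mse}, the in-sample error of each of the $d_y$ outputs is $\frac{1}{n}$ times the squared norm of a rank-$d_x$ projection of the noise, giving $\frac{\sigma^2(d_x + t)}{n}$ with conditional failure probability $\exp(-ct)$; summing over outputs and choosing $t \asymp \frac{n\kappa^2}{\sigma^2 d_y} - d_x$ produces the claimed $\bar{K}_f(\kappa) = O(\exp(d_x))$ and $\bar{\beta}_f(\kappa) = O(n\kappa^2/(\sigma^2 d_y))$. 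For Assumption~\ref{ass:reglargedev_point}, I would condition and note that $(\hat{\theta}_n - \theta^*)x$ equals $\frac{1}{n}\sum_i \bar{\varepsilon}^i \bigl(\tr{(x^i)}\hat{\Sigma}_n^{-1}x\bigr)$, a weighted noise average whose conditional variance proxy is $\frac{\sigma^2}{n}\tr{x}\hat{\Sigma}_n^{-1}x \approx \frac{\sigma^2}{n}\tr{x}\Sigma_X^{-1}x$, yielding $\beta_f(\kappa,x) = O(n\kappa^2/(\sigma^2 d_y\norm{x}^2))$ after bounding $\tr{x}\Sigma_X^{-1}x = O(\norm{x}^2)$.

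The hard part is the \emph{random} design: the conditional arguments above are useful only once $\hat{\Sigma}_n$ is uniformly well-conditioned, i.e.\ $\hat{\Sigma}_n^{-1}$ stays close to $\Sigma_X^{-1}$ off an exponentially small event. This is precisely where the hypothesis that $\Sigma_X^{-\frac{1}{2}}X$ is sub-Gaussian enters, supplying a sample-covariance concentration bound that fails with probability $\exp(-c(n - O(d_x)))$; intersecting this event with the conditional noise bounds and transferring the conditional exponential tails to unconditional ones (the $\exp(d_x)$ prefactors absorbing the dimension and covering terms) is the delicate bookkeeping. Rather than reprove these concentration results, I would invoke Remark~12 of~\citet{hsu2012random} for the random-design prediction-error bound and Theorem~2.2 and Remark~2.3 of~\citet{rigollet2015high} for the fixed-design in-sample bound, and spend the effort matching their high-probability guarantees to the exact exponential-tail format and constants demanded by Assumptions~\ref{ass:reglargedev_point} and~\ref{ass:reglargedev_mse}.
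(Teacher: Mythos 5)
Your proposal is correct, and it is more self-contained than the paper's own proof, which consists entirely of citations: Part~1 is deferred to Theorem~3.15 of \citet{white2014asymptotic}, Part~2 to Theorem~5.13 of the same reference, and Part~3 to Remark~12 of \citet{hsu2012random} (with Theorem~2.2 and Remark~2.3 of \citet{rigollet2015high} covering the homoscedastic case). For Parts~1 and~2 you instead argue directly from the normal-equation decomposition $\hth_n - \sth = \bigl(\frac{1}{n}\sum_i \bar{\varepsilon}^i \tr{(x^i)}\bigr)\hat{\Sigma}_n^{-1}$, using the key observation $\expect{\bar{\varepsilon}\tr{X}} = 0$ (valid because $\varepsilon$ is zero-mean and independent of $X$), Kolmogorov's SLLN for consistency, and the CLT (or just Chebyshev, since only tightness is needed) for the $n^{-1/2}$ rate; the reductions $\norm{\hf_n(x)-f^*(x)} \leq \norm{\hth_n-\sth}\norm{x}$ and $\frac{1}{n}\sum_i\norm{f^*(x^i)-\hf_n(x^i)}^2 \leq \norm{\hth_n-\sth}^2\cdot\frac{1}{n}\sum_i\norm{x^i}^2$ then transfer these rates to the two quantities in Assumptions~\ref{ass:regconsist} and~\ref{ass:regconvrate}. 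This is essentially a reconstruction of what White's theorems assert, specialized to the i.i.d.\ case, and every step checks out (the $(1+\gamma)$- and $(2+\gamma)$-moment hypotheses supply exactly the first and second moments your LLN and CLT need, and positive definiteness of $\expect{X\tr{X}}$ gives eventual a.s.\ invertibility of $\hat{\Sigma}_n$). For Part~3 you correctly identify the structure of the argument — conditional sub-Gaussian concentration of the weighted noise average with conditional variance proxy $\frac{\sigma^2}{n}\tr{x}\hat{\Sigma}_n^{-1}x$, the Hanson--Wright-type bound for the rank-$d_x$ projection driving the in-sample term, and the sub-Gaussianity of $\Sigma_X^{-1/2}X$ entering only to control $\hat{\Sigma}_n^{-1}$ off an exponentially small event — and then invoke the same two references as the paper for the actual concentration inequalities. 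The net effect is that your write-up makes explicit where each hypothesis is used, which the paper's citation-only proof does not; the only thing the paper's route buys in exchange is access to White's more general framework (mixing data, instrumental variables), which is invoked in the surrounding discussion but not needed for the proposition as stated.
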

\begin{proof}
Part 1 follows from Theorems~3.15 of~\citet{white2014asymptotic}.
Part 2 follows from Theorem~5.13 of~\citet{white2014asymptotic}.
Part 3 follows from Remark~12 of~\citet{hsu2012random}.
For the homoscedastic case, part 3 also follows from
Theorem~2.2 and Remark~2.3 of~\citet{rigollet2015high}.
Although~\citet{rigollet2015high} consider the fixed design case, their proof readily extends to the random design setting since no restrictions were placed on the design.
\end{proof}

{For the homoscedastic setting, part 1 of Proposition~\ref{prop:ols-ass} follows from Theorems~3.5 and~3.37 of~\citet{white2014asymptotic} and part 2 follows from Theorem~5.3 of~\citet{white2014asymptotic}.}
Theorems~3.49 and~3.78 of~\citet{white2014asymptotic} present sufficient conditions under which Assumptions~\ref{ass:regconsist_point} and~\ref{ass:regconsist_mse} hold under mixing and martingale conditions on the data~$\{(x^i,\varepsilon^i)\}$.
Theorem~5.17 and Exercise~5.21 of~\citet{white2014asymptotic} present sufficient conditions under which Assumptions~\ref{ass:regconvrate_point} and~\ref{ass:regconvrate_mse} hold with $\alpha = 1$ for ergodic and mixing data~$\{(x^i,\varepsilon^i)\}$, respectively.
Note that results in~\citet{bryc1996large} and~\cite{dembo2011large} can be used to establish Assumptions~\ref{ass:reglargedev_point} and~\ref{ass:reglargedev_mse} for the non-i.i.d.\ setting.
{Note that once we have an estimate~$\hQ_n$ of~$Q^*$, we can re-estimate~$\sth$ using feasible weighted least squares regression~\citep{romano2017resurrecting}. 
This yields an estimate~$\hth_n$ that is asymptotically more efficient than the OLS estimator of~$\sth$ whenever $\hQ_n$ is a consistent estimate of~$Q^*$~\citep{romano2017resurrecting,robinson1987asymptotically}.
Even if the estimate~$\hQ_n$ of~$Q^*$ is inconsistent, the weighted least squares estimator~$\hth_n$ remains consistent but may no longer be asymptotically efficient (see, e.g., Section~3.3 of~\cite{romano2017resurrecting}).}

The above results can be used in conjunction with the techniques in Section~\ref{subsec:mestimators} to verify Assumptions~\ref{ass:jack_regconsist},~\ref{ass:jack+_regconsist},~\ref{ass:jack_regconvrate},~\ref{ass:jack+_regconvrate},~\ref{ass:jack_reglargedev}, and~\ref{ass:jack+_reglargedev} for i.i.d.\ data~$\{(x^i,\varepsilon^i)\}$.
In the remainder of this section, we specialize the verification of these assumptions for OLS regression {in the homoscedastic setting} when problem~\eqref{eqn:speq} is a two-stage stochastic LP (see Example~\ref{exm:runningexample}).
We assume that $d_y = 1$ for ease of exposition.

Following Lemma~\ref{lem:jack_meandeviation} in Section~\ref{sec:jackknife}, it suffices to establish rates and finite sample guarantees for the terms $\frac{1}{n} \sum_{i=1}^{n} \norm{\hf_n(x^i) - \hf_{-i}(x^i)}$ and $\frac{1}{n} \sum_{i=1}^{n} \norm{\hf_n(x) - \hf_{-i}(x)}$ when the assumptions for the ER-SAA problem hold.
Let $\bar{X}$ denote the $\R^{n \times d_x}$ \textit{design matrix} with $\bar{X}_{[i]} = \tr{(x^i)}$, $h^i := (\bar{X} (\tr{\bar{X}}\bar{X})^{-1} \tr{\bar{X}})_{ii}$ denote the $i$th \textit{leverage score}, and $e^i := y^i - \tr{\hth_n} x^i$ denote the residual of the model $\hf_n$ at the $i$th data point.
From Section~10.6.3 of~\citet{seber2012linear}, we have
{
\small
\begin{align*}
\frac{1}{n} \sum_{i=1}^{n} \norm{\hf_n(x^i) - \hf_{-i}(x^i)} &= \frac{1}{n} \sum_{i=1}^{n} \frac{h^i \abs{e^i}}{1-h^i} \leq \biggl(\frac{1}{n} \sum_{i=1}^{n} (h^i)^2\biggr)^{1/2} \biggl(\frac{1}{n} \sum_{i=1}^{n} \left(\frac{e^i}{1-h^i}\right)^2\biggr)^{1/2} \leq \frac{d_x}{\sqrt{n}} \biggl(\frac{1}{n} \sum_{i=1}^{n} \left(\frac{e^i}{1-h^i}\right)^2\biggr)^{1/2},
\end{align*}%
}%
{
\small
\begin{align*}
\frac{1}{n} \sum_{i=1}^{n} \norm{\hf_n(x) - \hf_{-i}(x)} \leq \frac{\norm{x}}{n} \sum_{i=1}^{n} \norm*{\frac{(\tr{\bar{X}}\bar{X})^{-1} x^i e^i}{1-h^i}} &\leq \norm{x} \biggl(\frac{1}{n} \sum_{i=1}^{n} \norm*{(\tr{\bar{X}}\bar{X})^{-1} x^i}^2\biggr)^{1/2} \biggl(\frac{1}{n} \sum_{i=1}^{n} \left(\frac{e^i}{1-h^i}\right)^2\biggr)^{1/2} \\
&\leq \norm{x} \biggl(\frac{1}{n} \text{Tr}((\tr{\bar{X}}\bar{X})^{-1})\biggr)^{1/2} \biggl(\frac{1}{n} \sum_{i=1}^{n} \left(\frac{e^i}{1-h^i}\right)^2\biggr)^{1/2},
\end{align*}%
}%
where $\text{Tr}$ denotes the trace operator.
The quantity $\Bigl(\frac{1}{n} \sum_{i=1}^{n} \left(\frac{e^i}{1-h^i}\right)^2\Bigr)^{1/2}$ is called the \textit{prediction residual sum of squares} statistic and is bounded in probability under mild assumptions.
The above inequalities can be used to verify the assumptions for the {jackknife}-based estimators for Example~\ref{exm:runningexample}.

\subsubsection{The Lasso and high-dimensional generalized linear models}
\label{subsubsec:lassoreg}

Following~\citet{van2008high} and~\citet{bunea2007sparsity}, we consider generalized linear models with an~$\ell_1$-penalty.
We assume $d_y = 1$ for ease of exposition.
The setup is: 
the model class~$\F := \Set{f}{f(\cdot;\theta) := \sum_{k=1}^{m} \theta_{k} \psi_{k}(\cdot), \theta \in \Theta}$, where $\{\psi_{k}(\cdot)\}_{k=1}^{m}$ is a sequence of real-valued basis functions with domain~$\X$, the data~$\{(x^i,\varepsilon^i)\}$ is assumed to be i.i.d., the number of basis functions~$m$ grows subexponentially with the number of data samples~$n$, the set~$\Theta$ is convex, the loss function~$\ell$ satisfies some Lipschitz assumptions~\citep[see Assumption~L and Example~4 of][]{van2008high}, and the estimate~$\hth_{n}$ of~$\sth$ is obtained as
\[
\hth_{n} \in \uset{\theta \in \Theta}{\argmin} \: \dfrac{1}{n}\sum_{i=1}^{n} \ell \left( y^i,f(x^i;\theta) \right) + \lambda_n \displaystyle\sum_{k=1}^{m} \left( \dfrac{1}{n} \displaystyle\sum_{i=1}^{n} \psi^2_k(x^i) \right)^{\frac{1}{2}} \abs{\theta_k}
\]
for some penalty parameter~$\lambda_n= O\left( \sqrt{\frac{\log m}{n}} \right)$ that is chosen large enough.
The above setup captures both parametric and nonparametric regression models.
Theorem~2.2 of~\citet{van2008high} and Theorems~2.1,~2.2, and~2.3 of~\citet{bunea2007sparsity} present conditions under which Assumptions~\ref{ass:regconsist},~\ref{ass:reglargedev}, and~\ref{ass:regconvrate} hold for the above setting.

In the remainder of this section, we specialize the results of~\citet{bunea2007sparsity} to the traditional Lasso setup~\citep{tibshirani1996regression}. In this setup, $m = d_x$, $\psi_k(x) = x_k$, $\Theta = \R^{d_x}$, and $\ell(y,\hat{y}) = \norm{y - \hat{y}}^2$.
{Once again, we define $\bar{\varepsilon} := Q^*(X)\varepsilon$.}

\begin{proposition}
\label{prop:lasso-ass}
Suppose $f^*(X) = \sth X$ with $\norm{\sth_{[j]}}_0 \leq s$, $\forall j \in [d_y]$, the sequences $\{x^i\}_{i=1}^{n}$ and $\{\bar{\varepsilon}^i\}_{i=1}^{n}$ are i.i.d., and the error~$\bar{\varepsilon}$ is uniformly sub-Gaussian with variance proxy $\sigma^2$, i.e.,
\[
{\expect{\exp(s\tr{u}\bar{\varepsilon}) \mid X = x} \leq \exp(0.5s^2\sigma^2), \quad \forall s \in \R, \:\: \norm{u} = 1, \:\: \text{a.e. } x \in \X.}
\]
Additionally, suppose the support~$\X$ of the covariates~$X$ is compact, $\expect{\abs{X_j}^2} > 0$, $\forall j \in [d_x]$, and the matrix $\expect{X\tr{X}} - \tau \textup{diag}(\expect{X\tr{X}})$ is positive semidefinite for some constant $\tau \in (0,1]$.
If we use the Lasso to estimate $\sth$,
then Assumptions~\ref{ass:regconsist_point} and~\ref{ass:regconsist_mse} hold, Assumptions~\ref{ass:regconvrate_point} and~\ref{ass:regconvrate_mse} hold with $\alpha = 1$, and Assumption~\ref{ass:reglargedev_point} and~\ref{ass:reglargedev_mse} hold with $\bar{K}_f(\kappa) = K_f(\kappa,x) = O(d_x)$, $\bar{\beta}_f(\kappa) = O\left(\frac{\kappa^2}{\sigma^2 s d_y}\right)$, and $\beta_f(\kappa,x) = O\left(\frac{\kappa^2}{\sigma^2 s d_y \norm{x}^2}\right)$.
\end{proposition}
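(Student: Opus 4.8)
The plan is to reduce everything to finite-sample prediction and estimation error bounds for the Lasso and then invoke the sparsity oracle inequalities of \citet{bunea2007sparsity}. First I would observe that the regression step is carried out separately for each component $j \in [d_y]$ of the response, so that the $j$th row $\hth_{n,[j]}$ is the Lasso estimate of the $s$-sparse vector $\sth_{[j]}$ from the responses $\{y^i_j\}$ and the sub-Gaussian noise $\{\bar{\varepsilon}^i_j\}$. All quantities of interest decompose across $j$: in particular $\frac{1}{n}\sum_{i=1}^{n}\norm{f^*(x^i) - \hf_n(x^i)}^2 = \sum_{j=1}^{d_y} \frac{1}{n}\sum_{i=1}^{n}\bigl((\sth_{[j]} - \hth_{n,[j]}) x^i\bigr)^2$ and $\norm{\hf_n(x) - f^*(x)}^2 = \sum_{j=1}^{d_y}\bigl((\hth_{n,[j]} - \sth_{[j]})x\bigr)^2$. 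A union bound over the $d_y$ coordinates then produces the stated dependence on $d_y$ in the large-deviation constants.

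Next, for a fixed coordinate I would apply Theorem~2.1 and Corollary~1 of \citet{bunea2007sparsity}, which, under the stated coherence condition and uniform sub-Gaussianity of $\bar\varepsilon$ with variance proxy $\sigma^2$, yield a sparsity oracle inequality whose tail form reads $\mathbb{P}\bigl\{\frac{1}{n}\sum_i ((\sth_{[j]} - \hth_{n,[j]})x^i)^2 > \kappa^2\bigr\} \le O(d_x)\exp(-O(1)n\kappa^2/(\sigma^2 s))$. Summing over $j$ gives Assumption~\ref{ass:reglargedev_mse} with $\bar{K}_f(\kappa) = O(d_x)$ and $\bar{\beta}_f(n,\kappa) = O(n\kappa^2/(\sigma^2 s d_y))$. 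Because $s$, $d_x$, $d_y$, and $\sigma$ do not depend on $n$, the same inequality shows $\frac{1}{n}\sum_i \norm{f^*(x^i) - \hf_n(x^i)}^2 = O_p(s\sigma^2\log(d_x)/n) = O_p(n^{-1})$, which is Assumption~\ref{ass:regconvrate_mse} with $\alpha = 1$ and in turn implies Assumption~\ref{ass:regconsist_mse}. For the pointwise statements I would use the $\ell_1$ estimation bound of Theorems~2.2 and~2.3 of \citet{bunea2007sparsity}, namely $\norm{\hth_{n,[j]} - \sth_{[j]}}_1 = O_p(s\sqrt{\sigma^2\log(d_x)/n})$ with the same exponential probability, and bound $\abs{(\hth_{n,[j]} - \sth_{[j]})x} \le \norm{\hth_{n,[j]} - \sth_{[j]}}_1\norm{x}_\infty$ at the fixed covariate value $x$. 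This yields $\norm{\hf_n(x) - f^*(x)} = O_p(n^{-1/2})$ (Assumptions~\ref{ass:regconsist_point} and~\ref{ass:regconvrate_point} with $\alpha = 1$) together with Assumption~\ref{ass:reglargedev_point}, with $K_f(\kappa,x) = O(d_x)$ and $\beta_f(n,\kappa,x) = O(n\kappa^2/(\sigma^2 s d_y \norm{x}^2))$.

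The main obstacle I anticipate is the gap between the \emph{population}-level coherence hypothesis $\expect{X\tr{X}} - \tau\,\textup{diag}(\expect{X\tr{X}}) \succeq 0$ assumed here and the \emph{empirical} coherence (restricted eigenvalue) condition on the realized Gram matrix $\frac{1}{n}\tr{\bar X}\bar X$ that the oracle inequalities of \citet{bunea2007sparsity} actually require. Bridging this gap is where the random-design character of the setting enters: using compactness of $\X$ (so each $\abs{X_j}$ is bounded) together with $\expect{\abs{X_j}^2} > 0$, I would show via an entrywise Hoeffding/matrix-concentration argument that $\frac{1}{n}\tr{\bar X}\bar X$ stays within $o(1)$ of $\expect{X\tr{X}}$ on an event whose complement has probability exponentially small in $n$, so that the empirical coherence condition inherits the population one without degrading the exponential rate. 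Care is then needed to intersect this ``good design'' event with the noise events from the oracle inequalities and to check that the resulting constants are absorbed into $K_f$, $\bar K_f$, $\beta_f$, and $\bar\beta_f$ with no hidden $n$-dependence; compactness of $\X$ is precisely what makes $\norm{x}_\infty$ and the per-coordinate variances uniformly controllable in this step.
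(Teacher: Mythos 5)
Your proposal follows essentially the same route as the paper, whose entire proof of this proposition is the citation to Theorem~2.1 and Corollary~1 of Bunea et al.\ (2007): the coordinate-wise decomposition over $j \in [d_y]$, the union bound, and the oracle inequality for the empirical prediction norm are exactly the implicit content of that citation. One reassurance: the population-to-empirical coherence bridging that you identify as the main obstacle is already built into the cited random-design results (the mutual-coherence condition there is stated on $\expect{X\tr{X}}$, and boundedness of $\X$ together with $\expect{\abs{X_j}^2}>0$ is used to show the empirical Gram matrix inherits it on an event of exponentially small complementary probability), so you do not need to re-derive the matrix concentration step, though doing so is harmless.

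There is, however, one step where your route delivers a weaker constant than the proposition claims. For the pointwise guarantee you bound $\abs{((\hth_{n})_{[j]}-\sth_{[j]})x} \le \norm{(\hth_{n})_{[j]}-\sth_{[j]}}_1\norm{x}_\infty$ with $\norm{(\hth_{n})_{[j]}-\sth_{[j]}}_1 = O_p\bigl(s\sqrt{\log(d_x)/n}\bigr)$; squaring, this produces a tail exponent of order $n\kappa^2/(\sigma^2 s^2 d_y \norm{x}_\infty^2)$, i.e.\ $s^2$ where the statement claims $s$. To obtain the stated $\beta_f(n,\kappa,x) = O\bigl(n\kappa^2/(\sigma^2 s d_y \norm{x}^2)\bigr)$ you should instead pass through the $\ell_2$ coefficient error: the coherence condition yields a restricted-eigenvalue-type lower bound on the (empirical) Gram matrix, so the prediction-error oracle inequality implies $\norm{(\hth_{n})_{[j]}-\sth_{[j]}}^2 = O_p(s\log(d_x)/n)$ with the same exponential tail, and Cauchy--Schwarz then gives $\abs{((\hth_{n})_{[j]}-\sth_{[j]})x} \le \norm{(\hth_{n})_{[j]}-\sth_{[j]}}\,\norm{x}$ with the claimed $s$- and $\norm{x}$-dependence. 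The distinction is not cosmetic: the linear-in-$s$ exponent is what feeds into the Lasso sample-size estimate of Proposition~\ref{prop:specfinsamp} and underlies the paper's comparison of the Lasso against OLS.
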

\begin{proof}
Follows from Theorem~2.1 and Corollary~1 of~\citet{bunea2007sparsity}.
\end{proof}

\citet{chatterjee2013assumptionless} establishes consistency of the Lasso {in the homoscedastic setting} under the following weaker assumptions: the data~$\{(x^i,\varepsilon^i)\}$ is i.i.d., 
the error~$\varepsilon$ is sub-Gaussian with variance proxy $\sigma^2$ and is independent of the covariates $X$,
the support $\X$ of the covariates is compact, and the covariance matrix of the covariates is positive definite.
Theorems~1 and~2 therein present conditions under which Assumption~\ref{ass:regconvrate} holds at a slower rate with $\alpha = 0.5$. 
Theorem~2.15 of~\citet{rigollet2015high} can then be used to show that Assumptions~\ref{ass:reglargedev_point} and~\ref{ass:reglargedev_mse} hold {in the homoscedastic setting} with $\bar{K}_f(\kappa) = K_f(\kappa,x) = O(d_x)$, $\bar{\beta}_f(\kappa) = O\left(\frac{\kappa^4}{\sigma^2 s^2 d^2_y}\right)$, and $\beta_f(\kappa,x) = O\left(\frac{\kappa^4}{\sigma^2 s^2 d^2_y \norm{x}^2}\right)$. 
\citet{basu2015regularized} present conditions under which Assumptions~\ref{ass:regconsist},~\ref{ass:reglargedev}, and~\ref{ass:regconvrate} can be verified for time series data~$\{(x^i,\varepsilon^i)\}$ {in the homoscedastic case}.
The above results can be used in conjunction with the discussion in Section~\ref{subsec:mestimators} to derive rates of convergence and finite sample guarantees for the {jackknife}-based estimators for i.i.d.\ data~$\{(x^i,\varepsilon^i)\}$.

{
Theorem~1 of~\citet{belloni2012sparse} outlines conditions under which Assumptions~\ref{ass:regconsist} and~\ref{ass:regconvrate} hold for the heteroscedasticity-adapted Lasso with $\alpha = 1$.
\citet{medeiros2016l1} and~\citet{ziel2016iteratively} present asymptotic analyses of the adaptive Lasso for time series data, including GARCH-type processes.
Theorems~2 and~3 of~\cite{medeiros2016l1} and Theorem~1 of~\cite{ziel2016iteratively} present conditions under which Assumptions~\ref{ass:regconsist} and~\ref{ass:regconvrate} hold with $\alpha = 1$.
\citet{belloni2014pivotal} study asymptotic and finite sample guarantees for the heteroscedasticity-adapted square-root Lasso.
Finally, Theorem~5.2 of~\citet{dalalyan2013learning} introduces a scaled heteroscedastic Dantzig selector and presents large deviation bounds for both~$\hf_n$ and~$\hQ_n$ under certain sparsity assumptions.}

\subsection{General M-estimation procedures for estimating \texorpdfstring{$f^*$}{fstar}}
\label{subsec:mestimators}

We use results from Chapter~5 of~\citet{van2000asymptotic}, Chapter~3 of~\citet{vaart1996weak}, and~\citet{shapiro2009lectures} to verify {the parts of} Assumptions~\ref{ass:regconsist},~\ref{ass:reglargedev}, and~\ref{ass:regconvrate} {relating to the estimate~$\hf_n$ of~$f^*$} for general M-estimators.
To begin, we suppose that the regression function~$f(x;\cdot)$ is Lipschitz continuous at~$\sth$ for a.e.~$x \in \X$ with Lipschitz constant~$L_f(x)$, i.e., we a.s.\ have $\norm{f(x;\sth) - f(x;\hth_n)} \leq L_f(x) \norm{\sth - \hth_n}$.
To establish Assumptions~\ref{ass:regconsist_point},~\ref{ass:regconsist_mse},~\ref{ass:regconvrate_point}, and~\ref{ass:regconvrate_mse}, it suffices to assume that the function~$f(x,\cdot)$ is locally Lipschitz continuous at~$\sth$ and a.s.\ for $n$ large enough, the estimates~$\hth_n$ of~$\sth$ lie in some compact subset of $\Theta$.
Note that
\[
\dfrac{1}{n} \displaystyle\sum_{i=1}^{n}\norm{f(x^i;\sth) - f(x^i;\hth_n)}^2 \leq \biggl(\dfrac{1}{n} \displaystyle\sum_{i=1}^{n} L^2_f(x^i)\biggr) \norm{\sth - \hth_n}^2,
\]
with the first term in the r.h.s.\ of the above inequality bounded in probability under a suitable weak LLN assumption.
Therefore, our main focus is presenting rates at which $\norm{\sth - \hth_n} \convinprob 0$.
{Note that Assumption~\ref{ass:regconsist_point_lq} also holds whenever $\hth_n \convinprob \sth$ and $\norm{L_f(X)}_{L^q} < +\infty$.}

\paragraph{Verifying Assumption~\ref{ass:regconsist}.} 
Theorem~5.7 of~\citet{van2000asymptotic} presents conditions under which $\hth_n \convinprob \sth$ for i.i.d.\ data~$\{(x^i,\varepsilon^i)\}$~\citep[cf.\ Theorems~5.3 and~5.4 of][]{shapiro2009lectures}.
Similar to the discussion following Assumption~\ref{ass:uniflln}, this result also holds when $\D_n$ satisfies certain mixing/stationarity assumptions.
Section~5.2 of~\citet{van2000asymptotic} also presents alternative conditions for $\hth_n \xrightarrow{p} \sth$.

\paragraph{Verifying Assumption~\ref{ass:regconvrate}.} 
We discuss conditions under which $\norm{\hth_n - \sth} \convinprob 0$ at certain rates.
Theorem~5.23 of~\citet{van2000asymptotic} presents regularity conditions under which this convergence holds at the conventional~$n^{-1/2}$ rate, in which case Assumption~\ref{ass:regconvrate} holds with~$\alpha = 1$~\citep[cf.\ Theorem~5.8 of][]{shapiro2009lectures}.
Once again, the above conclusion holds when the observations~$\D_n$ satisfy certain mixing/stationarity assumptions.
Chapter~5 of~\citet{van2000asymptotic} and Chapter~3.2 of~\citet{vaart1996weak} provide some examples of M-estimators that possess this rate of convergence.
Theorem~5.52 and Chapter~25 of~\citet{van2000asymptotic} present conditions under which Assumption~\ref{ass:regconvrate} holds with constant~$\alpha < 1$ (including for semiparametric regression). 
{Similar to the special case of OLS regression, vanilla M-estimators that do not account for heteroscedasticity may no longer be efficient. Feasible weighted M-estimation may provide an asymptotically efficient alternative in the heteroscedastic setting.}

\paragraph{Verifying Assumption~\ref{ass:reglargedev}.} We verify this assumption by establishing finite sample guarantees for $\hth_n$ when the M-estimation problem satisfies uniform exponential bounds similar to Assumption~\ref{ass:tradsaalargedev}.
Specifically, suppose for any constant $\kappa > 0$, there exist positive constants $\hat{K}(\kappa)$ and $\hat{\beta}(\kappa)$ such that
\[
\mathbb{P}\biggl\{\uset{\theta \in \Theta}{\sup} \: \bigg\lvert \dfrac{1}{n}\sum_{i=1}^{n} \ell \left( y^i,f(x^i;\theta) \right) - \expectation{(Y,X)}{\ell \left( Y,f(X;\theta) \right)}\bigg\rvert > \kappa\biggr\} \leq \hat{K}(\kappa) \exp\bigl(-n\hat{\beta}(\kappa)\bigr),
\]
see the discussion surrounding Assumption~\ref{ass:tradsaalargedev} for conditions under which such a uniform exponential bound holds~\citep[the main restriction there is that~$\Theta$ is compact, but this can be relaxed by assuming that the estimates $\hth_n$ lie in a compact subset of $\Theta$, see the discussion following Theorem~5.3 of][]{shapiro2009lectures}.
Theorem~2.3 of~\citet{homem2008rates} then implies that Assumption~\ref{ass:reglargedev} holds whenever the sample average term $\frac{1}{n} \sum_{i=1}^{n} L^2_f(x^i)$ satisfies a large deviation property (i.e., it is concentrated around $\mathbb{E}\bigl[L^2_f(X)\bigr]$).
We note that results in~\citet{bryc1996large,dembo2011large} can be used to establish such uniform exponential bounds for mixing data~$\{(x^i,\varepsilon^i)\}$ by adapting Lemma~2.4 of~\citet{homem2008rates}.
{Theorems~1,~3, and~5 of~\citet{sun2020adaptive} and Theorem~2.1 of~\citet{zhou2018new} present large deviation results in the form of Assumptions~\ref{ass:reglargedev_point} and~\ref{ass:reglargedev_mse} for adaptive Huber regression when~$f^*$ is linear.}

\paragraph{Verifying the assumptions for the {jackknife}-based methods.} 
We now present techniques for verifying {the parts of} Assumptions~\ref{ass:jack_regconsist},~\ref{ass:jack+_regconsist},~\ref{ass:jack_regconvrate},~\ref{ass:jack+_regconvrate},~\ref{ass:jack_reglargedev}, and~\ref{ass:jack+_reglargedev} {relating to the estimate~$\hf_n$ of~$f^*$} when the data~$\{(x^i,\varepsilon^i)\}$ is i.i.d.
Noting from Markov's inequality that
{
\small
\begin{align*}
&\pr\biggl\{\dfrac{1}{n} \displaystyle\sum_{i=1}^{n} \norm{f^*(x^i) - \hf_{-i}(x^i)}^4 > \kappa^4\biggr\} \leq \dfrac{1}{n\kappa^4} \displaystyle\sum_{i=1}^{n} \expectation{\D_n}{\norm{f^*(x^i) - \hf_{-i}(x^i)}^4} = \dfrac{1}{\kappa^4} \expectation{\D_{n-1},X}{\norm{f^*(X) - \hf_{n-1}(X)}^4}, \\
&\pr\biggl\{\dfrac{1}{n} \displaystyle\sum_{i=1}^{n} \norm{f^*(x) - \hf_{-i}(x)} > \kappa\biggr\} \leq \dfrac{1}{n\kappa} \displaystyle\sum_{i=1}^{n} \expectation{\D_n}{\norm{f^*(x) - \hf_{-i}(x)}} = \dfrac{1}{\kappa} \expectation{\D_{n-1}}{\norm{f^*(x) - \hf_{n-1}(x)}},
\end{align*}
}%
when the data~$\D_n$ is i.i.d., we have that Assumptions~\ref{ass:jack_regconvrate} and~\ref{ass:jack+_regconvrate} on the {jackknife}-based methods hold if, for a.e.~$x \in \X$, the expectations $\expectation{\D_{n-1},X}{\norm{f^*(X) - \hf_{n-1}(X)}^4}$ and $\expectation{\D_{n-1}}{\norm{f^*(x) - \hf_{n-1}(x)}}$ converge to zero at suitable rates.
Under the aforementioned Lipshitz continuity assumption on the function~$f(x;\cdot)$ at~$\sth$ and the assumption that $\expect{L^4_f(X)} < +\infty$, it suffices to establish rates of convergence for the expectation term $\expectation{\D_{n-1}}{\norm{\sth - \hth_{n-1}}^4}$.
These results can be readily obtained under assumptions on the curvature of the loss function of the M-estimation problem (e.g., restricted strong convexity) around the true parameter $\sth$, see~\citet{negahban2012unified} for instance.
Chapter~14 of~\citet{biau2015lectures} provides similar rate results for kNN regression.
Alternatively, we can also bound the terms appearing in the assumptions for the {jackknife}-based formulations as
\begin{align*}
&\dfrac{1}{n} \displaystyle\sum_{i=1}^{n}\norm{f(x;\sth) - f(x;\hth_{-i})} \leq \dfrac{1}{n} \displaystyle\sum_{i=1}^{n} L_f(x) \norm{\sth - \hth_{-i}}, \\
&\dfrac{1}{n} \displaystyle\sum_{i=1}^{n}\norm{f(x^i;\sth) - f(x^i;\hth_{-i})}^4 \leq \dfrac{1}{n} \displaystyle\sum_{i=1}^{n} L^4_f(x^i) \norm{\sth - \hth_{-i}}^4 \leq \biggl(\dfrac{1}{n} \displaystyle\sum_{i=1}^{n} L^8_f(x^i)\biggr)^{1/2} \biggl(\dfrac{1}{n} \displaystyle\sum_{i=1}^{n} \norm{\sth - \hth_{-i}}^8\biggr)^{1/2},
\end{align*}
with the first term in the r.h.s.\ of the last inequality bounded under appropriate LLN assumptions.
Therefore, an alternative is to establish rates and finite sample guarantees for the two terms $\frac{1}{n}\sum_{i=1}^{n} \norm{\sth - \hth_{-i}}^8$ and $\frac{1}{n} \sum_{i=1}^{n} L^8_f(x^i)$.
A third direct approach is to use the weaker bounds
{
\small
\begin{align*}
&\pr\biggl\{\dfrac{1}{n} \displaystyle\sum_{i=1}^{n} \norm{f^*(x^i) - \hf_{-i}(x^i)}^4 > \kappa^4\biggr\} \leq \sum_{i=1}^{n} \prob{\norm{f^*(x^i) - \hf_{-i}(x^i)}^4 > \kappa^4} = n \prob{\norm{f^*(X) - \hf_{n-1}(X)}^4 > \kappa^4}, \\
&\pr\bigg\{\dfrac{1}{n} \displaystyle\sum_{i=1}^{n} \norm{f^*(x) - \hf_{-i}(x)} > \kappa\biggr\} \leq \sum_{i=1}^{n} \prob{\norm{f^*(x) - \hf_{-i}(x)} > \kappa} = n \prob{\norm{f^*(x) - \hf_{n-1}(x)} > \kappa}.
\end{align*}
}%
Finally, note that 
it is sufficient to establish rates and finite sample guarantees for $\frac{1}{n} \sum_{i=1}^{n} \norm{\hf_n(x^i) - \hf_{-i}(x^i)}^4$ and $\frac{1}{n} \sum_{i=1}^{n} \norm{\hf_n(x) - \hf_{-i}(x)}$ when Assumptions~\ref{ass:regconsist},~\ref{ass:regconvrate}, and~\ref{ass:reglargedev} hold.

\subsection{Nonparametric regression techniques for estimating \texorpdfstring{$f^*$}{fstar}}
\label{subsec:nonparamreg}

We verify that Assumptions~\ref{ass:regconsist_point},~\ref{ass:regconsist_mse},~\ref{ass:regconvrate_point}, and~\ref{ass:regconvrate_mse} hold for kNN regression, CART, and RF regression, and state a large deviation result similar to Assumptions~\ref{ass:reglargedev_point} and~\ref{ass:reglargedev_mse} for kNN regression.
The discussion in Section~\ref{subsec:mestimators} then provides an avenue for verifying the corresponding assumptions for the J-SAA and J+-SAA problems for i.i.d.\ data~$\{(x^i,\varepsilon^i)\}$.
{Theorem~14.5 in~\citet{biau2015lectures} presents conditions under which Assumption~\ref{ass:regconsist_point_lq} holds for $q \leq 2$.}
Note that results in~\citet{walk2010strong},~\citet{gyorfi2006distribution}, and~\citet{chen2018explaining} can be used to verify some of these assumptions for kernel regression and semi-recursive Devroye-Wagner estimates for mixing data~$\D_n$, results in~\citet{raskutti2012minimax} can be used to verify these assumptions for sparse additive nonparametric regression, Chapter~13 of~\citet{wainwright2019high} can be used to verify these assumptions for (regularized) nonparametric least squares regression, and results in~\citet{seijo2011nonparametric} and~\citet{mazumder2019computational} can be used to verify these assumptions for convex regression.
In what follows, we only consider the setting where the data~$\{(x^i,\varepsilon^i)\}$ is i.i.d.

We assume that the kNN regression estimate is computed as follows: given parameter $k \in [n]$ and $x \in \X$, define $\hf_n(x) := \frac{1}{k} \sum_{i=1}^{k} y^{(i)}(x)$, where $\{(y^{(i)}(x),x^{(i)}(x))\}_{i=1}^{n}$ is a reordering of the data $\{(y^i,x^i)\}_{i=1}^{n}$ such that $\norm{x^{(j)}(x)-x} \leq \norm{x^{(k)}(x)-x}$ whenever $j \leq k$ 
(if $\norm{x^{(j)}(x)-x} = \norm{x^{(k)}(x)-x}$ for some $j < k$, then we assume that $(y^{(j)}(x),x^{(j)}(x))$ appears first in the reordering).

\begin{proposition}
\label{prop:knn-ass}
Suppose the data~$\{(x^i,\varepsilon^i)\}$ is i.i.d.\ and the support $\X$ of the covariates is compact. 
{Define $\bar{\varepsilon} := Q^*(X)\varepsilon$, and suppose the distribution of the errors~$\bar{\varepsilon}$ satisfies $\sup_{x \in \R^{d_x}} \: \expect{\exp(\lambda \abs{\bar{\varepsilon}_j}) \mid X = x} < +\infty$ for each $j \in [d_y]$ and some $\lambda > 0$.}
Consider the setting where we use kNN regression to estimate the regression function $f^*$ with the parameter `$k$' satisfying $\uset{n \to \infty}{\lim} \frac{k}{\log(n)} = \infty$ and $k = o(n)$.
\begin{enumerate}
\item Suppose the function $f^*$ is continuous on $\X$.
Then Assumptions~\ref{ass:regconsist_point} and~\ref{ass:regconsist_mse} hold.

\item Suppose the function $f^*$ is twice continuously differentiable on $\X$ and the random vector $X$ has a density that is twice continuously differentiable.
Then, there exists a choice of the parameter `$k$' such that Assumptions~\ref{ass:regconvrate_point} and~\ref{ass:regconvrate_mse} hold with $\alpha = \frac{O(1)}{d_x}$.

\item Suppose the function $f^*$ is Lipschitz continuous on $\X$ and there exists a constant $\tau > 0$ such that the distribution $P_X$ of the covariates satisfies $\prob{X \in \mathcal{B}_{\kappa}(x)} \geq \tau \kappa^{d_x}$, $\forall x \in \X$ and $\kappa > 0$.
Then, for sample size $n$ satisfying $n \geq O(1)k\left(\frac{O(1)}{\kappa}\right)^{d_x}$ and $\frac{n^{\gamma}}{\log(n)} \geq \frac{O(1) d_x d_y \sigma^2}{\kappa^2}$, we have
{
\small
\begin{align*}
\hspace*{-0.4in}\prob{\uset{x \in \X}{\sup} \: \norm{f^*(x) - \hf_n(x)} > \kappa\sqrt{d_y}} &\leq \left(\dfrac{O(1) \sqrt{d_x}}{\kappa}\right)^{d_x} \exp\left( - O(1)n (O(1)\kappa)^{2d_x} \right) + O(1) n^{2d_x} \left(\dfrac{O(1)}{d_x}\right)^{d_x} \exp\left( - \dfrac{k\kappa^2}{O(1)\sigma^2}  \right).
\end{align*}
}%
\end{enumerate}
\end{proposition}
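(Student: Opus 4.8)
The plan is to handle all three parts through the bias--variance decomposition of the kNN estimator. Writing the $i$-th nearest neighbor's response as $y^{(i)}(x) = f^*(x^{(i)}(x)) + \bar\varepsilon^{(i)}(x)$ with $\bar\varepsilon := Q^*(X)\varepsilon$, we obtain
\[
\hf_n(x) - f^*(x) = \underbrace{\frac{1}{k}\sum_{i=1}^{k}\bigl(f^*(x^{(i)}(x)) - f^*(x)\bigr)}_{b_n(x)} + \underbrace{\frac{1}{k}\sum_{i=1}^{k}\bar\varepsilon^{(i)}(x)}_{e_n(x)},
\]
an approximation (bias) term $b_n$ and a noise term $e_n$. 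For Part~1, I would bound $\norm{b_n(x)}$ using continuity of $f^*$ and the fact that the $k$-NN radius $R_k(x) := \max_{i \le k}\norm{x^{(i)}(x) - x}$ satisfies $R_k(x) \convinprob 0$ whenever $k = o(n)$ and $\X$ is compact, and I would bound $e_n(x)$ via $\expect{\norm{e_n(x)}^2 \mid \{x^i\}} = O(1/k) \to 0$ since $k \to \infty$. Averaging the resulting pointwise bounds over the design points and passing to the limit gives the empirical-norm statement, so Assumptions~\ref{ass:regconsist_point} and~\ref{ass:regconsist_mse} follow; these are the classical universal-consistency arguments of~\citet{gyorfi2006distribution}.

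For Part~2, twice-differentiability of $f^*$ and of the covariate density upgrades the bias via a Taylor expansion to $\norm{b_n(x)} = O_p\bigl((k/n)^{2/d_x}\bigr)$, while the noise term remains $O_p(k^{-1/2})$. Balancing these two contributions by choosing $k \asymp n^{4/(d_x+4)}$ yields $\norm{\hf_n(x) - f^*(x)}^2 = O_p\bigl(n^{-4/(d_x+4)}\bigr)$ pointwise and, after averaging, in empirical norm. This establishes Assumptions~\ref{ass:regconvrate_point} and~\ref{ass:regconvrate_mse} with $\alpha = \frac{O(1)}{d_x}$, the usual nonparametric rate reflecting the curse of dimensionality \citep[cf.\ Chapter~14 of][]{biau2015lectures}.

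Part~3 is the crux and the only genuinely delicate step, because it demands a \emph{uniform} bound over the continuum $x \in \X$ rather than a pointwise one. I would split $\sup_{x \in \X}\norm{f^*(x) - \hf_n(x)}$ into a supremum of the bias and a supremum of the noise. For the bias, Lipschitz continuity gives $\norm{b_n(x)} \le L\,R_k(x)$, so it suffices to control $\sup_{x \in \X} R_k(x)$: I would cover $\X$ by $O\bigl((1/\kappa)^{d_x}\bigr)$ balls and, for each center, apply the measure lower bound $\prob{X \in \mathcal{B}_\kappa(x)} \ge \tau\kappa^{d_x}$ together with a Chernoff bound on the binomial number of sample points falling in the ball; under the stated condition $n \ge O(1)k(O(1)/\kappa)^{d_x}$ the expected count exceeds $k$, and tracking the covering number against this tail produces the first exponential term of the displayed bound. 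For the noise, the key observation is that although $x$ varies continuously, the $k$-NN \emph{set} takes only polynomially many distinct values --- the regions of constant neighbor set are the cells of the arrangement of the $O(n^2)$ pairwise perpendicular bisectors in $\R^{d_x}$, numbering $O(n^{2d_x})$. On each such region $e_n$ is a fixed average of $k$ errors that are uniformly sub-exponential (here the hypothesis $\sup_x \expect{\exp(\lambda\abs{\bar\varepsilon_j}) \mid X = x} < \infty$ is essential, since the neighbor errors are conditioned on lying near $x$), so a Bernstein bound gives tail $\exp\bigl(-k\kappa^2/(O(1)\sigma^2)\bigr)$, and a union bound over the $O(n^{2d_x})$ regions yields the second term. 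The main obstacle is making the covering/binomial radius argument and the combinatorial count of neighbor configurations interlock so that both exponents dominate their polynomial prefactors under the given sample-size conditions --- precisely the content of Lemma~10 of~\citet{bertsimas2019predictions}, which I would invoke (applied componentwise to accommodate $d_y > 1$).
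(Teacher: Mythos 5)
Your proposal is correct and follows essentially the same route as the paper, whose proof simply cites Theorem~12.1 and Theorems~14.3 and~14.5 of~\citet{biau2015lectures} (with Markov's inequality) for parts~1 and~2, and Lemma~10 of~\citet{bertsimas2019predictions} for part~3 --- the same external results you invoke, with your bias--variance decomposition and covering/union-bound sketch being exactly the arguments underlying those citations. The only substantive content beyond the paper's one-line proof is your (accurate) reconstruction of how those lemmas are proved, including the $O(n^{2d_x})$ count of distinct neighbor configurations that explains the polynomial prefactor in the displayed tail bound.
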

\begin{proof}
The first part follows from Theorem~12.1 of~\citet{biau2015lectures}.
The second part follows from Theorems~14.3 and~14.5 of~\citet{biau2015lectures} and Markov's inequality.
The last part follows from Lemma~10 of~\citet{bertsimas2019predictions}.
\end{proof}

{\citet{jiang2019non} presents improved rates of convergence and finite sample guarantees for kNN regression in the homoscedastic setting when the data~$\D_n$ lies on a low-dimensional manifold.}
Lemma~7 of~\citet{bertsimas2019predictions} presents conditions under which CART regression satisfies Assumption~\ref{ass:regconsist}.
Along with Theorem~8 of~\citet{wager2018estimation}, the above result can be used to show that Assumption~\ref{ass:regconvrate} holds for CART regression with $\alpha = \frac{O(1)}{d_x}$.
Lemma~9 of~\citet{bertsimas2019predictions} presents conditions under which RF regression satisfies Assumption~\ref{ass:regconsist}.
Once again, we can use this result along with Theorem~8 of~\citet{wager2018estimation} to show that Assumption~\ref{ass:regconvrate} holds for RF regression with $\alpha = \frac{O(1)}{d_x}$.

\subsection{Verifying assumptions on the estimation of \texorpdfstring{$Q^*$}{Qstar}}
\label{subsec:verify_Qstar_ass}

{We verify that the parts of Assumptions~\ref{ass:regconsist},~\ref{ass:reglargedev}, and~\ref{ass:regconvrate} relating to the estimate~$\hQ_n$ of~$Q^*$ hold for some setups with structured heteroscedasticity.
The techniques in Section~\ref{subsec:mestimators} may then be used to verify the assumptions for the jackknife-based estimators in Section~\ref{sec:jackknife} when the data~$\{(x^i,\varepsilon^i)\}$ is i.i.d.
These assumptions for $\hQ_n$---in particular, Assumption~\ref{ass:reglargedev}---are not as well-studied in the literature as those for~$\hf_n$ and are typically harder to verify.
Because deriving theoretical properties of estimators in the heteroscedastic setting and deriving finite sample properties of estimators in general are areas of topical interest, we envision that future research will enable easier verification of these assumptions.
For simplicity, we only consider function classes~$\Q$ that comprise diagonal covariance matrices~\citep{zhou2018new}, i.e., 
\[
\Q := \{Q : \R^{d_x} \to \R^{d_y \times d_y} : Q(X) = \text{diag}(q_1(X), q_2(X), \dots, q_{d_y}(X))\},
\]
although the ER-SAA approach is more generally applicable.
\citet{bauwens2006multivariate} review some model classes~$\Q$ with non-diagonal covariance matrices that are popular in time series modeling.}

\subsubsection{Parametric models for heteroscedasticity}
{We assume that $Q^*(X) \equiv Q(X;\spi)$ for some finite-dimensional parameter~$\spi \in \Pi$ and the goal is to estimate~$\spi$. 
Forms of the functions~$q_j$ of interest include~\citep{powell2010models,romano2017resurrecting}:}
\begin{enumerate}[label=\roman*.]
\item {$(q_j(X;\pi))^2 = (\tr{(\pi^j)} X)^2$,}

\item {$(q_j(X;\pi))^2 = \exp(\tr{(\pi^j)} X)$,}

\item {$(q_j(X;\pi))^2 = \exp\bigl(\tr{(\pi^j)} \log(\abs{X})\bigr)$, where $X_1 \equiv 2$ (instead of $X_1 \equiv 1$).}
\end{enumerate}
{The above setup can also accommodate cases where the parameters of the function~$Q^*$ include some of the parameters of the function~$f^*$.}

{Let~$\hpi_n$ denote the estimate of~$\spi$ corresponding to the regression estimate~$\hQ_n$.
Suppose for a.e.\ realization~$x \in \X$, the function~$Q(x;\cdot)$ is Lipschitz continuous with Lipschitz constant~$L_Q(x)$ and its inverse~$[Q(x;\cdot)]^{-1}$ is also Lipschitz continuous with Lipschitz constant~$\bar{L}_Q(x)$.
These assumptions hold for the above parametric models if the parameters~$\pi$ therein are restricted to lie in a compact sets (similar to Section~\ref{subsec:mestimators}, it suffices to assume that the above Lipschitz continuity assumptions hold locally for the asymptotic results).
Because
\[
\norm{\hQ_n(x) - Q^*(x)} \leq L_Q(x)\norm{\hpi_n - \spi}, \:\: \frac{1}{n} \sum_{i=1}^{n} \bigl\lVert \bigl[\hQ_n(x^i)\bigr]^{-1} - \bigl[Q^*(x^i)\bigr]^{-1}\bigr\rVert^2 \leq \Bigl(\frac{1}{n} \sum_{i=1}^{n}\bar{L}^2_Q(x^i)\Bigr) \norm{\hpi_n - \spi}^2,
\]
asymptotic and finite sample guarantees on the estimator~$\hpi_n$ of~$\spi$ directly translate to the asymptotic and finite sample guarantees on the estimate~$\hQ_n$ in Assumptions~\ref{ass:regconsist_point2},~\ref{ass:regconsist_mse2},~\ref{ass:regconvrate_point2},~\ref{ass:regconvrate_mse2},~\ref{ass:reglargedev_point2}, and~\ref{ass:reglargedev_mse2}.
When the functions~$f^*$ and~$Q^*$ are jointly estimated using an M-estimation procedure, the results in Section~\ref{subsec:mestimators} provide conditions under which the estimator~$\hpi_n$ of~$\spi$ is consistent and Assumptions~\ref{ass:regconsist_point2},~\ref{ass:regconsist_mse2},~\ref{ass:regconvrate_point2}, and~\ref{ass:regconvrate_mse2} hold with $\alpha = 1$.
Section~\ref{subsec:mestimators} also presents a hard-to-verify uniform exponential bound condition under which~$\hpi_n$ possesses a finite sample guarantee.
\citet{carroll1982robust} consider robust M-estimators for~$\spi$ that possess a similar rate of convergence when~$f^*$ is linear.
\citet{dalalyan2013learning} present asymptotic and finite sample guarantees for a scaled Dantzig estimator of~$\spi$ under some sparsity assumptions.
Finally,~\citet{fan2014quasi} present a quasi-maximum likelihood approach for estimating the parameters of GARCH models and investigate their asymptotic properties.}

{In the remainder of this section, we specialize the verification of Assumptions \ref{ass:regconsist_point2}, \ref{ass:regconsist_mse2}, \ref{ass:regconvrate_point2}, and~\ref{ass:regconvrate_mse2}, to Example~\ref{exm:regrexample}.
We are unable to verify Assumptions~\ref{ass:reglargedev_point2} and~\ref{ass:reglargedev_mse2} because the literature lacks suitable finite-sample guarantees for heteroscedasticity estimation.}

\paragraph{{Verifying Assumptions~\ref{ass:regconsist} and~\ref{ass:regconsist_lq}}.}
{We verify Assumptions~\ref{ass:regconsist_point2},~\ref{ass:regconsist_mse2}, and~\ref{ass:regconsist_point2_lq} for Example~\ref{exm:regrexample}.
Since}
\begin{align*}
{\norm{\hQ_n(x) - Q^*(x)}} &{\leq \max_{j \in [d_y]} \abs{\hat{q}_{jn}(x) - q^*_j(x)}} \\
&{= \max_{j \in [d_y]} \bigg\lvert \biggl(\exp\biggl(\sum_{k=1}^{d_x}\hat{\pi}^j_{kn} \log(\abs{x_k})\biggr)\biggr)^{1/2} - \biggl(\exp\biggl(\sum_{k=1}^{d_x}\pi^{j*}_{k} \log(\abs{x_k})\biggr)\biggr)^{1/2}\bigg\rvert,}
\end{align*}
{we have that Assumption~\ref{ass:regconsist_point2} holds whenever $\hpi_n \convinprob \spi$ on account of Slutsky's lemma and the continuous mapping theorem.
Appendix~B.2 of~\citet{romano2017resurrecting} identifies conditions under which $\hpi_n \convinprob \spi$ for Example~\ref{exm:regrexample}.
To verify Assumption~\ref{ass:regconsist_mse2}, assume for simplicity that the support~$\X$ is compact and bounded away from the origin and the estimates~$\{\hpi_n\}$ lie in a compact set a.s.\ for $n$ large enough. 
Since}
{
\small
\begin{align*}
{\frac{1}{n}\sum_{i=1}^{n} \bigl\lVert \bigl[\hQ_n(x^i)\bigr]^{-1} - \bigl[Q^*(x^i)\bigr]^{-1}\bigr\rVert^2} &{\leq \frac{1}{n}\sum_{i=1}^{n} \max_{j \in [d_y]}\abs{(\hat{q}_{jn}(x^i))^{-1} - (q^*_j(x^i))^{-1}}^2} \\
&{\leq \frac{1}{n}\sum_{i=1}^{n} \max_{j \in [d_y]}\abs{(\hat{q}_{jn}(x^i))^{-2} - (q^*_j(x^i))^{-2}}} \\
&{= \frac{1}{n}\sum_{i=1}^{n} \max_{j \in [d_y]}\bigg\lvert \exp\biggl(-\sum_{k=1}^{d_x}\hat{\pi}^j_{kn} \log(\abs{x^i_k})\biggr) - \exp\biggl(-\sum_{k=1}^{d_x}\pi^{j*}_{k} \log(\abs{x^i_k})\biggr)\bigg\rvert} \\
&{\leq \frac{1}{n}\sum_{i=1}^{n} O(1) \max_{j \in [d_y]}\bigg\lvert \sum_{k=1}^{d_x}(\pi^{j*}_k-\hat{\pi}^j_{kn}) \log(\abs{x^i_k})\bigg\rvert} \\
&{\leq O(1)\max_{j \in [d_y]}\sum_{k=1}^{d_x}\abs{\pi^{j*}_k-\hat{\pi}^j_{kn}},}
\end{align*}%
}%
{where the last two steps follow by the mean-value theorem, the assumption that $\X$ is compact and bounded away from the origin, and the assumption that the sequence $\{\hpi_n\}$ lies in a compact set a.s.\ for $n$ large enough (note that the compactness assumption on~$\X$ can be relaxed, e.g., if the sequence $\{\hpi_n\}$ lies in the nonnegative orthant and $\abs{\bx} \geq 1$, $\forall \bx \in \X$). 
Consequently, Assumption~\ref{ass:regconsist_mse2} also holds whenever $\hpi_n \convinprob \spi$.
The above arguments also imply $\sup_{x \in \X} \norm{[Q^*(x)]^{-1} - [\hQ_n(x)]^{-1}} \convinprob 0$.
These conditions also guarantee that Assumption~\ref{ass:regconsist_point2_lq} holds since}
{
\small
\begin{align*}
{\norm{\hQ_n(X) - Q^*(X)}_{L^q}} &{\leq \biggl(\mathbb{E}\biggl[\sum_{j=1}^{d_y} \abs{\hat{q}_{jn}(X) - q^*_j(X)}^q\biggr]\biggr)^{1/q}} \\
&{\leq \biggl(\mathbb{E}\biggl[\sum_{j=1}^{d_y} \bigg\lvert \biggl(\exp\biggl(\sum_{k=1}^{d_x}\hat{\pi}^j_{kn} \log(\abs{X_k})\biggr)\biggr)^{1/2} - \biggl(\exp\biggl(\sum_{k=1}^{d_x}\pi^{j*}_{k} \log(\abs{X_k})\biggr)\biggr)^{1/2}\bigg\rvert^q\biggr]\biggr)^{1/q}} \\
&{\leq \biggl(\sum_{j=1}^{d_y}\mathbb{E}\biggl[ O(1)\bigg\lvert \sum_{k=1}^{d_x}(\hat{\pi}^j_{kn} - \pi^{j*}_k) \log(\abs{X_k})\bigg\rvert^{q}\biggr]\biggr)^{1/q}} \\
&{\leq \biggl(\sum_{j=1}^{d_y} O(1)\biggl\lvert\sum_{k=1}^{d_x}\abs{\hat{\pi}^j_{kn} - \pi^{j*}_k} \biggr\rvert^{q}\biggr)^{1/q}}  \\
&{\leq \sum_{j=1}^{d_y} O(1)\sum_{k=1}^{d_x}\abs{\hat{\pi}^j_{kn} - \pi^{j*}_k},} 
\end{align*}%
}%
{where the third and fourth steps again follow by the mean-value theorem, the compactness of $\X$ and the fact that it is bounded away from the origin, and the a.s.\ compactness of the sequence $\{\hpi_n\}$ for $n$ large enough.
The above arguments also imply $\sup_{x \in \X} \norm{Q^*(x) - \hQ_n(x)} \convinprob 0$.}

\paragraph{{Verifying Assumption~\ref{ass:regconvrate}}.}
{We show that Assumptions~\ref{ass:regconvrate_point2} and~\ref{ass:regconvrate_mse2} hold whenever $\norm{\hpi_n - \spi} = O_p(n^{-\alpha/2})$, the support~$\X$ is compact and bounded away from the origin, and the estimates~$\{\hpi_n\}$ lie in a compact set a.s.\ for $n$ large enough. Note that}
\begin{align*}
{\sup_{x \in \X} \norm{Q^*(x) - \hQ_n(x)}} &{\leq \max_{j \in [d_y]} \sup_{x \in \X} \abs{\hat{q}_{jn}(x) - q^*_j(x)}} \\
&{= \max_{j \in [d_y]} \sup_{x \in \X} \bigg\lvert \biggl(\exp\biggl(\sum_{k=1}^{d_x}\hat{\pi}^j_{kn} \log(\abs{x_k})\biggr)\biggr)^{1/2} - \biggl(\exp\biggl(\sum_{k=1}^{d_x}\pi^{j*}_{k} \log(\abs{x_k})\biggr)\biggr)^{1/2}\bigg\rvert} \\
&{\leq \max_{j \in [d_y]} O(1) \sum_{k=1}^{d_x}\abs{\pi^{j*}_k-\hat{\pi}^j_{kn}},}
\end{align*}
{where the last step follows by arguments similar to the derivation of Assumption~\ref{ass:regconsist_point2_lq} using the mean-value theorem, the compactness of $\X$ and the fact that it is bounded away from the origin, and the a.s.\ compactness of the sequence $\{\hpi_n\}$ for $n$ large enough.
Therefore, Assumption~\ref{ass:regconvrate_point2} holds whenever $\norm{\hpi_n - \spi} = O_p(n^{-\alpha/2})$. 
Similarly, we have}
{
\small
\begin{align*}
{\sup_{x \in \X} \norm{[Q^*(x)]^{-1} - [\hQ_n(x)]^{-1}}} &{\leq \max_{j \in [d_y]}  \sup_{x \in \X} \abs{(\hat{q}_{jn}(x))^{-1} - (q^*_j(x))^{-1}}} \\
&{\leq \max_{j \in [d_y]} \sup_{x \in \X} \bigg\lvert \biggl(\exp\biggl(-\sum_{k=1}^{d_x}\hat{\pi}^j_{kn} \log(\abs{x_k})\biggr)\biggr)^{1/2} - \biggl(\exp\biggl(-\sum_{k=1}^{d_x}\pi^{j*}_{k} \log(\abs{x_k})\biggr)\biggr)^{1/2}\bigg\rvert} \\
&{\leq \max_{j \in [d_y]}  O(1) \sum_{k=1}^{d_x}\abs{\pi^{j*}_k-\hat{\pi}^j_{kn}},}
\end{align*}%
}%
{by arguments similar to the derivation of Assumption~\ref{ass:regconsist_mse2}. Therefore, Assumption~\ref{ass:regconvrate_mse2} holds whenever $\norm{\hpi_n - \spi} = O_p(n^{-\alpha/2})$.
Appendix~B.2 of~\citet{romano2017resurrecting} identifies conditions under which $\hpi_n \convinprob \spi$ for Example~\ref{exm:regrexample} at a certain rate.}

\subsubsection{Nonparametric models for heteroscedasticity}
{We assume that each function $q_j : \R^{d_x} \to \R_+$ is `sufficiently smooth'.
Chapter~8 of~\citet{fan2008nonlinear} presents some popular models for the functions~$q_j$ in a time series context.}

{Suppose the function~$Q^*$ and its regression estimate~$\hQ_n$ are (asymptotically) a.s.\ uniformly invertible, i.e., $\sup_{\bar{x} \in \X} \bigl\lVert \bigl[Q^*(\bar{x})\bigr]^{-1} \bigl\rVert < +\infty$ and $\sup_{\bar{x} \in \X} \bigl\lVert \bigl[\hQ_n(\bar{x})\bigr]^{-1} \bigl\rVert < +\infty$.
We have}
{
\small
\begin{align*}
{\frac{1}{n} \sum_{i=1}^{n} \bigl\lVert \bigl[\hQ_n(x^i)\bigr]^{-1} - \bigl[Q^*(x^i)\bigr]^{-1}\bigr\rVert^2} \leq& {\frac{1}{n} \sum_{i=1}^{n} \bigl\lVert \bigl[Q^*(x^i)\bigr]^{-1}\bigr\rVert^2 \bigl\lVert \bigl[\hQ_n(x^i)\bigr]^{-1}\bigr\rVert^2 \norm{\hQ_n(x^i) - Q^*(x^i)}^2} \\
\leq& {\biggl(\sup_{\bar{x} \in \X} \bigl\lVert \bigl[Q^*(\bar{x})\bigr]^{-1}\bigr\rVert^2\biggr) \biggl(\sup_{\bar{x} \in \X} \bigl\lVert \bigl[\hQ_n(\bar{x})\bigr]^{-1}\bigr\rVert^2\biggr) \biggl( \frac{1}{n} \sum_{i=1}^{n} \norm{\hQ_n(x^i) - Q^*(x^i)}^2 \biggr)}
\end{align*}%
}%
{Therefore, asymptotic and finite sample guarantees for $\norm{\hQ_n(x) - Q^*(x)}$ and $\frac{1}{n} \sum_{i=1}^{n} \norm{\hQ_n(x^i) - Q^*(x^i)}^2$ are sufficient for verifying Assumptions \ref{ass:regconsist_point2}, \ref{ass:regconsist_mse2}, \ref{ass:regconvrate_point2}, \ref{ass:regconvrate_mse2}, \ref{ass:reglargedev_point2}, and \ref{ass:reglargedev_mse2}.
Theorem~8.5 of~\citet{fan2008nonlinear} can be used to identify conditions under which these asymptotic guarantees hold for local linear estimators on time series data when the dimension of the covariates~$d_x = 1$.
They also mention approaches for estimating~$Q^*$ when $d_x > 1$.
Theorem~2 of~\citet{ruppert1997local} can be used to verify Assumptions~\ref{ass:regconsist} and~\ref{ass:regconvrate} for local polynomial smoothers.
Proposition~2.1 and Theorem~3.1 of~\citet{jin2015adaptive} identify conditions under which Assumptions~\ref{ass:regconsist} and~\ref{ass:regconvrate} hold for a local likelihood estimator.
\citet{van2010semiparametric} consider semiparametric models for both~$f^*$ and~$Q^*$.
Theorems~3.1 and~3.2 therein can be used to verify Assumptions~\ref{ass:regconsist} and~\ref{ass:regconvrate} for the estimates~$\hQ_n$. Section~3 of~\citet{zhou2018new} presents robust estimators of~$Q^*$ when~$f^*$ is linear and notes that these estimators~$\hQ_n$ possess asymptotic and finite sample guarantees in the form of Assumptions~\ref{ass:regconsist},~\ref{ass:regconvrate}, and~\ref{ass:reglargedev}.
Finally, Theorem~3.1 of~\citet{chesneau2020nonparametric} can be used to derive asymptotic guarantees for wavelet estimators of~$Q^*$.}

\section{Omitted details for the computational experiments}
\label{sec:computexp-details}

The parameters $\varphi^*$ and $\zeta^*$ in the true demand model are specified as:
\begin{align*}
\varphi^*_j &= 50 + 5\delta_{j0}, \quad \zeta^*_{j1} = 10 + \delta_{j1}, \quad \zeta^*_{j2} = 5 + \delta_{j2}, \quad \text{and} \quad \zeta^*_{j3} = 2 + \delta_{j3}, \quad \forall j \in \J,
\end{align*}
where $\{\delta_{j0}\}_{j \in \J}$ are i.i.d.\ samples from the standard normal distribution $\mathcal{N}(0,1)$, and $\{\delta_{j1}\}_{j \in \J}$, $\{\delta_{j2}\}_{j \in \J}$, and $\{\delta_{j3}\}_{j \in \J}$ are i.i.d.\ samples from the uniform distribution $U(-4,4)$.
We generate i.i.d.\ samples of the covariates~$X$ from a \textit{multivariate folded/half-normal distribution}. 
We specify the underlying normal distribution to have mean $\mu_X = 0$ and set its covariance matrix $\Sigma_X$ to be a random correlation matrix that is generated using the `vine method' of~\citet{lewandowski2009generating} (each partial correlation is sampled from the $\text{Beta}(2,2)$ distribution and rescaled to $[-1,1]$).
Finally, Algorithm~\ref{alg:99ucbs} describes our procedure for estimating the normalized $99$\% UCB on the optimality gap of our data-driven solutions using the multiple replication procedure~\citep{mak1999monte}.

\begin{algorithm}
\caption{Estimating the normalized $99$\% UCB on the optimality gap of a given solution.}
\label{alg:99ucbs}
{
\begin{algorithmic}[1]
\State \textbf{Input}: Covariate realization $X = x$ and data-driven solution $\hz_n(x)$ 
for a particular realization of the data~$\D_n$.

\State \mbox{\textbf{Output}: $\hat{B}_{99}(x)$, which is a normalized estimate of the $99$\% UCB on the optimality gap of $\hz_n(x)$.}

\For{$k = 1, \dots, 30$}

\State Draw $1000$ i.i.d.\ samples $\bar{\D}^k := \{\bar{\varepsilon}^{k,i}\}_{i=1}^{1000}$ of $\varepsilon$ according to the distribution $P_{\varepsilon}$.

\State Estimate the optimal value $v^*(x)$ by solving the full-information SAA problem~\eqref{eqn:fullinfsaa} using 

\Statex \hspace*{0.2in} the data $\bar{\D}^k$:
\[
\bar{v}^k(x) := \uset{z \in \Z}{\min} \:\: \dfrac{1}{1000} \displaystyle\sum_{i=1}^{1000} c(z,f^*(x)+{Q^*(x)}\bar{\varepsilon}^{k,i}).
\]

\State Estimate the out-of-sample cost of the solution $\hz_n(x)$ using the data~$\bar{\D}^k$:
\[
\hat{v}^k(x) := \dfrac{1}{1000} \displaystyle\sum_{i=1}^{1000} c(\hz_n(x),f^*(x)+{Q^*(x)}\bar{\varepsilon}^{k,i}).
\]

\State Estimate the optimality gap of the solution $\hz_n(x)$ as $\hat{G}^k(x) = \hat{v}^k(x) - \bar{v}^k(x)$.

\EndFor

\State Construct the normalized estimate of the $99$\% UCB on the optimality gap of $\hz_n(x)$ as 
\[
\hat{B}_{99}(x) := \dfrac{100}{\abs{\bar{v}(x)}} \left(\dfrac{1}{30} \displaystyle\sum_{k=1}^{30} \hat{G}^k(x) + 2.462\displaystyle\sqrt{\dfrac{\text{var}(\{\hat{G}^k(x)\})}{30}}\right),
\]
where $\bar{v}(x) := \dfrac{1}{30} \displaystyle\sum_{k=1}^{30} \bar{v}^k(x)$ and $\text{var}(\{\hat{G}^k(x)\})$ denotes the variance of the gaps $\{\hat{G}^k(x)\}_{k=1}^{30}$.

\end{algorithmic}
}
\end{algorithm}

\begin{figure}[t!]
    \centering
    \begin{subfigure}[t]{0.33\textwidth}
        \centering
        \includegraphics[width=\textwidth]{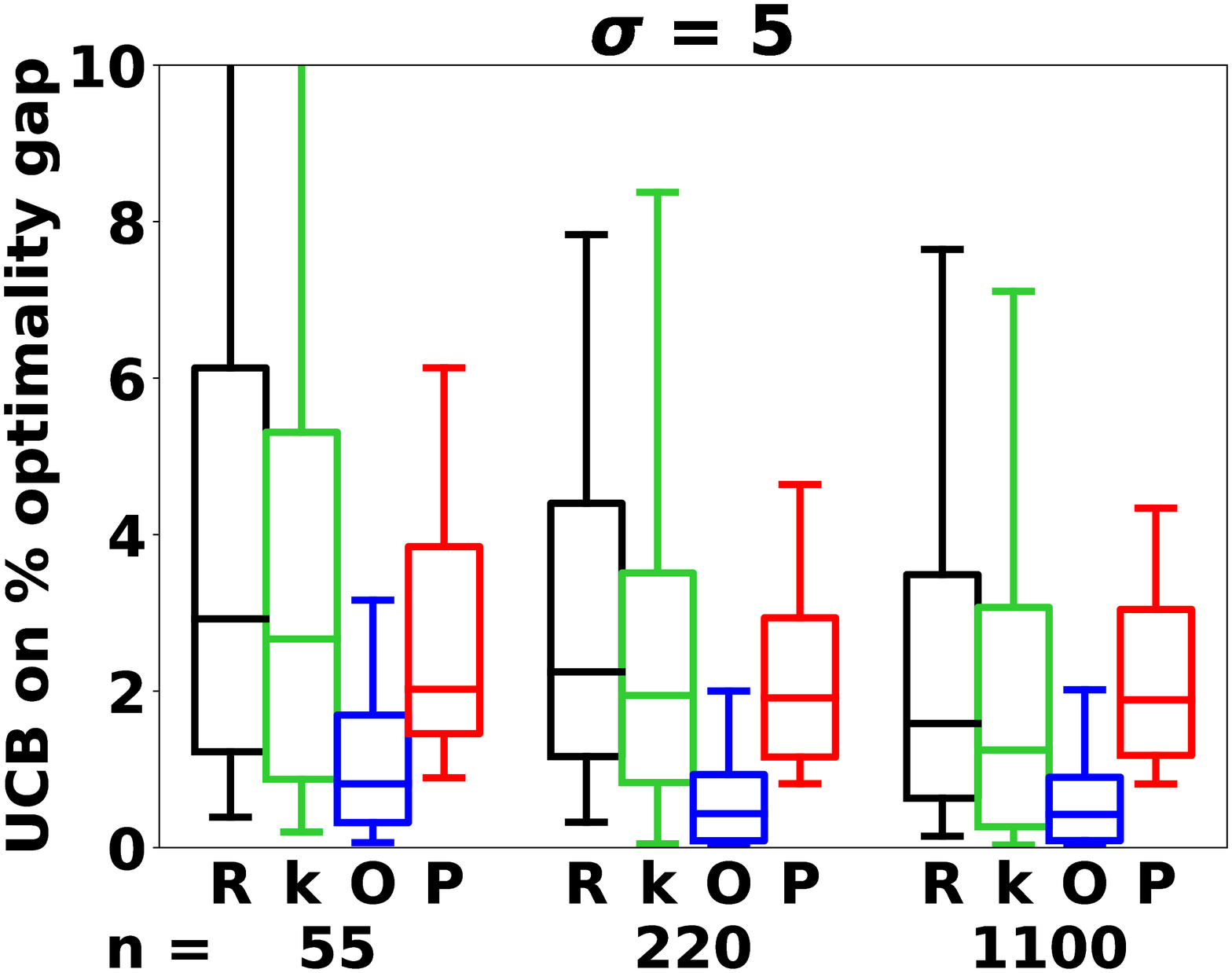}
    \end{subfigure}%
    ~ 
    \begin{subfigure}[t]{0.33\textwidth}
        \centering
        \includegraphics[width=\textwidth]{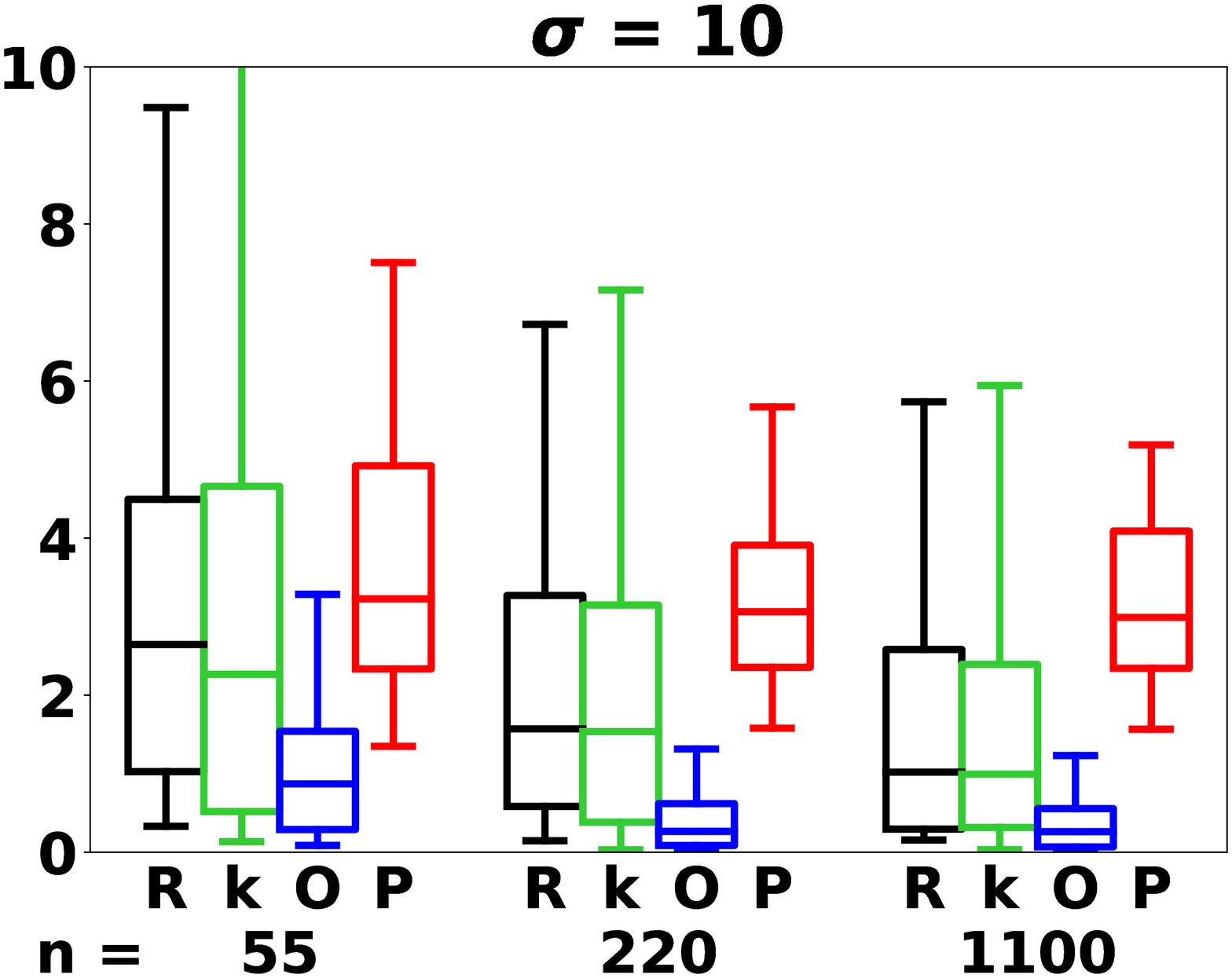}
    \end{subfigure}%
    ~ 
    \begin{subfigure}[t]{0.33\textwidth}
        \centering
        \includegraphics[width=\textwidth]{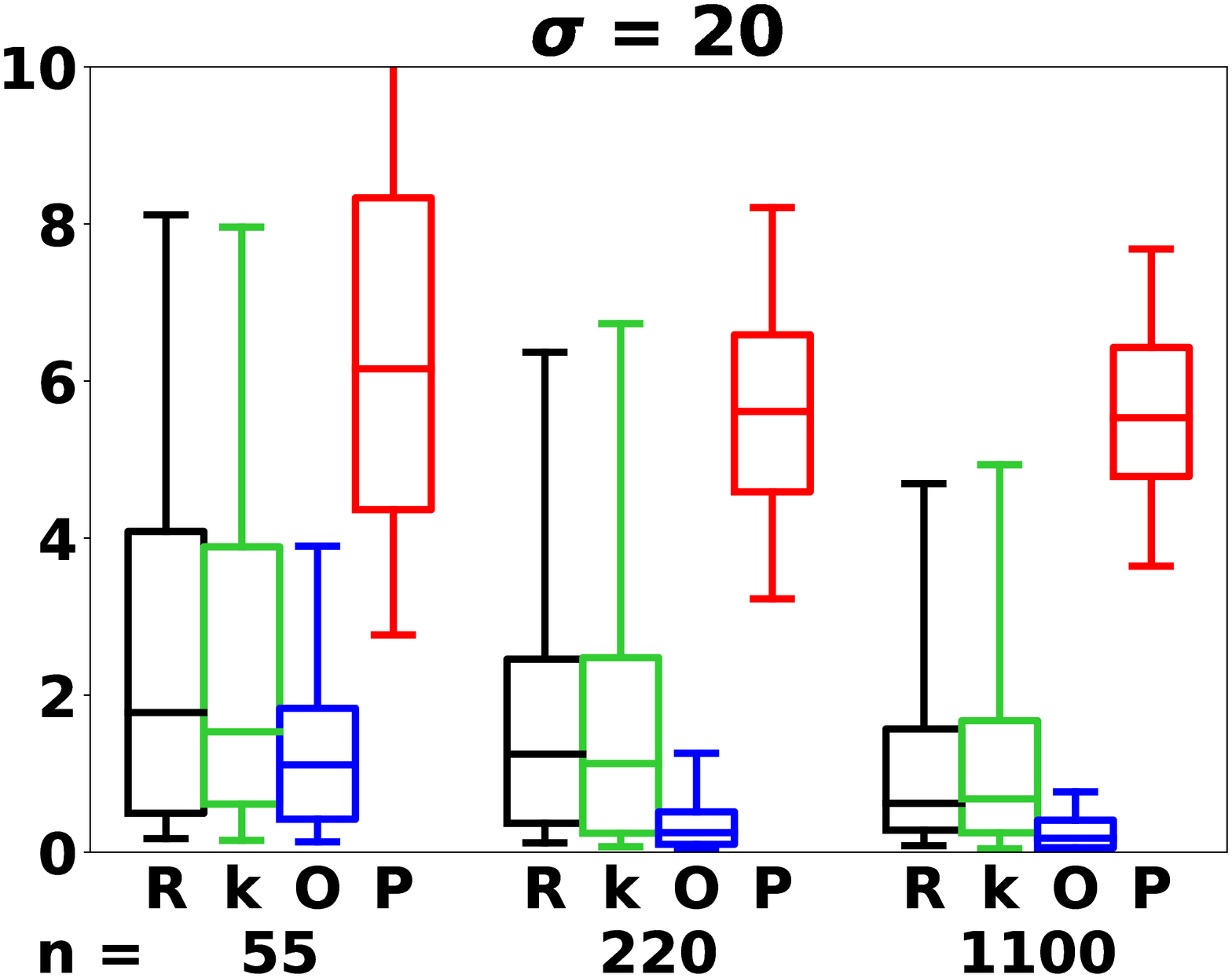}
    \end{subfigure}\\
    \begin{subfigure}[t]{0.33\textwidth}
        \centering
        \includegraphics[width=\textwidth]{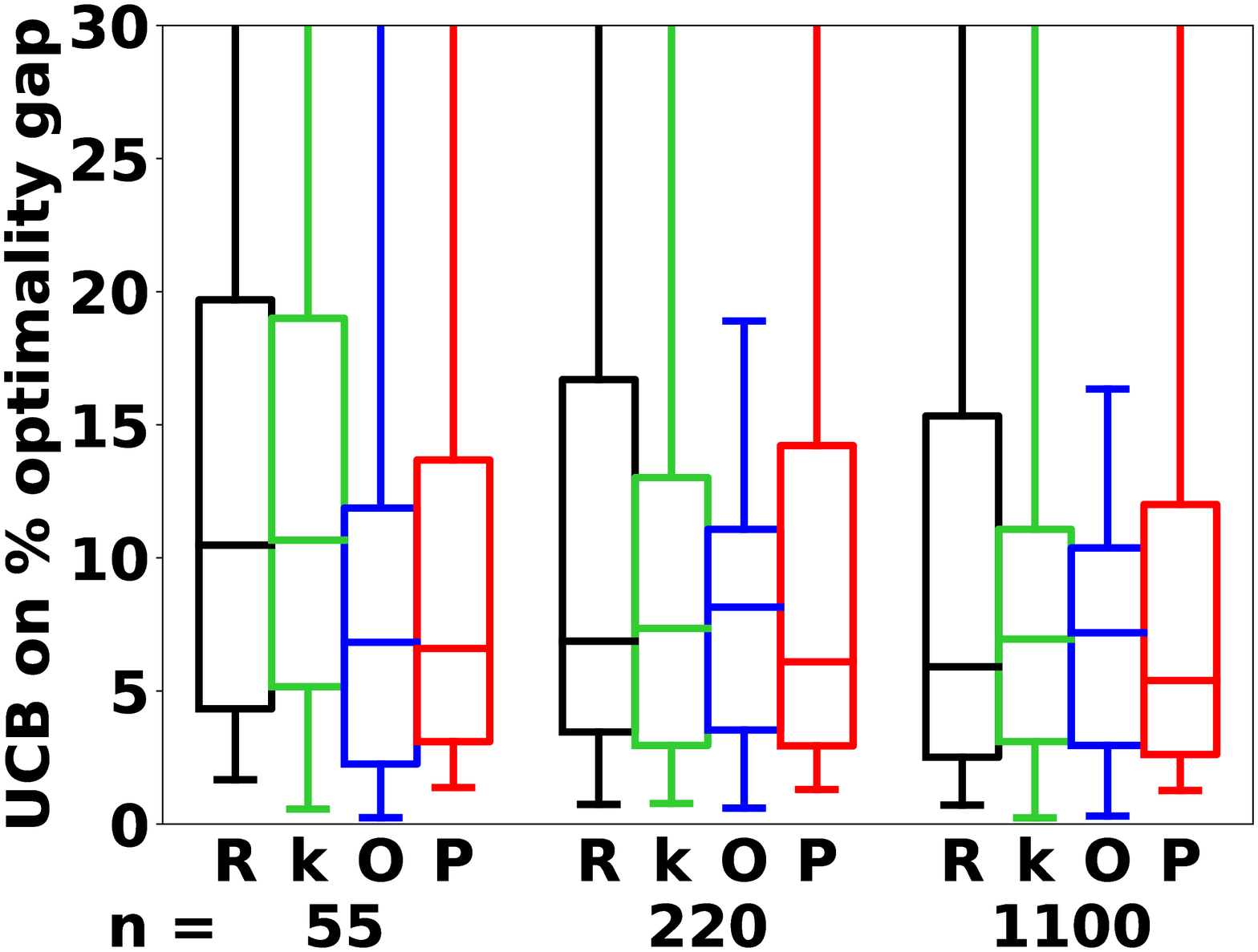}
    \end{subfigure}%
    ~ 
    \begin{subfigure}[t]{0.33\textwidth}
        \centering
        \includegraphics[width=\textwidth]{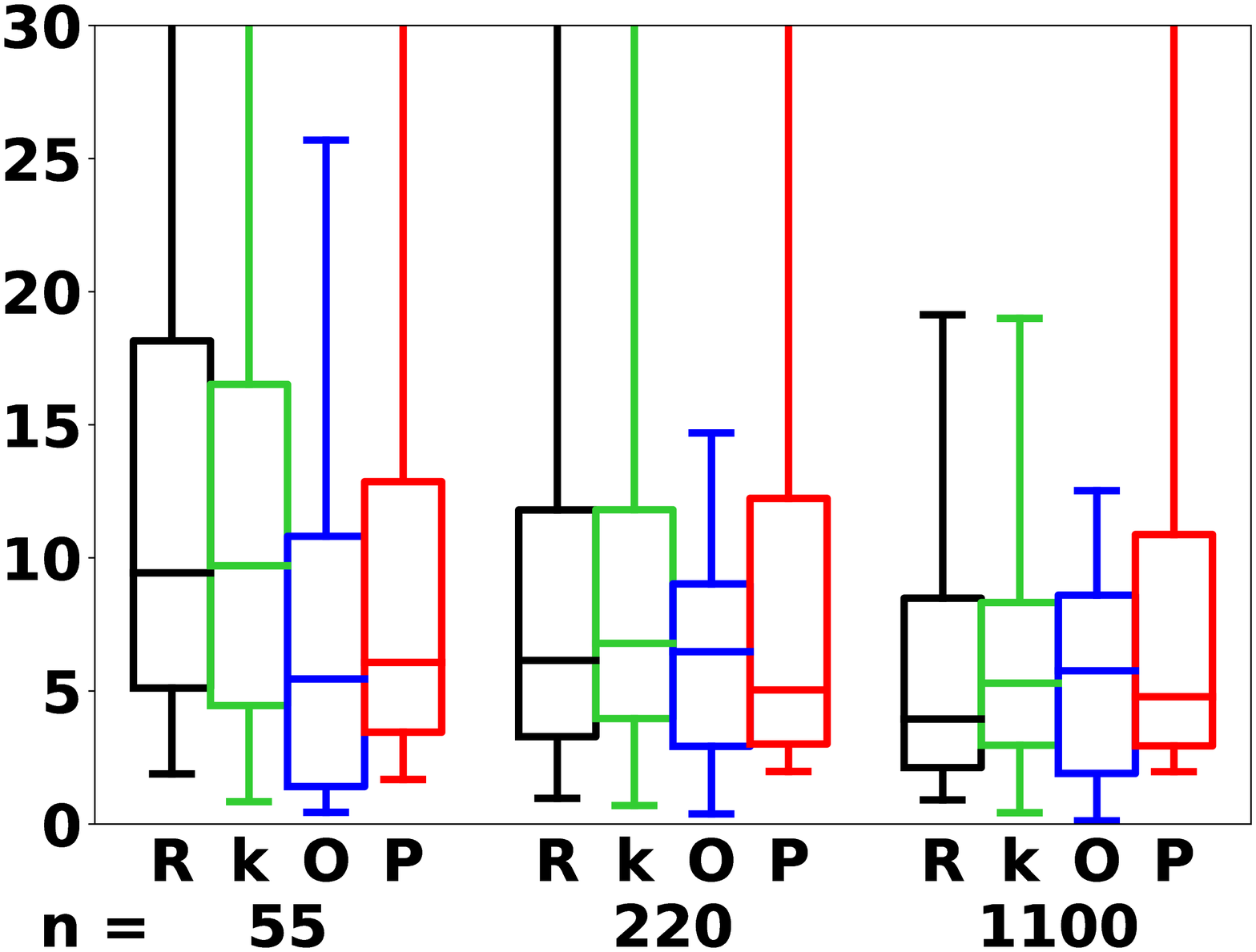}
    \end{subfigure}%
    ~ 
    \begin{subfigure}[t]{0.33\textwidth}
        \centering
        \includegraphics[width=\textwidth]{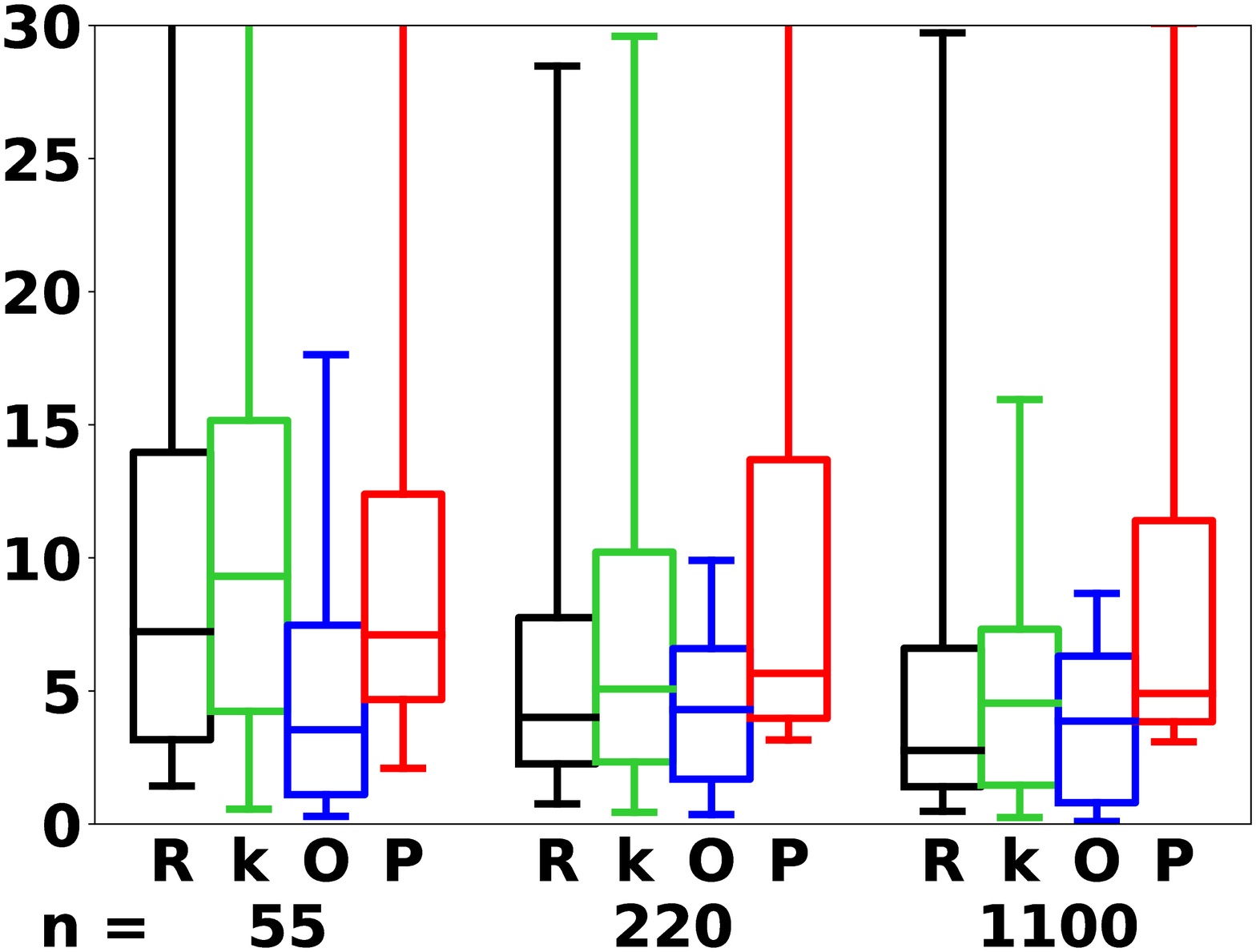}
    \end{subfigure}
    \caption{{Effect of increasing $\sigma$ on re-weighted kNN-SAA (\texttt{R}), ER-SAA+kNN (\texttt{k}), ER-SAA+OLS (\texttt{O}), and PP+OLS (\texttt{P}) approaches when $\omega = 1$. Top row: $p = 0.5$. Bottom row: $p = 2$. Left column: $\sigma = 5$. Middle column: $\sigma = 10$. Right column: $\sigma = 20$.}}
    \label{fig:sigma_plots_2}
\end{figure}

\end{document}